\renewcommand{\injlim}{\varinjlim}
\renewcommand{\projlim}{\varprojlim}
\crefname{equation}{}{}
\crefname{enumi}{}{}
\newlist{conenum}{enumerate}{1}
\setlist[conenum,1]{label=(\roman*),ref=\roman*}
\crefname{conenumi}{}{}
\numberwithin{equation}{section}
\theoremstyle{plain}
\newtheorem{Theorem}{Theorem}
\crefname{Theorem}{Theorem}{Theorems}
\newtheorem{conjecture}[equation]{Conjecture}
\crefname{conjecture}{Conjecture}{Conjectures}
\newtheorem{corollary}[equation]{Corollary}
\newtheorem{lemma}[equation]{Lemma}
\newtheorem{proposition}[equation]{Proposition}
\newtheorem{theorem}[equation]{Theorem}
\theoremstyle{definition}
\newtheorem{assumption}[equation]{Assumption}
\newtheorem{definition}[equation]{Definition}
\newtheorem{example}[equation]{Example}
\theoremstyle{remark}
\newtheorem{remark}[equation]{Remark}
\let\oldAA\AA\let\AA\relax
\let\oldVec\Vec\let\Vec\relax
\newcommand{\FF}{\mathbf{F}}
\newcommand{\NN}{\mathbf{N}}
\newcommand{\ZZ}{\mathbf{Z}}
\newcommand{\QQ}{\mathbf{Q}}
\newcommand{\RR}{\mathbf{R}}
\newcommand{\CC}{\mathbf{C}}
\newcommand{\HH}{\mathbf{H}}
\newcommand{\AA}{\mathbb{A}}
\newcommand{\GG}{\mathbb{G}}
\newcommand{\E}{\mathrm{E}}
\newcommand{\F}{\mathrm{F}}
\newcommand{\Coh}{\operatorname{Coh}}
\newcommand{\Db}{\operatorname{D}^{\textnormal{b}}}
\newcommand{\D}{\operatorname{D}}
\newcommand{\Fun}{\operatorname{Fun}}
\newcommand{\Ind}{\operatorname{Ind}}
\newcommand{\KH}{\operatorname{KH}}
\newcommand{\KSp}{\operatorname{KSp}}
\newcommand{\KU}{\operatorname{KU}}
\newcommand{\Map}{\operatorname{Map}}
\newcommand{\Mat}{\operatorname{Mat}}
\newcommand{\NCoh}{\operatorname{NCoh}}
\newcommand{\NK}{\operatorname{NK}}
\newcommand{\NVec}{\operatorname{NVec}}
\newcommand{\PShv}{\operatorname{PShv}}
\newcommand{\Perf}{\operatorname{Perf}}
\newcommand{\Pro}{\operatorname{Pro}}
\newcommand{\REnd}{\operatorname{REnd}}
\newcommand{\Shv}{\operatorname{Shv}}
\newcommand{\Sh}{\operatorname{Sh}}
\newcommand{\Spec}{\operatorname{Spec}}
\newcommand{\Tor}{\operatorname{Tor}}
\newcommand{\Vec}{\operatorname{Vec}}
\newcommand{\fib}{\operatorname{fib}}
\newcommand{\id}{\operatorname{id}}
\newcommand{\ko}{\operatorname{ko}}
\newcommand{\kr}{\operatorname{kr}}
\newcommand{\ksp}{\operatorname{ksp}}
\newcommand{\ku}{\operatorname{ku}}
\newcommand{\map}{\operatorname{map}}
\newcommand{\op}{\operatorname{op}}
\newcommand{\sw}{\operatorname{sw}}
\newcommand{\wt}{\operatorname{wt}}
\newcommand{\X}{\mathord{-}}
\newcommand{\llparenthesis}{(\!(}
\newcommand{\rrparenthesis}{)\!)}
\newcommand{\cat}[1]{\mathcal{#1}}
\newcommand{\idl}[1]{\mathfrak{#1}}
\newcommand{\cst}[1]{\underline{#1}}
\newcommand{\Cls}[1]{\mathscr{#1}}
\newcommand{\Cat}[1]{\mathsf{#1}}
\title[\texorpdfstring{\(K(\Cls{C}(X))\)}{K(C(X))}]{\texorpdfstring{\(K\)}{K}-theory of rings of continuous functions}
\author{Ko Aoki}
\address{Max Planck Institute for Mathematics,
  Vivatsgasse 7, 53111 Bonn, Germany
}
\email{aoki@mpim-bonn.mpg.de}
\date{\today}
\begin{document}

\begin{abstract}
  We study
  the algebraic \(K\)-theory
  of the ring of continuous functions
  on a compact Hausdorff space
  with values in a local division ring,
  e.g., a local field:
  We compute its negative \(K\)-theory
  and show its \(K\)-regularity.
  The complex case reproves the results
  of Rosenberg, Friedlander--Walker, and Cortiñas--Thom.
  Our consideration in the real case proves
  two previously unconfirmed claims made by Rosenberg in 1990.
  The algebraic nature of our methods
  enables us to
  deal with the nonarchimedean
  and noncommutative cases analogously.
\end{abstract}

\maketitle
\setcounter{tocdepth}{1}
\tableofcontents

\section{Introduction}\label{s:intro}

Grothendieck chose the letter ``K'' over ``C''
for \(K\)-theory to distinguish it from
the ring of continuous functions~\(\Cls{C}\);
see~\cite{Bak87}.
This is indeed practical;
e.g.,
a typical course on \(K\)-theory
starts with
the observation that \(K_{0}(\Cls{C}(X;\CC))\)
computes \(\KU^0(X)\)
for a compactum\footnote{In this paper,
  we use the term ``compactum'' to mean a compact Hausdorff space.
}~\(X\).
In this paper,
we study the (nonconnective) algebraic \(K\)-theory of~\(\Cls{C}(X;F)\)
for a local division ring (e.g., a local field)~\(F\).

The case \(F=\CC\) has been studied since
Rosenberg's influential paper~\cite{Rosenberg90}:
Friedlander--Walker~\cite[Theorem~5.1]{FriedlanderWalker01C}
first showed
that \(K_{*}(\Cls{C}(D^{n};\CC))\) vanishes for \({*}<0\)
and
then
Cortiñas--Thom~\cite[Corollary~6.9]{CortinasThom12}
in general
proved that \(K_*(\Cls{C}(X;\CC))
\simeq
{\ku}^{-*}(X)\) holds for \({*}\leq0\)
by completing Rosenberg's argument.
Moreover,
as pointed out in~\cite[Remark~8.2]{CortinasThom12},
it follows from~\cite{Rosenberg90,FriedlanderWalker01C}
that the morphism
\(K(\Cls{C}(X;\CC))\to K(\Cls{C}(X;\CC)[T_{1},\dotsc,T_{n}])\)
is an equivalence for \(n\geq0\).
Our main results
generalize those theorems to general~\(F\):

\begin{Theorem}\label{main_a}
  For a compactum~\(X\),
  we have canonical isomorphisms
  \begin{align*}
    K_*(\Cls{C}(X;\RR))
    &\simeq
    {\ko}^{-*}(X),
    &
    K_*(\Cls{C}(X;\CC))
    &\simeq
    {\ku}^{-*}(X),
    &
    K_*(\Cls{C}(X;\HH))
    &\simeq
    {\ksp}^{-*}(X)
  \end{align*}
  for \({*}\leq0\).
  Here the right-hand sides are
  the sheaf cohomology groups.
\end{Theorem}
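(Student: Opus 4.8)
The plan is to treat the three cases uniformly: write $F\in\{\RR,\CC,\HH\}$ and let $\kr$ be the associated connective topological $K$-theory spectrum ($\ko$, $\ku$, or $\ksp$), so that $\kr^{-*}(X)=\pi_{*}\mathrm{R}\Gamma(X;\underline{\kr})$ is the homotopy of the derived global sections of the constant sheaf $\underline{\kr}$. I would first construct the comparison map itself: evaluation at $x\in X$ is a ring map $\Cls{C}(X;F)\to F$, which composed with the canonical map $K(F)\to\kr$ depends continuously on $x$, so assembling over $X$ produces a natural morphism $K(\Cls{C}(X;F))\to\mathrm{R}\Gamma(X;\underline{\kr})$. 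The theorem is the assertion that this is an isomorphism on $\pi_{n}$ for every $n\le 0$; the restriction is essential, since in positive degrees the source already records the unit group $\Cls{C}(X;F)^{\times}$ and its higher analogues, which the connective target cannot match. The case $n=0$ is Swan's theorem: finitely generated projective $\Cls{C}(X;F)$-modules are sections of $F$-vector bundles, so $K_{0}(\Cls{C}(X;F))$ is the Grothendieck group of such bundles, matching $\kr^{0}(X)=\pi_{0}\mathrm{R}\Gamma(X;\underline{\kr})$ via the comparison of representable with sheaf-theoretic $K$-cohomology in degree $0$.

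For the negative groups I would argue by descent along $X$ rather than by a direct computation. A finite closed cover $X=Z_{1}\cup Z_{2}$ turns $\Cls{C}(-;F)$ into a Milnor square, and excision for algebraic $K$-theory — which applies here because the relevant ideals of functions vanishing on a closed set are sufficiently well-behaved (e.g.\ H-unital) — makes $Z\mapsto K(\Cls{C}(Z;F))$ into a sheaf of spectra $\mathcal{K}$ on $X$, with a descent spectral sequence $H^{p}(X;\pi_{q}\mathcal{K})\Rightarrow K_{q-p}(\Cls{C}(X;F))$; the comparison map is then $\mathrm{R}\Gamma$ of the sheaf map $\mathcal{K}\to\underline{\kr}$ induced stalkwise by $K(F)\to\kr$. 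Two structural features control the range $n\le 0$. First, $\mathcal{K}$ is connective: the negative homotopy sheaves vanish because the stalks are the $K$-theories of the local rings of germs of continuous functions, which have vanishing negative $K$-theory (the local counterpart of the vanishing established by Friedlander--Walker on disks). Second, the positive homotopy sheaves $\pi_{q}\mathcal{K}$ are emphatically \emph{not} the constant sheaves $\underline{K_{q}(F)}$ — $\pi_{1}\mathcal{K}$, for instance, is the sheaf of continuous units, not the locally constant one — and it is their \emph{sheaf cohomology}, rather than their stalks, that must reproduce the topological answer.

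The crux, and the main obstacle, is exactly this identification of the homotopy sheaves of $\mathcal{K}$ at the level of cohomology. Concretely, one must show that the descent spectral sequence above agrees, on the abutment in total degree $\le 0$, with the Atiyah--Hirzebruch spectral sequence of $\underline{\kr}$: the continuous-function sheaves $\pi_{q}\mathcal{K}$ have the same cohomology as the constant sheaves $\underline{\pi_{q}\kr}$ in this range, through exponential-type short exact sequences that trade continuous coefficients for locally constant ones (for $F=\CC$ and $q=1$ this is the familiar passage from $\Cls{C}(-;\CC)^{\times}$ to $\underline{\ZZ}$ via the exponential). The connective spectrum $\kr$, rather than its periodic version, emerges precisely because no Bott class is ever inverted in this argument. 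For a treatment valid simultaneously over the real, quaternionic, and nonarchimedean local division rings — and in the noncommutative case — I would replace the operator-algebraic homotopies that Cortiñas--Thom used over $\CC$ by a uniform algebraic argument for both the excision input and this rigidity identification; only the coefficient spectrum $\kr=K^{\mathrm{top}}(F)$ changes, not the architecture of the proof.
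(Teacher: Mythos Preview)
Your approach has a genuine gap at what you yourself call the crux. You propose to prove the isomorphism by matching the descent spectral sequence for the sheaf $\mathcal{K}$ against the Atiyah--Hirzebruch spectral sequence of $\underline{\kr}$. But for a general compactum $X$ the Atiyah--Hirzebruch spectral sequence does \emph{not} compute $\Gamma(X;\underline{\kr})$: the map $\underline{\kr}\to\projlim_{i}\tau_{\le i}\underline{\kr}$ of sheaves on~$X$ need not induce an equivalence on global sections. The paper makes this explicit in \cref{xqrboy}, exhibiting a compactum on which the two differ. So even if you could match the two spectral sequences termwise (and your only concrete mechanism, the exponential sequence, handles just $q=1$), you would be computing the wrong object on the right-hand side. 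Relatedly, your connectivity input---that the stalks of $\mathcal{K}$ have vanishing negative $K$-theory---is essentially the theorem itself for small pieces of~$X$; the appeal to Friedlander--Walker on disks is circular for $F\neq\CC$, and in any case concerns global sections on~$D^n$, not rings of germs.

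The paper's proof avoids Postnikov towers and convergence questions entirely. Rather than sheafifying on a single~$X$, it equips the \emph{category} $\Cat{Cpt}_{\aleph_1}$ of all compacta of countable weight with the cd~topology, for which both $K(\Cls{C}(\X;F))$ (via $\Tor$-unitality, \cref{exc_a}) and $\Gamma(\X;\underline{\kr})$ (\cref{f1a75190e6}) are sheaves. The substitute for a stalk computation is the finite-complexity statement of \cref{vain}: every negative-degree class in $K_{*}(\Cls{C}(X;F))$ dies after pullback along some cd-cover $Y\to X$. This is obtained by presenting $\Cls{C}(X;F)$ as a filtered colimit of finite-type algebras, invoking the Kerz--Strunk rh-local vanishing of negative $K$-theory on the algebraic side, and then using \cref{cd_rh} to transport rh-covers of varieties to cd-covers of their $F$-points. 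The analogous finiteness for $\Gamma(\X;\underline{\kr})$ is the elementary \cref{zero_zero}. With both in hand, a finite diagram chase (\cref{a2ca91099a}, \cref{891d38c320}) upgrades the $\pi_0$-equivalence to an equivalence on $\tau_{\le 0}$, with no infinite limit or spectral sequence involved.
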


\begin{Theorem}\label{main_na}
  For a profinite set (aka totally disconnected compactum)~\(X\)
  and
  a local division ring~\(F\),
  the negative \(K\)-theory of
  \(\Cls{C}(X;F)\) vanishes.
\end{Theorem}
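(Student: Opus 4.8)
The plan is to split into the archimedean and nonarchimedean cases. When \(F\in\{\RR,\CC,\HH\}\) the assertion is immediate from \Cref{main_a}: writing a profinite set as a cofiltered limit \(X=\varprojlim_i X_i\) of finite sets, its generalized cohomology is the filtered colimit of that of finite discrete spaces, so connectivity of \(\ko\), \(\ku\), \(\ksp\) forces \(\ko^{m}(X)=\ku^{m}(X)=\ksp^{m}(X)=0\) for \(m>0\); equivalently, the sheaf-cohomological right-hand sides of \Cref{main_a} vanish in positive degrees because a profinite set has covering dimension zero. Thus \(K_{-m}(\Cls{C}(X;F))=0\) for \(m>0\). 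The real content is therefore the nonarchimedean case, which I would treat by purely algebraic means; the noncommutative variant is no different, since a nonarchimedean local division ring still has a maximal order with finite (hence commutative) residue field.

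So let \(F\) be nonarchimedean with maximal order \(\mathcal{O}\), uniformizer \(\pi\), and finite residue field \(k=\mathcal{O}/\pi\mathcal{O}\), and set \(R=\Cls{C}(X;\mathcal{O})\). First I would record three structural facts: \(R\) is \(\pi\)-torsion-free and \(\pi\)-adically complete with \(R/\pi^{n}R\cong\Cls{C}(X;\mathcal{O}/\pi^{n})\); since every continuous map from a profinite set to a finite discrete ring is locally constant, \(\Cls{C}(X;\mathcal{O}/\pi^{n})=\varinjlim_i(\mathcal{O}/\pi^{n})^{X_i}\); and, because a continuous \(F\)-valued function on a compactum has bounded image, \(\Cls{C}(X;F)=R[1/\pi]\). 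In particular \(R/\pi=\Cls{C}(X;k)=\varinjlim_i k^{X_i}\) is a filtered colimit of finite products of the field \(k\). Next I would invoke the localization sequence \(K(\Perf_{\pi}(R))\to K(R)\to K(\Cls{C}(X;F))\), where \(\Perf_{\pi}(R)\subseteq\Perf(R)\) is the subcategory of \(\pi\)-power-torsion perfect complexes. Chasing its long exact sequence shows it is enough to prove \(K_{<0}(R)=0\) and \(K_{<0}(\Perf_{\pi}(R))=0\). At the finite levels both are clear: each \(R/\pi^{n}=\varinjlim_i(\mathcal{O}/\pi^{n})^{X_i}\) is a filtered colimit of finite rings, which are Artinian and so have vanishing negative \(K\)-theory, and negative \(K\)-theory commutes with filtered colimits; the \(\pi\)-torsion part is governed by the special fibre \(\Cls{C}(X;k)\), whose negative \(K\)-theory vanishes by the same colimit argument.

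The main obstacle is exactly the passage from the truncations \(R/\pi^{n}\) to the completion \(R\), together with the parallel control of \(\Perf_{\pi}(R)\). The ring \(R=\Cls{C}(X;\mathcal{O})\) is a genuine \(\pi\)-adic completion of the locally constant functions \(\varinjlim_i\mathcal{O}^{X_i}\)—and \emph{not} itself a filtered colimit of regular rings—precisely because \(\mathcal{O}\) carries a nondiscrete topology; moreover \(R\) is very far from Noetherian, so classical dévissage and continuity theorems do not apply directly. The crux is thus to establish the relevant continuity (pro-descent) statement for negative \(K\)-theory of this non-Noetherian \(\pi\)-complete ring, identifying \(K_{<0}(R)\) and \(K_{<0}(\Perf_{\pi}(R))\) with their finite-level counterparts; I expect this to require either a pro-cdh descent argument or the truncating-invariant formalism rather than any off-the-shelf regularity input.
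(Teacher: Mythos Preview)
Your archimedean reduction to \Cref{main_a} is correct: for a profinite set \(X=\varprojlim_i X_i\) the shape is the pro-finite set itself, so \(\Gamma(X;\cst{E})\simeq\varinjlim_i E^{X_i}\) is connective whenever \(E\) is, and the claim follows.

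In the nonarchimedean case, however, you have correctly identified---and not filled---the decisive gap. Your scheme needs \(K_{<0}(\Cls{C}(X;\mathcal{O}))=0\), and the only argument you offer is continuity along the \(\pi\)-adic tower \(R/\pi^n\). No such continuity is available here: \(R=\Cls{C}(X;\mathcal{O})\) is non-noetherian, and neither pro-cdh descent nor any truncating-invariant argument you allude to yields \(K_{<0}(R)=0\) from \(K_{<0}(R/\pi^n)=0\) without further input. In fact the paper establishes the order case (\cref{main_int}) \emph{as a consequence of} the field case, via the fibre sequence \(K(\Perf(R/\pi))\to K(R)\to K(R[1/\pi])\) (the d\'evissage step using Burklund--Levy, which in turn needs that \(R/\pi\) is regular coherent). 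So your proposed direction of implication is the reverse of what the paper actually proves, and attempting to invoke \cref{main_int} here would be circular.

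The paper's route is entirely different and avoids the completion problem. It works directly with \(\Cls{C}(X;F)\) (over the centre \(Z\) of \(F\)) and proceeds in three steps: (1) \(K(\Cls{C}(\mathord{-};F))\) is a cd~sheaf on \((\Cat{PFin}_{\aleph_1})_{/X}\) by the excision result \cref{exc_na}; (2) every negative-degree class has finite complexity: writing \(\Cls{C}(X;Z)=\varinjlim_i C_i\) with \(C_i\) of finite type over \(Z\), a class comes from some \(K_*(C_i\otimes_Z F)\), and the (noncommutative extension of the) Kerz--Strunk vanishing (\cref{xpbwff}) produces an rh~cover \(\Spec D\to\Spec C_i\) killing it, which by \cref{cd_rh} underlies a cd~cover of \(X\); (3) a cd~sheaf on profinite sets whose negative classes all have finite complexity is connective (\cref{f59fc7068a}), the point being that blowup squares of profinite sets split (Halmos), so the Mayer--Vietoris long exact sequence breaks into short exact sequences and induction on complexity forces the vanishing. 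None of this touches \(\mathcal{O}\) or \(\pi\)-adic completion.
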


\begin{Theorem}\label{main_hi}
  For a compactum~\(X\)
  and a local division ring~\(F\),
  the ring
  \(\Cls{C}(X;F)\) is \(K\)-regular;
  i.e.,
  the tautological map
  \(K(\Cls{C}(X;F))\to K(\Cls{C}(X;F)[T_{1},\dotsc,T_{n}])\)
  is an equivalence for \(n\geq0\).
\end{Theorem}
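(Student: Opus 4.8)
The plan is to reduce \(K\)-regularity to the vanishing of Bass nil-\(K\)-theory and then to propagate that vanishing from profinite sets to all compacta by descent. Recall first that the Bass fundamental theorem provides natural splittings \(K(A[T_1,\dots,T_n])\simeq\bigoplus_{S\subseteq\{1,\dots,n\}}\NK^{|S|}(A)\) (with the convention \(\NK^0=K\)), so that \(A\) is \(K\)-regular if and only if \(\NK^{j}(A)\simeq0\) for all \(j\ge1\), equivalently \(\NK(A[T_1,\dots,T_m])\simeq0\) for all \(m\ge0\). Since a continuous function out of a compactum has relatively compact image, \(\Cls{C}(X;F)[T_1,\dots,T_m]\) is canonically the ring \(\Cls{C}(X;F[T_1,\dots,T_m])\) of continuous functions valued in \(F[T_1,\dots,T_m]\) with its colimit topology. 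Thus \Cref{main_hi} becomes the single assertion that \(\NK(\Cls{C}(X;R))\simeq0\) for every compactum \(X\) and every \(R=F[T_1,\dots,T_m]\). Each such \(R\) is regular Noetherian (a polynomial ring over a division ring has finite global dimension), so \(\NK(R)\simeq0\); the task is to transport this from the coefficient ring to the function ring.

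For \(X\) profinite the statement is immediate from continuity: writing \(X=\lim_i X_i\) as a cofiltered limit of finite sets gives \(\Cls{C}(X;R)=\varinjlim_i R^{X_i}\), a filtered colimit of finite products of the regular Noetherian ring \(R\). As \(\NK\) commutes with filtered colimits and vanishes on regular Noetherian rings, \(\NK(\Cls{C}(X;R))\simeq0\). To reach a general compactum \(X\), I would choose a surjection \(P\twoheadrightarrow X\) from an extremally disconnected (equivalently, projective) compactum \(P\); by Stone duality \(P\), and hence every fibre product \(P\times_X\dotsb\times_X P\), is profinite, so the Čech nerve \(P_\bullet\to X\) is a simplicial profinite set. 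The aim is then to prove the descent equivalence \(\NK(\Cls{C}(X;R))\simeq\lim_{\Delta}\NK(\Cls{C}(P_\bullet;R))\), whose right-hand side vanishes termwise by the profinite case.

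This descent equivalence is the crux, and it is where the real difficulty lies, because algebraic \(K\)-theory does not satisfy descent along arbitrary covers. What makes it tractable is that \(\NK\) is the nil-part of the fibre \(\fib(K\to\KH)\): since \(\KH\) is homotopy invariant its nil-terms vanish, so \(\NK(A)\simeq\NK\bigl(\fib(K\to\KH)(A)\bigr)\). The functor \(A\mapsto\fib(K(A)\to\KH(A))\) vanishes on regular Noetherian rings and is far better behaved than \(K\) itself, and one expects it to be a hypersheaf for the topology on compacta generated by surjections with extremally disconnected source. Concretely, I would try to exhibit \(\fib(K\to\KH)\circ\Cls{C}(-;R)\) as an excisive invariant—turning the Milnor squares attached to closed decompositions \(X=A\cup B\) into pullback squares, in the spirit of the excision results of Land--Tamme and the cdh-descent of Cortiñas--Haesemeyer--Schlichting--Weibel—and then bootstrap from the Čech nerve of the projective cover. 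Identifying the correct Grothendieck topology on compacta and proving hyperdescent for this fibre invariant is the main obstacle; granting it, the remainder is formal manipulation of the Bass splitting together with continuity.

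Conceptually the result is believable: passing from \(F\) to \(F[T_1,\dots,T_n]\) is, topologically, a contraction (scale the variables to zero), and \(K(\Cls{C}(X;R))\) ought to depend on \(R\) only through its topology. From this viewpoint \(K\)-regularity of \(\Cls{C}(X;F)\) is the precise algebraic shadow of the contractibility of affine space, just as in the classical fact that regular rings are \(K\)-regular—the difference being that here the contraction must be seen through the Banach-algebraic structure of the function ring rather than through any Noetherian regularity of \(\Cls{C}(X;F)\) itself, which fails as soon as \(X\) is connected.
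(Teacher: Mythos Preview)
Your profinite base case is broken. You write that for \(X=\projlim_i X_i\) profinite one has \(\Cls{C}(X;R)=\varinjlim_i R^{X_i}\), but this is only true when \(R\) carries the discrete topology. For a local division ring \(F\) (and hence for \(R=F[T_1,\dotsc,T_m]\) with any reasonable topology) it fails already for \(X=\NN\cup\{\infty\}\): the left side is the ring of convergent sequences in \(R\), the right side only the eventually constant ones. So the ``continuity'' step does not compute \(\Cls{C}(X;R)\) and you cannot conclude \(\NK(\Cls{C}(X;R))=0\) this way even on profinite sets. (Incidentally, for nonarchimedean \(F\) the general compactum case reduces to profinite \(X\) because \(F\) is totally disconnected, but that does not help here.)

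The descent step is also more delicate than you indicate. You propose hyperdescent along the \v{C}ech nerve of an extremally disconnected cover, but the paper shows only that \(K(\Cls{C}(\X;F))\) and \(\NK(\Cls{C}(\X;F))\) are \emph{cd~sheaves} on compacta (via the \(\Tor\)-unitality/excision results of \cref{s:exc}), and it explicitly warns that the cd~site \(\Cat{Cpt}_{\aleph_1}\) is \emph{not} hypercomplete, so stalkwise vanishing does not imply vanishing. The paper circumvents this by moving the hypercompleteness problem to the algebraic side: \cref{cd_rh} says cdh~covers of varieties over the center \(Z\) of \(F\) induce cd~covers of compacta, giving a geometric morphism from the cd~topos to the cdh~topos (\cref{xwb4zy}). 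One then shows, using the Kerz--Strunk--Tamme vanishing (in its noncommutative form \cref{xpbwff}), that the cdh~presheaf \(\NK(\X\otimes_Z F)\) has zero stalks; since the cdh~site of varieties \emph{is} hypercomplete (Voevodsky), its sheafification is zero, and pulling back along the geometric morphism gives \(\NK(\Cls{C}(X;F))=0\). Your outline gestures at excision and Land--Tamme, which are indeed the right inputs for the sheaf property, but the passage from local vanishing to global vanishing really hinges on this algebraic hypercompleteness, not on a topological hypercover.
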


Several remarks are in order:

\begin{remark}\label{x4ajpq}
  In \cref{main_a},
  we cannot compute the right-hand side via
  the Atiyah--Hirzebruch spectral sequence;
  see \cref{xqrboy}.
\end{remark}

\begin{remark}\label{xszqjm}
  Our proof shows that
  the isomorphisms in \cref{main_a}
  come from corresponding equivalences
  between \(0\)-truncated
  commutative (or associative in the case of~\(\HH\)) ring spectra.
\end{remark}

\begin{remark}\label{x97um4}
  Rosenberg~\cite[Theorem~2.3]{Rosenberg97}
  considered
  the variant of \cref{main_a} for real (i.e., \(C_{2}\)-) compacta.
  We treat this variant in \cref{ss:kr}.
\end{remark}

\begin{remark}\label{x0p41o}
  Cortiñas--Thom~\cite[Theorem~1.5]{CortinasThom12}
  proved that
  \(A\otimes_{\CC}\Cls{C}(X;\CC)\)
  is \(K\)-regular for a smooth \(\CC\)-algebra~\(A\).
  Even in the complex case, our argument gives us
  a stronger \(K\)-regularity result than theirs:
  We show that
  any smooth \(\Cls{C}(X;\CC)\)-algebra
  is \(K\)-regular in general;
  see \cref{x8biir}.
\end{remark}

\begin{remark}\label{252df46ce5}
In the nonarchimedean case,
  similar statements hold
  for rings of integers;
  e.g.,
  for a profinite set~\(X\)
  and a prime~\(p\),
  the ring \(\Cls{C}(X;\ZZ_{p})\)
  is \(K\)-regular
  and
  has vanishing negative \(K\)-theory.
  We treat this version in \cref{ss:order}.
\end{remark}

Having stated our main theorems,
we point out that
in this paper we establish
two previously unproven statements
Rosenberg made in 1990:
\begin{enumerate}
  \item
    In~\cite[Theorems~2.4]{Rosenberg90},
    Rosenberg stated the real and complex cases of \cref{main_a}.
    The complex case was proven by
    Cortiñas--Thom~\cite{CortinasThom12}.
    Our proof in the complex case is different from theirs;
    see \cref{ss:methods}.
    Rosenberg later considered
    a variant for real (i.e., \(C_{2}\)-) compacta;
    see~\cite[Theorem~2.3]{Rosenberg97}.
    We also treat this variant in \cref{ss:kr}.
  \item
    In~\cite[Theorems~2.5\,(2)]{Rosenberg90},
    Rosenberg claimed that
    for a metrizable contractible compactum~\(X\),
    for nonnegative integers \(m\) and~\(n\),
    any finitely generated projective modules over
    \(\Cls{C}(X;\RR)[\NN^m\times\ZZ^n]\) is free.
    The complex case was proven
    by Cortiñas--Thom~\cite[Theorem~6.3]{CortinasThom12}.
    Our proof here (see \cref{xla9jq})
    just reuses their argument
    with an additional input,
    which is first proven in this paper.
\end{enumerate}

In the rest of this section,
we describe our methods in \cref{ss:methods},
illustrate applications in algebraic \(K\)-theory in \cref{ss:k_th},
and explain the connection between this work
and our ongoing work in \cref{ss:future}.

\subsection{Methods}\label{ss:methods}

We first review
the original approach
of Rosenberg~\cite{Rosenberg90}
to this problem
over the complex (or real) numbers.
It is unstable and geometric:
Suppose that we want to show that
\(K_{*}(\Cls{C}(X))\to K_{*}(\Cls{C}(X\times[0,1]))\)
is an isomorphism for \({*}\leq0\).
First,
by Bass delooping,
it suffices to show that
\(K_{0}(\Cls{C}(X)[M])\to K_{0}(\Cls{C}(X\times[0,1])[M])\)
is an isomorphism for the monoid \(M=\ZZ^{-{*}}\).
Then we consider an unstable generalization of this question,
asking whether two idempotent matrices
over \(\Cls{C}(X\times[0,1])[M]\)
are conjugate
given that they are conjugate over \(\Cls{C}(X\times\{0\})[M]\).
An idempotent matrix over \(\Cls{C}(X)[M]\)
can be regarded as a continuous map
\(X\to\Mat_{\infty}(\CC[M])\)
when we equip a suitable topology on the target
and we can interpret this problem as a homotopy lifting problem.
Hence it suffices to show
that certain maps of topological spaces
are fibrations in a suitable sense.
It is subtle to
study these infinite-dimensional spaces,
but it was achieved
by Cortiñas--Thom~\cite{CortinasThom12}.
One key input is the fact that
\(\ell^{\infty}(\NN;\CC)\) is \(\Ind\)-smooth,
which they proved using the resolution of singularities
in~\cite[Theorem~5.7]{CortinasThom12}.
They called their method \emph{algebraic approximation}:
Like any \(\CC\)-algebra,
\(\Cls{C}(X;\CC)\) is
a filtered colimit of an algebra of finite type.
Then we can interpret
a map
\(A\to\Cls{C}(X;\CC)\) as a map \(X\to(\Spec A)(\CC)\)
of topological spaces.
As pointed out in~\cite[Remark~5.8]{CortinasThom12},
the real case has an extra complication
as the topology of real points
(or any \(F\)-points for
local fields~\(F\neq\CC\)) does not behave
as nicely as in the complex case.
We overcome this problem
by finding site-theoretic explanations behind this
type of the argument:
We see that the cdh~topology of varieties
behaves nicely with topology of \(F\)-points
for any local field~\(F\).
As a consequence,
we show the following,
which fills in the last piece
of Cortiñas--Thom's argument in the real case:

\begin{Theorem}\label{linf}
  The \(\RR\)-algebra \(\ell^{\infty}(\NN;\RR)\) is \(\Ind\)-smooth.
\end{Theorem}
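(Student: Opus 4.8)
The plan is to reduce the statement to a single lifting problem for continuous maps out of the profinite set $\beta\NN$ and to solve that problem using abstract blow-up (i.e.\ cdh) covers. First I would record the identification $\ell^\infty(\NN;\RR)\cong\Cls{C}(\beta\NN;\RR)$, where $\beta\NN$ is the Stone--Čech compactification of the discrete set $\NN$; as the Stone space of a complete Boolean algebra it is extremally disconnected, hence a \emph{projective} object of the category of compacta by Gleason's theorem. I would then invoke the standard criterion for $\Ind$-smoothness: an $\RR$-algebra is $\Ind$-smooth as soon as every map $A\to\ell^\infty(\NN;\RR)$ from a finite-type $\RR$-algebra $A$ factors through a smooth finite-type $\RR$-algebra. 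Writing $V=\Spec A$ and equipping the set of $\RR$-points with its strong topology, such a map is the same datum as a continuous map $\phi\colon\beta\NN\to V(\RR)$, and the desired factorization amounts to lifting $\phi$ through $W(\RR)\to V(\RR)$ for some smooth finite-type $W\to V$.

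The construction of $W$ is where the cdh topology enters, and where the real case genuinely differs from the complex one. Over $\CC$ a single resolution $\tilde V\to V$ is already surjective on points, but over $\RR$ it need not be. The remedy is to use a whole abstract blow-up square: for $\tilde V\to V$ proper and an isomorphism away from a closed $Z\subsetneq V$, the family $\{\tilde V\to V,\ Z\hookrightarrow V\}$ is \emph{jointly surjective on $\RR$-points}, since points outside $Z$ lift uniquely to $\tilde V(\RR)$ and points of $Z(\RR)$ are caught by $Z$. Running Hironaka resolution together with Noetherian induction on $\dim V$ --- keeping the smooth piece $\tilde V$ and replacing $Z$ by its own cdh cover by smooth varieties --- I obtain finitely many \emph{proper} maps $p_i\colon W_i\to V$ with each $W_i$ smooth and $\coprod_i W_i(\RR)\to V(\RR)$ jointly surjective; joint surjectivity on $\RR$-points is visibly preserved at each inductive step. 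This is exactly the ``cdh behaves well with $F$-points'' input, and the same argument works for any local field $F$, recovering the complex case as well.

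Finally I would solve the lifting problem. Since $V$ is quasi-compact the index set is finite, and since each $p_i$ is proper the induced map $p_i\colon W_i(\RR)\to V(\RR)$ is a proper map of locally compact Hausdorff spaces; hence $K:=\phi(\beta\NN)$ is compact, each $L_i:=p_i^{-1}(K)$ is compact, and $q\colon\coprod_i L_i\to K$ is a continuous surjection of compacta. Projectivity of $\beta\NN$ then lifts $\phi$ to $\tilde\phi\colon\beta\NN\to\coprod_i L_i\subseteq W(\RR)$, where $W=\coprod_i W_i$ is smooth of finite type. Partitioning $\beta\NN$ into finitely many clopen pieces (possible as it is a Stone space) so that each lands in the $\RR$-points of a single smooth affine open $U_\alpha\subseteq W$, the product $B=\prod_\alpha\mathcal{O}(U_\alpha)$ is smooth of finite type and receives $A$ (via $U_\alpha\subseteq W\to V$) and maps to $\Cls{C}(\beta\NN;\RR)$ compatibly, yielding the factorization $A\to B\to\ell^\infty(\NN;\RR)$. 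A routine filtered-colimit argument upgrades this per-$A$ factorization to the assertion that $\ell^\infty(\NN;\RR)$ is a filtered colimit of smooth finite-type algebras.

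The main obstacle is the geometric input of the second paragraph: controlling the interaction between cdh covers and the strong topology on $\RR$-points. The naive resolution-lifting that succeeds over $\CC$ fails over $\RR$, and the whole argument hinges on the observation that the abstract blow-up square is jointly surjective on $F$-points while remaining proper, so that the resulting family of smooth real loci assembles into an honest surjection of compacta onto the compact image of $\phi$ that $\beta\NN$, being projective, can lift against.
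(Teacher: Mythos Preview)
Your proof is correct and follows the same route as the paper: use the factorization criterion for $\Ind$-smoothness, build an rh cover of $\Spec A$ by smooth varieties via Hironaka plus Noetherian induction, observe that on $\RR$-points this yields a proper jointly surjective family, and lift against the resulting surjection of compacta using projectivity of $\beta\NN$. The only divergence is which projectivity result is invoked: you use Gleason's theorem (extremally disconnected implies projective in $\Cat{Cpt}$), while the paper uses the Neville--Lloyd $\aleph_1$-projectivity of totally disconnected $\F$-spaces, which buys them the more general \cref{dig} characterizing all compacta $X$ with $\Cls{C}(X;\RR)$ $\Ind$-smooth; for $\beta\NN$ alone your shortcut via Gleason is entirely adequate.
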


In fact,
we can characterize compacta~\(X\)
such that \(\Cls{C}(X;\RR)\)
is \(\Ind\)-smooth; see \cref{dig}.

Therefore,
we can also get the real case of \cref{main_a,main_hi}
by using their argument.
Moreover,
by using the argument in~\cite[Theorem~6.3]{CortinasThom12},
we obtain the following unstable result
which our main approach cannot prove:

\begin{corollary}\label{xla9jq}
  Let \(X\) be a contractible compactum
  and \(M\) be a commutative monoid that
  is countable,
  torsion-free, seminormal\footnote{A commutative monoid
    is called \emph{seminormal} if \(a^2=b^3\) implies
    the existence of~\(c\)
    with \(a=c^{3}\) and \(b=c^{2}\).
  }, and cancellative.
  Then every finitely generated projective module
  over \(\Cls{C}(X;\RR)[M]\) is free.
\end{corollary}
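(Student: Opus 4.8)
The plan is to follow the proof of \cite[Theorem~6.3]{CortinasThom12} essentially verbatim, substituting the field~\(\RR\) for~\(\CC\) throughout, and to supply the one ingredient that the real case was missing. Recall the anatomy of their argument. A finitely generated projective module over \(\Cls{C}(X;\RR)[M]\) is classified by an idempotent matrix, and since \(M\) is countable the entire problem takes place over a countably generated algebra, so it can be attacked by algebraic approximation: one presents \(\Cls{C}(X;\RR)\) as a filtered colimit of finite-type \(\RR\)-algebras and studies the idempotent through its approximations. The contractibility of~\(X\) is then used to homotope the resulting classifying datum to a constant one, reducing the question to the freeness of projective modules over the monoid ring \(\RR[M]\) of a single point. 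At that stage Gubeladze's theorem---which holds over an arbitrary field, indeed over any regular base, for seminormal, cancellative, torsion-free monoids---guarantees that every finitely generated projective \(\RR[M]\)-module is free. This is precisely the generality in which \(M\) is assumed to lie.

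Of the ingredients just described, the monoid-theoretic and homotopy-theoretic ones are insensitive to the choice of ground field: Gubeladze's theorem is field-agnostic, and the reduction coming from contractibility of~\(X\) is formal once the relevant infinite-dimensional mapping spaces are known to behave well. The genuinely \(\CC\)-specific step in \cite{CortinasThom12} is the input that makes the algebraic approximation converge, namely the \(\Ind\)-smoothness of \(\ell^{\infty}(\NN;\CC)=\Cls{C}(\beta\NN;\CC)\), which they deduced from resolution of singularities. Its real counterpart is exactly \cref{linf}: the \(\RR\)-algebra \(\ell^{\infty}(\NN;\RR)\) is \(\Ind\)-smooth. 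Feeding \cref{linf} into their machine in place of the complex statement lets the remaining steps go through unchanged; the sharper characterization in \cref{dig} is what identifies the class of \(X\) for which the requisite \(\Ind\)-smoothness is available, and since \(\beta\NN\) is non-metrizable, this is also what allows us to dispense with the metrizability hypothesis present in Rosenberg's original formulation \cite[Theorem~2.5\,(2)]{Rosenberg90}.

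The main obstacle lies not in the reassembly but in the construction of this missing input. As recorded in \cite[Remark~5.8]{CortinasThom12}, the topology of the real points of a variety is markedly less well behaved than that of its complex points, and it is exactly this discrepancy that blocked the verbatim extension of their proof to~\(\RR\). Establishing \cref{linf} is therefore the crux, and it is carried out by the cdh-topological methods developed earlier in this paper, which reconcile the cdh topology of \(\RR\)-varieties with the topology of their \(F\)-points. Granting \cref{linf}, the only work that remains is to audit the proof of \cite[Theorem~6.3]{CortinasThom12} to confirm that no further step covertly exploits complex-analytic structure; I expect a step-by-step check to show that every remaining argument is either purely algebraic---Gubeladze's theorem together with algebraic approximation---or a formal consequence of contractibility, so that the substitution of \(\RR\) for \(\CC\) is legitimate and the corollary follows.
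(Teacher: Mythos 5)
Your proposal matches the paper's own treatment: the paper offers no independent proof of this corollary but explicitly states that it ``just reuses'' the argument of Cortiñas--Thom's Theorem~6.3, with the single new ingredient being \cref{linf} (the \(\Ind\)-smoothness of \(\ell^{\infty}(\NN;\RR)\)), which is exactly the substitution you describe. Your identification of the \(\Ind\)-smoothness statement as the sole \(\CC\)-specific input blocked by the poor behavior of real points, and of the remaining steps (algebraic approximation, contractibility, Gubeladze) as field-agnostic, is precisely the paper's reasoning.
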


Whereas this fibration approach
gives us some unstable results,
it has
some difficulties:
\begin{itemize}
  \item
    We need to understand the unstable problem first:
    For example,
    we need to know that \cref{xla9jq} is true
    for \(X={*}\) and \(M=\ZZ^{-{*}}\),
    which is a deep result
    built upon the resolution of
    Serre's problem due to Quillen and Suslin.
  \item
    Since the result like \cref{linf} depends on the resolution of singularities,
    it does not work in the characteristic~\(p\) situation
    such as \(F=\FF_{p}\llparenthesis T\rrparenthesis\).
  \item
    It does not imply
    our strong \(K\)-regularity result
    explained in \cref{x0p41o}.
\end{itemize}
Our proof does not use Hironaka's theorem.
It also does not use Quillen--Suslin's theorem
as we do not use Bass delooping.
Our main observation is
that our site-theoretic study
of the topology is enough to
conclude the main results
up to recent theorems on \(K\)-theory.
On the topological side,
one crucial, yet trivial, observation
is the following nature of cohomology:

\begin{proposition}\label{zero_zero}
  Let \(X\) be a compactum
  and \(E\) a connective spectrum.
  Then for any class \(\alpha\in E^{-*}(X)\)
  for \({*}<0\),
  there is a finite cover by closed subsets
  \(Z_{1}\cup\dotsb\cup Z_{n}=X\)
  such that
  \(E^{-*}(X)\to E^{-*}(Z_{k})\) kills~\(\alpha\) for each~\(k\).
\end{proposition}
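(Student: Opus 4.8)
The plan is to reduce the assertion to a local statement at each point and then invoke compactness. Writing $d = -{*} \geq 1$, I would first establish the following local claim: for every $x \in X$ there is a closed neighborhood $Z$ of~$x$ with $\alpha|_{Z} = 0$ in $E^{d}(Z)$. Granting this, for each $x$ I pick such a neighborhood $Z_{x}$; the interiors $\operatorname{int}(Z_{x})$ form an open cover of~$X$, so compactness yields finitely many with $\operatorname{int}(Z_{x_{1}}) \cup \dotsb \cup \operatorname{int}(Z_{x_{n}}) = X$. A fortiori $Z_{x_{1}} \cup \dotsb \cup Z_{x_{n}} = X$ is a finite closed cover, and on each member $E^{d}(X) \to E^{d}(Z_{x_{k}})$ kills~$\alpha$ by construction. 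Thus the whole statement follows from the local claim.

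For the local claim the two inputs are connectivity of~$E$ and the continuity of sheaf cohomology on compacta. Since $X$ is compact Hausdorff, hence normal, the closed neighborhoods of~$x$ form a neighborhood basis; ordering them by reverse inclusion gives a cofiltered system of compacta whose limit (equivalently, intersection) is the one-point space $\{x\}$. Continuity of sheaf cohomology --- that $R\Gamma(-;\cst{E})$ carries a cofiltered limit of compacta to the filtered colimit of global sections --- then supplies a canonical isomorphism
\[
  \varinjlim_{Z \ni x} E^{d}(Z) \xrightarrow{\ \sim\ } E^{d}(\{x\}),
\]
the colimit running over the closed neighborhoods $Z$ of~$x$ and the isomorphism being induced by restriction. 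On the right, sheaf cohomology of a point computes $E^{d}(\{x\}) = \pi_{-d}(E)$, which vanishes because $d \geq 1$ and $E$ is connective. The class $\alpha$ restricts compatibly through all the $E^{d}(Z)$, so its image in the filtered colimit equals its restriction to $\{x\}$ and is therefore zero; since the colimit is filtered, $\alpha$ must already restrict to zero on some sufficiently small closed neighborhood~$Z$. This proves the local claim.

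The argument has essentially no hard step --- it is exactly the ``trivial'' observation advertised before the statement. The one substantive ingredient is the continuity (tautness) of sheaf cohomology for the cofiltered system $\{x\} = \varprojlim_{Z} Z$ of compacta with constant spectrum coefficients $\cst{E}$, which is standard for compact Hausdorff spaces (the continuity axiom for \v{C}ech cohomology, promoted to spectrum coefficients). In writing this up I would only take care to record that a single negative homotopy group $\pi_{-d}(E)$ enters, so that connectivity of~$E$ is exactly --- and only --- what is needed, and that the passage from ``vanishing at the point'' to ``vanishing on a closed neighborhood'' is precisely where continuity is used.
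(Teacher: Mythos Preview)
Your proof is correct and takes a genuinely different route from the paper's. The paper invokes polyhedral approximation (\cref{approx}): writing $X$ as a cofiltered limit of polyhedra and using continuity of cohomology along that limit, $\alpha$ lifts to $E^{d}(P)$ for some polyhedron $P$ equipped with a map $X\to P$; the preimages of the closed cells of $P$ then give the finite closed cover, since each cell is contractible. You instead apply continuity \emph{pointwise}, along the cofiltered system of closed neighborhoods of each $x\in X$, to find a small closed neighborhood on which $\alpha$ dies, and then extract a finite subcover by compactness. Both arguments rest on the same continuity property of sheaf cohomology on compacta; the paper uses it once globally and then exploits the combinatorics of a triangulation, while you use it locally at every point and need only compactness. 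Your version is a bit more self-contained (no polyhedral approximation), whereas the paper's produces $Z_k$ that are preimages of contractible cells, which could be marginally more useful if one wanted to kill several classes simultaneously or needed contractibility rather than mere vanishing of a single group.
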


\begin{proof}
  By \cref{approx},
  \(\alpha\) is in the image of
  \(E^*(P)\to E^*(X)\)
  for some map to a polyhedron~\(P\).
  Hence we can assume \(X=P\).
  Then the cover by closed cells satisfies this property
  as for each cell~\(C\) we have \(E^{-*}(C)=0\).
\end{proof}

Basically,
our strategy is to prove
a similar result for \(K(\Cls{C}(\X;F))\)
in a finer topology,
which we call the cd~topology.
This use of topology is inspired by
the resolution of Weibel's conjecture
on negative \(K\)-theory\footnote{In characteristic~\(0\),
  this was solved by
  Cortiñas--Haesemeyer--Schlichting--Weibel~\cite{CHSW08}
  before.
} by Kerz--Strunk--Tamme~\cite{KerzStrunkTamme18}.
We use their results to prove the main theorem.

\subsection{Application: examples in \texorpdfstring{\(K\)}{K}-theory}\label{ss:k_th}

One reason we study
rings of the form \(\Cls{C}(X;\RR)\)
is that they give pathological examples
in commutative algebra.
We construct commutative rings
with interesting \(K\)-theoretic properties.

Consider a noetherian commutative ring.
Its weak dimension (aka flat dimension)
coincides with
its global dimension (aka projective dimension).
If it is finite then
the ring is regular
and hence has vanishing negative \(K\)-theory.
Our construction for the following is not related
to rings of continuous functions,
but
it is included in this paper
as it appears to be new:

\begin{Theorem}\label{xpogpw}
  Over any field,
  there is a commutative ring
  of weak dimension~\(\leq1\)
  and of global dimension~\(\leq2\)
  whose \(K_{-1}\) does not vanish.
\end{Theorem}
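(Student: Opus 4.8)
The plan is to build $R$ as a non-noetherian ring that is everywhere \emph{locally} a valuation domain --- which makes $\operatorname{wdim}R\le1$ automatic --- but whose global geometry is rich enough to force $K_{-1}(R)\neq0$; the class will be cohomological in origin. The first step is a purely local observation: a local ring $A$ of weak dimension $\le1$ is a valuation domain. Indeed, a finitely generated flat module over a local ring is free, so every finitely generated ideal of $A$ is free; since any $a,b\in A$ satisfy the nontrivial relation $b\cdot a-a\cdot b=0$, such a free ideal has rank $\le1$ and is therefore principal, generated by a nonzerodivisor. In particular $ab=0$ with $a\ne0$ forces $b=0$, so $A$ is a domain, and a local domain whose finitely generated ideals are principal is a valuation ring. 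Hence any $R$ with $\operatorname{wdim}R\le1$ is reduced and locally a valuation domain.

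Next I would feed this into the Zariski (equivalently Nisnevich) descent spectral sequence for nonconnective $K$-theory of Thomason--Trobaugh,
\[
  E_2^{p,q}=H^p_{\mathrm{Zar}}(\Spec R,\mathcal K_q)\Longrightarrow K_{q-p}(R),
\]
with $\mathcal K_q$ the sheafification of $U\mapsto K_q(U)$. Since each stalk is a valuation domain, and valuation domains have vanishing negative $K$-theory (e.g.\ rank-one ones are filtered colimits of discrete valuation rings), we get $\mathcal K_{-n}=0$ for $n\ge1$, together with $\mathcal K_0=\cst{\ZZ}$ and $\mathcal K_1=\GG_m$. If moreover $R$ is a domain then $\Spec R$ is irreducible, so $H^1_{\mathrm{Zar}}(\Spec R,\cst{\ZZ})=0$, and the only surviving contribution to $K_{-1}$ in low degree is $H^2_{\mathrm{Zar}}(\Spec R,\GG_m)$. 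This reduces the theorem to producing a domain $R$ that is locally a valuation ring (hence of weak dimension $\le1$), has global dimension $\le2$, and satisfies $H^2_{\mathrm{Zar}}(\Spec R,\GG_m)\neq0$ with this class surviving to $K_{-1}(R)$.

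The construction then splits into two requirements. For the homological bound I would use Osofsky's estimate of global dimension in terms of weak dimension and the cardinality of generation of ideals: a ring of weak dimension $\le1$ all of whose ideals are countably generated has global dimension $\le2$. So I would take $R$ to be a Prüfer domain with countably generated ideals, building it from a countable combinatorial skeleton so that the countable-generation persists over an arbitrary coefficient field. For the cohomological requirement I would glue valuation rings of rank $\le2$ along a two-dimensional spectral space designed to carry a nontrivial degree-two class, i.e.\ to look cohomologically like a $2$-sphere, so that $H^2_{\mathrm{Zar}}(\Spec R,\GG_m)\neq0$. Here the mechanism is sharp and explains why this is possible at all: for a \emph{regular} ring the divisor resolution of $\GG_m$ is flasque and gives $H^{\ge2}_{\mathrm{Zar}}(\X,\GG_m)=0$, so a nonzero $H^2$ is precisely an obstruction to $R$ being a filtered colimit of regular rings --- exactly the slack that lets negative $K$-theory appear while the local rings stay as tame as valuation rings.

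The main obstacle is this last construction: exhibiting a locally-valuation domain whose spectrum genuinely supports a nonzero class in $H^2_{\mathrm{Zar}}(\X,\GG_m)$ while keeping every ideal countably generated, and then verifying that the relevant differentials of the descent spectral sequence, above all $d_2\colon E_2^{0,0}\to E_2^{2,1}$, do not kill the class, so that it really survives to $K_{-1}(R)$. A leaner variant would instead keep $\dim R=1$ and target $H^1_{\mathrm{Zar}}(\Spec R,\cst{\ZZ})\neq0$ via a reduced, non-domain locally-valuation ring with topologically entangled irreducible components; the risk there is that local irreducibility (forced by $\operatorname{wdim}\le1$) constrains the Zariski topology so tightly that a nonzero $H^1$ may be unattainable, which is why I would pursue the two-dimensional version first.
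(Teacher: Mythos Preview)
Your proposal identifies the right ingredients --- locally-valuation rings, Osofsky's bound, a cohomological source for $K_{-1}$ --- but does not actually construct a ring, and the main route you propose (a two-dimensional Pr\"ufer domain with $H^2_{\mathrm{Zar}}(\Spec R,\GG_m)\ne0$) remains entirely hypothetical. The ``leaner variant'' you sketch and then set aside is in fact exactly what the paper does, and your stated reason for abandoning it is where the gap lies.

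The paper's construction is concrete and one-dimensional. Over a field~$k$, let $A_n$ be the coordinate ring of a chain of $2^n+1$ affine lines, with transition maps $\Spec A_{n+1}\to\Spec A_n$ that subdivide each line in two; this is Lazard's example. The colimit $A=\injlim_n A_n$ is locally a valuation ring: every node of $\Spec A_n$ lies on a single smooth line in $\Spec A_{n+1}$, so the transition map on local rings factors through a regular local ring of dimension~$\le1$, and in the colimit each stalk is $k$ or $k[T]_{(T)}$. Now set $B=A/\langle x_0-x_1\rangle$, closing the chain into a cycle. Each $B_n$ is a cycle of $2^n$ lines, so $K_{-1}(B_n)\simeq\ZZ$ by excision (the same computation as for a nodal cubic), and the transition maps induce isomorphisms on $K_{-1}$; since $K$ commutes with filtered colimits, $K_{-1}(B)\simeq\ZZ$. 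The same local analysis as for~$A$ shows $B$ has valuation stalks, hence weak dimension~$\le1$, and $B$ is countable over~$k$, so Osofsky gives global dimension~$\le2$.

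Your concern that local irreducibility might force $H^1_{\mathrm{Zar}}(\Spec R,\cst\ZZ)=0$ is exactly the point to confront rather than avoid: Lazard's infinite subdivision is the mechanism that makes every stalk a domain while the global spectrum stays a tangle of infinitely many components arranged in a loop. Note also that the paper bypasses the descent spectral sequence entirely: it computes $K_{-1}$ at each finite stage by excision and passes to the colimit, which is both shorter and avoids any discussion of differentials.
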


One could still expect
some relations between the vanishing of negative \(K\)-theory
and homological dimensions.
For example, the example we construct
for \cref{xpogpw},
its \(K\)-theory is \((-1)\)-connective.
However,
using rings of continuous functions,
we obtain the following:

\begin{Theorem}\label{2cd33f5deb}
  There is a commutative ring
  of weak dimension~\(\leq1\)
  whose \(K\)-theory is not bounded below.
  Under the continuum hypothesis,
  there is such a ring
  with global dimension~\(\leq3\).
\end{Theorem}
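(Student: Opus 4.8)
The plan is to realize the ring as $\Cls{C}(X;\RR)$ for a carefully chosen compactum~$X$ and to read off its negative $K$-theory from \cref{main_a}: since that result gives $K_{-n}(\Cls{C}(X;\RR))\cong\ko^{n}(X)$ for $n\ge 0$, the $K$-theory fails to be bounded below exactly when $\ko^{n}(X)\ne 0$ for unboundedly large~$n$. So the two demands are (a) $X$ should have nonvanishing $\ko$-cohomology in arbitrarily high degrees, and (b) $\Cls{C}(X;\RR)$ should have weak dimension~$\le 1$. These pull against each other—(a) forces $X$ to be infinite-dimensional, whereas already $\Cls{C}([0,1];\RR)$ has weak dimension~$\ge 2$ (the annihilator of a function supported on $[\tfrac12,1]$ is not a pure ideal)—so the crux is to find an \emph{infinite-dimensional} $X$ for which (b) nonetheless holds.

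For~(b) I would take $X$ to be a compact $F$-space. By the theorem of Gillman--Henriksen, $\Cls{C}(X;\RR)$ is then a B\'ezout ring, so every finitely generated ideal is principal; hence weak dimension~$\le 1$ reduces to showing that each principal ideal $(f)\cong\Cls{C}(X;\RR)/\operatorname{Ann}(f)$ is flat. Now $\operatorname{Ann}(f)=\{h:h|_{\operatorname{supp}f}=0\}$ is flat precisely when it is a pure ideal, and this is where the $F$-space hypothesis enters: for $a\in\operatorname{Ann}(f)$ the cozero sets $\operatorname{coz}a$ and $\operatorname{coz}f$ are disjoint, hence completely separated in an $F$-space, so there is $e$ vanishing on $\operatorname{supp}f$ with $e\equiv 1$ on $\operatorname{coz}a$; then $e\in\operatorname{Ann}(f)$ and $ea=a$, witnessing purity.

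For~(a) I would manufacture such an $X$ as a corona. Choose metrizable compacta $W_{1},W_{2},\dots$ so that for each of an unbounded set of degrees~$n$ infinitely many of the $W_{k}$ have $\ko^{n}(W_{k})\ne 0$; concretely, list spheres of every dimension with infinite multiplicity, using that $\ko^{d}(S^{d})=\ZZ\ne 0$ in top degree. Put $Y=\bigsqcup_{k}W_{k}$, a locally compact $\sigma$-compact space, and set $X=\beta Y\setminus Y$; this corona is a compact $F$-space, so~(b) applies. To compute its $\ko$-cohomology I would run the sheaf-cohomology long exact sequence of the open–closed decomposition $Y\hookrightarrow\beta Y\hookleftarrow X$, in which the relative term is $\ko^{n}_{c}(Y)=\bigoplus_{k}\ko^{n}(W_{k})$ and $\ko^{n}(\beta Y)=\ko^{n}(Y)=\prod_{k}\ko^{n}(W_{k})$, so that the sequence yields an injection
\[
  \operatorname{coker}\!\Bigl(\textstyle\bigoplus_{k}\ko^{n}(W_{k})\to\prod_{k}\ko^{n}(W_{k})\Bigr)\hookrightarrow\ko^{n}(X).
\]
By the choice of the $W_{k}$ this quotient is nonzero for unboundedly many~$n$, whence $\ko^{n}(X)\ne 0$ there and, via \cref{main_a}, the $K$-theory of $\Cls{C}(X;\RR)$ is not bounded below. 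I expect this corona computation to be the main obstacle: one must set up the recollement long exact sequence for the same ($\ko$-sheaf-cohomology) theory appearing in \cref{main_a}, justify the identifications of the compactly supported term and of $\ko^{*}(\beta Y)$ with $\ko^{*}(Y)$, and check that the surviving classes $\prod/\bigoplus$ are not killed, so that nonvanishing genuinely persists into arbitrarily high degrees.

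Finally, for the global-dimension statement I would invoke the cardinality bound on homological dimension of Auslander and Osofsky: if every ideal of a commutative ring $R$ is generated by at most $\aleph_{m}$ elements, then the global dimension of $R$ is at most its weak dimension plus $m+1$. Since $Y$ is built from countably many metrizable compacta, $\lvert\Cls{C}(X;\RR)\rvert\le 2^{\aleph_{0}}$, so under the continuum hypothesis every ideal is $\aleph_{1}$-generated; taking $m=1$ together with the bound weak dimension~$\le 1$ from~(b) gives global dimension~$\le 3$. (The same input with $m=0$ recovers Osofsky's value~$2$ for countable products of fields, which is a reassuring check on the numerology.)
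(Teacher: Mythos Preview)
Your overall architecture matches the paper's: take $X$ to be the corona of a disjoint union so that $X$ is an $\F$-space, deduce weak dimension~$\le 1$, read off negative $K$-theory from \cref{main_a}, and bound the global dimension under CH via Osofsky's cardinality estimate exactly as in \cref{25a0dda94b}. The weak-dimension paragraph and the global-dimension paragraph are both fine and essentially coincide with the paper's reasoning.

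The gap is precisely where you flagged it, and it is a real one. Your identification $\ko^{n}(\beta Y)=\ko^{n}(Y)=\prod_{k}\ko^{n}(W_{k})$ fails for the sheaf/shape cohomology that \cref{main_a} outputs. Already for $W_{k}=\ast$ one has $\ko^{0}(\beta\NN)=\{\,f\colon\NN\to\ZZ:\lvert f(\NN)\rvert<\infty\,\}$, not $\prod_{\NN}\ZZ$; the point is that $\Sh(\beta Y)$ is computed via cofiltered limits of polyhedra (\cref{86095c550d}), so only classes that factor through a \emph{finite} approximation survive, which is strictly smaller than the full product. (This is the same phenomenon behind \cref{xqrboy}.) Consequently your displayed injection $\operatorname{coker}(\bigoplus\to\prod)\hookrightarrow\ko^{n}(X)$ is not what the long exact sequence gives; it gives $\operatorname{coker}\bigl(\bigoplus_{k}\ko^{n}(W_{k})\to\ko^{n}(\beta Y)\bigr)\hookrightarrow\ko^{n}(X)$, and with $W_{k}=S^{k}$ (each sphere once) it is not clear that this cokernel is nonzero.

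The paper sidesteps this by using \emph{noncompact} pieces and an external input: it takes $Y=\coprod_{n\ge 2}\RR^{n+1}$, so that $\beta\RR^{n+1}\setminus\RR^{n+1}$ is a clopen summand of $X$, and then invokes the Calder--Siegel theorem (\cref{05c57ea55f}, \cref{x4ovub}) to identify $H^{i}(\beta\RR^{n+1}\setminus\RR^{n+1};\ZZ)\cong H^{i}(S^{n};\ZZ)$ for $i\ge 2$; rationalizing and using the Chern character gives $\ko^{m}(X)\otimes\QQ\ne 0$ for all $m\ge 0$ (\cref{f7079eb4b7}). If you want to keep compact pieces and avoid Calder--Siegel, the clean fix is to let each sphere occur infinitely often, say $Y=\coprod_{n\ge 1}(\NN\times S^{n})$: then $\beta(\NN\times S^{n})=\beta\NN\times S^{n}$ is clopen in $\beta Y$, so $(\beta\NN\setminus\NN)\times S^{n}$ is a clopen retract of $X$, and projection to $S^{n}$ exhibits $\tilde{\ko}^{n}(S^{n})=\ZZ$ inside $\ko^{n}(X)$ for every $n$; no long exact sequence or identification of $\ko^{*}(\beta Y)$ is needed.
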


\subsection{Future plans}\label{ss:future}

Cortiñas--Thom's
homotopy invariance theorem
is the commutative complex case
of~\cite[Conjecture~2.2]{Rosenberg97}:

\begin{conjecture}[Rosenberg]\label{288b87f0e9}
  For any (real) C*-algebra\footnote{Here it suffices to consider unital C*-algebras
    since the unital case implies the nonunital case.
  }~\(A\),
the tautological map
  \(K_{*}(A)\to K_{*}(\Cls{C}([0,1];A))\)
  is an isomorphism for \({*}\leq0\).
\end{conjecture}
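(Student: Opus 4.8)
The plan is to reduce the conjecture, by Suslin--Wodzicki excision, to a vanishing statement that can be attacked by the cd-descent and continuity machinery behind \cref{main_na,main_hi,main_a}, now carrying an arbitrary C*-algebra in the coefficient slot. By the footnote it suffices to treat unital $A$, and since nonconnective $K$-theory commutes with filtered colimits and $A=\injlim_{i}A_{i}$ over its separable C*-subalgebras, one may further assume $A$ separable. The enabling structural input is that every C*-algebra is $H$-unital, so Suslin--Wodzicki excision applies: for a closed cover $X=Z_{1}\cup Z_{2}$ of a compactum the kernel of $\Cls{C}(Z_{1};A)\to\Cls{C}(Z_{1}\cap Z_{2};A)$ is the C*-algebra $\Cls{C}_{0}(Z_{1}\setminus Z_{2};A)$, hence the associated Milnor square of rings becomes $K$-cartesian. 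Thus $X\mapsto K(\Cls{C}(\X;A))$ satisfies cd-descent with coefficients in an \emph{arbitrary} C*-algebra. Equivalently, and this is the conceptual core, evaluation at an endpoint splits and exhibits $K(\Cls{C}([0,1];A))\simeq K(A)\oplus K(\Cls{C}_{0}((0,1];A))$, so the conjecture is precisely the vanishing $K_{*}(\Cls{C}_{0}((0,1];A))=0$ for ${*}\leq0$ of the cone.

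With cd-descent in hand for general coefficients, I would try to run the argument of \cref{main_na,main_hi} verbatim, with $A$ in place of the local division ring $F$, inducting on the covering dimension of the space factor. Within finite polyhedra the descent proceeds through honest pullbacks and the analogue of \cref{zero_zero} should kill negative classes on a cover by cells, the base case being the point, where $\Cls{C}(\{*\};A)=A$ and homotopy invariance is trivial. Since \cref{main_a} already reproves the commutative complex case (it is exactly Cortiñas--Thom's theorem), and likewise covers the commutative real case, the genuinely open content lies in noncommutative, non-type-I coefficient algebras, for which the cd-descent above is the one piece of the present method that survives intact.

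The hard part will be the continuity input, and this is why the statement remains conjectural. The method of \cref{main_na,main_hi} is built on \emph{algebraic approximation}: one writes $\Cls{C}(X;F)$ as a filtered colimit of $F$-algebras of finite type, reads a map $B\to\Cls{C}(X;F)$ as a map $X\to(\Spec B)(F)$ of topological spaces, and transports the cd-descent to the cdh topology of $F$-varieties, the point being that for a local field $F$ the cdh covers are compatible with the topology on $F$-points while the Kerz--Strunk--Tamme continuity theorem \cite{KerzStrunkTamme18} governs the finite-type stages. For an arbitrary C*-algebra $A$ none of this is available: $\Cls{C}(X;A)$ is not a filtered colimit of finite-type algebras over a field, there is no variety whose $F$-points carry the topology of $X$, and $\Cls{C}(X;A)$ is far from noetherian, so the input underlying \cref{linf} has no counterpart. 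The decisive difficulty is therefore to replace algebraic approximation by a continuity (pro-cdh-descent) theorem for $K(\Cls{C}(\X;A))$ that is uniform in the C*-coefficient $A$ and uses only $H$-unitality together with the cd-combinatorics of finite-dimensional compacta; supplying such a coefficient-independent continuity statement, decoupled from the algebraic geometry that powers the complex and nonarchimedean computations, is the essential gap between the present results and the full conjecture.
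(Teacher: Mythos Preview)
The statement you were asked to address is a \emph{Conjecture}, and the paper does not prove it. The paper says explicitly, just after stating it, that \cref{main_kr} settles only the commutative case and that a proof for $K_{-1}$ will appear in separate work. There is thus no ``paper's own proof'' to compare your proposal against.

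Your write-up is not a proof either, and you say so yourself; it is an outline of a strategy together with a diagnosis of where it breaks. On that level your analysis is accurate and aligns with the paper. The cd-descent step you describe in your first paragraph is not hypothetical: it is precisely \cref{exc_a}, whose hypothesis (a normed ring admitting an $\RR$-algebra structure with $\lVert x\rVert=\lvert x\rvert$ for $x\in\RR$) is satisfied by any unital real Banach algebra, so $X\mapsto K(\Cls{C}(X;A))$ is already known to be a cd~sheaf for arbitrary C*-coefficients. What fails, as you correctly isolate in your third paragraph, is the finite-complexity input: \cref{vain} rests on writing $\Cls{C}(X;Z)$ as a filtered colimit of finite-type $Z$-algebras, passing to the cdh site of varieties over the local field~$Z$, and invoking \cref{xpbwff} (the Kerz--Strunk--Tamme vanishing). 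None of this has a substitute when the coefficient ring is an arbitrary C*-algebra rather than a local division ring with commutative center. Your identification of this as ``the essential gap'' is exactly the paper's implicit position.

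One minor correction: your appeal to ``inducting on the covering dimension of the space factor'' and to \cref{zero_zero} is not how the paper handles the archimedean case even for $F=\RR,\CC,\HH$; the argument goes through \cref{a2ca91099a} and the finite-complexity machinery of \cref{ss:ha}, not through a dimension induction. But since you are only sketching a heuristic for a statement that remains open, this does not affect the substance of your diagnosis.
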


Note that
\cref{main_kr} settles this conjecture
in the commutative case.
We will reinterpret this type of statement
using condensed mathematics (see~\cite{Condensed})
in our forthcoming paper~\cite{k-ros-2}
and
will give a proof
of \cref{288b87f0e9}
for~\(K_{-1}\)
in a separate paper.

We also plan to study
the motivic cohomology
of \(\Cls{C}(X;\CC)\)
using the techniques developed in this paper.

\subsection*{Organization}\label{ss:outline}

The first part of this paper is about general topology
of compacta:
We review facts about compacta in \cref{s:comp},
introduce the cd~topology on the category of compacta in \cref{s:cd},
and compare it with the cdh~topology in algebraic geometry in \cref{s:cd_rh}.
In \cref{s:ind_sm},
we study geometric properties
of the ring \(\Cls{C}(X;\RR)\)
and prove \cref{linf,xpogpw,2cd33f5deb} there.
\Cref{s:exc} proves
special properties
of the rings of the form \(\Cls{C}(X;A)\)
for nice topological algebras~\(A\).
\Cref{s:nc} contains a technical material
on \(K\)-theory that is only needed
to treat the noncommutative case.
We prove \cref{main_a,main_na,main_hi} in \cref{s:main}
by combining the results obtained in \cref{s:cd_rh,s:exc,s:nc}.
See \cref{f:leitfaden}
for the dependency between those sections.

\begin{figure}[htbp]
  \begin{tikzcd}[row sep=small]
    \text{\labelcref{s:comp}}\ar[r]&
    \text{\labelcref{s:cd}}\ar[r]\ar[rd]&
    \text{\labelcref{s:cd_rh}}\ar[r]\ar[rd]&
    \text{\labelcref{s:ind_sm}}\\
    {}&
    {}&
    \text{\labelcref{s:exc}}\ar[r]&
    \text{\labelcref{s:main}}\\
    {}&
    {}&
    \text{\labelcref{s:nc}}\ar[ur]
  \end{tikzcd}
  \caption{A graph of the section dependency}\label{f:leitfaden}
\end{figure}
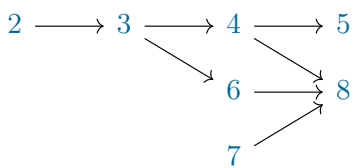

\subsection*{Acknowledgments}\label{ss:ack}

I thank
Guido Bosco,
Alexander Efimov,
Ofer Gabber,
Matthew Morrow,
Peter Scholze,
and
Andreas Thom
for beneficial communications.
I thank
the Max Planck Institute for Mathematics for its hospitality.

\subsection*{Conventions}\label{ss:kanshu}

Since
we study related problems
using condensed mathematics~\cite{Condensed}
in our forthcoming papers
(see \cref{ss:future}),
we adhere to
the terminology used in condensed mathematics;
in particular,
an \emph{anima}
is an object in the final \(\infty\)-topos~\(\Cat{Ani}\).

For a local compactum (aka locally compact Hausdorff space)~\(X\),
we write \(X^{+}\) for its one-point compactification
with \(\infty\in X^{+}\) the point at infinity.

By abuse of terminology,
we call a morphism in a site \emph{cover}
if it generates a covering sieve.

In this paper,
we deal with noncommutative rings:
In \cref{s:cd_rh,s:ind_sm},
we only consider commutative rings.
In \cref{s:exc,s:nc,s:main},
we also consider noncommutative rings.

\section{Compacta}\label{s:comp}

In this section,
we collect facts about compacta.
Some of them are not essential in this paper
but are recorded here for future use.

\subsection{Weight}\label{ss:basic}

One of the most basic cardinal invariants
for topological spaces
is the following:

\begin{definition}\label{wt}
  For a topological space (or a locale)~\(X\),
  its \emph{weight} \(\wt(X)\) is
  the smallest cardinal~\(\kappa\)
  such that
  there exists a basis of cardinality~\(\kappa\).
\end{definition}

\begin{remark}\label{908cd02220}
  Point-set topologists
  usually
  redefine the weight
  by adding~\(\aleph_{0}\).
\end{remark}

\begin{lemma}\label{x2bjoo}
  Let \(\kappa\) be an infinite cardinal.
  Consider \((X_{i})_{i\in I}\) a family of topological spaces
  satisfying \(\#I\leq\kappa\) and \(\wt(X_{i})\leq\kappa\).
  Then \(\wt\bigl(\prod_{i\in I}X_{i}\bigr)\leq\kappa\) holds.
\end{lemma}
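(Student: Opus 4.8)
The plan is to build an explicit basis for the product topology on \(\prod_{i\in I}X_{i}\) of cardinality at most~\(\kappa\) and then confirm the bound by elementary cardinal arithmetic. First, using the hypothesis \(\wt(X_{i})\leq\kappa\), I would choose for each \(i\in I\) a basis \(\mathcal{B}_{i}\) of the topology of~\(X_{i}\) with \(\#\mathcal{B}_{i}\leq\kappa\). Let \(\pi_{i}\colon\prod_{j\in I}X_{j}\to X_{i}\) denote the projections.

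Next I would form the standard collection of basic boxes. Set
\(\mathcal{B}=\{\bigcap_{i\in F}\pi_{i}^{-1}(V_{i}):F\in[I]^{<\omega},\ V_{i}\in\mathcal{B}_{i}\text{ for }i\in F\}\),
where \([I]^{<\omega}\) denotes the collection of finite subsets of~\(I\) and the empty intersection is understood to be the whole product. Because the product topology is, by definition, generated by the subbasis of sets \(\pi_{i}^{-1}(U)\), and finite intersections of subbasic sets form a basis, it suffices to note that one may restrict each \(U\) to range only over the basis \(\mathcal{B}_{i}\): the sets \(\pi_{i}^{-1}(V)\) with \(V\in\mathcal{B}_{i}\) still generate the topology, so \(\mathcal{B}\) is a basis for \(\prod_{i\in I}X_{i}\).

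It then remains to bound \(\#\mathcal{B}\). For a fixed \(F\in[I]^{<\omega}\) with \(\#F=n\), the number of boxes supported on~\(F\) is \(\prod_{i\in F}\#\mathcal{B}_{i}\leq\kappa^{n}=\kappa\), since \(\kappa\) is infinite and \(n\) is finite. Summing over all finite subsets gives \(\#\mathcal{B}\leq\#([I]^{<\omega})\cdot\kappa\). As \(\#I\leq\kappa\) with \(\kappa\) infinite, the set \([I]^{<\omega}\) of finite subsets of~\(I\) has cardinality at most~\(\kappa\), so \(\#\mathcal{B}\leq\kappa\cdot\kappa=\kappa\). Hence \(\wt\bigl(\prod_{i\in I}X_{i}\bigr)\leq\kappa\), as required.

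There is no genuinely hard step here; the argument is purely bookkeeping. The only points requiring care are the standard cardinal identities \(\kappa^{n}=\kappa\) for infinite~\(\kappa\) and finite~\(n\), that \([I]^{<\omega}\) has cardinality \(\leq\kappa\), and \(\kappa\cdot\kappa=\kappa\). The one conceptual subtlety worth stating explicitly is that one must pass to basis elements in each coordinate rather than to arbitrary open sets; this is precisely what keeps the count at~\(\kappa\) instead of blowing up, and it is legitimate because the subbasic sets \(\pi_{i}^{-1}(V)\) with \(V\in\mathcal{B}_{i}\) already generate the product topology.
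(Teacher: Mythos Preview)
Your argument is correct and is exactly the standard bookkeeping the paper has in mind: the paper's own proof is the single sentence ``This follows directly from the definition of the product topology,'' and your construction of the basis \(\mathcal{B}\) together with the cardinal-arithmetic count is precisely what that sentence unpacks to.
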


\begin{proof}
  This follows directly from the definition
  of the product topology.
\end{proof}

\begin{example}\label{31fd5b4abc}
  For an infinite cardinal~\(\kappa\),
  we have \(\wt([0,1]^{\kappa})=\kappa\).
\end{example}

\begin{lemma}\label{x4101g}
  Let \(\kappa\) be an infinite cardinal.
  A compactum is homeomorphic to
  a subspace of \([0,1]^{\kappa}\)
  if and only if \(\wt(X)\leq\kappa\).
\end{lemma}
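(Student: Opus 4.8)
The plan is to prove the two implications separately, writing $X$ for the compactum and using the convention of \cref{wt}. The ``only if'' direction is immediate: if $X$ is a subspace of $[0,1]^{\kappa}$, I would first record that weight is monotone under passage to subspaces, since for any basis $\mathcal{B}$ of an ambient space $Y\supseteq X$ the family $\{B\cap X : B\in\mathcal{B}\}$ is a basis of $X$ of cardinality $\le\#\mathcal{B}$. Combined with $\wt([0,1]^{\kappa})=\kappa$ from \cref{31fd5b4abc}---for this direction even $\wt([0,1]^{\kappa})\le\kappa$ suffices, which already follows from \cref{x2bjoo} together with $\wt([0,1])=\aleph_{0}\le\kappa$---this yields $\wt(X)\le\kappa$ at once.

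For the ``if'' direction, assume $\wt(X)\le\kappa$ and fix a basis $\mathcal{B}$ with $\#\mathcal{B}\le\kappa$; the strategy is the classical Urysohn embedding with careful cardinality bookkeeping. Since a compactum is normal, hence regular, I would consider the set $P$ of pairs $(U,V)\in\mathcal{B}\times\mathcal{B}$ with $\overline{U}\subseteq V$ and, for each such pair, use Urysohn's lemma to choose a continuous $f_{U,V}\colon X\to[0,1]$ with $f_{U,V}|_{\overline{U}}=0$ and $f_{U,V}|_{X\setminus V}=1$. Here $\#P\le\kappa^{2}=\kappa$ because $\kappa$ is infinite. The key claim is that this family separates points: given $x\neq y$, Hausdorffness provides a basic $V\ni x$ with $y\notin V$, and regularity then yields a basic $U$ with $x\in U\subseteq\overline{U}\subseteq V$, so that $(U,V)\in P$ and $f_{U,V}(x)=0\neq 1=f_{U,V}(y)$.

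It then remains to assemble the embedding. The evaluation map $e=(f_{U,V})_{(U,V)\in P}\colon X\to[0,1]^{P}$ is continuous and, by the separation property, injective; since $X$ is compact and $[0,1]^{P}$ is Hausdorff, $e$ is a closed map and hence a homeomorphism onto its image, that is, an embedding. Finally, choosing an injection $P\hookrightarrow\kappa$ and filling in the remaining coordinates by $0$ exhibits $[0,1]^{P}$ as a subspace of $[0,1]^{\kappa}$, and composing gives the required embedding of $X$.

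I expect the main obstacle to be precisely the cardinality control in the ``if'' direction: the naive approach of picking one Urysohn function per pair of distinct points produces a separating family of size up to $\#X$, which may exceed $\kappa$, so the crux is to index the functions by pairs of \emph{basic} opens and to invoke regularity to manufacture the nesting $\overline{U}\subseteq V$ that makes Urysohn's lemma applicable. Everything else---monotonicity of weight, the infinite cardinal arithmetic $\kappa^{2}=\kappa$, and the fact that a continuous injection from a compactum into a Hausdorff space is an embedding---is routine.
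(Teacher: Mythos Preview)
Your proof is correct. Both arguments are variants of the classical Tychonoff embedding, but the bookkeeping differs slightly: the paper picks a basis of cardinality~\(\leq\kappa\) consisting of \emph{cozero} sets (using the general fact that any basis contains a sub-basis of size~\(\wt(X)\)) and takes the witnessing functions directly, whereas you start from an arbitrary small basis, pass to nested pairs \((U,V)\) with \(\overline{U}\subseteq V\), and invoke Urysohn's lemma. Your route is a little longer but more self-contained, since it avoids citing the sub-basis extraction result; the paper's is terser but leans on that standard fact. Either way the crux is the same: produce \(\leq\kappa\) many continuous functions into~\([0,1]\) that separate points, and then use compactness to upgrade the resulting continuous injection to an embedding.
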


\begin{proof}
  The ``only if'' direction follows from \cref{31fd5b4abc},
  so we prove the converse.
  Since cozero\footnote{A subset of a topological space~\(X\)
    is called \emph{cozero}
    if it can be written as
    the inverse image of \((0,1]\)
    for some continuous function \(X\to[0,1]\).
  } sets form a basis,
  we can take a basis
  of cardinality \(\leq\kappa\)
  that consists of cozero sets.
  Then the corresponding family of functions
  gives the desired embedding.
\end{proof}

In general, weight does not behave well
with continuous surjections, but
we have the following classical result for compacta:

\begin{theorem}[Arhangelskii]\label{wt_ineq}
For a surjection \(Y\to X\) of compacta,
  \(\wt(X)\leq\wt(Y)\) holds.
\end{theorem}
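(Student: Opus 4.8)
The plan is to build an explicit base for $X$ out of a base for $Y$, exploiting the fact that the surjection $f \colon Y \to X$ is automatically a closed map with compact fibers. I may assume $\kappa := \wt(Y)$ is infinite, since otherwise $Y$, and hence its continuous image $X$, is finite and the inequality is immediate. Choose a base $\mathcal{B}$ for $Y$ with $\#\mathcal{B} \leq \kappa$; as $\kappa$ is infinite, enlarging $\mathcal{B}$ to be closed under finite unions leaves its cardinality unchanged, so I may assume this.

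For each $B \in \mathcal{B}$ set $U_B := X \setminus f(Y \setminus B)$. Since $f$ is closed (a continuous map from a compactum to a Hausdorff space), $f(Y \setminus B)$ is closed and hence $U_B$ is open in $X$. Unwinding the definitions, $U_B = \{x \in X : f^{-1}(x) \subseteq B\}$. I claim that $\{U_B : B \in \mathcal{B}\}$ is a base for $X$; this yields $\wt(X) \leq \#\mathcal{B} \leq \kappa$ and completes the argument.

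To prove the claim, fix an open set $W \subseteq X$ and a point $x \in W$. The fiber $f^{-1}(x)$ is closed, hence compact, and is contained in the open set $f^{-1}(W)$. Covering it by members of $\mathcal{B}$ lying inside $f^{-1}(W)$ and extracting a finite subcover, I obtain $B \in \mathcal{B}$ (using closure under finite unions) with $f^{-1}(x) \subseteq B \subseteq f^{-1}(W)$. The first inclusion gives $x \in U_B$. For the reverse containment $U_B \subseteq W$: if $x' \in U_B$, then $f^{-1}(x') \subseteq B \subseteq f^{-1}(W)$, and since $f$ is \emph{surjective} the fiber $f^{-1}(x')$ is nonempty, so any of its points witnesses $x' \in W$. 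Thus $x \in U_B \subseteq W$, as required.

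The crux I expect is verifying that $\{U_B\}$ is a base, and specifically the two points where the hypotheses on $f$ are genuinely used: the compactness of the fibers $f^{-1}(x)$, which lets me replace a pointwise cover by a single $B \in \mathcal{B}$, and the surjectivity of $f$, which is what forces $U_B \subseteq W$. These are precisely the features that fail for an arbitrary continuous surjection, explaining why the statement is special to compacta; by contrast, the cardinality bookkeeping is routine once $\mathcal{B}$ is taken closed under finite unions.
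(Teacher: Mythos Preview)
Your proof is correct and is in fact the standard argument for Arhangelskii's theorem. Note, however, that the paper does not supply its own proof of this statement: it is recorded as a classical result attributed to Arhangelskii and used without proof, so there is nothing to compare against. Your write-up would serve perfectly well as the missing justification.
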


We then give another characterization
of weight using polyhedrons.

\begin{definition}\label{654a594bc8}
  A \emph{polyhedron} is a compactum
  that can be obtained as a geometric realization
  of some finite simplicial complex.
\end{definition}

\begin{proposition}\label{approx}
  Let \(\kappa\) be an infinite cardinal.
  A compactum~\(X\)
  is a \(\kappa\)-small cofiltered limit of polyhedrons
  if and only if \(\wt(X)\leq\kappa\).
\end{proposition}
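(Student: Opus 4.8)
The plan is to prove the two implications separately; the forward implication is essentially formal, while the reverse implication carries all the content.

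For the forward implication, suppose \(X\cong\lim_{i\in I}P_{i}\) is a cofiltered limit of polyhedra indexed by a poset of cardinality at most~\(\kappa\). Then \(X\) is the closed subspace of \(\prod_{i\in I}P_{i}\) cut out by the compatibility conditions. Each polyhedron \(P_{i}\) is a compact metric space, so \(\wt(P_{i})\leq\aleph_{0}\leq\kappa\); since also \(\#I\leq\kappa\), \cref{x2bjoo} gives \(\wt\bigl(\prod_{i}P_{i}\bigr)\leq\kappa\), and passing to a subspace does not increase the weight. Hence \(\wt(X)\leq\kappa\).

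For the reverse implication, assume \(\wt(X)\leq\kappa\). By \cref{x4101g} I may regard \(X\) as a closed subspace of the cube \([0,1]^{\kappa}\). For a finite subset \(F\subseteq\kappa\) write \(\pi_{F}\colon[0,1]^{\kappa}\to[0,1]^{F}\) for the projection, so that \(\pi_{F}(X)\) is compact (hence closed). For \(k\in\NN\), let \(N_{F,k}\subseteq[0,1]^{F}\) be the union of those closed cubes of the dyadic grid of mesh~\(2^{-k}\) that meet~\(\pi_{F}(X)\); this is a finite union of cubes, hence a polyhedron in the sense of \cref{654a594bc8}. I index these by the directed poset \(J=[\kappa]^{<\omega}\times\NN\) with the product order, which has cardinality~\(\kappa\). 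The bonding maps are the restrictions of the coordinate projections \([0,1]^{F'}\to[0,1]^{F}\): for \(F\subseteq F'\) such a projection carries \(N_{F',k}\) into \(N_{F,k}\), since a grid cube meeting \(\pi_{F'}(X)\) projects onto a grid cube meeting \(\pi_{F}(X)\), while for \(k\leq k'\) one has \(N_{F,k'}\subseteq N_{F,k}\) and the bonding map is the inclusion (the case \(F=F'\) of a coordinate projection). As all bonding maps are restrictions of coordinate projections, they compose strictly, so this is an honest inverse system of polyhedra, and its limit is a cofiltered limit indexed by~\(J^{\op}\).

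It then remains to identify \(\lim_{J}N_{F,k}\) with~\(X\). The tautological map \(X\to\lim_{J}N_{F,k}\), \(x\mapsto(\pi_{F}(x))_{(F,k)}\), is well defined because \(\pi_{F}(x)\in\pi_{F}(X)\subseteq N_{F,k}\), continuous, and injective since a point of \([0,1]^{\kappa}\) is determined by its coordinates. For surjectivity I first note that, for fixed~\(F\), the inclusion bonding maps force a compatible family \((p_{F,k})_{k}\) to be a single point \(z_{F}\in\bigcap_{k}N_{F,k}\); since the cubes of \(N_{F,k}\) have diameter tending to~\(0\) and \(\pi_{F}(X)\) is closed, \(\bigcap_{k}N_{F,k}=\pi_{F}(X)\), so \(z_{F}\in\pi_{F}(X)\). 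The projection bonding maps then say that \((z_{F})_{F}\) is compatible over \([\kappa]^{<\omega}\), and a standard subnet argument in the compactum~\(X\) shows any such family equals \((\pi_{F}(x))_{F}\) for a unique \(x\in X\). Thus the map is a continuous bijection from a compactum to a Hausdorff space, hence a homeomorphism, exhibiting \(X\) as a \(\kappa\)-small cofiltered limit of polyhedra. The main obstacle is this reverse direction, and within it the delicate point is arranging honest, strictly composable bonding maps: the naive approach through nerves of finite open covers only produces maps defined up to homotopy, whereas presenting each finite-dimensional projection \(\pi_{F}(X)\) by its decreasing dyadic polyhedral neighborhoods \(N_{F,k}\) yields genuine projections and inclusions that commute on the nose. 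The remaining verifications — that projections respect the neighborhoods, that \(\bigcap_{k}N_{F,k}=\pi_{F}(X)\), and that compatible families reconstruct points of~\(X\) — are routine compactness arguments.
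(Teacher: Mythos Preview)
Your proof is correct and follows the same overall strategy as the paper: the forward implication is via \cref{x2bjoo}, and for the reverse both you and the paper begin by embedding \(X\hookrightarrow[0,1]^{\kappa}\) using \cref{x4101g}. The only difference is that the paper then cites the proof of \cite[Theorem~X.10.1]{EilenbergSteenrod52} to produce the inverse system of polyhedra, whereas you spell it out explicitly via dyadic cubical neighborhoods \(N_{F,k}\) of the finite-dimensional projections \(\pi_{F}(X)\). Your construction has the merit of being self-contained and of making the strict commutativity of the bonding maps transparent (you correctly flag that the nerve approach would only give maps up to homotopy); the paper's version is shorter by deferring to the classical reference.
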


\begin{proof}
  The ``only if'' direction follows
  from \cref{x2bjoo},
  so
  we prove the converse.
  By \cref{x4101g},
  we have an embedding
  \(X\hookrightarrow[0,1]^{\wt(X)}\).
  Then the proof of~\cite[Theorem~X.10.1]{EilenbergSteenrod52}
  gives the desired diagram.
\end{proof}

\begin{remark}\label{not_ext}
  Let \(\cat{C}\) denote
  the full subcategory of \(\Cat{Cpt}\)
  spanned by polyhedrons.
  In \cref{approx},
  one might want to take
  the limit of~\(P\)
  where \(P\) runs over \(\cat{C}_{X/}\).
  However, this does not work:
  Let \(X\subset[0,1]\) be the Cantor set.
  We take a function \(f\colon[0,1]\to[0,1]\)
  whose zero set is~\(X\).
  Then we consider
  \(f\) and \(-f\) as
  morphisms \([0,1]\to[-1,1]\) in \(\cat{C}\).
  Then there is no morphism \(P\to[0,1]\)
  in~\(\cat{C}\) under~\(X\)
  that equates these two morphisms.
\end{remark}

\subsection{The category of compacta}\label{ss:coacc}

We consider categorical properties of \(\Cat{Cpt}\),
the category of compacta
and continuous maps.
First,
its opposite
\(\Cat{Cpt}^{\op}\) is not compactly generated
as only finite compacta
are cocompact in \(\Cat{Cpt}\).
Nevertheless,
we have the following, which was proven in~\cite{GabrielUlmer71}:

\begin{theorem}[Gabriel--Ulmer]\label{gu}
  The category \(\Cat{Cpt}^{\op}\)
  is \(\aleph_{1}\)-compactly generated.
  Its \(\aleph_{1}\)-compact generators
  are precisely compacta of countable weight.
\end{theorem}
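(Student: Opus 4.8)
The plan is to describe the \(\aleph_{1}\)-compact objects of \(\Cat{Cpt}^{\op}\) explicitly and then read off generation. Since limits in \(\Cat{Cpt}\) are closed subspaces of products, \(\Cat{Cpt}\) is complete, so \(\Cat{Cpt}^{\op}\) is cocomplete; moreover an \(\aleph_{1}\)-filtered colimit in \(\Cat{Cpt}^{\op}\) is precisely an \(\aleph_{1}\)-cofiltered (countably cofiltered) limit in \(\Cat{Cpt}\). Thus a compactum \(X\) is \(\aleph_{1}\)-compact exactly when, for every such limit \(Y=\projlim_{i}Y_{i}\), the comparison map \(\operatorname{colim}_{i}\Hom(Y_{i},X)\to\Hom(Y,X)\) is a bijection. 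I would prove three things: (a) every compactum of countable weight is \(\aleph_{1}\)-compact; (b) every \(\aleph_{1}\)-compact compactum has countable weight; (c) every compactum is an \(\aleph_{1}\)-filtered colimit of countable-weight ones. Because there is only a set of homeomorphism types of countable-weight compacta (each embeds as a closed subspace of the Hilbert cube \([0,1]^{\NN}\) by \cref{x4101g}), (a) and (c) supply a small generating family of \(\aleph_{1}\)-compact objects, which with cocompleteness yields that \(\Cat{Cpt}^{\op}\) is \(\aleph_{1}\)-compactly generated, while (a) and (b) identify the generators.

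For (b) and (c) I would use one presentation. Embedding \(X\hookrightarrow[0,1]^{\wt(X)}\) via \cref{x4101g} and letting \(X_{S}\subseteq[0,1]^{S}\) be the image of \(X\) under projection to the countable subproduct indexed by a countable \(S\subseteq\wt(X)\), one checks that \(X=\projlim_{S}X_{S}\) over the \(\aleph_{1}\)-directed poset of countable subsets; each \(X_{S}\) has countable weight and the structure maps \(p_{S}\colon X\to X_{S}\) are surjective. This is an \(\aleph_{1}\)-cofiltered limit, giving (c). For (b), if \(X\) is \(\aleph_{1}\)-compact then applying the comparison bijection to \(Y=X=\projlim_{S}X_{S}\) shows the identity of \(X\) is hit by some \(g\colon X_{S}\to X\) with \(g\circ p_{S}=\id_{X}\); hence \(p_{S}\) is a surjective split monomorphism, so a continuous bijection of compacta, so a homeomorphism, whence \(\wt(X)=\wt(X_{S})\le\aleph_{0}\).

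For (a) I would reduce to the interval. A countable-weight compactum \(X\) embeds as a closed subspace of \([0,1]^{\NN}\) (\cref{x4101g}); writing \(\rho\) for the distance to \(X\) in a product metric (which already takes values in \([0,1]\)), we realize \(X\) as the equalizer of \(\rho\) and the zero map \([0,1]^{\NN}\rightrightarrows[0,1]\). Thus \(X\) is built from copies of \([0,1]\) by a countable product and an equalizer, both \(\aleph_{1}\)-small limits in \(\Cat{Cpt}\), i.e.\ \(\aleph_{1}\)-small colimits in \(\Cat{Cpt}^{\op}\). As \(\aleph_{1}\)-compact objects are closed under \(\aleph_{1}\)-small colimits, it suffices to prove that \([0,1]\) itself is \(\aleph_{1}\)-compact, i.e.\ that \(\operatorname{colim}_{i}\Hom(Y_{i},[0,1])\to\Hom(Y,[0,1])\) is bijective for every \(\aleph_{1}\)-cofiltered limit \(Y=\projlim_{i}Y_{i}\).

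The heart of the matter is this last bijectivity. For surjectivity, given \(h\in\Hom(Y,[0,1])\), the subalgebra \(\bigcup_{i}p_{i}^{*}\Cls{C}(Y_{i};\RR)\) separates points of \(Y\subseteq\prod_{i}Y_{i}\) and contains the constants, so Stone--Weierstrass makes it dense; choose \(\tilde f_{n}\in\Cls{C}(Y_{i_{n}};\RR)\) with \(\|h-p_{i_{n}}^{*}\tilde f_{n}\|<2^{-n}\). For injectivity, given \(f,f'\colon Y_{i}\to[0,1]\) agreeing on \(p_{i}(Y)=\bigcap_{j}\operatorname{im}(Y_{j}\to Y_{i})\), compactness yields indices \(j_{n}\) with \(|f-f'|<1/n\) on \(\operatorname{im}(Y_{j_{n}}\to Y_{i})\). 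In both cases I invoke \(\aleph_{1}\)-cofilteredness to pass to a single index \(w\) carrying a cone over the countably many \(i_{n}\) (resp.\ \(j_{n}\)); pulling the data back along \(Y_{w}\to Y_{i_{n}}\) and using that \(p_{w}\colon Y\to Y_{w}\) has compact image, uniform completeness of \(\Cls{C}(-;\RR)\) together with the Tietze extension theorem produces an exact preimage of \(h\), while the estimates force \(f\) and \(f'\) to agree after pullback to \(Y_{w}\). The main obstacle, and the entire role of the cardinal \(\aleph_{1}\), is precisely this step: a countable chain is cofiltered but not \(\aleph_{1}\)-cofiltered, and it is only the cone over a countable subdiagram that upgrades the approximate conclusion of Stone--Weierstrass and the eventual conclusion of compactness into the exact single-stage factorizations that \(\aleph_{1}\)-compactness requires.
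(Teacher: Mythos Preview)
The paper does not give a proof of this theorem; it is quoted from Gabriel--Ulmer with a citation to~\cite{GabrielUlmer71} and then used as a black box (for instance in \cref{983ff95ba5}). Your argument is a correct self-contained proof along the classical lines: reduce~(a) to the \(\aleph_{1}\)-cocompactness of \([0,1]\) via the Hilbert-cube embedding and closure of \(\aleph_{1}\)-compact objects under \(\aleph_{1}\)-small colimits, and settle that case by Stone--Weierstrass together with the single use of \(\aleph_{1}\)-cofilteredness to dominate countably many approximating stages by one index. The presentation of an arbitrary~\(X\) as the \(\aleph_{1}\)-cofiltered limit of its images in countable coordinate subproducts of \([0,1]^{\wt(X)}\) cleanly yields~(b) and~(c) at once. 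One clarification worth making explicit in the surjectivity step: the pulled-back functions \(g_{n}\) on \(Y_{w}\) need not be Cauchy globally, only on the closed image \(p_{w}(Y)\); but that is enough, since their uniform limit \(\bar h\) there satisfies \(\bar h\circ p_{w}=h\) (this also shows \(h\) is constant on the fibers of \(p_{w}\)), and Tietze then extends \(\bar h\) to a map \(Y_{w}\to[0,1]\).
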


\begin{proposition}\label{983ff95ba5}
  Let \(\kappa\) be an infinite regular cardinal.
  A compactum~\(X\) is \(\kappa\)-cocompact
  if and only if \(\wt(X)<\kappa\) holds.
\end{proposition}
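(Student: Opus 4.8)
The plan is to unwind the definition: \(X\) being \(\kappa\)-cocompact means \(\kappa\)-compact in \(\Cat{Cpt}^{\op}\), i.e.\ that \(\Hom_{\Cat{Cpt}}(\X,X)\) sends \(\kappa\)-cofiltered limits of compacta to filtered colimits of sets; concretely, for every \(\kappa\)-cofiltered diagram \((Y_{i})\) with limit \(Y\), the canonical map \(\injlim_{i}\Hom(Y_{i},X)\to\Hom(Y,X)\) must be a bijection. I would prove the two implications separately, and throughout I may assume \(\kappa\geq\aleph_{1}\): for \(\kappa=\aleph_{0}\) the assertion is exactly the already-noted fact that the cocompact objects of \(\Cat{Cpt}\) are precisely the finite ones.

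For the \emph{if} direction I would first record that \([0,1]\) is \(\kappa\)-cocompact: by \cref{gu} it is an \(\aleph_{1}\)-compact generator of \(\Cat{Cpt}^{\op}\), and an \(\aleph_{1}\)-compact object is a fortiori \(\kappa\)-compact for every regular \(\kappa\geq\aleph_{1}\) (a \(\kappa\)-filtered colimit is in particular \(\aleph_{1}\)-filtered). Next I would use the standard fact that the \(\kappa\)-cocompact objects are closed under \(\kappa\)-small limits, since \(\kappa\)-small limits commute with \(\kappa\)-filtered colimits in \(\Cat{Set}\); in particular any product \([0,1]^{\lambda}\) with \(\lambda<\kappa\) and any equalizer of two such products is \(\kappa\)-cocompact. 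Now suppose \(\wt(X)=\lambda<\kappa\). By \cref{x4101g} we may regard \(X\) as a closed subspace of \(Z=[0,1]^{\lambda}\). Choosing a basis of cozero sets of \(Z\) of cardinality \(\leq\lambda\) and collecting the functions \(\phi_{\alpha}\colon Z\to[0,1]\) (for \(\alpha\) in an index set \(A\) with \(\#A\leq\lambda\)) defining those basic cozero sets disjoint from \(X\), one gets \(X=\bigcap_{\alpha}\phi_{\alpha}^{-1}(0)\). Hence \(X\) is the equalizer of \((\phi_{\alpha})_{\alpha}\colon Z\to[0,1]^{A}\) and the constant map \(Z\to[0,1]^{A}\) at the origin, exhibiting \(X\) as a \(\kappa\)-small limit of \(\kappa\)-cocompact objects, so \(X\) is \(\kappa\)-cocompact.

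For the \emph{only if} direction I would argue contrapositively: assume \(\mu=\wt(X)\geq\kappa\) and exhibit \(X\) as a \(\kappa\)-cofiltered limit through which the identity cannot factor. Embed \(X\hookrightarrow[0,1]^{\mu}\) by \cref{x4101g} and, for each subset \(S\subseteq\mu\) of cardinality \(<\kappa\), set \(Y_{S}=\pi_{S}(X)\subseteq[0,1]^{S}\), a compactum with \(\wt(Y_{S})\leq\max(\#S,\aleph_{0})<\kappa\) by \cref{x2bjoo,31fd5b4abc}. The poset of such \(S\) ordered by inclusion is \(\kappa\)-directed because \(\kappa\) is regular, so the projections \(Y_{S'}\to Y_{S}\) (for \(S\subseteq S'\)) form a \(\kappa\)-cofiltered diagram, and the point-set input that a closed subspace of a product equals the limit of its projections already on finite coordinate sets gives \(X=\projlim_{S}Y_{S}\). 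If \(X\) were \(\kappa\)-cocompact, the identity of \(X\) would factor as \(X\xrightarrow{p_{S}}Y_{S}\xrightarrow{f}X\) for some \(S\); then \(f\) is a surjection, so \cref{wt_ineq} forces \(\wt(X)\leq\wt(Y_{S})<\kappa\), contradicting \(\wt(X)=\mu\geq\kappa\).

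The genuinely delicate points, which I would verify with care, are the weight estimates and the limit identification. The bound \(\wt(Y_{S})<\kappa\) is what pins the argument to \(\kappa\geq\aleph_{1}\) (for \(\kappa=\aleph_{0}\) one must instead fall back on the finite-space description), and the closure-under-limits and base-case steps are formal. I expect the \emph{main obstacle} to be the point-set lemma that \(X=\projlim_{S}Y_{S}\): this reduces to showing that any \(y\in[0,1]^{\mu}\) lying in \(\pi_{F}(X)\) for every finite \(F\) already lies in \(X\), which holds because \(X\) is closed and a basic open neighborhood of \(y\) constrains only finitely many coordinates—so a neighborhood missing \(X\) would contradict \(\pi_{F}(y)\in\pi_{F}(X)\) for the relevant finite \(F\).
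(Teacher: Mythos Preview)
Your argument is correct, and it differs from the paper's in both directions while reaching the same destination.

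For the ``if'' direction, the paper invokes \cref{approx} to write \(X\) as a \(\kappa\)-small cofiltered limit of polyhedrons, each \(\aleph_{1}\)-cocompact by \cref{gu}; you instead realize \(X\) directly as an equalizer of maps between cubes \([0,1]^{\lambda}\to[0,1]^{A}\). Both arguments rest on the same closure principle (\(\kappa\)-cocompact objects are closed under \(\kappa\)-small limits) and the same base case (\([0,1]\), or polyhedra, are \(\aleph_{1}\)-cocompact), so this is a minor repackaging. One small slip: when \(\lambda=\wt(X)\) is finite the cube \([0,1]^{\lambda}\) has weight \(\aleph_{0}\), not \(\leq\lambda\), so the basis and the index set \(A\) should be taken of cardinality \(\leq\max(\lambda,\aleph_{0})\); since you have already reduced to \(\kappa\geq\aleph_{1}\) this does not affect the conclusion.

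For the ``only if'' direction the approaches genuinely diverge. The paper appeals to the structural fact that in an \(\aleph_{1}\)-compactly generated category every \(\kappa\)-compact object is a \(\kappa\)-small filtered colimit of \(\aleph_{1}\)-compact objects, and then bounds the weight via \cref{x2bjoo} (the limit embeds in the product). You instead produce by hand a \(\kappa\)-cofiltered presentation \(X=\projlim_{S}Y_{S}\) via coordinate projections, factor \(\id_{X}\) through some \(Y_{S}\), and invoke Arhangelskii's inequality \cref{wt_ineq}. Your route is more elementary and self-contained---it does not need the accessible-category machinery---at the cost of the extra point-set verification that \(X=\projlim_{S}Y_{S}\), which you handle correctly. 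The paper's route is shorter once \cref{gu} is in hand and illustrates why that theorem is the right organizing principle.
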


\begin{proof}
  First, it can be directly
  checked when \(\kappa=\aleph_{0}\).
  We assume \(\kappa>\aleph_{0}\)
  in the rest of the proof.

  The ``if'' direction
  follows from \cref{approx}
  since polyhedrons
  are \(\aleph_{1}\)-cocompact by \cref{gu}.

  In general,
  any \(\kappa\)-compact objects
  in an \(\aleph_{1}\)-compactly generated category
  can be written as a \(\kappa\)-small
  (filtered) colimit of \(\aleph_{1}\)-compact objects.
  This observation together
  with \cref{x2bjoo} implies the ``only if'' direction.
\end{proof}

\subsection{(Strong) shape and cohomology}\label{ss:shape}

Here we clarify
the right-hand sides of the isomorphisms in \cref{main_a}.
It is simply the sheaf cohomology groups,
but they can be equivalently described using the notion of \emph{shape},
which we review here.

We write
\(\Pro(\Cat{Ani})\)
for the full subcategory of \(\Fun(\Cat{Ani},\Cat{Ani})\)
spanned by accessible left-exact functors
and call its objects \emph{\(\Pro\)-animas}.
In~\cite[Definition 7.1.6.3]{LurieHTT}
(and essentially also in~\cite[Definition~5.3.1]{ToenVezzosi}),
for an \(\infty\)-topos~\(\cat{X}\),
its \emph{shape} is
defined to be
the \(\Pro\)-anima \(p_*\circ p^*\)
for the tautological geometric morphism \(p\colon\cat{X}\to\Cat{Ani}\).
We use this to get the following:

\begin{definition}\label{xczrw9}
  We define the \emph{shape} functor
  \({\Sh}\colon\Cat{Cpt}\to\Pro(\Cat{Ani})\)
  to be the composite
  \begin{equation*}
    \Cat{Cpt}
    \xrightarrow{\Shv}
    \Cat{Top}
    \xrightarrow{\Sh}
    \Pro(\Cat{Ani}),
  \end{equation*}
  where \(\Cat{Top}\) denotes the \(\infty\)-category
  of \(\infty\)-toposes.
\end{definition}

\begin{remark}\label{7b7ffdab5d}
  The value \(\Sh(X)\)
  of the functor given in \cref{xczrw9}
  is usually called the ``strong shape'' of~\(X\)
  in shape theory.
\end{remark}

\begin{theorem}\label{86095c550d}
  The functor \(\Sh\) defined in \cref{xczrw9}
  satisfies the following property:
  \begin{enumerate}
    \item\label{i:a156a}
      Its restriction to the category of polyhedrons
      lands in
      the full subcategory of constant \(\Pro\)-animas
      and
      is equivalent to
      the homotopy type functor.
    \item\label{i:a156b}
      It preserves cofiltered limits and finite products.
    \item\label{i:a156c}
      The cohomology of \(\Sh(X)\)
      with coefficients in a spectrum~\(E\)
      is canonically equivalent to
      the sheaf cohomology of the constant sheaf \(\cst{E}\) on~\(X\).
  \end{enumerate}
\end{theorem}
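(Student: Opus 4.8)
The plan is to prove \cref{i:a156a} directly for polyhedrons, propagate it to all compacta through \cref{i:a156b}, and then obtain \cref{i:a156c} formally from the definition of shape in \cref{xczrw9}. For \cref{i:a156a}, let \(P\) be a polyhedron and fix a triangulation. After one barycentric subdivision the open stars of the vertices form a finite open cover whose nonempty finite intersections are all contractible; such a good cover exhibits the constant \(\Pro\)-anima on the nerve---which is \(\lvert P\rvert\)---as the shape of \(\Shv(P)\). This shows that \(\Sh\) restricted to polyhedrons lands in constant \(\Pro\)-animas and agrees with the homotopy-type functor. I would also record here, for use below, that geometric realization carries finite products of finite simplicial complexes to products, so that \(\Sh(P\times Q)\simeq\Sh(P)\times\Sh(Q)\) for polyhedrons.

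The substance is \cref{i:a156b}, and the step I expect to be the main obstacle is the preservation of cofiltered limits. The key input is a continuity property of \(\Shv\) on \(\Cat{Cpt}\): for a cofiltered limit \(X=\lim_i X_i\), every open of \(X\) is a union of opens pulled back from the stages, and---because \(X\) is compact---every finite open cover of \(X\) is refined by one pulled back from a single stage; hence \(\Shv(X)\) is the filtered colimit of the \(\Shv(X_i)\) along the pullback functors. Establishing this topos-theoretic continuity cleanly is the delicate point. Granting it, the canonical map \(\Sh(X)\to\lim_i\Sh(X_i)\) in \(\Pro(\Cat{Ani})\) induced by the projections is an equivalence: since \(\Cat{Ani}\) corepresents \(\Pro(\Cat{Ani})\), it suffices to test after \(\Map_{\Pro}(-,K)\) for \(K\in\Cat{Ani}\), where the two sides become \(\Gamma(X;\cst{K})\) and \(\operatorname{colim}_i\Gamma(X_i;\cst{K})\); these agree because the unit and the constant sheaf are pulled back from the stages and mapping anima in a filtered colimit of presentable \(\infty\)-categories are computed as colimits.

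Finite products in \cref{i:a156b} then reduce to the polyhedral case. Writing \(X=\lim_i P_i\) and \(Y=\lim_j Q_j\) as cofiltered limits of polyhedrons via \cref{approx}, we have \(X\times Y\simeq\lim_{i,j}(P_i\times Q_j)\), so combining the cofiltered-limit case just proven with the product identity for polyhedrons and the levelwise formation of products in \(\Pro(\Cat{Ani})\) yields \(\Sh(X\times Y)\simeq\Sh(X)\times\Sh(Y)\); the terminal compactum maps to the terminal \(\Pro\)-anima since a point is a polyhedron.

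Finally, \cref{i:a156c} is formal. Writing \(p\colon\Shv(X)\to\Cat{Ani}\) for the terminal geometric morphism, the constant sheaf is \(\cst{E}=p^{*}E\) and its sheaf cohomology is \(\Gamma(\Shv(X);\cst{E})=p_{*}p^{*}E\). By the definition recalled in \cref{xczrw9}, the \(\Pro\)-anima \(\Sh(X)\) pro-corepresents the left-exact accessible functor \(p_{*}p^{*}\) on \(\Cat{Ani}\); since this identification is compatible with loops and limits, it upgrades to spectrum coefficients and gives \(\Gamma(\Shv(X);\cst{E})\simeq\map(\Sh(X),E)\), which is precisely the \(E\)-cohomology of \(\Sh(X)\).
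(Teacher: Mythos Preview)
Your approach is essentially the same as the paper's, just unpacked: the paper proves \cref{i:a156a} by citing Lurie's identification of shape with homotopy type for nice spaces (HTT~7.1.0.1), proves \cref{i:a156b} by combining Hoyois's result that the shape functor \(\Cat{Top}\to\Pro(\Cat{Ani})\) preserves colimits with Lurie's proper base change theorem (HTT~7.3.1.16), and notes that \cref{i:a156c} is immediate from the definition. Your good-cover/nerve argument for \cref{i:a156a} is a fine direct substitute for the HTT citation.

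The one point worth flagging is your intermediate claim in \cref{i:a156b} that \(\Shv(X)\simeq\operatorname{colim}_i\Shv(X_i)\) in \(\Cat{Top}\). You correctly identify this as the delicate step, but note that it is slightly stronger than what you actually use: all you need is the continuity of \(\Gamma(\X;\cst{K})\), i.e., \(\Gamma(X;\cst{K})\simeq\operatorname{colim}_i\Gamma(X_i;\cst{K})\), and this is precisely what proper base change for maps of compacta gives you directly, without passing through the full topos-colimit statement. The paper's citation of proper base change is exactly the input that fills your ``delicate point'', so your sketch and the paper's proof are really the same argument at different levels of compression.
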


\begin{proof}
  First,
  \cref{i:a156a}
  follows from~\cite[Theorem~7.1.0.1]{LurieHTT}.
  By combining~\cite[Proposition~2.11]{Hoyois18}
  with
  Lurie's proper base change theorem~\cite[Theorem~7.3.1.16]{LurieHTT},
  we obtain~\cref{i:a156b}.
  Finally,
  \cref{i:a156c} follows from the definition of~\(\Sh\).
\end{proof}

\begin{proposition}\label{d22e6fcb24}
  The functor
  \(\Sh\colon\Cat{Cpt}\to\Pro(\Cat{Ani})\)
  lands in \(\Pro(\Cat{Ani}^{\aleph_{0}})\),
  where \(\Cat{Ani}^{\aleph_{0}}\) denotes the category of compact animas.
  Moreover,
  for an infinite regular cardinal~\(\kappa\),
  the shape of a compactum of weight~\(<\kappa\)
  is \(\kappa\)-cocompact
  in the coaccessible
  \(\infty\)-category \(\Pro(\Cat{Ani}^{\aleph_{0}})\).
\end{proposition}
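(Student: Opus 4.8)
The plan is to reduce both assertions to the approximation of compacta by polyhedrons. First I would treat an arbitrary compactum~\(X\) and, applying \cref{approx} (with \(\kappa\) an infinite cardinal bounding \(\wt(X)\)), present it as a cofiltered limit \(X\simeq\lim_{i}P_{i}\) of polyhedrons. Because \(\Sh\) preserves cofiltered limits (\cref{86095c550d}\,\cref{i:a156b}), this yields \(\Sh(X)\simeq\lim_{i}\Sh(P_{i})\) in \(\Pro(\Cat{Ani})\); and by \cref{86095c550d}\,\cref{i:a156a}, each \(\Sh(P_{i})\) is the constant \(\Pro\)-anima on the homotopy type of the finite simplicial complex~\(P_{i}\). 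Since the geometric realization of a finite simplicial complex is a finite CW complex, that homotopy type is a compact anima, so \(\Sh(X)\) is exhibited as a cofiltered limit of constant \(\Pro\)-animas on objects of \(\Cat{Ani}^{\aleph_{0}}\).

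To upgrade this to the statement that \(\Sh(X)\) lies in \(\Pro(\Cat{Ani}^{\aleph_{0}})\), I would use that the functor \(\iota\colon\Pro(\Cat{Ani}^{\aleph_{0}})\to\Pro(\Cat{Ani})\) induced by \(\Cat{Ani}^{\aleph_{0}}\hookrightarrow\Cat{Ani}\) is fully faithful and preserves cofiltered limits. Full faithfulness follows from the fact that the mapping anima in a pro-category is computed by the formula \(\lim_{j}\operatorname{colim}_{i}\Map(X_{i},Y_{j})\) in either target, together with the full faithfulness of \(\Cat{Ani}^{\aleph_{0}}\hookrightarrow\Cat{Ani}\); preservation of cofiltered limits is the characterizing property of the pro-extension of a finite-limit-preserving functor. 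Hence the limit \(\lim_{i}\Sh(P_{i})\) computed in \(\Pro(\Cat{Ani})\) is the image under \(\iota\) of the corresponding limit formed in \(\Pro(\Cat{Ani}^{\aleph_{0}})\), which settles the first assertion and shows, in addition, that the induced factorization of \(\Sh\) through \(\Pro(\Cat{Ani}^{\aleph_{0}})\) still preserves cofiltered limits.

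For the cocompactness claim I would assume \(\wt(X)<\kappa\) and again invoke \cref{approx}, this time to arrange the indexing category of \((P_{i})\) to be \(\kappa\)-small (possible since \(\wt(X)<\kappa\)). The constant \(\Pro\)-animas on objects of \(\Cat{Ani}^{\aleph_{0}}\) are the canonical cocompact objects of \(\Pro(\Cat{Ani}^{\aleph_{0}})\) — dually to the fact that the objects of \(\Cat{Ani}^{\aleph_{0}}\) are compact in \(\Ind((\Cat{Ani}^{\aleph_{0}})^{\op})\) — so in particular each \(\Sh(P_{i})\) is \(\kappa\)-cocompact. As the class of \(\kappa\)-cocompact objects is closed under \(\kappa\)-small limits (dual to the closure of \(\kappa\)-compact objects under \(\kappa\)-small colimits), the \(\kappa\)-small cofiltered limit \(\Sh(X)\simeq\lim_{i}\Sh(P_{i})\) is again \(\kappa\)-cocompact.

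The hard part is not any single estimate but the bookkeeping relating the two pro-categories and the cardinal conventions: one must be careful that the cofiltered limits and the notion of \(\kappa\)-cocompactness are all taken inside \(\Pro(\Cat{Ani}^{\aleph_{0}})\), that \(\iota\) is genuinely compatible with these limits, and that the ``\(\kappa\)-small'' provided by \cref{approx} matches the ``\({<}\kappa\)'' implicit in \(\kappa\)-cocompactness — consistent with the weight bound of \cref{983ff95ba5}. Once \(\Sh(X)\) is identified with the pro-object \((P_{i})_{i}\) of compact animas, both claims are formal.
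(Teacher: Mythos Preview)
Your argument is correct and follows the same route as the paper: use \cref{approx} to present \(X\) as a \(\kappa\)-small cofiltered limit of polyhedrons, then invoke \cref{86095c550d}\,\cref{i:a156a,i:a156b} to identify \(\Sh(X)\) with a \(\kappa\)-small cofiltered limit of constant \(\Pro\)-animas on compact animas. The paper's proof simply records these two inputs and declares the result; your version makes explicit the bookkeeping (full faithfulness and limit-preservation of \(\Pro(\Cat{Ani}^{\aleph_{0}})\hookrightarrow\Pro(\Cat{Ani})\), closure of \(\kappa\)-cocompact objects under \(\kappa\)-small limits) that the paper leaves to the reader.
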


\begin{proof}
  Let \(\kappa\) be an infinite regular cardinal
  and \(X\) a compactum of weight~\(<\kappa\).
  By \cref{approx},
  we can write~\(X\)
  as a \(\kappa\)-small cofiltered limit
  of polyhedrons.
  Then by~\cref{i:a156a,i:a156b} of \cref{86095c550d},
  the desired result follows.
\end{proof}

We note that
in \cref{main_a},
it is important to consider
uncompleted cohomology:

\begin{example}\label{xqrboy}
Unlike the (finite) CW~case,
  for a general compactum~\(X\),
  the cohomology \(\Gamma(X;\cst{\ku})\)
  is different from
  what the Atiyah--Hirzebruch spectral sequence computes.
  This happens because
  the map
  \(\cst{\ku}
  \to
  \projlim_{i\geq0}\tau_{\leq i}\cst{\ku}\)
  of \(\Cat{Sp}\)-valued sheaves
  does not induce an equivalence on the global section
  in general:
  For example,
  when we consider
  \begin{equation*}
    X
    =
    \biggl(\coprod_{n\geq0}S^{2n}\biggr)^{+}
    =
    \projlim_{n\geq0}\biggl(\coprod_{k=0}^{n}S^{2k}\biggr)^{+},
  \end{equation*}
  the global section contains the map
  \begin{equation*}
    \bigoplus_{n\geq0}
    \Sigma^{-2n}{\ku}
    \to
    \projlim_{i\geq0}
    \bigoplus_{n\geq0}
    \Sigma^{-2n}\tau_{\leq i}{\ku},
  \end{equation*}
  which is not an equivalence,
  as a direct summand.
\end{example}

\section{The cd~topology on compacta}\label{s:cd}

We introduce a topology
on the category of compacta (of countable weight;
see \cref{xah2cq}) analogous
to the cdp~topology for proper algebraic varieties.
We introduce it and study its basic properties in \cref{ss:cd}
and study its sheaves in \cref{ss:cd_desc}.
In \cref{ss:pfin},
we consider its restriction to profinite sets.

\begin{remark}\label{xah2cq}
We work with \(\Cat{Cpt}_{\aleph_{1}}\),
  the category of compacta of countable weight,
  instead of \(\Cat{Cpt}\) for two reasons:
  We intend to avoid dealing with a large site
  and to strengthen the main result of \cref{s:cd_rh},
  which is crucial in \cref{s:ind_sm}.

  We could work with a smaller variant
  such as
  the category
  of finite-dimensional compacta of countable weight,
  which is in some sense the minimal choice:
  It is the smallest category of \(\Cat{Cpt}\)
  (or the category of topological spaces or locales)
  that is closed under finite limits
  and contains \([0,1]\).
\end{remark}

\subsection{The cd~topology}\label{ss:cd}

\begin{definition}\label{c0f0e840c9}
  We say that a collection
  \(\{Y_{i}\to X\}_{i}\)
  of morphisms in \(\Cat{Cpt}_{\aleph_{1}}\)
  is a \emph{cd~cover} (of complexity~\(\leq n\)) on~\(X\)
  if there is a filtration
  \begin{equation*}
    \emptyset
    =
    X_{0}
    \subset
    \dotsb
    \subset
    X_{n}
    =
    X
  \end{equation*}
  and maps \(p_{k}\colon X_{k}'\to X_{k}\)
  for \(k=1\), \dots,~\(n\)
  such that
  for each~\(k\),
  the map
  \(p_{k}\)
  factors through
  \(Y_{i}\to X\) for some~\(i\)
  and
  the map
  \(X_{k}'\setminus p_{k}^{-1}(X_{k-1})\to X_{k}\setminus X_{k-1}\)
  induced by \(p_{k}\) is a homeomorphism.
\end{definition}

We prove the following,
which is straightforward but tedious:

\begin{theorem}\label{cc4df108fb}
  There is a Grothendieck topology
  on \(\Cat{Cpt}_{\aleph_{1}}\)
  such that
  a sieve \(\cat{R}\subset(\Cat{Cpt}_{\aleph_{1}})_{/X}\) on~\(X\)
  is covering
  if it contains a cd~cover on~\(X\).
\end{theorem}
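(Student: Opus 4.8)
The plan is to define, for each object \(X\), the set \(J(X)\) of those sieves on \(X\) that contain a cd~cover, and to verify directly that \(J\) satisfies the three axioms of a Grothendieck topology. At the outset I record that \(\Cat{Cpt}_{\aleph_{1}}\) is closed under finite limits inside \(\Cat{Cpt}\): fiber products are closed subspaces of binary products, and both operations preserve countable weight by \cref{x2bjoo}. This is exactly what lets me form the pullbacks used below. The maximal-sieve axiom is immediate, since the singleton \(\{\id_{X}\}\) is a cd~cover via the filtration \(\emptyset=X_{0}\subset X_{1}=X\) with \(p_{1}=\id_{X}\).

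For stability under base change, let \(\cat{R}\in J(X)\) contain a cd~cover \(\{Y_{i}\to X\}_{i}\) with witnessing data \(X_{0}\subset\dotsb\subset X_{n}=X\) and \(p_{k}\colon X_{k}'\to X_{k}\), and let \(f\colon W\to X\) be arbitrary. I would pull everything back: the filtration by closed subsets \(f^{-1}(X_{0})\subset\dotsb\subset f^{-1}(X_{n})=W\), together with the maps \(X_{k}'\times_{X_{k}}f^{-1}(X_{k})\to f^{-1}(X_{k})\). Since \(f^{-1}(X_{k})\setminus f^{-1}(X_{k-1})=f^{-1}(X_{k}\setminus X_{k-1})\) and the base change of a homeomorphism is a homeomorphism, the restriction over each new stratum is again a homeomorphism; and each pulled-back map factors through \(Y_{i}\times_{X}W\to W\), which lies in \(f^{*}\cat{R}\). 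Hence \(f^{*}\cat{R}\) contains a cd~cover, so \(f^{*}\cat{R}\in J(W)\).

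The substance is the transitivity (local-character) axiom, and it reduces to the assertion that cd~covers \emph{compose}: a cd~cover each of whose members is refined by a cd~cover is itself a cd~cover (of complexity at most the product). Granting this, let \(\cat{R}\in J(X)\) with data as above, and let \(\cat{S}\) be a sieve with \(g^{*}\cat{S}\) covering for every \((g\colon T\to X)\in\cat{R}\). Because the composite \(\phi_{k}\colon X_{k}'\to X_{k}\hookrightarrow X\) factors through a member of the cd~cover in \(\cat{R}\), and \(\cat{R}\) is a sieve, we have \(\phi_{k}\in\cat{R}\); hence \(\phi_{k}^{*}\cat{S}\) contains a cd~cover \(\{Z_{k,j}\to X_{k}'\}_{j}\), and each composite \(Z_{k,j}\to X_{k}'\to X\) lies in \(\cat{S}\). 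Assembling these into a single cd~cover of \(X\) inside \(\cat{S}\) then finishes the proof.

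For the composition itself---the main obstacle---I would refine the filtration of \(X\) stratum by stratum. Writing \(U_{k}:=p_{k}^{-1}(X_{k}\setminus X_{k-1})\xrightarrow{\ \sim\ }X_{k}\setminus X_{k-1}\), I transport the filtration of \(X_{k}'\) coming from its own cd~cover across this homeomorphism and set \(W_{k,l}:=X_{k-1}\cup p_{k}(X_{k,l}'\cap U_{k})\); a short point-set check shows each \(W_{k,l}\) is closed in \(X\), and that, ordered lexicographically in \((k,l)\), these sets form a filtration from \(\emptyset\) to \(X\) interpolating the original one. Over the new stratum \(W_{k,l}\setminus W_{k,l-1}\), which is homeomorphic to a locally closed piece of \(X_{k}'\) on which the resolving map \(q_{k,l}\) of the inner cd~cover is a homeomorphism, I take the resolving space to be the closure, inside the inner resolving space, of the preimage of that piece; this is a compactum of countable weight, its map to \(X\) factors through the appropriate \(Z_{k,j}\), and restricting over the stratum recovers a composite of homeomorphisms. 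The bookkeeping that these closures interact correctly with the filtration---namely that the map lands in \(W_{k,l}\) and is a homeomorphism \emph{exactly} over the stratum---is the tedious heart of the argument, but it is elementary. Once it is in place, all three axioms hold and \(J\) is the asserted topology.
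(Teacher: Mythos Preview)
Your proposal is correct. The verification of the maximal-sieve and base-change axioms matches the paper's (implicit) treatment, and you correctly isolate transitivity as the only substantial point, reducing it to the claim that cd~covers compose.

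Where you diverge is in how you prove that composition. The paper packages the induction differently: it proves a decomposition lemma (\cref{two}) saying that a family \(\{X_{i}'\to X\}\) is a cd~cover if and only if its restrictions to a closed \(Z\subset X\) and to the one-point compactification \((X\setminus Z)^{+}\) are cd~covers, and then applies this with \(Z=X_{n-1}\) to peel off the top stratum and induct on~\(n\). Your route is instead a direct construction: you transport each inner filtration across the stratum homeomorphism \(U_{k}\xrightarrow{\sim}X_{k}\setminus X_{k-1}\), set \(W_{k,l}=X_{k-1}\cup p_{k}(X_{k,l}'\cap U_{k})\), and take resolving spaces as closures inside the inner resolving spaces. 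The closedness of \(W_{k,l}\) and the fact that the composite maps land in \(W_{k,l}\) and restrict to homeomorphisms over the new strata are genuine checks, but they go through exactly as you indicate. Your approach avoids introducing one-point compactifications and gives an explicit bound on the complexity of the composite cover, at the price of slightly heavier bookkeeping; the paper's approach is cleaner to state inductively and isolates a reusable characterization of cd~covers in terms of a closed-open decomposition.
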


\begin{definition}\label{715a589303}
  We call the topology constructed in \cref{cc4df108fb}
  the \emph{cd~topology}.
\end{definition}

\begin{figure}[htbp]
  \centering
\begin{tikzpicture}
    \node[inner sep=0pt] at (0,0) {\includegraphics[width=240.9201194bp]{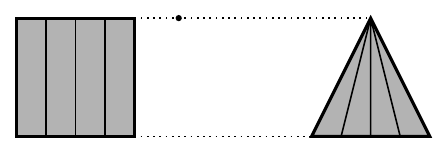}};
    \node at (0.5,0) {$\longrightarrow$};
  \end{tikzpicture}
  \caption{A typical cd~cover that is not a cover in the closed topology}\label{f:cd}
\end{figure}

\begin{proof}[Proof of \cref{cc4df108fb}]
  The only nontrivial part is the transitivity condition,
  which is \cref{2924013c0a} below.
\end{proof}

\begin{proposition}\label{2924013c0a}
  Suppose that
  \(\{Y_{i}\to X\}_{i}\) is a cd~cover
  and 
  \(\{Z_{ij}\to Y_{i}\}_{j}\) is a cd~cover
  for each \(i\in I\).
  Then the collection
  \(\{Z_{ij}\to Y_{i}\to X\}_{i,j}\)
  obtained by composing morphisms
  is again a cd~cover.
\end{proposition}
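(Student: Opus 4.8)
\emph{Proof proposal.} The plan is to prove transitivity by building, out of the two given cd~covers, an explicit \emph{refined} filtration of~\(X\) together with covering maps that factor through the composites \(Z_{ij}\to X\). Recall the outer cd~cover \(\{Y_{i}\to X\}_{i}\) via a filtration \(\emptyset=X_{0}\subset\dotsb\subset X_{n}=X\) by closed subsets and maps \(p_{k}\colon X_{k}'\to X_{k}\); by definition each \(p_{k}\) factors as \(X_{k}'\xrightarrow{f_{k}}Y_{i(k)}\to X\) for some index~\(i(k)\), and \(p_{k}\) restricts to a homeomorphism \(U_{k}:=X_{k}'\setminus p_{k}^{-1}(X_{k-1})\xrightarrow{\sim}X_{k}\setminus X_{k-1}\). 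For each~\(i\), write the inner cd~cover \(\{Z_{ij}\to Y_{i}\}_{j}\) via \(\emptyset=Y^{(i)}_{0}\subset\dotsb\subset Y^{(i)}_{m_{i}}=Y_{i}\) and maps \(q^{(i)}_{l}\colon W^{(i)}_{l}\to Y^{(i)}_{l}\) factoring through some \(Z_{i,j(i,l)}\). I would index the refined filtration by the pairs \((k,l)\) with \(1\leq k\leq n\) and \(1\leq l\leq m_{i(k)}\), ordered lexicographically, so that the \((k,l)\)-stratum lives inside the \(k\)-th outer stratum \(X_{k}\setminus X_{k-1}\) and subdivides it according to the pullback of the \(Y_{i(k)}\)-filtration.

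Concretely, first I would pull back the inner filtration of~\(Y_{i(k)}\) along~\(f_{k}\), setting \(A_{k,l}:=f_{k}^{-1}(Y^{(i(k))}_{l})\), a closed subset of~\(X_{k}'\), and transport it to~\(X\) by the homeomorphism \(p_{k}|_{U_{k}}\), setting \(D_{k,l}:=p_{k}(A_{k,l}\cap U_{k})\subseteq X_{k}\setminus X_{k-1}\) and \(\tilde X_{(k,l)}:=X_{k-1}\cup D_{k,l}\). A short check shows \(D_{k,l}\) is closed in the locally closed stratum \(X_{k}\setminus X_{k-1}\), whence each \(\tilde X_{(k,l)}\) is closed in~\(X\); moreover \(\tilde X_{(k,m_{i(k)})}=X_{k}\), so the \(\tilde X_{(k,l)}\), read off in lexicographic order, interpolate the filtration \(\{X_{k}\}\). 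Its \((k,l)\)-stratum is \(D_{k,l}\setminus D_{k,l-1}\), the homeomorphic image under~\(p_{k}\) of \(f_{k}^{-1}(S^{(i(k))}_{l})\cap U_{k}\), where \(S^{(i)}_{l}:=Y^{(i)}_{l}\setminus Y^{(i)}_{l-1}\).

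For the covering maps I would use fibre products of compacta (closed subspaces of products, hence again in \(\Cat{Cpt}_{\aleph_{1}}\) by \cref{x2bjoo}): set \(\tilde X'_{(k,l)}:=A_{k,l}\times_{Y^{(i(k))}_{l}}W^{(i(k))}_{l}\), formed along the legs \(f_{k}\) and \(q^{(i(k))}_{l}\), and let \(\tilde p_{(k,l)}\colon\tilde X'_{(k,l)}\to\tilde X_{(k,l)}\) be \((x,w)\mapsto p_{k}(x)\). Since the composite \(X_{k}'\xrightarrow{f_{k}}Y_{i(k)}\to X\) equals \(X_{k}'\xrightarrow{p_{k}}X_{k}\hookrightarrow X\), the map \(\tilde p_{(k,l)}\) agrees with the \(W\)-leg \((x,w)\mapsto w\) followed by \(W^{(i(k))}_{l}\to Z_{i(k),j(i(k),l)}\to X\), i.e.\ it factors through a member of the composed collection. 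Over the \((k,l)\)-stratum one has \(f_{k}(x)=q^{(i(k))}_{l}(w)\in S^{(i(k))}_{l}\), so \(w\) lies in the open locus where \(q^{(i(k))}_{l}\) is a homeomorphism onto \(S^{(i(k))}_{l}\); hence \(w=\psi(f_{k}(x))\) for \(\psi\) the inverse of that homeomorphism, the projection \((x,w)\mapsto x\) is a homeomorphism onto \(f_{k}^{-1}(S^{(i(k))}_{l})\cap U_{k}\), and composing with~\(p_{k}\) shows \(\tilde p_{(k,l)}\) is a homeomorphism over \(\tilde X_{(k,l)}\setminus\tilde X_{(k,l-1)}\). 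This is exactly the data exhibiting \(\{Z_{ij}\to X\}\) as a cd~cover.

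I expect the main obstacle to be bookkeeping rather than conceptual: verifying that the transported subsets \(D_{k,l}\) are closed (so the refinement is a genuine filtration by closed subsets) and, above all, checking the homeomorphism condition over each refined stratum. The crux of the latter is the observation that on the overlap of the two ``new'' loci—where both an outer and an inner stratum are hit—the fibre-product coordinate~\(w\) is pinned down by the inverse of the inner stratum homeomorphism, which is precisely what collapses the fibre product back to a homeomorphic copy of the stratum. Everything else—compactness, countable weight via \cref{x2bjoo}, and the two factorizations—is immediate.
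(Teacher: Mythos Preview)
Your argument is correct: the explicit lexicographic refinement, with covering maps built as fibre products \(A_{k,l}\times_{Y^{(i(k))}_{l}}W^{(i(k))}_{l}\), does exactly what is needed, and the two verifications you flag (closedness of the \(D_{k,l}\) inside the stratum, and the homeomorphism over each refined stratum via the pinning of the \(w\)-coordinate) go through as you indicate.

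The paper, however, organises the proof differently. Instead of constructing the refinement in one shot, it isolates a splitting lemma (\cref{two}): a collection \(\{X_{i}'\to X\}\) is a cd~cover if and only if its restriction to a closed \(Z\subset X\) and the induced collection on the one-point compactification \((X\setminus Z)^{+}\) are both cd~covers. The transitivity proof then runs by induction on the complexity~\(n\) of the outer cover, applying this lemma with \(Z=X_{n-1}\). Your approach trades that auxiliary lemma for a direct, global construction; this makes your proof self-contained and yields an explicit complexity bound \(\sum_{k}m_{i(k)}\) for the composite cover, at the cost of heavier bookkeeping. The paper's route is shorter once \cref{two} is in hand, and that lemma has independent value as a structural characterisation of cd~covers relative to a closed--open decomposition.
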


\begin{proof}[Proof of \cref{2924013c0a}]
  We proceed by induction on the length of the filtration on~\(X\).
  The case \(n=0\) is trivial.
  For \(n>0\),
  we apply \cref{two} below
  to the closed subset \(X_{n-1}\) to get the desired result.
\end{proof}

\begin{lemma}\label{two}
  Consider
  a collection
  \(\{p_{i}\colon X_{i}'\to X\}_{i}\)
  of morphisms in \(\Cat{Cpt}_{\aleph_{1}}\).
  Let \(Z\) be a closed subset of a compactum~\(X\)
  and \(U\) denote its complement.
  Then the set is a cd~cover if and only if
  the induced collection
  \(\{Z_{i}'\to Z\}_{i}\)
  and
  \(\{(U_{i}')^+\to U^+\}_{i}\)
  are cd~covers on~\(Z\) and~\(U^+\), respectively,
  where \(Z_{i}'\) and~\(U_{i}'\) denote 
  \(p_{i}^{-1}(Z)\) and \(p_{i}^{-1}(U)\), respectively.
\end{lemma}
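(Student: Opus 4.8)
The plan is to prove both implications by carefully relating the filtration data defining a cd~cover on $X$ with the corresponding data on the closed piece $Z$ and the open piece $U$ (accessed through its one-point compactification $U^{+}$). The key structural fact to exploit is that the defining condition of a cd~cover—a filtration $\emptyset = X_{0} \subset \dotsb \subset X_{n} = X$ together with maps $p_{k}\colon X_{k}'\to X_{k}$ inducing homeomorphisms on the graded pieces $X_{k}'\setminus p_{k}^{-1}(X_{k-1}) \to X_{k}\setminus X_{k-1}$—is built entirely out of the ``locally closed stratification'' behavior of the $p_{i}$, which restricts well along both closed and open inclusions.

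For the ``only if'' direction, I would start from a cd~cover structure on $X$, i.e.\ a filtration $(X_{k})_{k}$ and maps $p_{k}$ as in \cref{c0f0e840c9}. To obtain a cd~cover on $Z$, I would intersect the filtration with $Z$, setting $Z_{k} \colonequals X_{k}\cap Z$, and restrict each $p_{k}$ to get $p_{k}^{-1}(Z_{k}) \to Z_{k}$; since $X_{k}'\setminus p_{k}^{-1}(X_{k-1}) \to X_{k}\setminus X_{k-1}$ is a homeomorphism and $Z$ is closed, taking preimages and intersections preserves the homeomorphism on the graded pieces $Z_{k}\setminus Z_{k-1} = (X_{k}\setminus X_{k-1})\cap Z$. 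Because $p_{k}$ factors through some $Y_{i}\to X$, the restriction factors through $Z_{i}' = p_{i}^{-1}(Z) \to Z$, so this is a genuine cd~cover on $Z$. For the open piece, I would intersect the filtration with $U$ and pass to one-point compactifications: the stratification $X_{k}\cap U$ of $U$ induces a filtration of $U^{+}$ by adjoining the point at infinity to the top stratum, and the maps $(U_{i}')^{+}\to U^{+}$ inherit the homeomorphism-on-graded-pieces property from the $p_{k}$ since one-point compactification is functorial on proper (here, restrictions of maps of compacta) maps and adds a single common point $\infty$ that matches up on both sides.

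For the ``if'' direction I would reverse this, which is where the real content lies: given cd~covers on $Z$ and on $U^{+}$, I must glue their filtrations into a single filtration on $X$ whose graded pieces are covered by homeomorphic lifts. The natural strategy is to place the entire $Z$-filtration at the bottom and stack the $U$-filtration (read off from the $U^{+}$-data by discarding the strata at $\infty$) on top of it, i.e.\ build $X_{k} = Z$ for the first stretch of indices and then $X_{k} = Z \cup (\text{$k$-th open stratum})$ afterward. Since $Z$ is closed in $X$, each open stratum $U_{k}\setminus U_{k-1}$ is locally closed in $X$ and agrees with $X_{k}\setminus X_{k-1}$, so the homeomorphism property transfers from the $U^{+}$-covers once one checks that removing the point $\infty$ does not disturb the graded-piece homeomorphisms away from infinity.

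The main obstacle I expect is precisely the bookkeeping at the point at infinity: I must verify that the homeomorphism condition recorded over $U^{+}$ on the strata touching $\infty$ translates correctly to the noncompact strata of $X$, and that the lifts $(U_{i}')^{+}$ genuinely come from restrictions of maps $X_{i}'\to X$ with $U_{i}' = p_{i}^{-1}(U)$ (rather than from extraneous maps into $U^{+}$ that do not extend over $X$). Concretely, I would need to confirm that a map $(U_{i}')^{+}\to U^{+}$ arising in the $U^{+}$-cover is the one-point compactification of the restriction to $U$ of one of the original $p_{i}$, so that the glued data is consistent across the closed/open boundary; this compatibility, together with the inductive reassembly of the two filtrations into one, is the crux, and everything else is the routine but tedious verification that intersections and preimages preserve the homeomorphism-on-graded-pieces requirement.
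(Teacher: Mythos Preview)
Your plan is essentially the paper's argument: restrict the filtration for the ``only if'' direction, and for the ``if'' direction stack the $Z$-filtration below the $U^{+}$-filtration. Two small points sharpen it.

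First, in the ``only if'' direction for $U^{+}$, the sets $X_{k}\cap U$ are closed in $U$ but generally not compact, hence not closed in $U^{+}$; you must adjoin $\infty$ at \emph{every} level, taking $(X_{k}\cap U)\cup\{\infty\}$ for $k\geq 1$ (equivalently, the image of $X_{k}$ under the collapse map $X\to X/Z\cong U^{+}$) and prepending $\emptyset$.

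Second, your worry about whether $(U_{i}')^{+}\to U^{+}$ comes from $p_{i}$ is misplaced: by hypothesis these maps are exactly the one-point compactifications of $p_{i}|_{U_{i}'}$. The actual issue is that the witnessing maps $q_{j}\colon U_{j}'\to U_{j}$ for the cd~cover of $U^{+}$ factor through some $(U_{i}')^{+}$, and to produce the required $X_{k}'\to X_{k}$ factoring through $p_{i}$ you must lift along the quotient $X_{i}'\to X_{i}'/Z_{i}'\cong(U_{i}')^{+}$; setting $X_{k}'=U_{k-l}'\times_{(U_{i}')^{+}}X_{i}'$ does the job. The paper streamlines this by first refining the $U^{+}$-filtration so that $\infty\in U_{1}$; then for every $k>l$ the stratum $X_{k}\setminus X_{k-1}$ equals $U_{k-l}\setminus U_{k-l-1}$ with no $\infty$ to excise, and the remaining verification is the routine bookkeeping you anticipated.
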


\begin{proof}
  The ``only if'' direction is straightforward
  since
  they are cd~covers
  with respect to the filtrations
  induced by a filtration on~\(X\)
  witnessing that
  \(\{p_{i}\}_{i}\) is a cd~cover.

  We see the ``if'' direction.
  Let
  \(\emptyset=Z_{0}\subset\dotsb\subset Z_l=Z\) and
  \(\emptyset=U_{0}\subset\dotsb\subset U_m=U^+\)
  be filtrations witnessing the cd~covers.
  By refining the filtration on \(U^{+}\)
  if necessary,
  we can assume that \(\infty\in U_{1}\).
Then, with respect to the filtration
  \begin{equation*}
    X_{k}=
    \begin{cases}
      Z_{k}&\text{if \(0\leq k\leq l\),}\\
      Z\cup(U_{k-l}\setminus\{\infty\})&\text{if \(k>l\),}
    \end{cases}
  \end{equation*}
  the collection
  \(\{p_{i}\}_{i}\) is a cd~cover.
\end{proof}

\subsection{Cd~sheaves}\label{ss:cd_desc}

The sheaf condition in the cd~topology
has a nice description.

\begin{definition}\label{77e7673094}
  A pullback square
  \begin{equation}
    \label{e:blowup}
    \begin{tikzcd}
      Z'\ar[r,hook]\ar[d]&
      X'\ar[d]\\
      Z\ar[r,hook]&
      X
    \end{tikzcd}
  \end{equation}
  in \(\Cat{Cpt}\)
  is called a \emph{blowup square} if
  the bottom map
  \(Z\hookrightarrow X\) is an immersion
  and
  the map
  \(X'\setminus Z'\to X\setminus Z\)
  induced by the vertical maps
  is a homeomorphism.
\end{definition}

\begin{theorem}\label{f37c1d2136}
  Let \(\cat{C}\) be a presentable \(\infty\)-category
  and \(X\) a compactum of countable weight.
  Then a functor \(E\colon((\Cat{Cpt}_{\aleph_{1}})_{/X})^{\op}\to\cat{C}\)
  is a cd~sheaf if and only if the following are satisfied:
  \begin{conenum}
    \item
      The object
      \(E(\emptyset)\) is final.
    \item
      It carries any blowup square
      of compacta of countable weight
      to a cartesian square.
  \end{conenum}
\end{theorem}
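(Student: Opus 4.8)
The plan is to treat \cref{f37c1d2136} as a descent theorem for the cd-structure on \(\Cat{Cpt}_{\aleph_{1}}\) whose distinguished squares are the blowup squares of \cref{77e7673094}, and to deduce both implications from a single regularity statement together with the fact that the cd-topology is generated by blowup covers. Since \(\cat{C}\) is presentable, being a cd-sheaf means being local with respect to the covering sieve inclusions; by the standard theory of localizations of presentable \(\infty\)-categories, the objects local for the strongly saturated class generated by a \emph{set} of sieve inclusions coincide with the objects local for the generators. I would therefore reduce the theorem to two assertions: (A) the cd-topology is generated, as a Grothendieck topology, by the blowup covers \(\{Z\to X,\ X'\to X\}\) together with the empty cover of~\(\emptyset\); and (B) a presheaf is local with respect to the sieve generated by a blowup cover if and only if it sends the corresponding blowup square to a cartesian square. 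Granting these, condition~(i) is exactly locality for the empty cover, condition~(ii) is~(B), and the forward implication is immediate.

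For~(A), the key observation is that the top step of the filtration witnessing a cd-cover \(\{Y_{i}\to X\}\) of complexity~\(\leq n\) is itself a blowup square: with \(Z=X_{n-1}\) and \(X'=X_{n}'\), the closed immersion \(Z\hookrightarrow X\), the pullback \(X'\times_{X}Z=p_{n}^{-1}(Z)\), and the excision homeomorphism \(X'\setminus p_{n}^{-1}(Z)\to X\setminus Z\) are precisely the data of \cref{77e7673094}. I would then induct on~\(n\): the restricted family on \(Z=X_{n-1}\) is a cd-cover of complexity~\(\leq n-1\) (by \cref{two}), so by induction its sieve lies in the topology generated by blowup covers; composing the top blowup cover of~\(X\) with this cover of~\(Z\) and the identity cover of~\(X'\) yields, by transitivity, a covering sieve in the generated topology all of whose members factor through some \(Y_{i}\to X\). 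Hence that covering sieve is contained in the sieve generated by \(\{Y_{i}\to X\}\), which is therefore itself covering. This shows the cd-topology is contained in, and—since blowup covers are cd-covers—equal to the topology generated by blowup covers.

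The heart of the matter is~(B), which is the regularity of this cd-structure. The subtlety is that \(X'\to X\) is a monomorphism only over \(X\setminus Z\), not over~\(Z\), so the \v{C}ech nerve of \(\{Z\to X,\ X'\to X\}\) has genuinely nontrivial higher terms coming from \(X'\times_{X}X'\). I would verify regularity in Voevodsky's sense: the square is cartesian, \(Z\hookrightarrow X\) is a monomorphism, and the diagonal \(X'\to X'\times_{X}X'\) again fits into a blowup square—over \(X\setminus Z\) it is an isomorphism by excision, while over~\(Z\) it is the diagonal of \(Z'\to Z\). This is the ``straightforward but tedious'' verification, and it forces the \v{C}ech descent object of a blowup cover to degenerate to the finite pullback \(E(Z)\times_{E(Z')}E(X')\); equivalently, locality for the blowup sieve is exactly cartesianness of the image square. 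Completeness of the cd-structure, needed for the saturation argument, follows from the fact (checked directly) that the base change of a blowup square along any morphism is again a blowup square, using that closed immersions and excision homeomorphisms are stable under pullback in \(\Cat{Cpt}_{\aleph_{1}}\).

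The main obstacle is precisely this regularity step~(B): controlling the higher terms of the \v{C}ech nerve for a general presentable target~\(\cat{C}\), where one cannot fall back on the long exact sequences available in the stable setting. I expect the delicate points to be the verification that the diagonal square is distinguished and the check that all the spaces produced along the way (fiber products, one-point compactifications, quotients \(X/Z\)) remain of countable weight, so that the argument stays inside the site \(\Cat{Cpt}_{\aleph_{1}}\); here \cref{wt_ineq,x2bjoo} keep the weight under control, while \cref{two,2924013c0a} supply the combinatorial bookkeeping for the induction in~(A).
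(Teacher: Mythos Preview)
Your proposal is correct and follows essentially the same route as the paper. Your step~(A) is exactly the paper's \cref{882d59cc9a}, proved by the same induction on the filtration length; your step~(B) is the verification that the blowup cd-structure is complete and regular in Voevodsky's sense, which the paper outsources to the cited result \cite[Theorem~3.2.5]{AsokHoyoisWendt17} (attributed to Voevodsky~\cite{Voevodsky10C}) after a Yoneda reduction to \(\cat{C}=\Cat{Ani}\). Your diagonal-square check for regularity and your base-change check for completeness are precisely the hypotheses that theorem needs, so you are simply unpacking what the paper leaves implicit in the citation; the only cosmetic difference is that you handle general presentable~\(\cat{C}\) via locality for a generating set of sieve inclusions, whereas the paper reduces to \(\Cat{Ani}\) first.
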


\begin{proof}
  By Yoneda, we can assume \(\cat{C}=\Cat{S}\).
  Then
  in light of~\cite[Theorem~3.2.5]{AsokHoyoisWendt17},
  which is attributed to Voevodsky~\cite{Voevodsky10C},
  the statement follows from \cref{882d59cc9a} below.
\end{proof}

\begin{lemma}\label{882d59cc9a}
  The cd~topology on \(\Cat{Cpt}_{\aleph_{1}}\)
  is generated by
  \(\{X'\to X,Z\to X\}\)
  for blowup squares~\cref{e:blowup}
  of compacta of countable weight
  and the empty cover on~\(\emptyset\).
\end{lemma}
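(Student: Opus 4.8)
The plan is to identify the cd topology with the topology \(\tau\) generated by the displayed covers and to prove the two resulting inclusions separately. For the inclusion \(\tau\subseteq(\text{cd topology})\), I would simply check that each generator is a cd cover; since the cd topology is a Grothendieck topology containing the generators, it then contains the whole topology they generate. The empty cover on \(\emptyset\) is a cd cover of complexity~\(0\). For a blowup square~\cref{e:blowup}, the cover \(\{X'\to X,Z\to X\}\) is a cd cover of complexity~\(\leq2\) witnessed by the filtration \(\emptyset\subset Z\subset X\): the first layer is covered by the identity of~\(Z\), and the second layer is covered by \(X'\to X\), because the blowup condition of \cref{77e7673094} says exactly that \(X'\setminus Z'\to X\setminus Z\) is a homeomorphism, where \(Z'=X'\times_X Z\).

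For the reverse inclusion \((\text{cd topology})\subseteq\tau\), I would show that every cd cover \(\{Y_i\to X\}_i\) generates a \(\tau\)-covering sieve \(\cat{R}\), arguing by induction on the complexity~\(n\) of a witnessing filtration \(\emptyset=X_0\subset\dotsb\subset X_n=X\). The case \(n=0\) is the empty cover. For the inductive step, set \(Z=X_{n-1}\), a closed subset of~\(X\), and let \(p_n\colon X_n'\to X\) be the last structure map, which factors through some \(Y_{i_0}\to X\). By \cref{c0f0e840c9}, the map \(X_n'\setminus p_n^{-1}(Z)\to X\setminus Z\) is a homeomorphism, so setting \(Z'=p_n^{-1}(Z)\) we obtain a blowup square with vertical maps \(X_n'\to X\) and \(Z'\to Z\); its associated cover \(\{X_n'\to X,Z\to X\}\) is therefore a \(\tau\)-cover.

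I would then invoke the local-character axiom of~\(\tau\): to prove that \(\cat{R}\) is \(\tau\)-covering it suffices to show that its pullback along each member of this \(\tau\)-cover is \(\tau\)-covering. Along \(X_n'\to X\) the pullback is the maximal sieve, since \(X_n'\to X\) factors through \(Y_{i_0}\) and hence already lies in~\(\cat{R}\). Along \(g\colon Z\to X\), the pullback \(g^*\cat{R}\) contains each projection \(Y_i\times_X Z\to Z\), because the composite \(Y_i\times_X Z\to Z\to X\) agrees with \(Y_i\times_X Z\to Y_i\to X\) and so factors through~\(Y_i\). Moreover the family \(\{Y_i\times_X Z\to Z\}_i\) is a cd cover of~\(Z\) of complexity \(\leq n-1\): the first \(n-1\) structure maps all land in \(Z=X_{n-1}\), so they witness the cover with respect to the truncated filtration \(\emptyset=X_0\subset\dotsb\subset X_{n-1}=Z\) (this is also exactly the restriction statement of \cref{two}). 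By the induction hypothesis this restricted cd cover generates a \(\tau\)-covering sieve on~\(Z\), so \(g^*\cat{R}\) is \(\tau\)-covering as well, and the induction closes.

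The main obstacle I anticipate is the bookkeeping in the inductive step: recognizing the top filtration layer as a genuine blowup square and verifying that restricting the cover to \(X_{n-1}\) drops the complexity by one. For the latter I would lean on \cref{two} rather than re-deriving the filtration by hand. I would also record the small point that the fiber products \(Y_i\times_X Z\) remain in \(\Cat{Cpt}_{\aleph_1}\), since countable weight is inherited by closed subspaces of countable products, so that all the sieves manipulated above are genuinely sieves in our site.
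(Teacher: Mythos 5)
Your proof is correct and follows essentially the same route as the paper: both arguments peel off the top stratum \(X_{n-1}\subset X\) of the witnessing filtration as a blowup square and close the induction using the transitivity of the generated topology. The only organizational difference is that the paper composes the blowup-square covers \(\{X_k'\to X_k,\,X_{k-1}\to X_k\}\) directly down the filtration to exhibit \(\{X_k'\to X\}_k\subset\cat{R}\) as a generated cover, whereas you pull the sieve back to \(X_{n-1}\) and invoke the restriction statement of \cref{two}; both are valid.
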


\begin{proof}
  In this proof,
  the \emph{cd'~topology}
  means the topology generated
  by the two types of covers in the statement.
  It is clear that the cd'~topology is coarser than the cd~topology,
  so we prove the converse.
  Consider
  a cd~covering sieve
  \(\cat{R}\) on~\(X\).
  We wish to show that
  it is also a cd'~covering sieve.
  We choose a filtration
  \begin{equation*}
    \emptyset=X_{0}
    \subset\dotsb\subset
    X_{n}=X
  \end{equation*}
  and a map
  \(p_{k}\colon X_{k}'\to X_{k}\) in~\(\cat{R}\)
  for \(k=1\), \ldots,~\(n\)
  such that
  the induced map
  \(X_{k}'\setminus p_{k}^{-1}(X_{k-1})
  \to X_{k}\setminus X_{k-1}\)
  is a homeomorphism.
  In this case,
  \(\{X_{k}'\to X_{k},X_{k-1}\to X_{k}\}\)
  generates a cd'~covering sieve.
  Hence by transitivity
  or
  the fact that
  \(\emptyset\) generates a cd'~covering sieve on \(X_{0}=\emptyset\),
  the collection
  \(\{X_{1}'\to X_{1},\dotsc,X_{n}'\to X_{n}\}\)
  also generates a cd'~covering sieve.
  Therefore, \(\cat{R}\) is a cd'~covering sieve.
\end{proof}

Cohomology is a typical example
of a cd~sheaf:

\begin{theorem}\label{f1a75190e6}
  Let \(\cat{C}\) be a presentable \(\infty\)-category.
  For an object \(C\in\cat{C}\),
  the cohomology functor
  \(\Gamma(\X;C)\colon\Cat{Cpt}_{\aleph_{1}}^{\op}\to\cat{C}\)
  is a cd~sheaf.
\end{theorem}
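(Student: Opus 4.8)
The plan is to apply the cd~sheaf criterion of \cref{f37c1d2136}, so that it suffices to prove two things: that \(\Gamma(\emptyset;C)\) is final, and that \(\Gamma(\X;C)\) sends every blowup square of compacta of countable weight to a cartesian square. The first point is immediate, since \(\Shv(\emptyset)\) is the trivial \(\infty\)-topos: the \(\cat{C}\)-valued sheaves on \(\emptyset\) form the terminal \(\infty\)-category, so \(\Gamma(\emptyset;C)=\projlim_{\Sh(\emptyset)}C\) is an empty limit and hence final.

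Everything therefore reduces to a fixed blowup square~\cref{e:blowup}, which I will analyze inside the \(\infty\)-category \(\Shv(X;\cat{C})\) of \(\cat{C}\)-valued sheaves, writing \(\pi_{Y}\colon Y\to{*}\) for terminal maps and \(\Gamma(Y;C)=\pi_{Y*}\pi_{Y}^{*}C\). Denote the right vertical map of the square by \(q\colon X'\to X\) and the bottom map by \(i\colon Z\hookrightarrow X\). Since \(Z\) is a compactum and \(X\) is Hausdorff, the embedding \(i\) is automatically a closed immersion; let \(j\colon U\hookrightarrow X\) be the open complement, and set \(Z'=q^{-1}(Z)\) and \(U'=q^{-1}(U)=X'\setminus Z'\). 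Because \(q\) is a continuous map between compacta it is proper, and the defining property of a blowup square says precisely that \(q\) restricts to a homeomorphism \(q|_{U'}\colon U'\xrightarrow{\sim}U\).

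The core of the argument is a recollement computation. Consider the sheaf \(\mathcal{G}=q_{*}\pi_{X'}^{*}C\) on \(X\). Proper base change for \(q\) (Lurie's theorem~\cite[Theorem~7.3.1.16]{LurieHTT}, in its \(\cat{C}\)-linear form) applied to the cartesian squares \(Z'=Z\times_{X}X'\) and \(U'=U\times_{X}X'\) yields \(i^{*}\mathcal{G}\simeq(q|_{Z'})_{*}\pi_{Z'}^{*}C\) and \(j^{*}\mathcal{G}\simeq(q|_{U'})_{*}\pi_{U'}^{*}C\); as \(q|_{U'}\) is a homeomorphism, the unit \(\pi_{X}^{*}C\to\mathcal{G}\) becomes an equivalence after applying \(j^{*}\). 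Hence the fibre \(\mathcal{K}=\fib(\pi_{X}^{*}C\to\mathcal{G})\) has \(j^{*}\mathcal{K}\simeq0\), so the recollement fibre sequence \(j_{!}j^{*}\to\id\to i_{*}i^{*}\) shows \(\mathcal{K}\simeq i_{*}i^{*}\mathcal{K}\) is supported on \(Z\), and therefore the restriction \(\Gamma(X;\mathcal{K})\to\Gamma(Z;i^{*}\mathcal{K})\) is an equivalence (using \(\pi_{X*}i_{*}\simeq\pi_{Z*}\)). On the other hand, \(\pi_{X*}\mathcal{G}\simeq\Gamma(X';C)\) and \(\pi_{Z*}i^{*}\mathcal{G}\simeq\Gamma(Z';C)\) by the same base-change identities together with \(\pi_{X*}q_{*}\simeq\pi_{X'*}\), so the image under \(\Gamma(\X;C)\) of the blowup square is exactly the square obtained by applying \(\Gamma(X;\X)\) and \(\Gamma(Z;i^{*}\X)\) to the map \(\pi_{X}^{*}C\to\mathcal{G}\). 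Its horizontal fibres are \(\Gamma(X;\mathcal{K})\) and \(\Gamma(Z;i^{*}\mathcal{K})\), and the equivalence just established between them is precisely the assertion that the square is cartesian.

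I expect the main obstacle to be technical rather than conceptual: one must secure proper base change and the closed--open recollement with coefficients in an arbitrary presentable \(\cat{C}\), not only in \(\Cat{Ani}\). The recollement is formal once \(j_{!}\) is known to exist, and proper base change for \(\cat{C}\)-valued sheaves can be deduced from the \(\Cat{Ani}\)-valued statement, since \(\Shv(\X;\cat{C})\simeq\Shv(\X)\otimes\cat{C}\) and the proper pushforward \(q_{*}\) is compatible with this tensoring; this is the only place where properness of~\(q\)---hence the restriction to compacta---is used. (Note that the weaker assertion that \(\Sh\) carries blowup squares to pushouts of \(\Pro\)-animas does not by itself yield the theorem for general~\(\cat{C}\), as filtered colimits in \(\cat{C}\) need not be left exact; this is why the argument is run directly in \(\Shv(\X;\cat{C})\).)
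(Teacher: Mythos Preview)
Your approach coincides with the paper's: both reduce via \cref{f37c1d2136} to blowup squares, and your recollement/proper-base-change argument is precisely the content of \cref{cd_shv} (from which the paper derives \cref{cdh_sh} before citing the latter). Your closing remark is apt---the \(\Pro\)-anima pushout of \cref{cdh_sh} does not by itself yield the result in a presentable \(\cat{C}\) lacking left-exact filtered colimits, so working directly with \(\cat{C}\)-valued sheaves, as you do, is the honest route and effectively supplies what the paper's one-line proof leaves implicit.

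One genuine but minor gap: taking the fibre \(\mathcal{K}=\fib(\pi_X^*C\to\mathcal{G})\), asserting \(j^*\mathcal{K}\simeq 0\), and invoking the ``fibre sequence'' \(j_!j^*\to\id\to i_*i^*\) all presuppose that \(\cat{C}\) is pointed, whereas the statement is for arbitrary presentable \(\cat{C}\). The fix is immediate and avoids fibres: since \(j^*(\pi_X^*C\to\mathcal{G})\) is an equivalence and \(j^*i_*\) is the terminal functor, the square
\[
\begin{tikzcd}
\pi_X^*C\ar[r]\ar[d]&\mathcal{G}\ar[d]\\
i_*i^*\pi_X^*C\ar[r]&i_*i^*\mathcal{G}
\end{tikzcd}
\]
is cartesian in \(\Shv(X;\cat{C})\)---check after the jointly conservative, left-exact functors \(i^*\) and \(j^*\)---and applying the limit-preserving \(\Gamma(X;\X)\) together with \(\Gamma(X;i_*\X)\simeq\Gamma(Z;\X)\) gives the desired pullback.
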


\begin{proof}
  As it maps~\(\emptyset\) to the final object,
  by \cref{f37c1d2136},
  this follows from \cref{cdh_sh} below.
\end{proof}

\begin{proposition}\label{cd_shv}
  For a blowup square \cref{e:blowup}
  in \(\Cat{Cpt}\),
  the square
  \begin{equation*}
    \begin{tikzcd}
      \Shv(Z')\ar[r]\ar[d]&
      \Shv(X')\ar[d]\\
      \Shv(Z)\ar[r]&
      \Shv(X)
    \end{tikzcd}
  \end{equation*}
  is cocartesian in
  the \(\infty\)-category
  of \(\infty\)-toposes \(\Cat{Top}\).
\end{proposition}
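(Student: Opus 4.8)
The plan is to pass to inverse image functors and reduce the assertion to a pullback of $\infty$-categories, which I would then establish using the open--closed recollement together with proper base change. Write the blowup square~\cref{e:blowup} as
\[
\begin{tikzcd}
Z'\ar[r,"i'",hook]\ar[d,"b"'] & X'\ar[d,"a"]\\
Z\ar[r,"i"',hook] & X,
\end{tikzcd}
\]
with $i,i'$ closed immersions (the only case we need) and $a$ proper, as every map of compacta is. By Lurie's computation of colimits in $\Cat{Top}$ via inverse images \cite[\S6.3]{LurieHTT}, together with the fact that limits in $\mathrm{Pr}^{\mathrm L}$ are computed in $\widehat{\mathrm{Cat}}_{\infty}$ \cite[Proposition~5.5.3.13]{LurieHTT}, the square of toposes is cocartesian if and only if the associated square of inverse image functors
\[
\begin{tikzcd}
\Shv(X)\ar[r,"a^{*}"]\ar[d,"i^{*}"'] & \Shv(X')\ar[d,"i'^{*}"]\\
\Shv(Z)\ar[r,"b^{*}"'] & \Shv(Z')
\end{tikzcd}
\]
is a pullback of $\infty$-categories. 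The canonical equivalence $b^{*}i^{*}\simeq i'^{*}a^{*}$ coming from $ib=ai'$ supplies a comparison functor $\Phi\colon\Shv(X)\to\Shv(Z)\times_{\Shv(Z')}\Shv(X')$, $F\mapsto(i^{*}F,a^{*}F)$, and it remains to show $\Phi$ is an equivalence.

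Next I would introduce the open complements $U=X\setminus Z$ and $U'=X'\setminus Z'$, with open immersions $j,j'$; the blowup condition says precisely that $a$ restricts to a homeomorphism $c\colon U'\xrightarrow{\ \sim\ }U$, so that $jc=aj'$. The open--closed recollement presents $\Shv(X)$ as the gluing (lax limit) of the functor $i^{*}j_{*}\colon\Shv(U)\to\Shv(Z)$, i.e.\ the $\infty$-category of triples $(A\in\Shv(U),G\in\Shv(Z),G\to i^{*}j_{*}A)$. Applying the recollement for $\Shv(X')$ to the right-hand factor of the fiber product, eliminating the $\Shv(Z')$-coordinate by the equivalence defining the product, and using the adjunction $b^{*}\dashv b_{*}$ together with the equivalence $c^{*}\colon\Shv(U)\xrightarrow{\ \sim\ }\Shv(U')$, I would identify $\Shv(Z)\times_{\Shv(Z')}\Shv(X')$ with the gluing of the functor $b_{*}i'^{*}j'_{*}c^{*}\colon\Shv(U)\to\Shv(Z)$. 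Under these two descriptions the functor $\Phi$ is the comparison of gluing categories induced by comparing the two gluing functors.

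The crux is therefore the single base-change equivalence $i^{*}j_{*}\simeq b_{*}i'^{*}j'_{*}c^{*}$. Since $c$ is a homeomorphism and $aj'=jc$, one has $a_{*}j'_{*}=j_{*}c_{*}$ and hence $j_{*}\simeq a_{*}j'_{*}c^{*}$; applying $i^{*}$ and invoking proper base change $i^{*}a_{*}\simeq b_{*}i'^{*}$ for the above pullback square with $a$ proper \cite[Theorem~7.3.1.16]{LurieHTT} gives the claimed identity. Thus the two gluing functors agree, the two lax limits are identified compatibly with $\Phi$, and $\Phi$ is an equivalence. I expect the main obstacle to be bookkeeping rather than geometry: correctly invoking Lurie's topos-colimit formalism in the first step, and matching the recollement gluing data (including the adjunction and homeomorphism reparametrisations) in the second. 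The geometric content is concentrated entirely in the proper base change identity of the last step, where the two defining features of a blowup square---that $a$ is an isomorphism over the open part and proper over the closed part---are exactly what is used.
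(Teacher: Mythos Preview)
Your argument is correct and shares the paper's first step: both reduce, via \cite[Proposition~6.3.2.3]{LurieHTT}, to showing that the square of inverse image functors is a pullback of \(\infty\)-categories. From there the approaches diverge. The paper simply cites \cite[Corollary~2.13]{ClausenJansen} for this pullback statement, whereas you supply a direct proof using the open--closed recollement for \(\Shv(X)\) and \(\Shv(X')\), the homeomorphism \(c\colon U'\to U\) furnished by the blowup condition, and Lurie's proper base change \(i^{*}a_{*}\simeq b_{*}i'^{*}\) \cite[Theorem~7.3.1.16]{LurieHTT}. Your route is more self-contained and makes explicit exactly where each hypothesis of a blowup square is consumed (closed immersion for the recollement, isomorphism over the open part for \(c\), properness of \(a\) for base change); the paper's route is terser but defers the geometric content to an external reference which, in the end, proves essentially the same recollement-plus-base-change statement. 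The ``bookkeeping'' you flag---that \(\Phi\) is compatible with the identification of gluing functors---does go through: the gluing datum of \(a^{*}F\) is \(i'^{*}\) of the unit \(a^{*}F\to j'_{*}j'^{*}a^{*}F\), which under the identifications \(i'^{*}a^{*}\simeq b^{*}i^{*}\) and \(j'^{*}a^{*}\simeq c^{*}j^{*}\) matches the adjoint of \(\alpha\) composed with the base-change transformation.
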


\begin{proof}
  By~\cite[Proposition~6.3.2.3]{LurieHTT},
  we need to show
  that the diagram
  obtained by considering the inverse image functors
  is cartesian.
  This claim follows from~\cite[Corollary~2.13]{ClausenJansen}.
\end{proof}

Since
the shape functor \(\Cat{Top}\to\Pro(\Cat{Ani})\) preserves colimits,
we get the following:

\begin{corollary}\label{cdh_sh}
  For a blowup square \cref{e:blowup}
  in \(\Cat{Cpt}\),
  the square
  \begin{equation*}
    \begin{tikzcd}
      \Sh(Z')\ar[r]\ar[d]&
      \Sh(X')\ar[d]\\
      \Sh(Z)\ar[r]&
      \Sh(X)
    \end{tikzcd}
  \end{equation*}
  is cocartesian in
  the \(\infty\)-category
  of \(\Pro\)-animas
  \(\Pro(\Cat{Ani})\).
\end{corollary}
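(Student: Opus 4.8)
The plan is to obtain this as an essentially formal consequence of \cref{cd_shv}. Recall from \cref{xczrw9} that the functor $\Sh\colon\Cat{Cpt}\to\Pro(\Cat{Ani})$ is by definition the composite of $\Shv\colon\Cat{Cpt}\to\Cat{Top}$ with the topos-theoretic shape functor $\Sh\colon\Cat{Top}\to\Pro(\Cat{Ani})$. Hence for every compactum $W$ occurring in the blowup square we have $\Sh(W)\simeq\Sh(\Shv(W))$, and the square asserted in the statement is literally the image, under the topos-level shape functor, of the square of $\infty$-toposes $\Shv(Z')\to\Shv(X')$, $\Shv(Z)\to\Shv(X)$.

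First I would invoke \cref{cd_shv}, which says precisely that this square of $\infty$-toposes is cocartesian in $\Cat{Top}$. It then only remains to transport this pushout along the topos-level shape functor. Since that functor preserves colimits—as recorded in the sentence immediately preceding the statement—it carries the pushout square in $\Cat{Top}$ to a pushout square in $\Pro(\Cat{Ani})$, and by the factorization of $\Sh$ through $\Shv$ this is exactly the claimed cocartesian square. No further manipulation is needed.

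The one nonformal ingredient is the colimit-preservation of $\Sh\colon\Cat{Top}\to\Pro(\Cat{Ani})$, but this is a standard property rather than an obstacle: the shape functor is a left adjoint, so it preserves all small colimits, in particular pushouts. I therefore expect no real difficulty in this step; the genuine content has already been discharged in \cref{cd_shv}, whose proof reduces via \cite[Proposition~6.3.2.3]{LurieHTT} to the cartesianness of the corresponding diagram of inverse-image functors and then appeals to \cite[Corollary~2.13]{ClausenJansen}. In short, the \emph{only} work here is the bookkeeping of applying a colimit-preserving functor to an already-established pushout.
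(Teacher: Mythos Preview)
Your proposal is correct and matches the paper's own argument exactly: the paper simply records that the shape functor \(\Cat{Top}\to\Pro(\Cat{Ani})\) preserves colimits and deduces the corollary from \cref{cd_shv}, which is precisely what you do.
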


\begin{example}\label{d49984addf}
  Note that
  cd~descent is stronger than what is usually called excision
  in algebraic topology.
  For example,
  singular cohomology does not determine a cd~sheaf:
  Let \(W\subset\RR^2\)
  be the Warsaw circle.
  Let
  \(D\subset\RR^2\) denote
  the union of~\(W\)
  and the bounded component of \(\RR^2\setminus W\).
  Then
  \(D\)
  and
  \(D/W\)
  are homeomorphic to~\(D^2\)
  and~\(S^2\), respectively.
  Therefore,
  \begin{equation*}
    \begin{tikzcd}
      W\ar[r]\ar[d]&
      D\ar[d]\\
      {*}\ar[r]&
      D/W
    \end{tikzcd}
  \end{equation*}
  is a blowup square,
  but the singular cohomology of~\(W\) is trivial.
\end{example}

\subsection{Variant: profinite sets}\label{ss:pfin}

Let \(\Cat{PFin}_{\aleph_{1}}\) be the category
of profinite sets of countable weight.
It is a full subcategory of \(\Cat{Cpt}_{\aleph_{1}}\)
closed under finite limits.

\begin{definition}\label{x5pwvc}
  The \emph{cd~topology} on \(\Cat{PFin}_{\aleph_{1}}\)
  is defined by restricting
  that on \(\Cat{Cpt}_{\aleph_{1}}\).
\end{definition}

The same proof as in \cref{f37c1d2136} proves the following:

\begin{theorem}\label{xds8fz}
  Let \(\cat{C}\) be a presentable \(\infty\)-category
  and \(X\) a profinite set of countable weight.
  Then a functor \(E\colon((\Cat{PFin}_{\aleph_{1}})_{/X})^{\op}\to\cat{C}\)
  is a cd~sheaf if and only if the following are satisfied:
  \begin{conenum}
    \item
      The object
      \(E(\emptyset)\) is final.
    \item
      It carries any blowup square
      of profinite sets of countable weight
      to a cartesian square.
  \end{conenum}
\end{theorem}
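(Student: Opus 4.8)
The plan is to carry over the proof of \cref{f37c1d2136} to the full subcategory $\Cat{PFin}_{\aleph_1}\subset\Cat{Cpt}_{\aleph_1}$, whose only new content is a closure check. First I would reduce to the universal case $\cat{C}=\Cat{S}$ via the Yoneda embedding: the sheaf condition and the two conditions~(i)--(ii) are each expressed by limit diagrams (a terminal object, respectively a cartesian square), and such diagrams in a presentable $\infty$-category are detected by the corepresentable functors $\Map_{\cat{C}}(C,\X)$ for $C\in\cat{C}$. Applying these functors replaces $E$ by the $\Cat{S}$-valued functors $\Map_{\cat{C}}(C,E(\X))$ and reduces all three conditions simultaneously to the case $\cat{C}=\Cat{S}$.

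Once we are over $\Cat{S}$, the statement follows from Voevodsky's criterion in the form of~\cite[Theorem~3.2.5]{AsokHoyoisWendt17}, exactly as in \cref{f37c1d2136}, provided we establish the profinite analogue of \cref{882d59cc9a}: the cd~topology on $\Cat{PFin}_{\aleph_1}$ is generated by the covers $\{X'\to X,\,Z\to X\}$ attached to blowup squares~\cref{e:blowup} of profinite sets of countable weight, together with the empty cover on~$\emptyset$.

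To prove this generation statement I would rerun the argument of \cref{882d59cc9a} and verify that it never leaves $\Cat{PFin}_{\aleph_1}$. Given a cd~covering sieve $\cat{R}$ on a profinite set~$X$, choose a filtration $\emptyset=X_0\subset\dotsb\subset X_n=X$ together with maps $p_k\colon X_k'\to X_k$ in~$\cat{R}$ witnessing that $\cat{R}$ contains a cd~cover. Since $\cat{R}$ is a sieve in $(\Cat{PFin}_{\aleph_1})_{/X}$, each source $X_k'$ is already a profinite set of countable weight; each $X_k$ is a closed subset of~$X$, hence profinite of countable weight; and $p_k^{-1}(X_{k-1})$ is a closed subset of~$X_k'$, hence again an object of $\Cat{PFin}_{\aleph_1}$. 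Thus the pullback square with corners $p_k^{-1}(X_{k-1})$, $X_k'$, $X_{k-1}$, $X_k$ is a blowup square of profinite sets, and the inductive refinement in \cref{882d59cc9a} exhibits $\cat{R}$ as generated by covers of this form.

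The main---and indeed only genuinely new---point is therefore the closure of $\Cat{PFin}_{\aleph_1}$ under the operations used above: finite limits (in particular the fiber products forming the blowup squares) and passage to closed subsets. Both hold because a closed subspace of a totally disconnected compactum is again a totally disconnected compactum and because a subspace never increases the weight. Everything else is formal and identical to the compact case, which is why the theorem requires no argument beyond this observation.
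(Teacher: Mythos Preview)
Your proposal is correct and follows exactly the approach the paper intends: the paper's proof of this theorem is the single line ``The same proof as in \cref{f37c1d2136} proves the following,'' and you have faithfully unpacked that, including the Yoneda reduction to $\Cat{S}$, the appeal to Voevodsky's criterion, and the rerun of \cref{882d59cc9a}. The only point worth tightening is the sentence ``each source $X_k'$ is already a profinite set'': a priori the witness $X_k'$ in \cref{c0f0e840c9} is just a compactum mapping through some $Y_i\in\cat{R}$, but replacing $X_k'$ by its image in $Y_i$ (which is profinite and still a homeomorphism over $X_k\setminus X_{k-1}$) fixes this immediately.
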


\section{Cdh~covers underlie cd~covers}\label{s:cd_rh}

In this section,
we relate
the cd~topology
with algebraic geometry.
For a local field~\(F\),
we write
\(\Cat{Var}_{F}\) for
the category of varieties,
i.e., separated schemes of finite type over~\(F\).
We write \(\Cat{Aff}_{F}\) for its full subcategory
spanned by affine varieties.
For a variety~\(X\),
the set of \(F\)-points
\(X(F)\) comes with a natural topology:
We use the product topology~\(F^{n}\)
for~\(\AA^{n}\)
and consider its subspace topology
for affine varieties.
In general, we get the topology by gluing.
See, e.g.,~\cite[Proposition~3.1]{Conrad12}
for details.
This gives a functor
\(\Cat{Var}_F\to\Cat{LCpt}_{\aleph_{1}}\),
where the target is the category
of local compacta of countable weight
and continuous maps.

\begin{remark}\label{xwhxfr}
The category of local compacta
  and proper continuous maps
  is copresentable
  since it is equivalent to \(\Cat{Cpt}_{*/}\).
  However, the category of local compacta
  and continuous maps
  is not;
  e.g., it does not have
  an infinite product of~\(\RR\).
  Hence the notation \(\Cat{LCpt}_{\aleph_{1}}\) here
  is not categorically justified.
\end{remark}

\begin{remark}\label{xc122x}
We can still take a minimal approach
  as in \cref{xah2cq}.
  The smallest full subcategory of topological spaces (or locales)
  that is closed under finite limits
  and contains~\(\RR\)
  is the category
  of finite-dimensional local compacta of countable weight.
\end{remark}

\begin{definition}\label{xq6j67}
  We equip \(\Cat{LCpt}_{\aleph_{1}}\) with the topology
  such that
  a sieve~\(\cat{R}\) on~\(X\) is a covering if and only if
  it contains a collection of proper maps
  \(\{X_{i}\to X\}_{i}\) such that
  \(\{X_{i}^{+}\to X^{+}\}_{i}\) is a cd~cover.
It is easy
  to deduce
  from \cref{cc4df108fb}
  that
  this is indeed a Grothendieck topology.
  We call this the \emph{cd~topology}.
\end{definition}

In this section,
we prove the following:

\begin{theorem}\label{cd_rh}
  Let \(F\) be a local field.
  We equip
  \(\Cat{Var}_F\)
  and \(\Cat{LCpt}_{\aleph_{1}}\)
  with the cdh~topology
  and cd~topologies, respectively.
  Then
  the functor
  \(\X(F)\colon\Cat{Var}_F\to\Cat{LCpt}_{\aleph_{1}}\)
  preserves covers.
\end{theorem}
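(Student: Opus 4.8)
The plan is to reduce to a generating set of cdh~covers and check each separately.
Recall that the cdh~topology on \(\Cat{Var}_F\) is generated by Nisnevich covers
together with the covers \(\{Y\to X,\,Z\to X\}\) arising from abstract blowup squares
(\emph{cdp}~covers), i.e., cartesian squares with \(Z\hookrightarrow X\) a closed immersion,
\(p\colon Y\to X\) proper, and \(Y\setminus p^{-1}(Z)\xrightarrow{\sim}X\setminus Z\).
Since \(\X(F)=\Hom(\Spec F,\X)\) preserves finite limits, it carries the fiber products
witnessing the sieve-theoretic closure conditions to the corresponding fiber products of
\(F\)-points; in particular \((X\times_Y Z)(F)\simeq X(F)\times_{Y(F)}Z(F)\) as topological spaces.
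Together with \cref{cc4df108fb}, this means it suffices to show that each of the two
generating classes is sent to a cd~cover in the sense of \cref{xq6j67}, after which the general
case follows by transitivity and refinement. I will also use that, by \cref{882d59cc9a,77e7673094},
cd~covers of compacta are generated by blowup squares, so that it is enough to recognize
blowup squares and, for the open pieces, to exhibit an explicit filtration as in \cref{c0f0e840c9}.

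\textbf{The proper (cdp) case.}
Let \(\{Y\to X,\,Z\to X\}\) come from an abstract blowup square.
Applying \(\X(F)\) yields a pullback square of local compacta in which \(Z(F)\hookrightarrow X(F)\)
is a closed immersion and \(p(F)\colon Y(F)\to X(F)\) is proper, using that proper morphisms of
varieties induce proper maps on \(F\)-points. The isomorphism \(Y\setminus p^{-1}(Z)\xrightarrow{\sim}X\setminus Z\)
becomes a homeomorphism \(Y(F)\setminus p(F)^{-1}(Z(F))\xrightarrow{\sim}X(F)\setminus Z(F)\),
and \(p^{-1}(Z)(F)=p(F)^{-1}(Z(F))\) by compatibility with fiber products.
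Thus the image is a blowup square of local compacta, and passing to one-point compactifications
(which preserves closed immersions and the homeomorphism on complements, sending
both points at infinity to \(\infty\)) produces a blowup square of compacta;
by \cref{882d59cc9a,xq6j67} the family \(\{Y(F)\to X(F),\,Z(F)\to X(F)\}\) is a cd~cover.

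\textbf{The Nisnevich case.}
Here the key inputs are that an étale morphism induces a \emph{local homeomorphism} on
\(F\)-points (the implicit function theorem, valid for every local field, including the
positive-characteristic fields \(\FF_p\llparenthesis T\rrparenthesis\)), and that the
complete-decomposition condition supplies \emph{sections}. Reducing to an elementary Nisnevich
square with open \(U\hookrightarrow X\), étale \(V\to X\), and closed complement \(Z=X\setminus U\)
satisfying \(V\times_X Z\xrightarrow{\sim}Z\), we obtain on \(F\)-points a continuous section
\(Z(F)\xrightarrow{\sim}(V\times_X Z)(F)\hookrightarrow V(F)\). Hence the closed immersion
\(Z(F)\hookrightarrow X(F)\) (proper) lies in the sieve generated by the image family, while
\(U(F)\) is its open complement. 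To build a cd~cover I pass to the compactum \(X(F)^+\):
covering a neighborhood of each point of \(X(F)\) by small opens over which some \(V(F)\to X(F)\)
has a local section, extracting a finite subcover by compactness of \(X(F)^+\), shrinking to a
finite closed refinement \(\{C_j\}\) carrying sections \(C_j\to V_{i(j)}(F)\), and filtering by
\(X_k=C_1\cup\dotsb\cup C_k\) so that each inclusion \(C_k\hookrightarrow X_k\) is a homeomorphism
onto \(X_k\setminus X_{k-1}\) and factors through the image family. This is a finite-complexity
cd~cover refining the image of the Nisnevich cover.

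\textbf{Main obstacle.}
The genuine difficulty is the Nisnevich case, precisely because the \(F\)-points of an étale map
are open local homeomorphisms rather than proper maps, so they are \emph{not} literally cd~covers
in the sense of \cref{xq6j67}; the content is to convert the local-section data coming from complete
decomposition into an honest finite filtration. The delicate point is the behavior at infinity:
the local homeomorphisms cover \(X(F)\) but not the added point \(\infty\in X(F)^+\), so one must
arrange the outermost stratum to reach out to infinity, which is where the finite covering dimension
of \(X(F)\) (finite since \(X\) is a variety over a local field, cf.\ \cref{xc122x}) is used to keep
the complexity finite and to handle the non-compact closed piece containing \(\infty\). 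Once this
is done, transitivity (\cref{2924013c0a}) and the generation statements assemble the two cases into
the assertion that \(\X(F)\) preserves covers.
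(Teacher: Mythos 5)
Your overall route is the same as the paper's: reduce to the two generating classes of cdh~covers and prove what the paper states as \cref{nis,cdp}; and your treatment of the proper (cdp) case, via properness of \(F\)-points of proper morphisms and passage to one-point compactifications, is correct and identical in substance to the paper's. The gap is in the Nisnevich case, at the step ``extracting a finite subcover by compactness of \(X(F)^{+}\).'' The opens over which the family admits local sections cover \(X(F)\) but not \(\infty\in X(F)^{+}\), so they do not form an open cover of the compactum \(X(F)^{+}\), and \(X(F)\) itself is not compact, so no finite subcover exists there either. You flag this, but the proposed repair via the finite covering dimension of \(X(F)\) does not do the job: covering dimension bounds the \emph{multiplicity} of a refinement, not its cardinality, and in general neither \(X(F)\) nor the closed set \(Z(F)\) is compact (e.g.\ \(X=\AA^{2}\), \(Z=\AA^{1}\times\{0\}\)), so no finite subfamily of your pointwise local sections can cover. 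As written, the Nisnevich case therefore does not produce the required finite closed refinement.

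The repair, which is what the paper's proof of \cref{nis} does, is to note that two opens always suffice. The open immersion \(U(F)\hookrightarrow X(F)\) is itself a member of the family and takes care of everything away from \(Z(F)\), including the end at infinity; so one only needs a \emph{single} open \(V\supset Z(F)\) whose inclusion into \(X(F)\) factors through \(X'(F)\). This is obtained by extending the canonical section \(Z(F)\to X'(F)\) (coming from \(X'\times_{X}Z\xrightarrow{\sim}Z\)) over an open neighborhood of the closed subset \(Z(F)\), using that \(X'(F)\to X(F)\) is a local homeomorphism and \(X(F)\) is metrizable; in the paper this appears as taking \(V\) to be a union of homeomorphic images of opens of \(X'(F)\) around the points of \(Z(F)\). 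Then \(\{U(F),V\}\) is a two-element open cover each member of which lies in the sieve, and normality of \(X(F)\) shrinks it to a two-element (disjointly indexed) closed cover, which is a cd~cover after one-point compactification. Your two key observations (étale maps give local homeomorphisms on \(F\)-points; the complete-decomposition condition gives a section over \(Z(F)\)) are exactly the right inputs; what is missing is this globalization over the non-compact \(Z(F)\) and the use of \(U(F)\), rather than dimension theory, to absorb the point at infinity.
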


This is rather immediate
when \(F=\CC\),
but otherwise it is subtle
as
a scheme surjection does not induce
a surjection on the points;
e.g., consider
\(\RR\)-points of
\(\Spec\CC\to\Spec\RR\).

\begin{remark}\label{79952efe42}
  In this paper,
  we only need the weaker variant of \cref{cd_rh},
  where ``cdh'' is replaced with ``rh''.
  This version is recorded here for completeness.
\end{remark}

\begin{proof}
  As the empty scheme underlies the empty set,
  by the definition of the cdh~topology,
  we are reduced to
  proving \cref{nis,cdp} below.
\end{proof}

\begin{proposition}\label{nis}
  Consider a cartesian square of
  varieties over a local field~\(F\)
  \begin{equation*}
    \begin{tikzcd}
      U'\ar[r,hook]\ar[d]&
      X'\ar[d]\\
      U\ar[r,hook]&
      X
    \end{tikzcd}
  \end{equation*}
  such that
  \(U\hookrightarrow X\) is an open immersion,
  \(X'\to X\) is étale,
  and
  the induced map
  \(X'\setminus U'\to X'\setminus U\)
  is an isomorphism.
  Then the set \(
  \{X'(F)\to X(F),U(F)\to X(F)\}
  \) refines to a finite closed cover of~\(X(F)\).
\end{proposition}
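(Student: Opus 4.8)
The plan is to work entirely on \(F\)-points and to exploit that \(\X(F)\) turns the étale map \(X'\to X\) into a local homeomorphism \(f\colon X'(F)\to X(F)\): over any local field this is the analytic inverse function theorem (cf.\ \cite{Conrad12}). Since \(X'\) and \(X\) are separated over \(F\), the morphism \(X'\to X\) is separated, so I may form the closed ``off\-/diagonal'' relation
\[
  R=\{(y_1,y_2)\in X'(F)\times_{X(F)}X'(F):y_1\neq y_2\};
\]
it is closed in \(X'(F)\times X'(F)\) because the fibre product is closed (the base \(X(F)\) is Hausdorff) and the diagonal is clopen in it (it is open since \(f\) is a local homeomorphism and closed since \(f\) is separated). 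Writing \(Z=X\setminus U\) and \(Z'=X'\setminus U'\), the Cartesian square gives \(f^{-1}(Z(F))=Z'(F)\), and the hypothesis that \(X'\setminus U'\to X\setminus U\) is an isomorphism says precisely that \(f\) restricts to a homeomorphism \(Z'(F)\xrightarrow{\sim}Z(F)\). In particular no point of \(Z'(F)\) has an \(R\)-partner, i.e.\ \(R\) is disjoint from \((Z'(F)\times X'(F))\cup(X'(F)\times Z'(F))\).

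Granting that \(f\) is injective on an open neighborhood of \(Z'(F)\), the proposition follows quickly. Let \(\Omega\supseteq Z'(F)\) be open with \(f|_\Omega\) injective; then \(f|_\Omega\) is a homeomorphism onto an open set \(V=f(\Omega)\supseteq Z(F)\), so \((f|_\Omega)^{-1}\) is a continuous section over \(V\). As \(X(F)\) is metrizable, hence normal, I pick an open \(W_0\) with \(Z(F)\subseteq W_0\subseteq\overline{W_0}\subseteq V\) and set \(C_1=\overline{W_0}\) and \(C_2=X(F)\setminus W_0\). These are closed, \(C_1\cup C_2=X(F)\); the inclusion \(C_1\hookrightarrow X(F)\) factors through \(X'(F)\to X(F)\) via the section, and \(C_2\subseteq X(F)\setminus Z(F)=U(F)\) factors through \(U(F)\to X(F)\). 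Both lifts are automatically proper, since each \(C_i\to X(F)\) is a closed immersion (hence proper) and \(X'(F)\to X(F)\), \(U(F)\to X(F)\) are separated, so the lifts are proper by cancellation. Passing to one-point compactifications and filtering \(X(F)^+\) by \(\emptyset\subseteq C_1^+\subseteq X(F)^+\), the finite closed cover \(\{C_1^+\to X(F)^+,C_2^+\to X(F)^+\}\) is a cd~cover (immediate from \cref{c0f0e840c9}), so by \cref{xq6j67} the two given maps generate a covering sieve.

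The hard part is therefore the neighborhood statement: a separated local homeomorphism \(f\colon Y\to X\) of second-countable local compacta that is injective along the closed set \(W=Z'(F)\), with \(R\) avoiding the slices over \(W\), is injective on an open neighborhood of \(W\); equivalently, I must find open \(\Omega\supseteq W\) with \((\Omega\times\Omega)\cap R=\emptyset\). A naive tube around \(W\) fails because \(W\) need not be compact, and the genuine worry is that collisions accumulate onto \(W\). Here closedness of \(R\) rules out accumulation onto \(W\times W\), while the strong hypothesis \(R\cap((W\times Y)\cup(Y\times W))=\emptyset\)—available only because \(f\) is an \emph{isomorphism}, not merely an injection, over \(Z\)—forces any collision with one coordinate near \(W\) to have its partner escape to infinity. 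I would tame this with a \(\sigma\)-compact exhaustion \(Y=\bigcup_j K_j\), building increasing open sets \(\Omega_j\) with \(W\cap K_j\subseteq\Omega_j\), \(\overline{\Omega_j}\) compact, and \((\overline{\Omega_j}\times\overline{\Omega_j})\cap R=\emptyset\). In the inductive step the compact set \(A=\overline{\Omega_{j-1}}\cup(W\cap K_j)\) satisfies \((A\times A)\cap R=\emptyset\)—the \(\overline{\Omega_{j-1}}\)-square by induction, every new product because it meets a slice over \(W\)—so the tube lemma supplies the next \(\Omega_j\); then \(\Omega=\bigcup_j\Omega_j\) works, as any pair of \(\Omega\times\Omega\) lies in some \(\Omega_j\times\Omega_j\). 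This \(\sigma\)-compact induction is the real content of the proposition and the point at which the topology of \(F\)-points for a general local field is controlled; the rest is formal.
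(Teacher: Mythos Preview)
Your proof is correct and follows the paper's overall strategy: exhibit an open \(V\supseteq Z(F)\) carrying a continuous section to \(X'(F)\), then use normality of \(X(F)\) to produce a two-element closed cover \(\{C_{1},C_{2}\}\) with \(C_{1}\subseteq V\) and \(C_{2}\subseteq U(F)\). The difference is one of rigor on the first step. The paper takes \(V=\bigcup_{z\in Z(F)}V_{z}\) for neighbourhoods \(V_{z}\) coming from the local-homeomorphism property of \(f\) and asserts that \(V\hookrightarrow X(F)\) factors through \(X'(F)\) ``by definition''; but the local inverse branches \(\sigma_{z}\colon V_{z}\to X'(F)\) need not agree on overlaps lying in \(U(F)\), so a single lift over \(V\) is not automatic. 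Your \(\sigma\)-compact induction---using that the closed off-diagonal relation \(R\subset X'(F)\times X'(F)\) is disjoint from every slice through \(Z'(F)\), not merely from \(Z'(F)\times Z'(F)\)---supplies exactly this justification by showing that \(f\) is injective on an open \(\Omega\supseteq Z'(F)\), whence \(V=f(\Omega)\) admits the global section \((f|_{\Omega})^{-1}\). In short, your write-up is the paper's argument made precise on the one point it leaves implicit.
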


\begin{proposition}\label{cdp}
  Consider a cartesian square of
  varieties over a local field~\(F\)
  \begin{equation*}
    \begin{tikzcd}
      Z'\ar[r,hook]\ar[d]&
      X'\ar[d]\\
      Z\ar[r,hook]&
      X
    \end{tikzcd}
  \end{equation*}
  such that
  \(Z\hookrightarrow X\) is a closed immersion,
  \(X'\to X\) is proper,
  and
  the induced map
  \(X'\setminus Z'\to X\setminus Z\)
  is an isomorphism.
  Then the set \(
  \{X'(F)^{+}\to X(F)^{+},Z(F)^{+}\to X(F)^{+}\}
  \) is a cd~cover of~\(X(F)^{+}\).
\end{proposition}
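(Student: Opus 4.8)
The plan is to apply the $F$-points functor followed by one-point compactification and to show that the given square becomes a blowup square of compacta in the sense of \cref{77e7673094}; the conclusion then follows from \cref{882d59cc9a}. Explicitly, writing $f\colon X'\to X$ for the proper map, I would produce the cartesian square
\begin{equation*}
  \begin{tikzcd}
    Z'(F)^{+}\ar[r,hook]\ar[d]&
    X'(F)^{+}\ar[d,"p"]\\
    Z(F)^{+}\ar[r,hook]&
    X(F)^{+}
  \end{tikzcd}
\end{equation*}
and check that its bottom map is a (closed) immersion and that $X'(F)^{+}\setminus Z'(F)^{+}\to X(F)^{+}\setminus Z(F)^{+}$ is a homeomorphism. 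Granting this, the filtration $\emptyset\subset Z(F)^{+}\subset X(F)^{+}$, together with the identity of $Z(F)^{+}$ and the map $p$, exhibits $\{X'(F)^{+}\to X(F)^{+},Z(F)^{+}\to X(F)^{+}\}$ as a cd~cover of complexity~$\leq2$. Note that all spaces have countable weight because $F$ is second countable.

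Building the square breaks into four checks. Properness of $f$ gives a topologically proper map $f(F)\colon X'(F)\to X(F)$, which therefore extends to a continuous $p\colon X'(F)^{+}\to X(F)^{+}$ with $p(\infty)=\infty$ and $p^{-1}(\infty)=\{\infty\}$, since no finite point of $X'(F)$ is sent to~$\infty$. The closed immersion $Z\hookrightarrow X$, and its base change $Z'\hookrightarrow X'$, induce closed embeddings on $F$-points and hence closed immersions $Z(F)^{+}\hookrightarrow X(F)^{+}$ and $Z'(F)^{+}\hookrightarrow X'(F)^{+}$ of compacta. Because the original square is cartesian, $Z'(F)=f(F)^{-1}(Z(F))$ as subspaces of $X'(F)$; together with $p^{-1}(\infty)=\{\infty\}$ this yields $p^{-1}(Z(F)^{+})=Z'(F)^{+}$, so the displayed square is cartesian in $\Cat{Cpt}$. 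Finally, deleting the closed subsets identifies the complement map with $(X'\setminus Z')(F)\to(X\setminus Z)(F)$, a homeomorphism because $X'\setminus Z'\to X\setminus Z$ is an isomorphism of varieties.

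The hard part is the first input: that a proper morphism of varieties over~$F$ induces a topologically proper map on $F$-points. This is precisely where $F\neq\CC$ is delicate. I would reduce to the projective case by Chow's lemma, where it is immediate: a projective morphism factors as a closed immersion $X'\hookrightarrow X\times\mathbf{P}^{n}$ followed by the projection, whose $F$-points give a closed subspace of the projection $X(F)\times\mathbf{P}^{n}(F)\to X(F)$, a proper map since $\mathbf{P}^{n}(F)$ is compact. For the general proper case one uses that a continuous map of local compacta is proper as soon as it is closed with compact fibers, both of which descend from algebraic properness (the fibers are $F$-points of proper $F$-varieties, hence compact); alternatively one can invoke the analysis of the topology on $F$-points in~\cite{Conrad12}. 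Once properness is available, the remaining point-set verifications are routine, and it is worth stressing that surjectivity of $f$ on $F$-points is never needed: the filtration covers $X(F)^{+}$ through $Z(F)^{+}$ on the closed stratum and through $X'(F)^{+}$ on its open complement.
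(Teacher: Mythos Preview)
Your proof is correct and follows essentially the same approach as the paper: the paper's proof is the single line ``This is immediate from~\cref{i:pr} of \cref{xi1siy}'', and what you have written is precisely an unpacking of that ``immediate'', verifying that the $(\X)^{+}$ of the square is a blowup square once one knows $f(F)$ is topologically proper (which the paper also sources to~\cite{Conrad12}). Your invocation of \cref{882d59cc9a} is slightly roundabout since the explicit filtration you give already exhibits the cd~cover directly from \cref{c0f0e840c9}, but this is harmless.
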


We need the following classical facts
to prove
(and state, as the statement of \cref{cdp}
implicitly uses~\cref{i:pr} of \cref{xi1siy})
them:

\begin{lemma}\label{xi1siy}
  Let \(f\colon Y\to X\) be a map between varieties
  over a local field~\(F\).
  \begin{enumerate}
    \item\label{i:et}
      If \(f\) is étale,
      \(f(F)\colon Y(F)\to X(F)\) is a local homeomorphism.
    \item\label{i:pr}
      If \(f\) is proper,
      so is \(f(F)\colon Y(F)\to X(F)\).
  \end{enumerate}
\end{lemma}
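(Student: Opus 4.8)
The plan is to reduce both assertions to standard analytic facts over the complete field~\(F\), using that étale and proper morphisms admit convenient local or global presentations. For the first assertion, the claim is local on source and target for the analytic topology, so I would use that an étale morphism admits, Zariski-locally, a standard presentation \(\Spec((A[t]/(g))_{g'})\to\Spec A\) with \(g\) monic and \(g'\) invertible. On \(F\)-points the hypothesis that \(g'\) is a unit means exactly that the derivative of the defining equation is nonvanishing, so the inverse (implicit) function theorem over the complete valued field~\(F\)---classical in the archimedean case and a consequence of Hensel's lemma and Newton iteration in the nonarchimedean one---produces a local analytic, hence homeomorphic, inverse. This exhibits \(f(F)\) as a local homeomorphism. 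Equivalently, one notes that a smooth morphism induces a topological submersion on \(F\)-points and that étale means relative dimension~\(0\).

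For the second assertion, the key input is that \(\mathbf{P}^{n}_{F}(F)=\mathbf{P}^{n}(F)\) is compact for every local field~\(F\) (it is a continuous image of the compact space of unimodular rows), together with the facts that a closed immersion induces a closed topological embedding on \(F\)-points and that \((X\times Y)(F)=X(F)\times Y(F)\). These immediately handle the projective case: if \(f\) is projective then \(Y(F)\hookrightarrow X(F)\times\mathbf{P}^{n}(F)\) is closed, and the projection \(X(F)\times\mathbf{P}^{n}(F)\to X(F)\) is proper because its fibre \(\mathbf{P}^{n}(F)\) is compact, so the composite \(f(F)\) is proper. To pass from projective to arbitrary proper morphisms I would invoke Chow's lemma to obtain a projective surjection \(\pi\colon Y'\to Y\) with \(f\circ\pi\) projective, and then recognize properness of \(f(F)\) through the Bourbaki criterion: a continuous map between second-countable (hence metrizable) locally compact spaces is proper if and only if it is closed with compact fibres. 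The fibres of \(f(F)\) over \(x\in X(F)\) are the \(F\)-points of the proper \(F\)-varieties \(Y_{x}\), and closedness should be extracted from the valuative criterion of properness.

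The main obstacle is exactly this passage from the projective case to the general proper case: although Chow's lemma furnishes a scheme-theoretic surjection \(\pi\), the induced map \(\pi(F)\) on \(F\)-points need not be surjective---the same phenomenon illustrated by \(\Spec\CC\to\Spec\RR\)---so one cannot simply push the compactness of \((f\circ\pi)(F)^{-1}(K)\) forward along \(\pi(F)\) to conclude that \(f(F)^{-1}(K)\) is compact. The clean way around this is to establish, once and for all, that the \(F\)-points of a proper \(F\)-variety form a compact space, via analytification, under which proper algebraic morphisms become proper morphisms of (complex, real, or Berkovich) analytic spaces; one then deduces closedness of \(f(F)\) from the same analytic properness. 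This simultaneously supplies the compact-fibre and closedness hypotheses required by the Bourbaki criterion and thereby circumvents the failure of surjectivity on \(F\)-points.
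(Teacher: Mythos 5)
Your proposal is essentially a correct, self-contained proof, whereas the paper itself offers no argument at all: it simply points to Conrad's article, whose ``subsequent paragraphs'' run exactly your standard-étale-presentation-plus-implicit-function-theorem argument for part~(1). For part~(2) you have correctly isolated the one genuine subtlety --- that a Chow cover \(\pi\colon Y'\to Y\) need not be surjective on \(F\)-points, the same phenomenon the paper flags via \(\Spec\CC\to\Spec\RR\) just before stating the lemma --- and your fix via analytification is valid: \(Y(F)\) is closed in \(Y^{\mathrm{an}}\) with the correct subspace topology (this uses local compactness of \(F\); it fails for \(\CC_p\)), \(f^{\mathrm{an}}\) is topologically proper, and restricting a proper map to a closed subspace of the source landing in a subspace of the target stays proper, so you do not even need the Bourbaki criterion or the vague appeal to the valuative criterion in your middle paragraph. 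The cost is importing Berkovich theory in the nonarchimedean case. A more elementary way to close the same gap, staying entirely within \(F\)-points, is noetherian induction on \(\dim Y\) (after reducing to \(Y\) reduced and irreducible, which does not change \(F\)-points): \(\pi\) is an isomorphism over a dense open \(U\subseteq Y\), so for compact \(K\subseteq X(F)\) one has
\begin{equation*}
  f(F)^{-1}(K)
  =
  \pi(F)\bigl((f\circ\pi)(F)^{-1}(K)\bigr)
  \cup
  \bigl(f(F)^{-1}(K)\cap(Y\setminus U)(F)\bigr),
\end{equation*}
where the first set is compact by your projective case (every \(F\)-point of \(U\) lifts uniquely along \(\pi\)) and the second is compact by induction applied to the proper map \(Y\setminus U\to X\) of smaller dimension. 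Either route completes your sketch; the inductive one avoids analytic spaces altogether.
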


\begin{proof}
The proof of~\cite[Proposition~4.4]{Conrad12}
  and its subsequent paragraphs
  prove \cref{i:pr,i:et}, respectively.
\end{proof}

\begin{proof}[Proof of \cref{nis}]
Let \(Z\) denote
  \(X\setminus U\simeq X'\setminus U'\).
  Then for each \(z\in Z(F)\),
  by~\cref{i:et} of \cref{xi1siy},
  we can take an open neighborhood
  \(V'_{z}\subset X'(F)\)
  that homeomorphically maps
  to \(V_{z}\subset X(F)\).
  Let \(V\) be the union of~\(V_{z}\).
  Note that
  by definition,
  the inclusion \(V\hookrightarrow X(F)\)
  factors through \(X'(F)\to X(F)\).
  As \(V\) contains~\(Z(F)\),
  \(U(F)\) and \(V\) covers~\(X(F)\).
  By the normality of \(X(F)\),
  there are
  disjoint open subsets~\(W_{1}\) and~\(W_{2}\)
  satisfying \(U(F)\cup W_{1}=V\cup W_{2}=X\).
  Then
  \(\{
    X(F)\setminus W_{1}\to X(F),
    X(F)\setminus W_{2}\to X(F)
  \}\) is a closed~cover
  of~\(X(F)\) refining our original family.
\end{proof}

\begin{proof}[Proof of \cref{cdp}]
  This is immediate from~\cref{i:pr} of \cref{xi1siy}.
\end{proof}

We have the following consequence,
which mentions neither local compacta
nor nonaffine varieties:

\begin{corollary}\label{xwb4zy}
Fix a local field~\(F\).
  Consider a functor
  \(\Cat{Aff}_{F}\to\PShv(\Cat{Cpt}_{\aleph_{1}})\)
  given by the pairing
  \((\Spec A,X)\mapsto\Map_{\Cat{CAlg}_{F}}(A,\Cls{C}(X;F))\).
  Then the right adjoint
  \(\PShv(\Cat{Cpt}_{\aleph_{1}})
  \to\PShv(\Cat{Aff}_{F})\)
  of its left Kan extension
  restricts to
  their subtoposes
  \(\Shv(\Cat{Cpt}_{\aleph_{1}})
  \to\Shv(\Cat{Aff}_{F})\)
  to determine a geometric morphism.
\end{corollary}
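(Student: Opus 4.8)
The plan is to obtain the geometric morphism from the universal property of a sheaf $\infty$-topos: a geometric morphism $\cat{X}\to\Shv(\Cat{Aff}_F)$ is the same datum as a functor $\Cat{Aff}_F\to\cat{X}$ that is left exact and continuous (its left Kan extension inverts cdh-local equivalences). So with $\cat{X}=\Shv(\Cat{Cpt}_{\aleph_1})$ it suffices to exhibit such a functor. Writing $a$ for cd-sheafification and $\phi_{!}\dashv\phi^{*}$ for the left Kan extension of $\phi$ along Yoneda and its right adjoint, the candidate is $\Psi=a\circ\phi$. The first move is to record that $\phi$ is corepresented by $F$-points: the identification $\Map_{\Cat{CAlg}_F}(A,\Cls{C}(X;F))\simeq\Map_{\Cat{Cpt}}(X,(\Spec A)(F))$ exhibits $\phi(\Spec A)$ as the presheaf $X\mapsto\Map_{\Cat{Cpt}}(X,(\Spec A)(F))$ attached to the local compactum $(\Spec A)(F)$. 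The very same formula extends $\phi$ to $\tilde\phi\colon\Cat{Var}_F\to\PShv(\Cat{Cpt}_{\aleph_1})$, which will be convenient below.

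Left exactness is the easy half. Since $\Cat{Aff}_F$ has finite limits and $\Map_{\Cat{CAlg}_F}(\X,R)$ sends the defining finite colimits of finite-type $F$-algebras to finite limits for every $R=\Cls{C}(X;F)$, and finite limits of presheaves are computed objectwise, $\phi$ preserves finite limits; as $a$ is left exact, so is $\Psi$, hence $\Psi$ is flat. For continuity I would reduce, through Voevodsky's theory of the cdh cd-structure, to the assertion that $\tilde\Psi=a\circ\tilde\phi$ carries each distinguished Nisnevich and abstract blowup square to a cocartesian square in $\Shv(\Cat{Cpt}_{\aleph_1})$. I pass to $\Cat{Var}_F$ precisely so that blowup squares — whose upper-right corner is typically nonaffine — are available, and then transport back to $\Cat{Aff}_F$ using that affine varieties form a basis for the cdh topology, so that $\Shv(\Cat{Aff}_F)\simeq\Shv_{\mathrm{cdh}}(\Cat{Var}_F)$ by the comparison lemma.

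This continuity verification is the heart of the matter and the step I expect to be the main obstacle; it is where \cref{cd_rh} enters. The key device is to write $\tilde\phi(Y)=\Map_{\Cat{Cpt}}(\X,Y(F))\simeq\operatorname*{colim}_{K}y(K)$ as a filtered colimit of representables indexed by the compact subsets $K\subseteq Y(F)$ (any map out of a compactum has compact image), so that $\tilde\Psi(Y)=\operatorname*{colim}_{K}a\,y(K)$. For an abstract blowup square of varieties, \cref{i:pr} of \cref{xi1siy} makes $X'(F)\to X(F)$ proper, so preimages of compacta are compacta and the exhausting systems can be chosen compatibly; by \cref{cdp}, together with \cref{77e7673094}, each resulting square of honest compacta is a blowup square, which $a\,y(\X)$ turns into a cocartesian square because cd-sheaves invert blowup squares by \cref{f37c1d2136}. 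Passing to the filtered colimit yields the desired cocartesian square for $\tilde\Psi$, and the Nisnevich case is identical, with \cref{nis} in place of \cref{cdp} and the observation that finite closed covers are cd-covers. The genuinely delicate point — that $F$-points need not be surjective yet still assemble into cd-covers — is exactly what \cref{cd_rh} resolves, so the residual difficulty is only the bookkeeping of the compact exhaustions and their cofinality under proper maps. Granting continuity, the universal property produces a geometric morphism whose inverse image is the colimit-preserving extension of $\Psi$, that is, $a\circ\phi_{!}$ on sheaves; its right adjoint is therefore the restriction of $\phi^{*}$ to cd-sheaves, which in particular shows that $\phi^{*}$ carries cd-sheaves to cdh-sheaves and identifies the result with the geometric morphism described in the statement.
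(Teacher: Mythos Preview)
The paper states this as a corollary of \cref{cd_rh} without further proof, so your overall framework---the universal property of the sheaf topos, the identification of $\phi(\Spec A)$ with the presheaf represented by the local compactum $(\Spec A)(F)$, and the reduction to distinguished cdh squares---is essentially the intended argument, and your treatment of the abstract blowup case is correct.

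The claim that ``the Nisnevich case is identical'' is too quick, however, and this is a genuine gap in your write-up. In a Nisnevich square neither the \'etale map $X'\to X$ nor the open immersion $U\hookrightarrow X$ is proper, so the step ``preimages of compacta are compacta and the exhausting systems can be chosen compatibly'' fails: for a compact $K\subseteq X(F)$ the pulled-back square need not consist of compacta at all, let alone form a blowup square in the sense of \cref{77e7673094}. What \cref{nis} actually supplies is only a finite \emph{closed cover} of each such $K$ refining $\{U(F),X'(F)\}$, not a blowup square. To close the gap, argue differently for Nisnevich squares: since $\tilde\Psi$ is left exact it preserves the pullback $U'=U\times_{X}X'$ and the monomorphism $U\hookrightarrow X$, so it suffices to show that $\tilde\Psi(U)\amalg\tilde\Psi(X')\to\tilde\Psi(X)$ is an effective epimorphism in $\Shv(\Cat{Cpt}_{\aleph_{1}})$. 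This follows from \cref{nis} exactly as you indicate: for each compact $K\subseteq X(F)$ the closed cover $K=K_{1}\cup K_{2}$ with $K_{1}$ factoring through $U(F)$ and $K_{2}$ through $X'(F)$ shows that $a\,y(K)$ lies in the sieve generated by $\tilde\Psi(U)$ and $\tilde\Psi(X')$. In an $\infty$-topos a cartesian square with one leg a monomorphism and jointly effective-epimorphic legs is automatically cocartesian, which finishes the Nisnevich case.
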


\section{Digression: compacta with \texorpdfstring{\(\Ind\)}{Ind}-smooth rings of continuous functions}\label{s:ind_sm}

In this section,
by ``ring'',
we mean a unital commutative ring.

This section is \emph{not} needed to prove
any of \cref{main_a,main_na,main_hi}.
\Cref{dig}, which is the main result of this section,
topologically characterizes
compacta~\(X\)
such that \(\Cls{C}(X;\RR)\) is \(\Ind\)-smooth over~\(\RR\).
We recall necessary notions
in \cref{ss:f,ss:proj,ss:ind_sm}
and prove the main theorem in \cref{ss:c_ind_sm}.
In \cref{ss:normal}, we further digress
and explain how to export pathological examples
in general topology to \(K\)-theory
using \cref{main_a};
it in particular proves \cref{2cd33f5deb}.

\begin{remark}\label{xvarmi}
  In this section, we only consider the real case for simplicity,
  but the same arguments prove the complex variants
  of our results.
\end{remark}

\begin{remark}\label{a2fc5341e6}
The nonarchimedean variant
  of our results here would be less interesting;
  what makes the real case interesting is the existence
  of an \(\F\)-space that is not totally disconnected.
  Furthermore, as we use the resolution of singularities,
  our argument does not work in positive characteristic.
\end{remark}

\subsection{\texorpdfstring{\(\F\)}{F}-spaces}\label{ss:f}

The concept of \(\F\)-spaces
was introduced in~\cite{GillmanHenriksen56}:

\begin{definition}[Gillman--Henriksen]\label{490c39f439}
  A compactum~\(X\) is called an \emph{\(\F\)-space}
  if \(\Cls{C}(X;\RR)\) is Bézout;
  i.e., every finitely generated ideal of it is principal.
\end{definition}

There are many characterizations
of \(\F\)-spaces (see, e.g.,~\cite[Theorem~14.25]{GillmanJerison60}),
but we only need the following:

\begin{theorem}[Gillman--Henriksen]\label{9dd5fcd481}
  For a compactum~\(X\),
  the following are equivalent:
  \begin{conenum}
    \item\label{i:f}
      It is an \(\F\)-space.
    \item\label{i:f_fg}
      For any continuous function \(f\colon X\to[-1,1]\),
      there is a continuous function \(g\colon X\to[-1,1]\)
      such that
      \(g\) is
      \(-1\) on \(f^{-1}[-1,0)\) and
      \(1\) on \(f^{-1}(0,1]\).
    \item\label{i:f_dom}
      The local ring of \(\Cls{C}(X;\RR)\)
      at each prime
      (or equivalently, at each maximal ideal) is a domain.
  \end{conenum}
\end{theorem}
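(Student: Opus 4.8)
The plan is to prove the two biconditionals \cref{i:f}\(\Leftrightarrow\)\cref{i:f_fg} (algebraic) and \cref{i:f_fg}\(\Leftrightarrow\)\cref{i:f_dom} (topological), which share condition~\cref{i:f_fg}. Write \(C=\Cls{C}(X;\RR)\). The first observation is that \cref{i:f_fg} is precisely the assertion that the two disjoint cozero sets \(\{f<0\}\) and \(\{f>0\}\) are \emph{completely separated} by a function that is \(\mp1\) on them; since every pair of disjoint cozero sets \(U,V\) arises this way (take \(f=u-v\) for \(u,v\geq0\) with \(U=\{u>0\}\), \(V=\{v>0\}\), \(uv=0\)), condition~\cref{i:f_fg} is equivalent to the statement that \emph{every pair of disjoint cozero sets is completely separated}.

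For \cref{i:f_fg}\(\Leftrightarrow\)\cref{i:f_dom}, I would use that for a compactum the maximal ideals of \(C\) are exactly the \(M_{p}=\{f:f(p)=0\}\), and that \(C_{M_{p}}\) is a domain if and only if, whenever \(fg=0\), one of \(f,g\) vanishes on a neighborhood of~\(p\) (the localization witness \(r\notin M_{p}\) is absorbed into ``near \(p\)''). Unwinding this over all maximal ideals says precisely that disjoint cozero sets have disjoint closures; checking maximal ideals suffices for the stated ``each prime'' version because every prime of \(C\) lies under a unique maximal ideal and a localization of a domain is a domain. Since \(X\) is compact Hausdorff, hence normal, Urysohn's lemma converts disjoint closures into complete separation, while conversely a separating function forces disjoint closures; this yields \cref{i:f_fg}\(\Leftrightarrow\)\cref{i:f_dom}.

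For \cref{i:f}\(\Rightarrow\)\cref{i:f_fg}, given \(f\colon X\to[-1,1]\) set \(u=\max(f,0)\), \(v=\max(-f,0)\), so that \(uv=0\) and \(U=\{u>0\}\), \(V=\{v>0\}\) are the disjoint cozero sets to be separated. Applying the Bézout hypothesis, write \((u,v)=(d)\), so \(u=ad\), \(v=bd\), and \(d=cu+ev\) with \(a,b,c,e\in C\). On~\(U\) one has \(v=0\), forcing \(b=0\) and then \(ca=1\); symmetrically \(a=0\) and \(eb=1\) on~\(V\). Hence \(g:=ca-eb\in C\) equals \(1\) on~\(U\) and \(-1\) on~\(V\), and truncating it to \([-1,1]\) produces the required function. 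This step is clean precisely because \(a,b,c,e\) are genuine continuous functions on~\(X\).

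Finally, for \cref{i:f_fg}\(\Rightarrow\)\cref{i:f}, I would reduce the Bézout property to two generators by induction, and observe that \cref{i:f_fg} applied to~\(f\) yields a \(k\in C\) with \(f=k|f|\) and \(|f|=kf\), so that \((f)=(|f|)\); thus we may assume the generators \(|f|,|g|\) are nonnegative. It then remains to show that \((|f|,|g|)\) is principal, i.e.\ that the bounded ratio \(|f|/(|f|+|g|)\) extends continuously across the common zero set \(\{f=0\}\cap\{g=0\}\). I expect this continuity/extension step to be the main obstacle; the plan is to control the ratio near the common zeros by applying complete separation (that is, \cref{i:f_fg}) to auxiliary functions comparing \(|f|\) and~\(|g|\), thereby constructing the continuous gcd-type generator. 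Once \((|f|,|g|)\) is shown to be principal, the induction closes and \(C\) is Bézout.
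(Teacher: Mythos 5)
The paper does not actually prove this statement: it quotes it as a classical theorem of Gillman--Henriksen and points to \cite[Theorem~14.25]{GillmanJerison60}, so there is no in-paper argument to compare against. Judged on its own terms, your proposal is structurally sound where it is complete: the reformulation of (ii) as ``any two disjoint cozero sets are completely separated,'' the equivalence (ii)\(\Leftrightarrow\)(iii) via ``disjoint cozero sets have disjoint closures'' together with normality of \(X\), the reduction from primes to maximal ideals (uniqueness of the maximal ideal over a prime is not even needed --- containment in some maximal ideal plus ``a localization of a domain is a domain'' suffices), and the implication (i)\(\Rightarrow\)(ii) via the relations \(u=ad\), \(v=bd\), \(d=cu+ev\) are all correct as written.

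The genuine gap is in (ii)\(\Rightarrow\)(i), and you have located it yourself: everything reduces to showing that the bounded function \(r=|f|/(|f|+|g|)\), defined on the cozero set \(W=\{|f|+|g|>0\}\), extends continuously to \(X\), and for this you offer only a plan. This is not a routine detail --- it is the substantive half of the theorem, amounting to the characterization of \(\F\)-spaces as the compacta whose cozero sets are C*-embedded. Your plan is viable and can be closed as follows. For \(0<q<1\), the sets \(\{r<q\}\) and \(\{r>q\}\) inside \(W\) are exactly the negative and positive sets of the single globally defined function \((1-q)|f|-q|g|\), so (ii) completely separates them in \(X\). More generally, any two subsets \(A,B\subseteq W\) that are completely separated in \(W\), say by \(\psi\colon W\to[-1,1]\) with \(\psi=-1\) on \(A\) and \(\psi=1\) on \(B\), are completely separated in \(X\): the product \(\psi\cdot(|f|+|g|)\) extends by zero to a continuous function on \(X\) (it is dominated by \(|f|+|g|\)) whose negative and positive sets contain \(A\) and \(B\), and (ii) applies to it. With this ``Urysohn extension'' property of \(W\) in hand, the standard successive-approximation argument (separate \(\{r\leq\tfrac13\}\) from \(\{r\geq\tfrac23\}\), subtract the separating function, and iterate with error shrinking by a factor of \(\tfrac23\)) produces a continuous \(a\colon X\to[0,1]\) agreeing with \(r\) on \(W\). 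Then \(|f|=a(|f|+|g|)\) and \(|g|=(1-a)(|f|+|g|)\) hold on all of \(X\), so \((f,g)=(|f|,|g|)=(|f|+|g|)\) is principal, and your induction closes the Bézout property. Until that extension step is written out, the proof of (ii)\(\Rightarrow\)(i) is incomplete.
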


\begin{example}\label{ac0082e6a8}
  Any extremally disconnected compactum
  is an \(\F\)-space.
  More generally,
  any basically disconnected compactum,
  i.e., a compactum whose cozero sets have open closures,
  is an \(\F\)-space.
\end{example}

\begin{example}\label{60a6bbf64f}
  Any closed subspace of an \(\F\)-space is an \(\F\)-space.
  For example, \(\beta\NN\setminus\NN\)
  is an \(\F\)-space
  that cannot be obtained from \cref{ac0082e6a8}.
\end{example}

\begin{example}[Gillman--Henriksen]\label{xgsasy}
A remarkable discovery is
  the existence of a nontrivial \emph{connected} \(\F\)-space:
  It is shown in~\cite[Example~2.8]{GillmanHenriksen56}
  that \(\beta[0,\infty)\setminus[0,\infty)\) is
  a connected \(\F\)-space.
  This cannot be obtained from \cref{60a6bbf64f}.
\end{example}

\subsection{\texorpdfstring{\(\aleph_{1}\)}{\textaleph1}-projective compacta}\label{ss:proj}

Famously,
Gleason~\cite{Gleason58} proved that
a compactum has a lifting property
against arbitrary surjections of compacta
if and only if it is extremally disconnected.
Neville--Lloyd~\cite{NevilleLloyd81}
generalized this
by considering the lifting problem
with respect to
surjections between compacta of weight \(<\kappa\)
for a fixed cardinal~\(\kappa\).
In this paper, we need the case \(\kappa=\aleph_{1}\):

\begin{theorem}[Neville--Lloyd]\label{3306b5117a}
  For a compactum~\(X\),
  the following are equivalent:
  \begin{conenum}
    \item\label{i:tdf}
      It is a totally disconnected \(\F\)-space.
    \item\label{i:a_proj}
      For an arbitrary surjection
      of compacta \(Y'\to Y\) of countable weight,
      any morphism \(X\to Y\)
      lifts to a morphism \(X\to Y'\).
  \end{conenum}
\end{theorem}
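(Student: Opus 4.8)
The plan is to prove the two implications separately, the reverse implication \(\cref{i:a_proj}\Rightarrow\cref{i:tdf}\) being elementary and constructive, while the forward implication \(\cref{i:tdf}\Rightarrow\cref{i:a_proj}\) carries all the weight and is an \(\aleph_{1}\)-flavored analogue of Gleason's theorem. Starting with the reverse direction, I would first deduce total disconnectedness from the lifting property. Let \(q\colon\{0,1\}^{\NN}\to[0,1]\) be the standard binary-digit surjection of the Cantor set onto the interval; both are compacta of countable weight. Given distinct \(x,y\in X\), Urysohn's lemma supplies \(f\colon X\to[0,1]\) with \(f(x)=0\) and \(f(y)=1\), and lifting \(f\) through \(q\) via \(\cref{i:a_proj}\) yields \(\tilde f\colon X\to\{0,1\}^{\NN}\) with \(q\circ\tilde f=f\), so \(\tilde f(x)\neq\tilde f(y)\). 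Since \(\{0,1\}^{\NN}\) is totally disconnected, a clopen set separating the two images pulls back to a clopen subset of \(X\) separating \(x\) from \(y\); hence \(X\) is totally disconnected.

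To obtain the \(\F\)-space property from \(\cref{i:a_proj}\), I would verify criterion \(\cref{i:f_fg}\) of \cref{9dd5fcd481} by lifting against a single well-chosen surjection. Consider the compactum of countable weight
\[
  E=\bigl\{(t,s)\in[-1,1]^{2}:(t<0\Rightarrow s=-1)\text{ and }(t>0\Rightarrow s=1)\bigr\},
\]
whose first-coordinate projection \(p\colon E\to[-1,1]\) is a continuous surjection with fibre \([-1,1]\) over \(0\) and a single point elsewhere. Given \(f\colon X\to[-1,1]\), lifting \(f\) through \(p\) produces \(h=(f,g)\colon X\to E\), and the definition of \(E\) forces the second coordinate \(g\) to equal \(-1\) on \(f^{-1}[-1,0)\) and \(1\) on \(f^{-1}(0,1]\); this is exactly \(\cref{i:f_fg}\), so \(X\) is an \(\F\)-space.

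For the forward direction I would reformulate the lifting problem as a sectioning problem. Given a surjection \(\pi\colon Y'\to Y\) of compacta of countable weight and \(\phi\colon X\to Y\), form the pullback
\[
  P=X\times_{Y}Y'=\bigl\{(x,y')\in X\times Y':\phi(x)=\pi(y')\bigr\},
\]
whose first projection \(\mathrm{pr}\colon P\to X\) is a surjection with fibres \(\{x\}\times\pi^{-1}(\phi(x))\) of countable weight; a lift of \(\phi\) is precisely a continuous section of \(\mathrm{pr}\), post-composed with the second projection \(P\to Y'\). Using Zorn's lemma I would select a closed subset \(P_{0}\subseteq P\) minimal among those surjecting onto \(X\), so that \(\mathrm{pr}|_{P_{0}}\) is an irreducible surjection, still with fibres of countable weight.

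The crux, and the main obstacle, is then the rigidity statement that an irreducible surjection onto a totally disconnected \(\F\)-space whose fibres have countable weight must be a homeomorphism; its inverse, composed with \(P_{0}\hookrightarrow P\to Y'\), is the desired lift. I would prove injectivity in Gleason's style: given \(p_{1}\neq p_{2}\) with \(\mathrm{pr}(p_{1})=\mathrm{pr}(p_{2})\), separate them by disjoint open sets and use irreducibility together with the closedness of \(\mathrm{pr}\) to produce nonempty open subsets of \(X\) over which the preimages are confined to one side. The delicate point, where Gleason's extremal disconnectedness must be weakened to the totally-disconnected-\(\F\)-space hypothesis and where the countable-weight bound on fibres becomes essential, is in promoting these open sets to a genuine clopen decomposition of \(X\) that contradicts minimality: total disconnectedness supplies clopen separators, while the \(\F\)-space property—in the guise of complete separation of disjoint cozero sets, i.e.\ \(\cref{i:f_fg}\) of \cref{9dd5fcd481}—is what lets one separate the two sides \emph{coherently across their common limit points}, the subtlety that fails for a merely zero-dimensional \(X\). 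I expect that controlling this separation is exactly what breaks down once the fibres have uncountable weight, which is why the statement is restricted to countable weight, and carrying out this clopen-decomposition argument carefully is the genuine content of the Neville--Lloyd theorem.
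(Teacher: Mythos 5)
Your implication \(\text{\cref{i:a_proj}}\Rightarrow\text{\cref{i:tdf}}\) is correct: the Cantor surjection onto \([0,1]\) does give total separatedness (hence total disconnectedness for a compactum), and your space \(E\subset[-1,1]^{2}\) is a legitimate compact metrizable test object whose sections over \(f\) produce exactly the function \(g\) required in \cref{i:f_fg} of \cref{9dd5fcd481}. Note, though, that the paper deliberately runs this direction against the single surjection \([-1,0]\amalg[0,1]\to[-1,1]\), because it wants the sharper \cref{e0b1f20916} (lifting against that one map already forces \cref{i:tdf}); your two-surjection variant proves the implication but not that corollary.

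The forward implication \(\text{\cref{i:tdf}}\Rightarrow\text{\cref{i:a_proj}}\) is where the proposal has a genuine gap. After the (standard and correct) reduction to a section of the pullback and the Zorn's-lemma passage to an irreducible surjection \(P_{0}\to X\), everything rests on the rigidity claim that an irreducible surjection onto a totally disconnected \(\F\)-space with fibres of countable weight is a homeomorphism --- and this is precisely the step you do not prove. Your own text concedes that ``carrying out this clopen-decomposition argument carefully is the genuine content of the Neville--Lloyd theorem,'' so what is offered is a plan, not a proof: the Gleason-style separation argument produces two disjoint nonempty open sets \(V_{1},V_{2}\subset X\) with \(\pi^{-1}(V_{i})\subset U_{i}\), but in the absence of extremal disconnectedness there is no canonical clopen set containing \(V_{1}\) and missing \(V_{2}\), and it is not explained how the \(\F\)-space property plus the countable-weight bound on the fibres actually produce the clopen decomposition contradicting minimality. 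It is also not clear that the rigidity lemma is correctly calibrated as stated: the hypothesis in \cite{NevilleLloyd81} is a weight bound on the covering space \(Y'\) (indeed weight \(\leq\aleph_{1}\) suffices, cf.\ \cref{c63e5663c0}), and replacing this by a bound on the individual fibres of an abstract irreducible surjection is a strictly different (a priori stronger) assertion that would itself need justification. For comparison, the paper does not reprove this direction at all --- it cites \cite{NevilleLloyd81} and only writes out \(\text{\cref{i:a_proj}}\Rightarrow\text{\cref{i:tdf}}\) --- so to count as a proof your proposal would have to supply the missing lemma in full.
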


We here recall the proof
of one direction to obtain a useful corollary:

\begin{proof}[Proof of \(\text{\cref{i:a_proj}}\Rightarrow\text{\cref{i:tdf}}\)]
  Suppose that a compactum~\(X\)
  has a lifting property with respect to
  \([-1,0]\amalg[0,1]\to[-1,1]\).
  We observe that
  it must be a totally disconnected \(\F\)-space.

  We first show that \(X\) is totally disconnected
  by showing
  that two distinct points \(x_{-}\neq x_{+}\)
  lie in different connected components.
  First, we take a continuous function
  \(f\colon X\to[-1,1]\) with \(f(x_{\pm})=\pm1\).
  Then we lift~\(f\) to \(g\colon X\to[-1,0]\amalg[0,1]\).
  Then \(x_{-}\) and \(x_{+}\) belong to
  \(g^{-1}([-1,0])\) and \(g^{-1}([0,1])\),
  respectively.

  We show that \(X\) is an \(\F\)-space
  by checking~\cref{i:f_fg} of~\cref{9dd5fcd481}.
  Consider a function \(f\colon X\to[-1,1]\).
  We have a lift \(X\to[-1,0]\amalg[0,1]\).
  Then we define \(g\)
  to be its composite with
  the map
  \([-1,0]\amalg[0,1]\to\{-1,1\}\hookrightarrow[-1,1]\)
  keeping~\(\pm1\).
  This fulfills the desired condition.
\end{proof}

The argument above shows the following:

\begin{corollary}\label{e0b1f20916}
  A compactum has a lifting property
  against the tautological map \([-1,0]\amalg[0,1]\to[-1,1]\)
  if and only if
  it is a totally disconnected \(\F\)-space.
\end{corollary}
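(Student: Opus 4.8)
The plan is to deduce both implications directly from \cref{3306b5117a}, since essentially all the content lives there; the only new point is the observation that the single test map $[-1,0]\amalg[0,1]\to[-1,1]$ already suffices to detect the property, so no quantification over all surjections is needed.

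For the ``only if'' direction---that a compactum lifting against this map is a totally disconnected $\F$-space---I would simply invoke the proof of $\text{\cref{i:a_proj}}\Rightarrow\text{\cref{i:tdf}}$ given just above. The point is that that argument never used the full strength of hypothesis~\cref{i:a_proj}: it only lifted a single real-valued function $f\colon X\to[-1,1]$ through $[-1,0]\amalg[0,1]$, first to separate two prescribed points into distinct clopen preimages (establishing total disconnectedness), and then to manufacture the function demanded by~\cref{i:f_fg} of~\cref{9dd5fcd481} (establishing the $\F$-space property). Hence the identical argument applies verbatim under the weaker hypothesis that $X$ merely lifts against this one map.

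For the ``if'' direction---that a totally disconnected $\F$-space lifts against this map---I would observe that both $[-1,1]$ and $[-1,0]\amalg[0,1]$ are metrizable, hence of countable weight, and that the tautological map between them is surjective. Thus the implication $\text{\cref{i:tdf}}\Rightarrow\text{\cref{i:a_proj}}$ of \cref{3306b5117a} applies to this particular surjection and produces the desired lift.

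I expect no genuine obstacle: the substantive work is entirely absorbed into \cref{3306b5117a}, and this corollary is a repackaging recording that a single surjection of metrizable compacta already characterizes totally disconnected $\F$-spaces. The only thing worth stating carefully is the first paragraph's claim that the earlier proof used nothing beyond lifting against this one map, which is immediate upon inspection.
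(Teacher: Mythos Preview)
Your proposal is correct and matches the paper's approach exactly: the paper simply remarks that ``the argument above shows the following'' before stating the corollary, meaning precisely what you spell out---the displayed proof of \(\text{\cref{i:a_proj}}\Rightarrow\text{\cref{i:tdf}}\) was deliberately written under the weaker hypothesis of lifting against this single map, and the converse is immediate from the full Neville--Lloyd theorem applied to this one surjection of metrizable compacta.
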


\begin{remark}\label{c63e5663c0}
  In \cref{3306b5117a},
  the lifting property in~\cref{i:a_proj}
  is not optimal;
  they proved that
  \(Y'\) can be of weight~\(\aleph_{1}\).
\end{remark}

\subsection{\texorpdfstring{\(\Ind\)}{Ind}-smooth algebras}\label{ss:ind_sm}

We use the following definition:

\begin{definition}\label{ind_sm}
  Let \(R\) be a ring.
  We say that an \(R\)-algebra
  is \emph{\(\Ind\)-smooth}
  if it can be written as a filtered colimit
  of smooth algebras over~\(R\).
\end{definition}

\begin{remark}\label{ind_sm_var}
  In contrast to the étale case,
  there is another possible definition
  of \(\Ind\)-smoothness:
  Some authors
  define
  an \(\Ind\)-smooth algebra
  to be an algebra
  that can be written as a filtered colimit
  of smooth algebras with smooth transition maps.

  We can (topologically) observe the fact that
  this is strictly stronger than our definition
  by using \cref{dig}:
  Let \(X\) be
  a totally disconnected \(\F\)-space~\(X\)
  that is not basically disconnected;
  see, e.g., \cref{60a6bbf64f}.
  Then the ring~\(\Cls{C}(X;\RR)\) is not coherent
  according to Neville's theorem~\cite{Neville90},
  which says that
  \(\Cls{C}(X;\RR)\) is coherent
  if and only if \(X\) is basically disconnected.
\end{remark}

\begin{proposition}\label{bcce4a4abc}
  Let \(R\) be a ring and \(A\) an \(R\)-algebra.
  The following are equivalent:
  \begin{conenum}
    \item\label{i:is_def}
      The \(R\)-algebra~\(A\) is \(\Ind\)-smooth.
    \item\label{i:is_f}
      Any map \(B\to A\)
      from an \(R\)-algebra of finite presentation
      factors as \(B\to B'\to A\)
      where \(B'\) is a smooth \(R\)-algebra.
  \end{conenum}
\end{proposition}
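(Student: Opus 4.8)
The plan is to prove the equivalence of the two characterizations of $\Ind$-smoothness by showing each implies the other, with the forward direction being essentially formal and the reverse direction requiring a standard diagram-chasing argument with finitely presented algebras. The statement is a purely algebraic fact that holds for any commutative ring $R$ and any $R$-algebra $A$, so no topology enters; it is the kind of lemma that reduces working with $\Ind$-smooth algebras to a manageable lifting criterion.

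First I would prove \cref{i:is_def}$\Rightarrow$\cref{i:is_f}. Suppose $A = \injlim_{\alpha} A_{\alpha}$ is a filtered colimit of smooth $R$-algebras, and let $B \to A$ be a map from an $R$-algebra of finite presentation. Since $B$ is finitely presented, the functor $\Hom_{R}(B, \X)$ commutes with filtered colimits; hence the map $B \to A = \injlim_{\alpha} A_{\alpha}$ factors through some $A_{\alpha}$. Taking $B' = A_{\alpha}$, which is smooth by hypothesis, gives the desired factorization $B \to B' \to A$.

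The main work is the reverse direction, \cref{i:is_f}$\Rightarrow$\cref{i:is_def}. The idea is to exhibit $A$ as a filtered colimit of smooth $R$-algebras by building an indexing category out of the factorizations provided by~\cref{i:is_f}. I would write $A$ as a filtered colimit $A = \injlim_{\lambda} B_{\lambda}$ of its finitely presented $R$-subalgebras (any $R$-algebra is such a colimit, indexed by the filtered poset of finitely presented subalgebras, or more robustly by maps from finitely presented algebras). For each index $\lambda$, the structure map $B_{\lambda} \to A$ factors through a smooth $R$-algebra $B_{\lambda}'$ by~\cref{i:is_f}. The hard part will be organizing these individual factorizations into a genuine filtered diagram of smooth algebras whose colimit recovers $A$: one must choose the $B_{\lambda}'$ compatibly, which requires passing to a suitable cofinal indexing category of triples (a finitely presented algebra mapping to $A$, a smooth algebra, and a compatible factorization) and checking this category is filtered. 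The cleanest way is to take the colimit over \emph{all} commutative triangles $B \to B' \to A$ with $B$ finitely presented and $B'$ smooth; \cref{i:is_f} guarantees this indexing category surjects cofinally onto the finitely presented algebras mapping to $A$, and filteredness follows because any two such factorizations can be amalgamated using a common finitely presented source together with a further application of~\cref{i:is_f}.

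I expect the genuine obstacle to be the filteredness and cofinality bookkeeping rather than any deep algebra: once the indexing category of factorizations is set up correctly, the colimit over the smooth algebras $B'$ agrees with $\injlim_{\lambda} B_{\lambda} = A$ because the $B'$'s sit cofinally between the $B$'s and $A$. The smoothness of each term is immediate from the construction, so establishing that the relevant diagram is filtered and that the evaluation map to $A$ is an isomorphism is the entire content of the argument.
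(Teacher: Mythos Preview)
Your proposal is correct and follows the same cofinality strategy as the paper. The paper streamlines it by observing that smooth \(R\)-algebras are already finitely presented, so instead of a category of factorization triples one can simply take the full subcategory \(\cat{C}'\) of smooth algebras inside the filtered category \(\cat{C}\) of finitely presented \(R\)-algebras over~\(A\) and prove \(\cat{C}'\hookrightarrow\cat{C}\) is cofinal; the key connectedness step---applying \cref{i:is_f} to the tensor product \(C'\otimes_{C}C''\) of two smooth algebras under a given~\(C\)---is exactly your ``amalgamation'' step, just with less bookkeeping.
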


\begin{proof}
  The implication
  \(\text{\cref{i:is_def}}\Rightarrow\text{\cref{i:is_f}}\)
  is clear.
  We assume~\cref{i:is_f}
  and prove~\cref{i:is_def}.
  We consider
  the category~\(\cat{C}\)
  of \(R\)-algebras of finite presentations with maps to~\(A\).
  Then \(\cat{C}\) is filtered
  and the map \(\injlim_{C\in\cat{C}}C\to A\) is an isomorphism.
  We now consider its full subcategory \(\cat{C}'\) spanned by
  smooth \(R\)-algebras.
It suffices to show that
  the inclusion \(\cat{C}'\hookrightarrow\cat{C}\) is
  \(1\)-categorically cofinal.
  To show this,
  we fix an object \(C\in\cat{C}\)
  and
  wish to observe
  that
  the category \(\cat{D}=\cat{C}'\times_{\cat{C}}\cat{C}_{C/}\)
  is connected.
  First,
  by applying~\cref{i:is_f} to \(B=C\),
  we see that \(\cat{D}\) is nonempty.
  We then consider two objects \(C'\) and~\(C''\) of~\(\cat{D}\).
  By applying~\cref{i:is_f} to \(B=C'\otimes_C C''\),
  we see that there is an object in~\(\cat{D}\)
  receiving morphisms from~\(C'\) and~\(C''\).
  Hence \(\cat{D}\) is connected.
\end{proof}

\subsection{Compacta with \texorpdfstring{\(\Ind\)}{Ind}-smooth rings of continuous functions}\label{ss:c_ind_sm}

We prove the following:

\begin{theorem}\label{dig}
  For a compactum~\(X\),
  the following are equivalent:
  \begin{conenum}
    \item\label{i:x_tdf}
      It is a totally disconnected \(\F\)-space.
    \item\label{i:cx_is}
      The ring
      \(\Cls{C}(X;\RR)\) is 
      \(\Ind\)-smooth over~\(\RR\).
    \item\label{i:cx_sd}
      The ring
      \(\Cls{C}(X;\RR)\)
      is a filtered colimit of
      products of domains over~\(\RR\).
  \end{conenum}
\end{theorem}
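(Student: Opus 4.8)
The plan is to prove the cycle \cref{i:x_tdf} \(\Rightarrow\) \cref{i:cx_is} \(\Rightarrow\) \cref{i:cx_sd} \(\Rightarrow\) \cref{i:x_tdf}. The heart of the matter is the first implication, which I would reduce to the finitary criterion~\cref{i:is_f} of \cref{bcce4a4abc}: it suffices to factor an arbitrary map \(B\to\Cls{C}(X;\RR)\) from a finitely presented \(\RR\)-algebra~\(B\) through a smooth \(\RR\)-algebra. Since \(\Cls{C}(X;\RR)\) is reduced, I may assume \(B\) is reduced, so that \(Y\coloneqq\Spec B\) is a (reduced) variety and the datum is a continuous map \(\phi\colon X\to Y(\RR)\). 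The engine is the lifting property of totally disconnected \(\F\)-spaces from~\cref{3306b5117a}: such an~\(X\) lifts any map against a surjection of compacta of countable weight, and \(Y(\RR)\) is second countable.

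For \cref{i:x_tdf} \(\Rightarrow\) \cref{i:cx_is} I would induct on \(\dim Y\). Let \(Z\subset Y\) be the singular locus (so \(\dim Z<\dim Y\)) and let \(\pi\colon\tilde{Y}\to Y\) be a resolution of singularities, proper with \(\tilde{Y}\) smooth and an isomorphism over \(Y\setminus Z\); this is the one place Hironaka's theorem enters, which is why the argument is confined to characteristic~\(0\) (cf.~\cref{a2fc5341e6}). By \cref{cdp}, the pair \(\{\tilde{Y}(\RR)^{+}\to Y(\RR)^{+},\,Z(\RR)^{+}\to Y(\RR)^{+}\}\) is a cd~cover of \(Y(\RR)^{+}\), so the induced surjection \(q\colon\tilde{Y}(\RR)^{+}\amalg Z(\RR)^{+}\to Y(\RR)^{+}\) is a surjection of countable-weight compacta. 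Applying the lifting property to \(\phi\) (viewed as landing in \(Y(\RR)^{+}\)) and using that \(\phi(X)\subseteq Y(\RR)\) avoids \(\infty\), I obtain a clopen decomposition \(X=X_{1}\amalg X_{2}\) with \(X_{1}\to\tilde{Y}(\RR)\) and \(X_{2}\to Z(\RR)\) lifting \(\phi\). The piece \(X_{1}\) maps into the smooth variety \(\tilde{Y}\); covering its compact image by finitely many affine smooth opens and refining the resulting open cover to a finite clopen partition (possible since \(X\) is totally disconnected), I factor \(B\to\Cls{C}(X_{1};\RR)\) through a finite product of smooth affine algebras, hence a smooth algebra. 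The piece \(X_{2}\) maps into \(Z\) of smaller dimension, so \(B\twoheadrightarrow\mathcal{O}(Z_{\mathrm{red}})\to\Cls{C}(X_{2};\RR)\) factors through a smooth algebra by induction (the base case \(\dim Y=0\) being immediate, as a reduced \(0\)-dimensional \(\RR\)-algebra is a finite product of copies of \(\RR\) and \(\CC\), hence étale). Assembling these along \(\Cls{C}(X;\RR)=\Cls{C}(X_{1};\RR)\times\Cls{C}(X_{2};\RR)\) gives the desired factorization through a smooth \(\RR\)-algebra. \emph{This inductive step is the main obstacle}, the delicate points being the surjectivity on real points (supplied by \cref{cdp}) and the passage from the possibly nonaffine \(\tilde{Y}\) to affine charts, which forces the clopen refinement.

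The implication \cref{i:cx_is} \(\Rightarrow\) \cref{i:cx_sd} is formal: a smooth \(\RR\)-algebra of finite type is regular, hence normal, so its finitely many connected components are irreducible and it is a finite product of domains; a filtered colimit of such is then a filtered colimit of products of domains.

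Finally, for \cref{i:cx_sd} \(\Rightarrow\) \cref{i:x_tdf}, write \(R=\Cls{C}(X;\RR)=\injlim_{j}\prod_{i}D_{i,j}\). The idempotents of \(R\) are \(\injlim_{j}\mathcal{P}(I_{j})\), and for a point \(x\in X\) each evaluation \(\prod_{i}D_{i,j}\to\RR\) determines an ultrafilter \(\mathcal{U}^{(j)}_{x}\) on \(I_{j}\). Inverting the idempotents \(\{e\mid e(x)=1\}\) in \(R\) computes, on the one hand, the ring \(\Cls{C}(Q_{x};\RR)\) of the quasicomponent \(Q_{x}=\bigcap\{W\ni x\text{ clopen}\}\) (which equals the connected component since \(X\) is compact Hausdorff), and on the other hand the filtered colimit \(\injlim_{j}\bigl(\prod_{i}D_{i,j}/\mathcal{U}^{(j)}_{x}\bigr)\) of ultraproducts of the domains \(D_{i,j}\). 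Since an ultraproduct of domains is a domain and a filtered colimit of domains is a domain, \(\Cls{C}(Q_{x};\RR)\) is a domain; but \(\Cls{C}(K;\RR)\) is a domain only when \(K\) is a single point, so \(Q_{x}=\{x\}\) and \(X\) is totally disconnected. Localizing this domain further at the maximal ideal of \(x\) shows each local ring of \(\Cls{C}(X;\RR)\) is a domain, so \(X\) is an \(\F\)-space by~\cref{i:f_dom} of \cref{9dd5fcd481}, completing the cycle.
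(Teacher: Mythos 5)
Your proof of \cref{i:x_tdf} \(\Rightarrow\) \cref{i:cx_is} is in substance the paper's argument: the paper likewise reduces to \cref{i:is_f} of \cref{bcce4a4abc}, produces an affine rh~cover \(\Spec A'\to\Spec A\) with \(A'\) smooth by Hironaka plus induction on dimension, converts it into a cd~cover of real points via \cref{cd_rh} (of which your direct appeal to \cref{cdp} is the relevant special case), and lifts against the resulting surjection using \cref{3306b5117a}; your extra step of refining to clopen pieces so as to land in affine charts is exactly what is absorbed into the paper's phrase ``affine rh~cover''. The implication \cref{i:cx_is} \(\Rightarrow\) \cref{i:cx_sd} is identical to the paper's.

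For \cref{i:cx_sd} \(\Rightarrow\) \cref{i:x_tdf} you take a genuinely different route from the paper, and it contains a genuine error. Inverting the idempotents \(\{e\mid e(x)=1\}\) in \(R=\Cls{C}(X;\RR)\) does \emph{not} compute \(\Cls{C}(Q_{x};\RR)\): it computes the ring of germs \(G_{x}=\injlim_{W\ni x\ \text{clopen}}\Cls{C}(W;\RR)\), which surjects onto \(\Cls{C}(Q_{x};\RR)\) but is usually much larger (already for \(X=\beta\NN\) and \(x\) a free ultrafilter one has \(G_{x}\neq\RR\); the class of \((1/n)_{n}\) is a nonzero element of \(G_{x}\) even though it vanishes at~\(x\)). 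Consequently neither of your final deductions is valid as written: being a quotient of the domain \(G_{x}\) does not make \(\Cls{C}(Q_{x};\RR)\) a domain, and once \(Q_{x}=\{x\}\) the ring \(\Cls{C}(Q_{x};\RR)\) is just~\(\RR\), whose localizations are not the local rings of~\(R\). The argument is salvageable, because your ultraproduct computation does correctly show that \(G_{x}\) itself is a domain, and both conclusions follow from that by slightly different reasoning: two distinct points of \(Q_{x}\) would carry functions with disjoint cozero sets, each nonzero in \(G_{x}\) (every clopen \(W\ni x\) contains \(Q_{x}\)) but with product zero, so \(Q_{x}\) is a singleton; and \(R_{\idl{m}_{x}}\) is a further localization of \(G_{x}\) (any idempotent with \(e(x)=1\) is a unit there), hence a domain, which gives the \(\F\)-space property via \cref{i:f_dom} of \cref{9dd5fcd481}. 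The paper avoids all of this by a cleaner device: it reduces \cref{i:x_tdf} to the single lifting property against \([-1,0]\amalg[0,1]\to[-1,1]\) (\cref{e0b1f20916}) and observes that the factorization property of \cref{77eb54c6ff} is inherited by products and filtered colimits; your (repaired) argument reaches the same conclusion but at the cost of the germ-ring bookkeeping.
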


As \(\beta\NN\) is a totally disconnected \(\F\)-space,
we obtain \cref{linf} as a corollary.

First note that
the implication
\(\text{\cref{i:cx_is}}\Rightarrow\text{\cref{i:cx_sd}}\)
is clear since any normal noetherian ring
is a finite product of domains.

\begin{proof}[Proof of \(\text{\cref{i:x_tdf}}\Rightarrow\text{\cref{i:cx_is}}\)]
  We consider a totally disconnected \(\F\)-space~\(X\).
  We show that \(\Cls{C}(X;\RR)\) is \(\Ind\)-smooth
  by checking~\cref{i:is_f} of~\cref{bcce4a4abc}.
  Let \(A\) be an \(\RR\)-algebra of finite type
  with a map \(A\to\Cls{C}(X;\RR)\).
We have an rh~cover
  \(\Spec A'\to\Spec A\)
  by using Hironaka's resolution of singularities
  and induction on the dimension.
  By \cref{cd_rh},
  the map \((\Spec A')(\RR)\to(\Spec A)(\RR)\)
  is a cd~cover.
Hence by \cref{3306b5117a}, we can take a lift
  \(X\to(\Spec A')(\RR)\)
  and thus we have a factorization
  \(A\to A'\to\Cls{C}(X;\RR)\).
\end{proof}

The proof of
the remaining implication needs
the following simple fact:

\begin{lemma}\label{77eb54c6ff}
  Let \(A\) be a domain.
  Then any ring map \(\ZZ[x,y]/\langle x,y\rangle\to A\)
  factors through the map
  \(\ZZ[x,y]/\langle x,y\rangle\to\ZZ[x]\times\ZZ[y]\).
\end{lemma}

\begin{proof}
  Let \(a\) and~\(b\) the images
  of \(x\) and~\(y\),
  respectively.
  These satisfy \(ab=0\),
  but since \(A\) is a domain,
  we may assume that \(a=0\)
  without loss of generality.
  Then the map factors as
  \(\ZZ[x,y]/\langle x,y\rangle
  \to\ZZ[x]\times\ZZ[y]
  \to0\times\ZZ[y]\simeq\ZZ[y]
  \xrightarrow{y\mapsto b}A\).
\end{proof}

\begin{figure}[htbp]
  \centering
  \begin{tikzpicture}
    \node[inner sep=0pt] at (0,0) {\includegraphics[width=397.38072193bp]{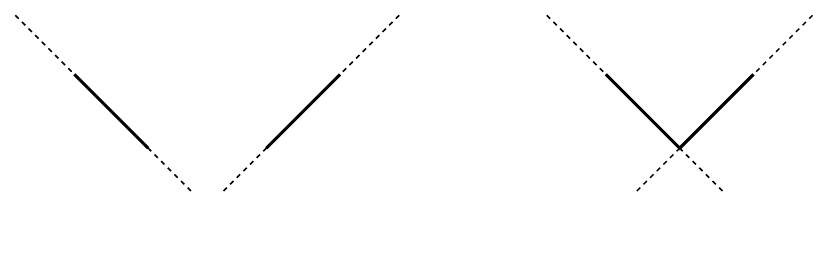}};
    \node at (-5.5,-1.75) {\(\Spec\RR[x]\)};
    \node at (-1.5,-1.75) {\(\Spec\RR[y]\)};
    \node at (4.5,-1.75) {\(\Spec\RR[x,y]/\langle xy\rangle\)};
    \node at (-6,.5) {\(-1\)};
    \node at (-4.75,-0.75) {\(0\)};
    \node at (-2.25,-0.75) {\(0\)};
    \node at (-1,.5) {\(1\)};
    \node at (3,.5) {\(-1\)};
    \node at (6,.5) {\(1\)};
    \node at (1,0) {\(\longrightarrow\)};
  \end{tikzpicture}
  \caption{The cover \([-1,0]\amalg[0,1]\to[-1,1]\)
    inside the real points of
    \(\Spec\RR[x]\amalg\Spec\RR[y]\to\Spec\RR[x,y]/\langle xy\rangle\)
  }\label{f:cross}
\end{figure}

\begin{proof}[Proof of \(\text{\cref{i:cx_sd}}\Rightarrow\text{\cref{i:x_tdf}}\) of \cref{dig}]
  Let \(X\) be a compactum satisfying \cref{i:cx_sd}.
  By \cref{e0b1f20916},
  we only need to show that
  \(X\) has a lifting property
  against \([-1,0]\amalg[0,1]\to[-1,1]\).

  First, we note that
  the class of rings
  satisfying the conclusion of \cref{77eb54c6ff}
  is closed under (possibly infinite) products
  and filtered colimits.
  Hence by assumption,
  \(\Cls{C}(X;\RR)\) has this property.

  Suppose that
  we have a map \(X\to[-1,1]\).
  Then as in \cref{f:cross},
  we get a map of \(\RR\)-algebras
  \(\RR[x,y]/\langle x,y\rangle\to\Cls{C}(X;\RR)\);
  formally,
  this is determined by
  \begin{align*}
    x&\mapsto
    (X\to[-1,1]\xrightarrow{\min(0,\X)}[-1,0]),&
    y&\mapsto
    (X\to[-1,1]\xrightarrow{\max(0,\X)}[0,1]).
  \end{align*}
  From the observation above,
  we can factor this map to
  \(\RR[x]\times\RR[y]\to\Cls{C}(X;\RR)\).
  This corresponds to the desired lifting property;
  cf.~\cref{f:cross}.
\end{proof}

\subsection{Digression: rings with interesting \texorpdfstring{\(K\)}{K}-theoretic properties}\label{ss:normal}

Recall that a ring is called \emph{normal}
if its localization at each prime
(or equivalently, at each maximal ideal) is
a normal domain.
The starting point of our discussion
is the following:

\begin{proposition}\label{normal}
  Let \(X\) be a compactum.
  Then \(\Cls{C}(X;\RR)\) is normal
  if and only if
  \(X\) is an \(\F\)-space.
\end{proposition}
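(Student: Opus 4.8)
The plan is to reduce both implications to the characterization of $\F$-spaces recorded in \cref{9dd5fcd481}, after which everything becomes standard commutative algebra. Recall that, by the definition of normality, $\Cls{C}(X;\RR)$ is normal exactly when its localization $\Cls{C}(X;\RR)_{\idl{p}}$ at every prime $\idl{p}$ is an integrally closed domain; in particular normality is a \emph{local} condition, so I never need a global integral-closure statement about the (typically non-domain) ring $\Cls{C}(X;\RR)$ itself.

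For the ``only if'' direction I would argue that normality of $\Cls{C}(X;\RR)$ forces each localization at a prime to be a domain, whence $X$ is an $\F$-space by the implication from \cref{i:f_dom} to \cref{i:f} in \cref{9dd5fcd481}. This uses nothing beyond the definition of normality and the already-recorded equivalence.

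For the ``if'' direction, suppose $X$ is an $\F$-space and set $A = \Cls{C}(X;\RR)$. By \cref{490c39f439} the ring $A$ is Bézout, and by the implication from \cref{i:f} to \cref{i:f_dom} in \cref{9dd5fcd481} every localization $A_{\idl{p}}$ at a prime is a domain. The key observation is that the Bézout property localizes: any finitely generated ideal of $A_{\idl{p}}$ is extended from a finitely generated, hence principal, ideal of $A$, so it is again principal. Thus each $A_{\idl{p}}$ is a Bézout domain. Since a Bézout domain is a GCD domain and GCD domains are integrally closed (if $z = a/b$ with $(a,b) = A_{\idl{p}}$ is integral over $A_{\idl{p}}$, the integral equation gives $b \mid a^{n}$, forcing $b$ to be a unit and $z \in A_{\idl{p}}$), each $A_{\idl{p}}$ is a normal domain. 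Therefore $A$ is normal.

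I expect no genuine obstacle here: essentially all the topological content is packaged into \cref{9dd5fcd481}, which I am assuming, and the two ring-theoretic ingredients (localizations of Bézout rings are Bézout, and Bézout domains are integrally closed) are classical. The only point requiring care is to read the definition of ``normal'' as the local condition above, so that the argument goes through the valuation-like localizations $A_{\idl{p}}$ rather than through $A$.
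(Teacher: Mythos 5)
Your proof is correct and follows essentially the same route as the paper: both directions reduce to the equivalence of \cref{i:f} and \cref{i:f_dom} in \cref{9dd5fcd481}, with the ``if'' direction resting on the observation that localizations of a B\'ezout ring are B\'ezout domains. The only (immaterial) difference is the last classical step: the paper notes that a B\'ezout local domain is a valuation ring, hence normal, while you argue via GCD domains being integrally closed.
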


\begin{proof}
The ``if'' direction follows from
  the fact that
  a Bézout local domain is a valuation ring,
  which is normal.
  The ``only if'' direction follows
  from \(
  \text{\cref{i:f_dom}}
  \Rightarrow
  \text{\cref{i:f}}
  \) of \cref{9dd5fcd481}.
\end{proof}

\begin{remark}\label{e627136059}
  There is another use of ``normal''
  in the literature on rings of continuous functions:
  In~\cite{ParmenterStewart87},
  normal rings are defined to be rings such that
  \(xy=0\) implies \(1\in\ker x+\ker y\).
  By combining~\cite[Theorem~6.2]{Dyre82}
  and~\cite[Theorem~2]{ParmenterStewart87},
  we see that
  \(\Cls{C}(X;\RR)\) is normal
  if and only if
  \(X\) is an \(\F\)-space.
  Therefore,
  by \cref{normal},
  those two notions coincide for \(\Cls{C}(X;\RR)\).
\end{remark}

By combining \cref{dig,normal},
we see that
if a compactum~\(X\)
is an \(\F\)-space
that is not totally disconnected,
\(\Cls{C}(X;\RR)\)
is an example of a normal \(\RR\)-algebra
that cannot be written as a filtered colimit
of a normal \(\RR\)-algebras of finite type.
This type of an example is well known;
I was informed by Gabber about the following observation:

\begin{remark}[Gabber]\label{bce2bc0a4a}
  If a ring can be expressed
  as a filtered colimit of normal rings
  that are of finite type over a field~\(k\),
  each of its connected components
  becomes
  a filtered colimit of normal domains;
  in particular, it is a domain.
  Hence
  any connected normal \(k\)-algebra
  that is not a domain
  cannot be written as a filtered colimit
  of normal \(k\)-algebras of finite type;
  see \cref{dcd3146c40} below
  for a typical construction
  of such a \(k\)-algebra.
\end{remark}

We recall the following
example, which can be found in~\cite[Section~7]{Lazard67}:

\begin{example}[Lazard]\label{dcd3146c40}
  Over an arbitrary field~\(k\),
  we show that
  \begin{equation*}
    A
    =
    \frac
    {k[x_{i}\mid i\in[0,1]\cap\ZZ[1/2]]}
    {\langle(x_{i}-1)x_{j}\mid i<j\rangle}
  \end{equation*}
  is
  a normal algebra that is connected but not irreducible.

  It is an increasing union
  of the subalgebra \(A_{n}\subset A\)
  generated by
  the variables~\(x_{i}\)
  satisfying \(2^n i\in\ZZ\).
  Geometrically,
  \(\Spec A_{n}\) is a chain-like union of \((2^n+1)\) lines
  and
  \cref{f:bu}
  shows
  how the transition map \(\Spec A_{1}\to\Spec A_{0}\)
  (or any transition map
  \(\Spec A_{n+1}\to\Spec A_{n}\)
  locally) looks.
  We see
  that \(A\) is connected but not irreducible
  from this description.

  We now take a point~\(\idl{p}\) of \(\Spec A\)
  and
  wish to show that \(A_{\idl{p}}\) is normal.
  The point~\(\idl{p}\) determines
  a compatible family of
  points~\(\idl{p}_{n}\) of \(\Spec A_{n}\).
  If \(\idl{p}_{n}\) is a smooth point for \(n\gg0\),
  then the local ring \(A_{\idl{p}}\simeq(A_{n})_{\idl{p}_{n}}\) is normal.
  Hence we assume that
  \(\idl{p}_{n}\) is a singular point for every \(n\geq0\).
  We analyze the map \((A_{n})_{\idl{p}_{n}}\to(A_{n+1})_{\idl{p}_{n+1}}\).
  Let \(x\), \(y\), and~\(z\) denote
  the variables
  such that
  \(\idl{p}_{n}\) is the intersection of~\(L_x\) and~\(L_y\)
  and
  \(\idl{p}_{n+1}\) is the intersection of~\(L_x\) and~\(L_z\),
  where \(L_{\X}\) means the line corresponding to the variable~\(\X\);
  see \cref{f:bu}.
  Then the transition map around these points
  factors as
  \begin{equation*}
    L_x\cup L_z\to L_x\to L_x\cup L_y.
  \end{equation*}
  This shows that the map
  \((A_{n})_{\idl{p}_{n}}\to(A_{n+1})_{\idl{p}_{n+1}}\)
  factors through the local ring of~\(k[x]\)
  at \(x=0\) or \(x=1\).
  This observation shows that \(A_{\idl{p}}\)
  is the sequential colimit of the algebra of the form \(k[T]_{(T)}\)
  whose transition maps are \(\id\) or \(T\mapsto0\)
  and thus \(A_{\idl{p}}\) is isomorphic to either~\(k\) or \(k[T]_{(T)}\).
\end{example}

\begin{figure}[htbp]
\centering
  \begin{tikzpicture}
    \node[inner sep=0pt] at (0,0) {\includegraphics[width=383.20831539bp]{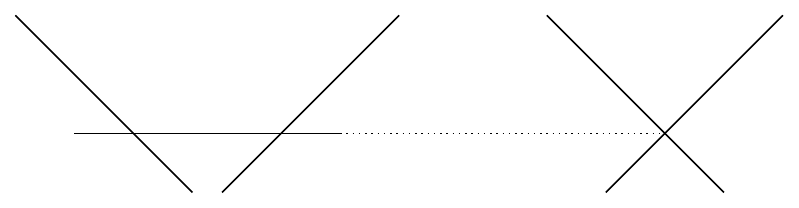}};
    \node at (1.25,-.25) {$\longrightarrow$};
    \node at (-6.25,.5) {$L_x$};
    \node at (-3.25,0) {$L_z$};
    \node at (-.25,.5) {$L_y$};
    \node at (2.75,.5) {$L_x$};
    \node at (6.25,.5) {$L_y$};
  \end{tikzpicture}
  \caption{Part of The construction in \cref{dcd3146c40}}\label{f:bu}
\end{figure}

Note that
we can modify this to obtain \cref{xpogpw}:

\begin{example}\label{5841fb8eca}
Over any field~\(k\),
  we here construct a ring of weak dimension~\(\leq1\)
  and global dimension~\(\leq2\)
  whose \(K_{-1}\) does not vanish.
  Note that any ring of weak dimension~\(\leq1\)
  is \(K\)-regular;
  see~\cite[Theorem~1.2]{BanerjeeSadhu22}.

  Consider \(B=A/\langle x_{0}-x_{1}\rangle\),
  where \(A=\injlim_{n} A_{n}\)
  is the ring constructed in \cref{dcd3146c40}.
  Note that this is a sequential colimit
  of
  the ring
  \(B_{n}=A_{n}/\langle x_{0}-x_{1}\rangle\).
  For each~\(n\geq1\),
  by excision,
  \(K_{-1}(B_{n})\simeq\ZZ\) holds and
  the map \(K_{-1}(B_{n})\to K_{-1}(B_{n+1})\) is an isomorphism.
  Hence \(K_{-1}(B)\simeq\ZZ\neq0\).

  It follows from
  the same argument as in \cref{dcd3146c40}
  that
  the local ring of~\(B\)
  is a valuation ring
  and hence \(B\) has weak dimension~\(\leq1\).
Also,
  since \(B\) is countably generated over a field,
  every ideal of~\(B\) is countably generated.
  Hence it follows from~\cite[Corollary~1.4]{Osofsky68}
  that
  \(B\) has global dimension~\(\leq2\).
\end{example}

Finally,
by using \cref{main_a},
we construct
this type of example
using general topology.
The topological input
is the following,
which was proven in~\cite[Corollary~5.2]{CalderSiegel78H}:

\begin{theorem}[Calder--Siegel]\label{05c57ea55f}
Let \(M\) be a compact topological manifold
  with boundary~\(\partial M\).
  Let \(M^{\circ}=M\setminus\partial M\) denote
  its interior.
  Let \(X\) be a CW~complex of finite type;
  i.e., it has finite cells in each dimension.
  If \(\pi_{1}(X)\) is finite,
  the map
  \begin{equation*}
    [\partial M,X]
    \to
    [\beta M^{\circ}\setminus M^{\circ},X]
  \end{equation*}
  is bijective,
  where \([\X,\X]\) denotes
  the set of homotopy classes of maps.
\end{theorem}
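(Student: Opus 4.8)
The plan is to realize the comparison map as induced by a natural collapse of the corona onto the boundary, built from a collar. By the topological collar neighborhood theorem, \(\partial M\) has a neighborhood homeomorphic to \(\partial M\times[0,1)\) in \(M\); deleting \(\partial M\), the set \(N=\partial M\times(0,1)\) is a neighborhood of infinity in \(M^{\circ}\) with relatively compact complement. Since the Stone--Čech corona depends only on the filter of cobounded sets, restriction yields a homeomorphism \(\beta M^{\circ}\setminus M^{\circ}\cong\beta\bar N\setminus\bar N\), and after reparametrizing \((0,1)\cong[1,\infty)\) (so that \(t\to0\) corresponds to \(s\to\infty\), the only end lying at infinity), with the corona of \(\partial M\times[1,\infty)\). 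The projection \(\partial M\times[1,\infty)\to\partial M\) extends over Stone--Čech to \(\beta(\partial M\times[1,\infty))\to\beta\partial M=\partial M\) and carries the corona into \(\partial M\); call the resulting continuous surjection \(r\colon\beta M^{\circ}\setminus M^{\circ}\to\partial M\). The map in the statement is then precisely \(r^{\ast}\), precomposition with~\(r\).

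Next I would reduce bijectivity of \(r^{\ast}\) to a cohomological statement via obstruction theory. As \(X\) is of finite type, replace it by its Postnikov tower \(\{X_{n}\}\) with \(X\simeq\projlim_{n}X_{n}\) and fibers \(K(\pi_{n}X,n)\), a tower of finite type because the homotopy groups are finitely generated. For a compactum~\(Y\), homotopy classes \([Y,X]\) are computed by lifting up this tower, the obstruction to extending a lift lying in \(\check H^{n+1}(Y;\pi_{n}X)\) and the indeterminacy being a \(\check H^{n}(Y;\pi_{n}X)\)-action, with Čech cohomology and local coefficients since \(Y\) need not be a CW complex; passage to the limit is governed by a Milnor \(\projlim\)--\({\projlim}^{1}\) sequence. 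Because \(\pi_{1}(X)\) is finite, every local system that arises is pulled back from a finitely generated \(\ZZ[\pi_{1}X]\)-module along \(\pi_{1}(Y)\to\pi_{1}(X)\); it therefore suffices to show that \(r\) induces an isomorphism on \(\check H^{\ast}(\X;A)\) for every such coefficient system, together with the expected bijection at the \(\pi_{1}\)-shape level into the finite group \(\pi_{1}(X)\). The finite type of \(X\) then forces the relevant \({\projlim}^{1}\)-terms to vanish, upgrading the cohomological statement to bijectivity of \(r^{\ast}\).

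The crux is the remaining Čech-cohomology (and \(\pi_{1}\)-shape) equivalence for~\(r\), and this is where I expect the main obstacle. Exploiting the product structure \(\partial M\times[1,\infty)\) and the compatibility of Čech cohomology with Stone--Čech compactification (finite cozero covers of \(\beta(\X)\) restrict bijectively to those of the dense subspace), I would reduce to the local statement that the ray end contributes nothing: the corona of \([1,\infty)\) is Čech-acyclic and is the connected \(\F\)-space of \cref{xgsasy}, so \(r\) collapses the end without altering cohomology while carrying the \(\partial M\)-factor isomorphically, giving \(\check H^{\ast}(\beta(\partial M\times[1,\infty))\setminus(\partial M\times[1,\infty));A)\cong\check H^{\ast}(\partial M;A)\) and the analogous statement for maps of the fundamental pro-group into a finite group. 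I expect this last step to be genuinely delicate: the Stone--Čech corona is non-metrizable and point-set pathological, so no CW or simplicial approximation is available, and all control must come from the cofinality of finite cozero covers together with the product-with-a-ray structure. Making the statement precise not merely on cohomology but at the \(\pi_{1}\)-level for a finite target is exactly the point at which the hypothesis that \(\pi_{1}(X)\) be finite becomes indispensable.
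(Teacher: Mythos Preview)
The paper does not give its own proof of this theorem; it is simply quoted from \cite[Corollary~5.2]{CalderSiegel78H} and used as a black box to obtain \cref{x4ovub}. There is therefore nothing in the paper to compare your proposal against.

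For what it is worth, your outline is broadly reasonable in spirit: identifying the corona of \(M^{\circ}\) with that of \(\partial M\times[1,\infty)\) via a collar, and then trying to show that the projection to \(\partial M\) induces an equivalence at the level of obstruction theory. But you correctly flag the sticking point yourself. The step where you assert
\[
\check H^{\ast}\bigl(\beta(\partial M\times[1,\infty))\setminus(\partial M\times[1,\infty));A\bigr)\cong\check H^{\ast}(\partial M;A)
\]
is essentially the content of the theorem, not a lemma you have proven; your appeal to ``the corona of \([1,\infty)\) is Čech-acyclic'' and a product decomposition does not settle it, because the Stone--Čech corona of a product is not the product of the coronas, and Čech cohomology of such spaces does not satisfy a Künneth formula in any usable sense. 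Calder--Siegel's argument does not proceed this way: it goes through the theory of uniform homotopy and Kan extensions of homotopy functors, which is what makes the finite-\(\pi_{1}\) and finite-type hypotheses enter cleanly. If you want to fill the gap, that is the literature to consult rather than trying to force a direct cohomological comparison.
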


By applying \cref{05c57ea55f}
to \(X=K(\ZZ,i)\),
we obtain the following:

\begin{corollary}\label{x4ovub}
  In the situation of \cref{05c57ea55f},
  \begin{equation*}
    H^{i}(\partial M;\ZZ)
    \to
    H^{i}(\beta M^{\circ}\setminus M^{\circ};\ZZ)
  \end{equation*}
  is an isomorphism
  for \(i\geq2\).
\end{corollary}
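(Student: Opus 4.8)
The plan is to apply \cref{05c57ea55f} with the Eilenberg--MacLane space $X=K(\ZZ,i)$ and then translate the resulting bijection of homotopy classes into the asserted isomorphism on cohomology. First I would verify that $K(\ZZ,i)$ satisfies the hypotheses for $i\geq2$: it admits a CW model with finitely many cells in each dimension, since its integral homology is finitely generated in each degree, so it is of finite type; and for $i\geq2$ it is simply connected, so $\pi_{1}(K(\ZZ,i))=0$ is finite. This is exactly where the restriction $i\geq2$ enters, as $\pi_{1}(K(\ZZ,1))=\ZZ$ is infinite. Hence \cref{05c57ea55f} yields a bijection
\begin{equation*}
  [\partial M,K(\ZZ,i)]
  \xrightarrow{\ \sim\ }
  [\beta M^{\circ}\setminus M^{\circ},K(\ZZ,i)],
\end{equation*}
given by precomposition with the canonical map $r\colon\beta M^{\circ}\setminus M^{\circ}\to\partial M$ obtained by restricting the Stone--\v{C}ech extension $\beta M^{\circ}\to M$ of the inclusion $M^{\circ}\hookrightarrow M$ to the remainder; the remainder lands in $\partial M$ because nets in $M^{\circ}$ leaving every compact subset accumulate only on the boundary.

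Next I would invoke Eilenberg--MacLane representability: for a compactum $Y$ there is a natural isomorphism $[Y,K(\ZZ,i)]\cong H^{i}(Y;\ZZ)$, where the right-hand side is sheaf (equivalently \v{C}ech) cohomology. For $\partial M$, a compact manifold and hence homotopy equivalent to a finite CW complex, this is the classical statement, with sheaf cohomology agreeing with singular cohomology. For the remainder $\beta M^{\circ}\setminus M^{\circ}$ one needs the \v{C}ech-cohomological form, valid for arbitrary compacta; this is the essential point. Naturality of the identification in $Y$ converts the bijection above into $r^{*}\colon H^{i}(\partial M;\ZZ)\to H^{i}(\beta M^{\circ}\setminus M^{\circ};\ZZ)$, which is therefore a bijection; being induced by a continuous map it is a homomorphism of abelian groups, hence an isomorphism, which is the claim.

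The hard part will be justifying the representability $[Y,K(\ZZ,i)]\cong H^{i}(Y;\ZZ)$ for the non-CW compactum $Y=\beta M^{\circ}\setminus M^{\circ}$. A route consonant with this paper is to write $Y$ as a cofiltered limit $\projlim_{\alpha}Y_{\alpha}$ of polyhedrons via \cref{approx}. On one side, continuity of \v{C}ech cohomology and ordinary representability on polyhedrons give $\check{H}^{i}(Y;\ZZ)=\injlim_{\alpha}H^{i}(Y_{\alpha};\ZZ)=\injlim_{\alpha}[Y_{\alpha},K(\ZZ,i)]$. On the other side, since $Y$ is compact, any map $Y\to K(\ZZ,i)$—and any homotopy $Y\times[0,1]\to K(\ZZ,i)$—has image in a finite subcomplex, which is a compact ANR; the standard shape-theoretic expansion property then gives $[Y,K(\ZZ,i)]=\injlim_{\alpha}[Y_{\alpha},K(\ZZ,i)]$. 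Matching the two computations yields the representability. Making this expansion property precise (or, alternatively, citing the classical \v{C}ech-representability of Eilenberg--MacLane spaces on paracompacta) is the only genuinely delicate step; everything else is a formal consequence of \cref{05c57ea55f}.
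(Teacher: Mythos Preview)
Your proposal is correct and follows exactly the paper's approach: the paper's entire proof is the single sentence ``By applying \cref{05c57ea55f} to \(X=K(\ZZ,i)\), we obtain the following,'' so your plan---checking that \(K(\ZZ,i)\) has finite type and finite \(\pi_{1}\) for \(i\geq2\), then identifying \([\X,K(\ZZ,i)]\) with \(H^{i}(\X;\ZZ)\)---is precisely what is intended. The representability step for the non-CW compactum, which you flag as the delicate point, is left implicit in the paper; your sketch via \cref{approx} and continuity of \v{C}ech cohomology (equivalently, the shape-theoretic description in \cref{86095c550d}) is a correct way to fill it in.
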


\begin{remark}\label{xiab72}
  Condensed mathematics
  helps to understand statements like
  \cref{x4ovub}.
  This will be explained elsewhere.
\end{remark}

\begin{example}\label{xpw6rb}
Consider \(M=D^{n+1}\) for \(n\geq0\)
  in \cref{x4ovub}.
  Then we see that
  the map
  \(H^{i}(S^{n};\ZZ)
  \to H^{i}(\beta\RR^{n+1}\setminus\RR^{n+1};\ZZ)\)
  is an equivalence
  for \(i\geq2\).
  In particular,
  the map
  \(H^{i}(S^{n};\QQ)
  \to H^{i}(\beta\RR^{n+1}\setminus\RR^{n+1};\QQ)\)
  is an equivalence.
\end{example}

We obtain \cref{2cd33f5deb} as follows:

\begin{example}\label{f7079eb4b7}
  We use \cref{main_a}
  to construct a ring with weak dimension \(\leq1\)
  whose all negative \(K\)-theory groups are nonzero.
Consider the compactum
  \begin{equation*}
    X
    =
    \beta\biggl(\coprod_{n\geq2}\RR^{n+1}\biggr)
    \setminus
    \biggl(\coprod_{n\geq2}\RR^{n+1}\biggr).
  \end{equation*}
  It is an \(\F\)-space
  by~\cite[Theorem~2.7]{GillmanHenriksen56}.
Hence
  the weak dimension of
  \(\Cls{C}(X;\RR)\) is \(\leq1\).
For \(n\geq2\),
  since \(X\) contains \(\beta\RR^{n+1}\setminus\RR^{n+1}\)
  as a direct summand,
  by \cref{xpw6rb},
  we have \(H^n(X;\QQ)\neq0\).
  Therefore,
  we see
  \(\ko^{-{*}}(X)\otimes\QQ
  \simeq
  \bigoplus_{i\geq0}H^{4i-{*}}(X;\QQ)
  \neq0\)
  for \({*}\leq0\).
  According to \cref{main_a},
  this means that
  \(K_{*}(\Cls{C}(X;\RR))\otimes\QQ\)
  does not vanish for \({*}\leq0\).
\end{example}

Note that for examples such as \cref{f7079eb4b7},
we can also bound their global dimensions
under some set-theoretic assumptions:

\begin{remark}\label{25a0dda94b}
Let \(X\) be a local compactum.
  Assume that
  \(X\) is separable and
  \(\beta X\setminus X\) is an \(\F\)-space.
  Since \(\Cls{C}(\beta X;\RR)\)
  is a subring of \(\RR^{I}\)
  for a dense subset \(I\subset X\),
  its cardinality is \(\leq2^{\aleph_{0}}\).
  By considering a surjection
  \(\Cls{C}(\beta X;\RR)
  \to\Cls{C}(\beta X\setminus X;\RR)\),
  we observe that
  any ideal of
  \(\Cls{C}(\beta X\setminus X;\RR)\)
  is generated by \(\leq2^{\aleph_{0}}\) elements.
  For example,
  if we assume the continuum hypothesis
  \(2^{\aleph_{0}}=\aleph_{1}\),
  its global dimension is \(\leq3\)
  by~\cite[Corollary~1.4]{Osofsky68}.
  Note that
  since
  \(2^{\aleph_{0}}=\aleph_{\omega+1}\) is relatively consistent
  with the Zermelo--Fraenkel set theory
  by Cohen's theorem,
  this observation does not give a useful bound in general.
\end{remark}

\section{Excision for rings of continuous functions}\label{s:exc}

In this section,
by ``ring'',
we mean a unital associative ring.

In this section,
we prove the following:

\begin{theorem}\label{exc_a}
  Let \((A,\lVert\X\rVert)\) be a normed ring
such that there exists an \(\RR\)-algebra structure
  satisfying \(\lVert x\rVert=\lvert x\rvert\) for \(x\in\RR\).
  Let \(X\) be a compactum
  and \(R\) a commutative ring
  with a map \(R\to\Cls{C}(X;A)\).
  For an \(R\)-linear localizing invariant\footnote{We do not require that localizing invariants preserve filtered colimits.
  }~\(E\)
  valued in a presentable stable \(\infty\)-category~\(\cat{C}\),
  the functor
  \begin{equation*}
    E(\Cls{C}(\X;A))\colon
    ((\Cat{Cpt}_{\aleph_{1}})_{/X})^{\op}
    \to\cat{C}
  \end{equation*}
  is a cd~sheaf.
\end{theorem}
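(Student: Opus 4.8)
The plan is to verify the two conditions of the cd-sheaf criterion \cref{f37c1d2136}. The first, that $E(\Cls{C}(\emptyset;A))$ is final, is immediate: $\Cls{C}(\emptyset;A)$ is the zero ring and any localizing invariant vanishes on it. The substance lies in the second condition, namely that the functor carries a blowup square \cref{e:blowup} of compacta of countable weight to a cartesian square. First I would apply the contravariant functor $\Cls{C}(\X;A)$ to \cref{e:blowup}. Writing $U=X\setminus Z\cong X'\setminus Z'$, I claim that the resulting square of unital associative rings, with maps $\Cls{C}(X;A)\to\Cls{C}(X';A)$ and $\Cls{C}(Z;A)\to\Cls{C}(Z';A)$, is a Milnor square: it is a pullback because a continuous $A$-valued function on $X$ is exactly a compatible pair of such functions on $Z$ and on $X'$ (the ring-level shadow of the cocartesian-topos statement \cref{cd_shv}), and the map $\Cls{C}(X';A)\to\Cls{C}(Z';A)$ is surjective because $Z'$ is closed in $X'$ and $A$-valued functions extend, via the Dugundji–Tietze extension theorem (here $A$, being a normed $\RR$-algebra, is in particular a locally convex $\RR$-vector space).

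The two vertical maps of this Milnor square have a \emph{common} kernel: the non-unital ring $\idl{I}=\Cls{C}_{0}(U;A)$ of $A$-valued functions on $X$ vanishing on $Z$ (equivalently, functions on the local compactum $U$ vanishing at infinity). The identification $U\cong X'\setminus Z'$ is exactly what forces the two kernels to agree. I would then invoke the excision theory of Land–Tamme: if $\idl{I}$ is \emph{Tor-unital}, i.e.\ $\Tor^{\ZZ\oplus\idl{I}}_{n}(\ZZ,\ZZ)=0$ for $n>0$, then for any ambient ring and any localizing invariant one gets a fiber sequence relating $E$ of the total ring, the quotient, and a term $E(\idl{I})$ depending only on $\idl{I}$. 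In kernel-category terms, both fibers $\fib(\Perf(\Cls{C}(X;A))\to\Perf(\Cls{C}(Z;A)))$ and $\fib(\Perf(\Cls{C}(X';A))\to\Perf(\Cls{C}(Z';A)))$ are then computed by the single ring $\idl{I}$, so the comparison functor induces an equivalence on $E$ and the square is cartesian. The $R$-linearity hypothesis plays no role in this reduction; it merely restricts the class of invariants considered.

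The main obstacle is therefore establishing that $\idl{I}=\Cls{C}_{0}(U;A)$ is Tor-unital, and this is exactly where the hypothesis $\lVert x\rVert=\lvert x\rvert$ is indispensable. The idea is to exploit that $\idl{I}$ contains the commutative subring $\Cls{C}_{0}(U;\RR)$, which admits continuous functional calculus for real functions; in particular every nonnegative $f$ has a square root $\sqrt{f}\in\Cls{C}_{0}(U;\RR)$ with $f=\sqrt{f}\cdot\sqrt{f}$, yielding a \emph{bounded, central approximate unit} equipped with a compatible squaring structure. This is the integral counterpart of the input used by Suslin–Wodzicki to prove $H$-unitality of $C^{*}$-algebras over $\QQ$: the approximate unit makes the bar construction of $\idl{I}$ a resolution, now witnessed integrally because the square-root factorizations are honest algebraic identities rather than merely analytic estimates. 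I expect the delicate point to be upgrading classical rational $H$-unitality to integral Tor-unitality, i.e.\ controlling all higher $\Tor$-groups simultaneously and uniformly in the approximate unit; this is most naturally isolated as a lemma proved just before \cref{exc_a}. (Alternatively, one might hope to transport excision from algebraic geometry by approximating $\idl{I}$ by ideals pulled back from varieties and applying cdh-descent of localizing invariants along \cref{cd_rh} and \cref{xwb4zy}; the difficulty there is that localizing invariants need not preserve the filtered colimits involved, as flagged in the footnote to the statement.) Once the Tor-unitality lemma is in place, the theorem follows by assembling the three steps above and appealing to \cref{f37c1d2136}.
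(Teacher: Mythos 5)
Your overall architecture coincides with the paper's: reduce to blowup squares via \cref{f37c1d2136}, observe that the resulting square of rings is a Milnor square (pullback by the gluing description of continuous functions, surjectivity of $\Cls{C}(X';A)\to\Cls{C}(Z';A)$ by the Dugundji--Tietze extension theorem, which is exactly \cref{b485ea33ba}), and then feed Tor-unitality of the relevant ideal into Land--Tamme excision (\cref{26b5462836}). You have also correctly isolated both where the real work is (Tor-unitality of $\idl{I}=\Cls{C}_{0}(U;A)$) and which hypothesis makes it work (the condition $\lVert x\rVert=\lvert x\rvert$ for $x\in\RR$, which supplies square roots). Up to the cosmetic difference of phrasing the input as Tor-unitality of the nonunital ring $\idl{I}$ rather than as the equivalence $\Cls{C}(Z;A)\otimes_{\Cls{C}(X;A)}\Cls{C}(Z;A)\to\Cls{C}(Z;A)$, this is the paper's proof.

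The gap is that the decisive lemma is not proved, and the route you sketch toward it is both harder than necessary and dubious as stated. You propose to run a Suslin--Wodzicki-style argument: a bounded central approximate unit built from square roots, a bar resolution, and then an ``upgrade from rational $H$-unitality to integral Tor-unitality'' that you yourself flag as delicate and defer. Two concrete problems: first, $A$ is only a normed ring, not a Banach algebra, so $\Cls{C}_{0}(U;A)$ is not complete and the Suslin--Wodzicki machinery (their Corollary~10.4, which the paper records as \cref{994969f6af}) does not apply off the shelf; second, controlling all higher $\Tor$-groups through a bar resolution is precisely the hard part you have not done. The paper's actual argument (\cref{xxaaq2} via \cref{6dcf858e48}) is far more elementary and entirely avoids the bar construction: given a relation $\sum_{i}y_{i}b_{i}=0$ with $y_{i}\in I$, set $x=\sqrt{\max_{i}\lVert y_{i}\rVert}$ and $a_{i}=x^{-1}y_{i}$ off the zero locus, extended by $0$; continuity of $a_{i}$ follows from the norm estimate $\lVert a_{i}(p)\rVert<\sqrt{\epsilon}$ on $y_{i}^{-1}(A_{<\epsilon})$. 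This single-factor factorization simultaneously shows $I^{2}=I$ (take $n=1$, $b_{1}=0$) and, via the equational criterion, that $I$ is flat as a right module; together these give Tor-unitality directly. If you replace your deferred ``integral upgrade'' lemma with this factorization argument, your proof closes.
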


\begin{theorem}\label{exc_na}
Let \((A,\lVert\X\rVert)\) be a nonarchimedean discretely normed ring
  satisfying either of the following:
  \begin{conenum}
    \item\label{i:n_na}
      There is a unit~\(\pi\)
      satisfying \(\lVert\pi\rVert<1\) and
      \(\lVert\pi\rVert\lVert\pi^{-1}\rVert=1\).
    \item\label{i:n_nab}
      It is the unit disk
      of a normed ring satisfying~\cref{i:n_na}.
  \end{conenum}
  Let \(X\) be a profinite set
  and \(R\) a commutative ring
  with a map \(R\to\Cls{C}(X;A)\).
  For an \(R\)-linear localizing invariant~\(E\)
  valued in a presentable stable \(\infty\)-category~\(\cat{C}\),
  the functor
  \begin{equation*}
    E(\Cls{C}(\X;A))\colon
    ((\Cat{PFin}_{\aleph_{1}})_{/X})^{\op}
    \to\cat{C}
  \end{equation*}
  is a cd~sheaf.
\end{theorem}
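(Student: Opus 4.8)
The plan is to verify the two conditions of \cref{xds8fz} for the functor \(E(\Cls{C}(\X;A))\). The first is immediate: \(\Cls{C}(\emptyset;A)\) is the zero ring, and a localizing invariant carries the zero ring to a zero object, so \(E(\Cls{C}(\emptyset;A))\) is final. The content is the second condition. Given a blowup square of profinite sets of countable weight, with \(Z\hookrightarrow X\) closed and \(X'\setminus Z'\to X\setminus Z\) a homeomorphism, I would first observe that \(X\) is the pushout \(X'\sqcup_{Z'}Z\) in profinite sets (the tautological continuous bijection from the pushout is a homeomorphism because both sides are compact). Applying \(\Cls{C}(\X;A)\) then yields a pullback square of rings
\begin{equation*}
  \begin{tikzcd}
    \Cls{C}(X;A)\ar[r]\ar[d]&\Cls{C}(X';A)\ar[d]\\
    \Cls{C}(Z;A)\ar[r]&\Cls{C}(Z';A).
  \end{tikzcd}
\end{equation*}
Next I would check that the horizontal restriction maps are surjective: a continuous \(A\)-valued function on a closed subset of a profinite set is a uniform limit of locally constant functions, each of which extends by clopen separation, and the ultrametric inequality lets one choose the successive extensions so that they converge. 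Thus the square is a Milnor square, and its two horizontal fibers are both canonically identified---through the homeomorphism \(X'\setminus Z'\simeq X\setminus Z\)---with the single non-unital ring \(I=\Cls{C}_{0}(X\setminus Z;A)\) of functions vanishing at infinity.

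At this point the localizing property enters. A localizing invariant sends the above Milnor square to a cartesian square exactly when the ideal \(I\) is excisive, and by the excision theorem of Land--Tamme this holds for \emph{every} localizing invariant once \(I\) is Tor-unital (equivalently \(H\)-unital). So the entire statement reduces to one algebraic assertion: the ring \(I=\Cls{C}_{0}(X\setminus Z;A)\) is Tor-unital. I expect this to be the main obstacle.

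To prove Tor-unitality I would exploit the nonarchimedean hypotheses. Since \(U=X\setminus Z\) is open in a profinite set, it is totally disconnected and \(\sigma\)-compact, so it admits an exhaustion \(K_{1}\subset K_{2}\subset\dotsb\) by compact open subsets; the indicators \(e_{N}=1_{K_{N}}\) are central idempotents of \(I\) with \(e_{N}f\to f\) uniformly. The role of the uniformizer is to control the completion: writing \(U\) as the disjoint union of the compact open sets \(V_{n}=K_{n}\setminus K_{n-1}\) identifies \(I\) with the \(c_{0}\)-sum of the unital rings \(\Cls{C}(V_{n};A)\), and for \(f=(f_{n})\) with \(\lVert f_{n}\rVert\to0\) the discreteness of the norm together with \(\lVert\pi\rVert\lVert\pi^{-1}\rVert=1\) lets me write \(f_{n}=\pi^{m_{n}}\tilde f_{n}\) with \(\lVert\tilde f_{n}\rVert\leq1\) and \(m_{n}\to\infty\), and then split \(f=gh\) with \(g_{n}=\pi^{\lfloor m_{n}/2\rfloor}\) and \(h_{n}=\pi^{\lceil m_{n}/2\rceil}\tilde f_{n}\), so that \(\lVert g\rVert,\lVert h\rVert\to0\) and \(g,h\in I\) (the unit-disk case is handled by the same \(\pi\)). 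The hard part is to upgrade these uniform approximate units and factorizations into the purely algebraic statement that the bar complex of \(I\) is acyclic, since the tensor powers occurring there are uncompleted and the approximate unit does not contract the complex on the nose. I would resolve this by reducing modulo \(\pi\): one checks \(I/\pi I\simeq\Cls{C}_{0}(U;A/\pi)\), which consists of compactly supported locally constant functions and is therefore a genuine filtered union of ideals with local units, hence manifestly \(H\)-unital; one then propagates Tor-unitality back to \(I\) using that \(\pi\) is a non-zero-divisor and that \(I\simeq\projlim_{n}I/\pi^{n}I\) is \(\pi\)-adically complete. Verifying this last propagation carefully---comparing the bar complex of \(I\) with that of its reduction under the \(\pi\)-adic filtration---is where I expect the real work to lie.
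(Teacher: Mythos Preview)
Your reduction matches the paper's: via \cref{xds8fz} and Land--Tamme (\cref{26b5462836}), everything comes down to the \(\Tor\)-unitality of \(\Cls{C}(X;A)\to\Cls{C}(Z;A)\), and your surjectivity sketch is a reasonable substitute for the paper's appeal to Halmos's theorem (\cref{7d5f1b5f38}). The gap is in your proposed proof of \(\Tor\)-unitality.

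Your \(\pi\)-adic reduction cannot work in case~\cref{i:n_na}: there \(\pi\) is by hypothesis a \emph{unit} of~\(A\), so \(\pi I=I\), the quotient \(I/\pi I\) is zero, and the claimed identification \(I\simeq\projlim_{n}I/\pi^{n}I\) is false unless \(I=0\). The propagation step therefore has no content in that case. Even in case~\cref{i:n_nab} the lift of H-unitality through a \(\pi\)-adic filtration is not straightforward, since the bar complex is formed with tensor products over~\(\ZZ\), not over~\(A\); you flagged this yourself but did not resolve it.

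The paper avoids any completion argument. It proves a direct criterion (\cref{6dcf858e48}): if for every relation \(\sum_{i} y_{i}b_{i}=0\) with \(y_{i}\in I\) and \(b_{i}\in\Cls{C}(X;A)\) one can find a \emph{single} \(x\in I\) and \(a_{i}\in I\) with \(y_{i}=xa_{i}\) and \(\sum_{i} a_{i}b_{i}=0\), then \(I\) is right-flat over \(\Cls{C}(X;A)\) with \(I^{2}=I\), hence \(\Tor\)-unital. Your factorization \(f=gh\) via \(g_{n}=\pi^{\lfloor m_{n}/2\rfloor}\) is exactly the right construction; the paper (\cref{xxaaq2}) sets
\[
  x(p)=\pi^{v(p)},\qquad v(p)=\Bigl\lfloor\tfrac{1}{2}\log_{\lVert\pi\rVert}\max_{i}\lVert y_{i}(p)\rVert\Bigr\rfloor,
\]
and \(a_{i}(p)=x(p)^{-1}y_{i}(p)\) off the common zero set, but applies it to the whole tuple \((y_{1},\dotsc,y_{n})\) at once so as to feed the flatness criterion. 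This handles cases~\cref{i:n_na} and~\cref{i:n_nab} uniformly and sidesteps the completion issue. So you already have the key pointwise construction; what is missing is the right target lemma for it, not a filtration argument.
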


We recall
general results on localizing invariants.
We first need the following notion:

\begin{definition}\label{xi411r}
We say that
  a morphism \(A\to B\) of rings
  is \emph{\(\Tor\)-unital}
  if the map \(B\otimes_{A}B\to B\) is an equivalence,
  where \(\otimes\) denotes the derived tensor product.
\end{definition}

The following
was observed in~\cite{Morrow18}:

\begin{theorem}[Morrow]\label{ed50fc2cda}
Let \(I\) be a two-sided ideal of
  a ring~\(A\).
  Then \(\ZZ\ltimes I\to\ZZ\)
  is \(\Tor\)-unital
  if and only if so is \(A\to A/I\).
\end{theorem}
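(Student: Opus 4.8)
The plan is to reduce both conditions to a single invariant intrinsic to the non-unital ring~\(I\), namely its bar homology, so that the ``if and only if'' becomes a symmetric statement. First I would unwind the definition: for any surjection \(R\to R/K\) of rings with two-sided kernel~\(K\), the underived tensor \(R/K\otimes_R R/K\) is always \(R/K\) (the \(K\)-action on \(R/K\) is trivial and the multiplication map is an isomorphism), so \(\Tor\)-unitality is equivalent to the vanishing of \(\Tor^R_n(R/K,R/K)\) for all \(n\geq1\). Applying this to \(R=\ZZ\ltimes I\) with \(K=I\) and quotient~\(\ZZ\), and to \(R=A\) with \(K=I\) and quotient \(B=A/I\), the theorem becomes the assertion that \(\Tor^{\ZZ\ltimes I}_n(\ZZ,\ZZ)\) and \(\Tor^A_n(B,B)\) vanish simultaneously for \(n\geq1\).

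The key claim I would isolate is that for any unital ring~\(A\) containing~\(I\) as a two-sided ideal, there is a natural isomorphism \(\Tor^A_n(A/I,A/I)\cong\overline H_n(I)\) for \(n\geq1\), where \(\overline H_\bullet(I)\) denotes the homology of the bar complex \((I^{\otimes\bullet},\partial)\) of the non-unital ring~\(I\) (tensor over~\(\ZZ\), differential the alternating sum of multiplications). Crucially, the right-hand side depends only on~\(I\) and not on the ambient~\(A\). Granting this claim, the theorem is immediate: applying it once to~\(A\) and once to \(\ZZ\ltimes I\) (whose defining ideal is again~\(I\), with quotient~\(\ZZ\)) identifies both families of \(\Tor\)-groups with the same \(\overline H_n(I)\), so both maps are \(\Tor\)-unital precisely when \(\overline H_n(I)=0\) for all \(n\geq1\), i.e.\ when~\(I\) is homologically unital.

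To prove the claim I would compute \(B\otimes_A B\) by the two-sided normalized bar complex \(\mathrm{Bar}(B,A,B)\), with terms \(B\otimes_\ZZ\bar A^{\otimes n}\otimes_\ZZ B\) for \(\bar A=A/(\ZZ\cdot1)\). The surjection \(A\to B\) induces \(\bar A\to\bar B\) with kernel~\(I\), giving a filtration of the bar complex whose associated graded pieces are indexed by which tensor slots lie in~\(I\) versus in~\(\bar B\). On any piece containing a \(\bar B\)-slot, the \(A\)-module structure on the outer~\(B\)'s—together with the facts that \(I\) acts as zero on~\(B\) and that \(IA=AI=I\)—produces a contracting homotopy, so those pieces are acyclic; the only surviving contributions are the all-\(I\) terms, which assemble exactly into the bar complex \(I^{\otimes\bullet}\) in positive homological degree. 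In degree~\(0\) the two sides legitimately differ (\(B\) versus~\(\ZZ\)), which is why the identification is restricted to \(n\geq1\). Alternatively one may cite the corresponding computation of Suslin--Wodzicki. As a sanity check, the cofiber sequence \(I\to A\to B\) of right \(A\)-modules tensored with~\(B\) gives \(\Tor^A_1(B,B)\cong I\otimes_A B\cong I/I^2=\overline H_1(I)\), matching the claim in the first degree.

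The main obstacle is exactly this last step: verifying that the \(\bar B\)-contaminated summands of the bar complex are acyclic and that the differentials collapse onto the intrinsic bar complex of~\(I\). The difficulty is bookkeeping—one must track the \(A\)-bimodule structure through the normalization and exhibit the contracting homotopy uniformly across the filtration—whereas the two bracketing paragraphs are formal. Once the intrinsic identification \(\Tor^A_n(A/I,A/I)\cong\overline H_n(I)\) for \(n\geq1\) is in hand, the equivalence of the two \(\Tor\)-unitality conditions follows with no further work, since \(\ZZ\ltimes I\) is simply the universal unital ring realizing~\(I\) as its augmentation ideal.
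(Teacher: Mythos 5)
Your argument has a genuine gap, and it is not the bookkeeping step you flag at the end: the key claim itself --- that \(\Tor_n^A(A/I,A/I)\cong\overline H_n(I)\) for all \(n\geq1\), with the right-hand side independent of the ambient ring~\(A\) --- is false. Take \(A=\ZZ\) and \(I=2\ZZ\). Since \(\ZZ\) has global dimension~\(1\), \(\Tor_n^{\ZZ}(\FF_2,\FF_2)=0\) for \(n\geq2\). On the other hand, \(I^{\otimes n}\cong\ZZ\) with generator \(2^{\otimes n}\), and each of the \(n-1\) terms of the bar differential sends this generator to \(\pm2\cdot 2^{\otimes(n-1)}\) with alternating signs, so \(\partial\colon I^{\otimes n}\to I^{\otimes(n-1)}\) is multiplication by \(2\) for \(n\) even and zero for \(n\) odd; hence \(\overline H_3(2\ZZ)=\ZZ/2\neq0\). (Equivalently, \(\ZZ\ltimes2\ZZ\cong\ZZ[e]/(e^2-2e)\), and the \(2\)-periodic resolution of \(\ZZ\) with alternating differentials \(e\) and \(e-2\) gives \(\Tor_3^{\ZZ\ltimes2\ZZ}(\ZZ,\ZZ)=\ZZ/2\).) So the two families of \(\Tor\)-groups you want to identify are genuinely different, the proposed contracting homotopy on the ``\(\bar B\)-contaminated'' graded pieces cannot exist (concretely, the bar differential can multiply an \(I\)-slot into an adjacent \(\bar B\)-slot and land back in \(I\), so the decomposition you assume is not compatible with the differential), and your sanity check is misleading because \(n=1\) is precisely the one degree where the identification does hold: \(\Tor_1^A(A/I,A/I)\cong I/I^2\cong\overline H_1(I)\) for every~\(A\).

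What is true --- and what Morrow's proof, building on Suslin--Wodzicki, actually establishes --- is only that the \emph{simultaneous} vanishing of these groups for all \(n\geq1\) is independent of~\(A\), not the groups themselves; the statement therefore cannot be reduced to an \(A\)-independent invariant of~\(I\) computed degreewise. The substantive input is the excision-in-Tor theorem: if the bar complex of~\(I\) is acyclic with arbitrary coefficients (a condition a priori stronger than \(\overline H_*(I)=0\), and note that even your identification of \(\Tor_*^{\ZZ\ltimes I}(\ZZ,\ZZ)\) with \(\overline H_*(I)\) needs care when \(I\) is not flat over~\(\ZZ\)), then for every unital ring~\(A\) containing~\(I\) as a two-sided ideal the change-of-rings maps \(\Tor_n^{\ZZ\ltimes I}(M,\ZZ)\to\Tor_n^A(M,A/I)\) are isomorphisms for all \(A/I\)-modules~\(M\); the converse direction is a separate argument. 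Neither direction is the formal filtration collapse you describe. (For what it is worth, the paper itself gives no proof of this statement; it only cites Morrow, so there is no internal argument to compare against.)
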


We say that
a nonunital ring \(I\) satisfies \emph{excision}
(in algebraic \(K\)-theory)
if its \(K\)-theory is well defined;
formally speaking, it is the requirement that
for  any embedding \(I\hookrightarrow A\)
realizing \(I\) as a two-sided ideal,
\(K\) carries
\begin{equation*}
  \begin{tikzcd}
    \ZZ\ltimes I\ar[r]\ar[d]&
    \ZZ\ar[d]\\
    A\ar[r]&
    A/I
  \end{tikzcd}
\end{equation*}
to a cartesian square of spectra.
In that case
we can define \(K(I)\) to be
\(\fib(K(\ZZ\ltimes I)\to K(\ZZ))\)
or
\(\fib(K(A)\to K(A/I))\)
for any embedding.
Suslin proved
the following general criterion in~\cite{Suslin95}:

\begin{theorem}[Suslin]\label{3234ba150e}
  Let \(I\) be a nonunital associative ring.
  Then \(I\) satisfies excision
  if and only if \(\ZZ\ltimes I\to \ZZ\) is
  \(\Tor\)-unital.
\end{theorem}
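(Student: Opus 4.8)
The plan is to recognize the displayed square as a Milnor square of (discrete) associative rings and to measure the failure of excision by the pullback formula of Land--Tamme. Writing \(R=\ZZ\ltimes I\), one checks directly that the square with corners \(R\), \(\ZZ\), \(A\), \(A/I\) is a pullback of rings: the map \(R\to\ZZ\) kills \(I\), the map \(R\to A\) sends \((n,x)\) to \(n\cdot 1_A+x\), and \(\ZZ\times_{A/I}A\simeq R\) because an element of \(A\) lying over \(n\cdot\bar 1\) differs from \(n\cdot 1_A\) by an element of \(I\). Excision for a localizing invariant \(E\) is by definition the assertion that \(E\) carries this square to a cartesian square of spectra, so the whole statement becomes: this holds (for \(K\), equivalently for every localizing invariant) exactly when \(\ZZ\otimes_R\ZZ\to\ZZ\) is an equivalence.

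For the main reduction I would invoke the Land--Tamme pullback theorem: to any Milnor square, i.e.\ a pullback \(A\simeq A'\times_{B'}B\) of \(\mathbb{E}_1\)-rings, they associate an \(\mathbb{E}_1\)-ring \(A'\odot^B_A B'\) fitting into a ring square with the same three corners \(A\), \(B\), \(A'\) and new corner \(A'\odot^B_A B'\), and they prove that \emph{every} localizing invariant sends this modified square to a cartesian square. Comparing with the original square, \(E\) satisfies excision if and only if the canonical ring map \(B'\to A'\odot^B_A B'\) becomes an equivalence after applying \(E\). The content of the \(\odot\)-construction that I would then use is the computation of the deviation: the cofiber of \(B'\to A'\odot^B_A B'\) is built by a bar construction out of \(\overline A:=\fib(A\to B)\), so that it vanishes precisely when the multiplication \(\overline A\otimes_A\overline A\to\overline A\) is an equivalence.

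In our situation \(B=\ZZ\), so \(\overline A=\fib(R\to\ZZ)=I\), and the fiber sequence \(I\to R\to\ZZ\) of \(R\)-bimodules lets me rewrite the condition: tensoring by \(\ZZ\) over \(R\) gives a cofiber sequence \(I\otimes_R\ZZ\to\ZZ\to\ZZ\otimes_R\ZZ\) whose second map is a section of the multiplication, so \(\ZZ\otimes_R\ZZ\to\ZZ\) is an equivalence iff \(I\otimes_R\ZZ\simeq0\); tensoring the same fiber sequence by \(I\) once more shows this is in turn equivalent to \(I\otimes_R I\xrightarrow{\sim}I\). Thus the \(\odot\)-deviation vanishes exactly when \(R\to\ZZ\) is \(\Tor\)-unital. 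This already yields one implication: if \(\ZZ\ltimes I\to\ZZ\) is \(\Tor\)-unital then \(B'\to A'\odot^B_A B'\) is an equivalence of rings, hence every localizing invariant — in particular \(K\) — satisfies excision.

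For the converse I would argue contrapositively: if \(R\to\ZZ\) fails to be \(\Tor\)-unital, the deviation spectrum \(D=\cofib(B'\to A'\odot^B_A B')\) is nonzero, and I must show \(K\) itself — not merely the universal localizing invariant — detects it. This is the delicate point and the main obstacle, since localizing invariants are not jointly conservative, so one cannot formally pass from ``\(D\neq0\)'' to ``\(K\) fails excision.'' The way through is to locate the lowest nonvanishing \(\Tor_n^{R}(\ZZ,\ZZ)\) and track it into a definite relative \(K\)-group, using connectivity of the bar filtration to pin down the degree in which the failure appears; this degree-by-degree detection is the arithmetic heart of Suslin's original argument. The other nontrivial input, which I would take as a black box, is the Land--Tamme pullback theorem itself — the existence of \(A'\odot^B_A B'\) together with the cartesianness of the modified square for all localizing invariants — whose proof is a genuinely deep analysis of the glued module category rather than a formal manipulation.
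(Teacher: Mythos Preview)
The paper does not prove this statement: it is recorded as Suslin's theorem with a citation to~\cite{Suslin95}, and the subsequent sentence notes that the ``if'' direction is subsumed by Land--Tamme (\cref{26b5462836}). There is no in-paper argument to compare against.

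Your treatment of the ``if'' direction is correct and coincides with what the paper indicates: once the displayed square is recognized as a Milnor square, \(\Tor\)-unitality of \(R=\ZZ\ltimes I\to\ZZ\) is exactly the hypothesis \(B\otimes_{A}B\xrightarrow{\sim}B\) of \cref{26b5462836}, and your rewriting of that condition as \(I\otimes_{R}I\xrightarrow{\sim}I\) via the cofiber sequence \(I\to R\to\ZZ\) is fine. One small correction: in the Land--Tamme picture the comparison map runs \(A'\odot_{A}^{B}B'\to B'\) (the underlying spectrum of the \(\odot\)-ring is \(A'\otimes_{A}B\)), not the other way.

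The ``only if'' direction, however, is not argued in your proposal. You correctly isolate the obstacle---a nonzero cofiber of the \(\odot\)-comparison need not be detected by \(K\), since localizing invariants are not jointly conservative---and then defer to ``the arithmetic heart of Suslin's original argument'' as a black box. That is a citation, not a proof. The \(\odot\)-formalism does not shortcut this half: Suslin's argument is a concrete unstable computation (via Volodin \(K\)-theory and the affine groups attached to~\(I\)) that, from the first nonvanishing \(\Tor_{n}^{R}(\ZZ,\ZZ)\), manufactures an explicit nonzero obstruction in a specific relative \(K\)-group, and nothing you have written replaces that work. As it stands, your proposal establishes precisely the implication the paper already attributes to Land--Tamme and defers the converse to the very reference the paper cites.
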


The ``if'' direction was generalized
in~\cite[Corollary~1.4 and Remark~1.18]{LandTamme19}:

\begin{theorem}[Land--Tamme]\label{26b5462836}
  Consider a cartesian square of \(\E_{1}\)-rings
  over an \(\E_{\infty}\)-ring~\(R\)
  \begin{equation*}
    \begin{tikzcd}
      A\ar[r]\ar[d]&
      B\ar[d]\\
      A'\ar[r]&
      B'\rlap.
    \end{tikzcd}
  \end{equation*}
  Suppose that
  the induced map \(A'\otimes_A B\to B'\) of spectra
  is an equivalence;
  e.g., \(B\otimes_A B\to B\) is an equivalence\footnote{Tamme~\cite{Tamme18} proved this case before.
    We only need this case in this paper.
  }.
  Then any \(R\)-linear localizing invariant
  carries this square to a cartesian square.
\end{theorem}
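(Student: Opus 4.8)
The plan is to reduce the statement to the defining property of localizing invariants---that they send exact (Verdier) sequences of stable \(\infty\)-categories to fiber sequences---by realizing the square of \(\E_{1}\)-rings as the endomorphism rings attached to a \emph{bicartesian} square of module categories. Concretely, an \(R\)-linear localizing invariant \(E\) carries any square of presentable stable categories that is simultaneously a pullback and a pushout along colimit-preserving functors to a Cartesian square; so the entire problem is to produce such a square of module categories recovering \(E(A)\), \(E(A')\), \(E(B)\), \(E(B')\).

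The first attempt, the tautological square \(\operatorname{Mod}_{A}\), \(\operatorname{Mod}_{A'}\), \(\operatorname{Mod}_{B}\), \(\operatorname{Mod}_{B'}\) with base-change functors, is \emph{not} bicartesian in general; this failure is exactly the failure of excision that the theorem is designed to circumvent. Following Land--Tamme, I would instead form the honest pullback of presentable stable categories
\begin{equation*}
  \cat{M}:=\operatorname{Mod}_{A'}\times_{\operatorname{Mod}_{B'}}\operatorname{Mod}_{B},
\end{equation*}
the fiber product taken along extension of scalars to \(B'\). The key structural claim is that \(\cat{M}\) is compactly generated by a single object \(P\); by Morita theory it is then equivalent to modules over the \(\E_{1}\)-ring \(\operatorname{End}_{\cat{M}}(P)\), which we denote \(A'\odot_{A}^{B'}B\). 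A bar-construction computation identifies its underlying \(R\)-module with \(A'\otimes_{A}B\), equips it with ring maps \(A'\to A'\odot_{A}^{B'}B\leftarrow B\), and produces a canonical map \(A'\odot_{A}^{B'}B\to B'\) inducing the tautological \(A'\otimes_{A}B\to B'\) on underlying spectra.

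Granting this, the main categorical step is to verify that the resulting square of module categories
\begin{equation*}
  \begin{tikzcd}
    \operatorname{Mod}_{A}\ar[r]\ar[d]&\operatorname{Mod}_{B}\ar[d]\\
    \operatorname{Mod}_{A'}\ar[r]&\operatorname{Mod}_{A'\odot_{A}^{B'}B}
  \end{tikzcd}
\end{equation*}
is bicartesian, so that \(E\) carries the square of \(\E_{1}\)-rings \(A\to B\), \(A\to A'\), \(A'\to A'\odot_{A}^{B'}B\leftarrow B\) to a Cartesian square. Finally, under the hypothesis that \(A'\otimes_{A}B\to B'\) is an equivalence, the canonical map \(A'\odot_{A}^{B'}B\to B'\) is a map of \(\E_{1}\)-rings that is an equivalence on underlying spectra, hence an equivalence of rings; substituting it into the corrected square recovers the original one, which is therefore sent to a Cartesian square. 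The special case \(B\otimes_{A}B\simeq B\) reduces to the main hypothesis: it forces the cofiber \(C=\cofib(A\to B)\) to be annihilated by \(-\otimes_{A}B\), and since the square is also a pushout of spectra the class \(\cofib(A\to A')\simeq\cofib(B\to B')\) is a \(B\)-module, so a short cofiber comparison yields \(A'\otimes_{A}B\simeq B'\) (this is Tamme's earlier result).

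The main obstacle is the analysis of \(\cat{M}\): proving that the pullback of module categories is compactly generated, exhibiting a single generator \(P\) whose endomorphism ring has underlying spectrum \(A'\otimes_{A}B\), and checking that the corrected square is bicartesian. This is the technical heart of the argument, where monadicity, an explicit bar-construction model for \(\operatorname{End}_{\cat{M}}(P)\), and the verification of both the pullback and pushout properties all have to be carried out; once it is in place, every remaining step above is formal.
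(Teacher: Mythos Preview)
The paper does not supply its own proof of this theorem; it is quoted as a result of Land--Tamme with a citation to~\cite[Corollary~1.4 and Remark~1.18]{LandTamme19} (and the special case to Tamme~\cite{Tamme18}). Your proposal is a faithful outline of the Land--Tamme argument in the cited reference: form the auxiliary ring \(A'\odot_{A}^{B'}B\) as the endomorphism ring of a compact generator of the lax pullback of module categories, show that the square with \(B'\) replaced by \(A'\odot_{A}^{B'}B\) is always carried to a cartesian square by localizing invariants, and then use the hypothesis \(A'\otimes_{A}B\simeq B'\) to identify \(A'\odot_{A}^{B'}B\) with \(B'\).

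One small terminological caution: calling the corrected module-category square ``bicartesian'' is slightly loose. What Land--Tamme actually establish (and what suffices) is that the induced functor on horizontal Verdier cofibers is an equivalence, i.e., the square is what they call a motivic pullback square; this is the precise input a localizing invariant needs. Your sketch clearly has this in mind, and the rest of the reduction---including the verification that \(B\otimes_{A}B\simeq B\) implies \(A'\otimes_{A}B\simeq B'\)---is correct.
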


\begin{example}[Suslin--Wodzicki]\label{994969f6af}
  In~\cite[Corollary~10.4]{SuslinWodzicki92},
  it is proven
  that any nonunital real Banach algebra
  with a right (or left) bounded approximate unit
  satisfies excision.
  Posing this example here is ahistorical
  as it was proven 
  before \cref{3234ba150e}.
\end{example}

\begin{remark}\label{x7gu2d}
As an application of \cref{994969f6af},
  we see that
  \(\Cls{C}_{0}(X;A)\) satisfies excision
  for a local compactum~\(X\)
  and a unital real Banach algebra~\(A\).
  This is also a corollary of our argument and \cref{ed50fc2cda}.
\end{remark}

To put \cref{exc_a,exc_na} in the context
of the excision problem,
we first need
to see that \(\Cls{C}(X;A)\to\Cls{C}(Z;A)\)
is surjective
when \(Z\hookrightarrow X\) is a closed inclusion.
First,
we recall the following from~\cite{Dugundji51}:

\begin{theorem}[Dugundji]\label{1cc406c94c}
  Let \(E\) be a locally convex real vector space
  and \(Z\hookrightarrow X\) a closed inclusion
  of metrizable topological spaces.
  Then any continuous function
  \(Z\to E\) extends to~\(X\).
\end{theorem}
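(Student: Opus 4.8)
The plan is to prove this via Dugundji's classical construction, which extends the Tietze extension theorem by replacing the scalar target with an arbitrary locally convex space and using convexity in place of intermediate-value interpolation. Let $f\colon Z\to E$ be the given continuous function, and fix a metric $d$ inducing the topology on~$X$.

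First I would build a canonical open cover of the open set $X\setminus Z$. For each $x\in X\setminus Z$ the distance $d(x,Z)$ is positive, since $Z$ is closed, so the balls $B(x,\tfrac12 d(x,Z))$ form an open cover of $X\setminus Z$. As every metric space is paracompact, I would pass to a locally finite open refinement $\{U_i\}_{i\in I}$ together with a subordinate partition of unity $\{\varphi_i\}_{i\in I}$. For each index~$i$ I would then select an auxiliary point $y_i$ with $U_i\subset B(y_i,\tfrac12 d(y_i,Z))$ and a point $z_i\in Z$ with $d(y_i,z_i)$ within a fixed bounded factor of $d(y_i,Z)$.

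Next I would define the extension $F\colon X\to E$ by setting $F|_Z=f$ and, for $x\in X\setminus Z$,
\begin{equation*}
  F(x)=\sum_{i\in I}\varphi_i(x)\,f(z_i).
\end{equation*}
Continuity of $F$ on the open set $X\setminus Z$ is immediate, since the sum is locally finite and each summand is continuous. The essential observation is that $F(x)$ always lies in the convex hull of the values of~$f$, which is precisely where local convexity of $E$ enters.

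The main obstacle, as always for this theorem, is continuity at a boundary point $z_0\in Z$. Here I would argue as follows. Given a convex open neighborhood $V$ of $f(z_0)$ (which exists by local convexity), continuity of $f$ on $Z$ supplies $\delta>0$ such that $f(z)\in V$ whenever $z\in Z$ and $d(z,z_0)<\delta$. The geometry of the canonical cover forces the witness points to cluster near $z_0$: a short computation shows that if $x\in U_i\subset B(y_i,\tfrac12 d(y_i,Z))$ then $\tfrac12 d(y_i,Z)<d(x,z_0)$, whence the triangle inequality bounds $d(z_i,z_0)$ by a fixed multiple of $d(x,z_0)$. Thus, for $x$ sufficiently close to $z_0$, every $z_i$ with $\varphi_i(x)\neq0$ satisfies $d(z_i,z_0)<\delta$, so $F(x)$ is a convex combination of points of $V$ and therefore lies in~$V$. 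This establishes continuity at $z_0$ and completes the proof.
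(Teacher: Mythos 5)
Your proof is correct: it is the classical Dugundji construction (canonical cover of $X\setminus Z$ by balls $B(x,\tfrac12 d(x,Z))$, locally finite refinement with subordinate partition of unity, convex combination of values $f(z_i)$ at nearby witness points, and the triangle-inequality estimate forcing the witnesses to cluster at a boundary point), and the key estimate $\tfrac12 d(y_i,Z)<d(x,z_0)$ is right. The paper does not prove this statement at all --- it is quoted as a classical theorem with a citation to Dugundji's 1951 paper --- so there is nothing to compare against beyond noting that your argument is precisely the one in the cited source.
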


\begin{corollary}\label{b485ea33ba}
  Let \(E\) be a metrizable locally convex real vector space
  and \(Z\hookrightarrow X\) an inclusion
  of compacta.
  Then any continuous function
  \(Z\to E\) extends to~\(X\).
\end{corollary}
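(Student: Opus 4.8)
The plan is to deduce the statement from Dugundji's theorem (\cref{1cc406c94c}) by replacing the possibly non-metrizable compactum~\(X\) with a metrizable quotient that still records all the information carried by~\(f\). The point is that, although \(X\) need not be metrizable, the \emph{metrizability of the target}~\(E\) forces \(f\) to factor through a quotient of countable weight, and Dugundji's theorem does apply to metrizable domains. Throughout, note that since \(Z\) is a compact subset of the Hausdorff space~\(X\) it is automatically closed, so the extension problems below are genuinely of the form handled by \cref{1cc406c94c} and by the Tietze extension theorem.

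First I would pass to the image. Set \(K=f(Z)\subseteq E\); this is compact, and since \(E\) is metrizable it is a compact metrizable space, hence of countable weight. By \cref{x4101g} (with \(\kappa=\aleph_{0}\)) I may fix an embedding \(K\hookrightarrow[0,1]^{\NN}\), and I let \(g_{n}\colon K\to[0,1]\) denote the resulting coordinate functions; by construction the family \(\{g_{n}\}_{n}\) separates the points of~\(K\). Composing with~\(f\) yields continuous functions \(h_{n}=g_{n}\circ f\colon Z\to[0,1]\), each of which, by the Tietze extension theorem applied to the closed subset~\(Z\) of the normal space~\(X\), extends to a continuous \(\tilde{h}_{n}\colon X\to[0,1]\).

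Next I would assemble these into a quotient map. Consider \(q=(\tilde{h}_{n})_{n}\colon X\to[0,1]^{\NN}\) and set \(X'=q(X)\), a compact \emph{metrizable} space (a compact subspace of the metrizable space \([0,1]^{\NN}\)), together with the closed subspace \(Z'=q(Z)\subseteq X'\). On~\(Z\) the map~\(q\) restricts to \((h_{n})_{n}\), which separates points exactly as \(f\) does: if \(q(z)=q(z')\) for \(z,z'\in Z\), then \(g_{n}(f(z))=g_{n}(f(z'))\) for all~\(n\), whence \(f(z)=f(z')\) because \(\{g_{n}\}\) separates~\(K\). Therefore \(f\) factors through \(q|_{Z}\) as \(f=f'\circ q|_{Z}\) for a map \(f'\colon Z'\to E\), which is necessarily continuous since \(q|_{Z}\) is a continuous surjection of compacta, hence a quotient map.

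Finally I would invoke Dugundji. Since \(X'\) is a metrizable compactum, \(Z'\subseteq X'\) is closed, and \(E\) is locally convex, \cref{1cc406c94c} extends~\(f'\) to a continuous \(F'\colon X'\to E\). Then \(F=F'\circ q\colon X\to E\) is continuous and, for \(z\in Z\), satisfies \(F(z)=F'(q(z))=f'(q(z))=f(z)\), so \(F\) is the desired extension. The only real subtlety — the step where metrizability of~\(E\) is indispensable — is the construction of the countably many separating functions \(g_{n}\) on~\(K\); everything else is a formal combination of the Tietze extension theorem and Dugundji's theorem.
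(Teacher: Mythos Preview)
Your argument is correct. Both your proof and the paper's share the same core strategy: reduce to a metrizable domain and apply \cref{1cc406c94c}. The execution, however, differs. The paper invokes its categorical framework: by \cref{gu}, the compactum~\(X\) is an \(\aleph_{1}\)-cofiltered limit of compacta of countable weight~\((X_{i})_{i}\), and since the image \(f(Z)\subset E\) is compact metrizable (hence \(\aleph_{1}\)-cocompact by \cref{983ff95ba5}), the map \(Z\to f(Z)\) factors through some~\(Z_{i}\); one then extends \(Z_{i}\to E\) over~\(X_{i}\) via Dugundji and pulls back along \(X\to X_{i}\). Your route instead constructs the metrizable quotient by hand, extending countably many coordinate functions of~\(f(Z)\) over~\(X\) via Tietze and using them to manufacture a tailor-made quotient \(X\to X'\subset[0,1]^{\NN}\). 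Your version is more elementary in that it avoids the \(\aleph_{1}\)-accessibility of~\(\Cat{Cpt}^{\op}\), while the paper's version is shorter given the infrastructure of \cref{ss:coacc} already in place and more in keeping with the surrounding discussion.
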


\begin{proof}
By \cref{gu},
  we can write \(X\)
  as a limit
  of an \(\aleph_{1}\)-cofiltered family
  of compacta of countable weight \((X_{i})_{i}\).
  Let \(Z_{i}\) denote the image
  of~\(Z\) under \(X\to X_{i}\).
  Then \(Z\) is the \(\aleph_{1}\)-cofiltered
  limit of the family \((Z_{i})_{i}\)
  of compacta of countable weight.
  As the image of the function \(Z\to E\)
  is metrizable,
  it factors through
  \(Z\to Z_{i}\) for some~\(i\).
  Then we can extend \(Z_{i}\to E\) to~\(X_{i}\)
  by \cref{1cc406c94c}.
  By composing it with \(X\to X_{i}\),
  we get the desired extension.
\end{proof}

In the nonarchimedean case,
we observe the following:

\begin{proposition}\label{7d5f1b5f38}
  Let \(E\) be the underlying topological space
  of an ultrametric space.
  If \(E\) is nonempty,
for any inclusion \(Z\hookrightarrow X\) of profinite sets,
  any continuous function \(Z\to E\)
  extends to~\(X\).
\end{proposition}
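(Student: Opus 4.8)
The plan is to transpose Dugundji's theorem (\cref{1cc406c94c}) to the nonarchimedean setting, exploiting the fact that an ultrametric turns ``being within distance~\(r\)'' into an \emph{equivalence} relation. First I dispose of the degenerate case: if \(Z=\emptyset\), then, since \(E\neq\emptyset\), any constant map \(X\to E\) is an extension, so I may assume \(Z\neq\emptyset\). The next reduction is that it suffices to extend \(f\) into its image. As \(Z\) is compact and \(f\) continuous, \(K\mathrel{:=}f(Z)\) is compact, hence closed in~\(E\), and it is enough to produce an extension \(X\to K\subseteq E\). Being a subspace of an ultrametric space, \(K\) is a compact, totally disconnected, metrizable Hausdorff space, i.e.\ a profinite set.

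The ultrametric inequality now gives \(K\) a convenient inverse-limit presentation. For each radius \(r>0\), the relation \(d(x,y)\leq r\) is an equivalence relation on~\(K\) (this is exactly where ultrametricity enters), whose classes are the closed balls of radius~\(r\); by compactness there are only finitely many. Choosing a decreasing sequence \(r_n\to0\), I obtain finite quotients \(K_n=K/{\sim_{r_n}}\) together with refinement maps \(K_{n+1}\to K_n\), and the canonical map \(K\to\projlim_n K_n\) is a homeomorphism: it is injective because points at positive distance are eventually separated, and surjective because a compatible thread of balls is a nested sequence of nonempty compacta of vanishing diameter, meeting in a unique point. Writing \(f_n\colon Z\to K_n\) for the composites, I have reduced the problem to lifting the compatible system \((f_n)_n\) through \(\projlim_n K_n\) over~\(X\).

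The heart of the argument is a finite building block, which I would isolate as a lemma: \emph{for a closed inclusion \(Z\hookrightarrow X\) of profinite sets and a finite discrete set~\(S\), every continuous map \(Z\to S\) extends to~\(X\).} Such a map is a finite clopen partition of~\(Z\); since in a compact, zero-dimensional Hausdorff space disjoint closed subsets are separated by a clopen set, each piece of the partition is the trace on~\(Z\) of a clopen subset of~\(X\), and a routine disjointification turns these into a clopen partition of~\(X\) restricting to the given one. Granting this, I build the extension level by level. Suppose \(\tilde f_n\colon X\to K_n\) extends \(f_n\) and refines to \(\tilde f_{n-1}\). For each of the finitely many \(k\in K_n\), the set \(A=\tilde f_n^{-1}(k)\) is clopen in~\(X\), hence profinite, with \(A\cap Z\) closed in it; applying the lemma to \(A\cap Z\hookrightarrow A\) and to the finite fiber of \(K_{n+1}\to K_n\) over~\(k\), and using that \(f_{n+1}\) lifts~\(f_n\), I extend \(f_{n+1}|_{A\cap Z}\) into that fiber. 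Gluing over the clopen partition \(X=\bigsqcup_{k}\tilde f_n^{-1}(k)\) produces \(\tilde f_{n+1}\colon X\to K_{n+1}\) lifting \(\tilde f_n\) and restricting to~\(f_{n+1}\). The resulting compatible tower assembles into the desired extension \(\tilde f\colon X\to\projlim_n K_n=K\subseteq E\).

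The main obstacle---and the reason this succeeds nonarchimedeanly but fails over~\(\RR\), where convexity is needed instead---is precisely the passage to the inverse-limit presentation of the target: I crucially use that ultrametric balls \emph{partition} the space, so that the approximating quotients \(K_n\) are honest finite discrete sets and the lifting problem decouples into independent finite extension problems over the clopen fibers of~\(\tilde f_n\). The two points I would verify most carefully are that \(f_{n+1}\) genuinely lifts \(f_n\) (so that the fiberwise problems are solvable on \(A\cap Z\)) and that the refinement compatibility \((K_{n+1}\to K_n)\circ\tilde f_{n+1}=\tilde f_n\) is preserved at every stage; both are built into the fibered construction.
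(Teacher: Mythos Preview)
Your proof is correct. The paper's proof begins the same way---dispose of \(Z=\emptyset\), then replace \(E\) by the image \(K=f(Z)\), a nonempty metrizable profinite set---but then diverges: instead of building the extension by hand, the paper simply observes that by Stone duality the extension problem is equivalent to lifting a Boolean-algebra map \(B(K)\to B(Z)\) along the surjection \(B(X)\twoheadrightarrow B(Z)\), and invokes Halmos's theorem that nontrivial countable Boolean algebras are projective. Your inductive tower argument is, in effect, a direct and self-contained proof of exactly the instance of Halmos's theorem that is needed (dualized): writing \(K=\projlim_n K_n\) as a sequential limit of finite sets is the same data as presenting \(B(K)\) as a countable increasing union of finite Boolean subalgebras, and your level-by-level extension is the Stone-dual of lifting generators one finite stage at a time. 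What your approach buys is that it is elementary and requires no external citation; what the paper's approach buys is brevity. One minor point worth making explicit in your write-up is that when \(A\cap Z=\emptyset\) you need the fiber of \(K_{n+1}\to K_n\) over \(k\) to be nonempty, which holds because these transition maps are surjective.
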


\begin{proof}
  It is clear when \(Z=\emptyset\).
  Otherwise, we can replace~\(E\) with its image,
  which is a nonempty profinite set of countable weight.
  In~\cite[Theorem~2]{Halmos61},
  Halmos proved that
  nontrivial countable Boolean algebras
  are projective in the category
  of Boolean algebras.
  In light of Stone duality,
  this implies that 
  \(E\) has the desired lifting property.
\end{proof}

We then prove the \(\Tor\)-unitality.
We need the following:

\begin{lemma}\label{6dcf858e48}
  Let \(I\subset A\) be an ideal of an associative ring.
  Assume that for any
  \((b_{i})_{i=1}^{n}\in A^{n}\)
  and \((y_{i})_{i=1}^{n}\in I^{n}\)
  satisfying
  \(\sum_{i=1}^n y_{i} b_{i}=0\),
  there are
  \((a_{i})_{i=1}^n\in I^{n}\)
  and \(x\in I\)
  satisfying
  \(y_{i}=xa_{i}\) and \(\sum_{i=1}^{n}a_{i}b_{i}=0\).
  Then \(A\to A/I\) is \(\Tor\)-unital.
\end{lemma}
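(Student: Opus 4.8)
The plan is to reduce the assertion to the single statement that $I$ is flat as a \emph{right} $A$-module together with the idempotence $I=I^{2}$, and then to observe that the displayed hypothesis is precisely the equational criterion for that flatness.

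First I would unwind the definition. Writing $B=A/I$, \(\Tor\)-unitality of $A\to B$ means that the multiplication $B\otimes_{A}B\to B$ is an equivalence, where $\otimes_{A}$ denotes the derived tensor product. Applying $\X\otimes_{A}B$ to the cofiber sequence of right $A$-modules $I\to A\to B$ gives a cofiber sequence $I\otimes_{A}B\to B\to B\otimes_{A}B$, in which the second map, induced by $A\to B$ on the left tensor factor, is a section of the multiplication (the composite is the canonical identification $A\otimes_{A}B\simeq B$). Hence the multiplication is an equivalence if and only if its fiber $I\otimes_{A}B$ vanishes, so it suffices to prove $I\otimes_{A}A/I\simeq0$.

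The key step is to recognize that the hypothesis says exactly that $I$ is flat as a right $A$-module. By the equational criterion for flatness, valid over an arbitrary (possibly noncommutative) ring, a right module $M$ is flat if and only if every relation $\sum_{i=1}^{n}m_{i}b_{i}=0$ with $m_{i}\in M$ and $b_{i}\in A$ admits elements $m'_{j}\in M$ and $c_{ji}\in A$ with $m_{i}=\sum_{j}m'_{j}c_{ji}$ and $\sum_{i}c_{ji}b_{i}=0$ for every $j$. Taking $M=I$, the hypothesis supplies exactly such a factorization with a single witness, namely $m'_{1}=x$ and $c_{1i}=a_{i}$; thus $I$ is right $A$-flat. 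Consequently $\Tor_{n}^{A}(I,A/I)=0$ for $n\geq1$, so the derived tensor product $I\otimes_{A}A/I$ is concentrated in degree $0$, where it is the ordinary tensor product $I/I^{2}$. Finally, applying the hypothesis to the trivial relation $y\cdot0=0$ (the case $n=1$, $b_{1}=0$) shows that every $y\in I$ lies in $I^{2}$, whence $I=I^{2}$ and $I/I^{2}=0$. Therefore $I\otimes_{A}A/I\simeq0$, which is what we needed.

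The only substantive point is the identification of the hypothesis with the (rank-one) flatness criterion; once that is made, the remainder is formal homological algebra and I do not expect a genuine obstacle. The two things to get right are the direction of the module structures, so that the flatness produced is on the side that kills the higher homotopy of $I\otimes_{A}A/I$, and the separate, trivial-looking input $I=I^{2}$, which flatness alone does not furnish and which is exactly the degenerate $b=0$ instance of the hypothesis.
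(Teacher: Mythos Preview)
Your proof is correct and follows essentially the same approach as the paper: reduce \(\Tor\)-unitality to the two conditions \(I^{2}=I\) and right-\(A\)-flatness of \(I\), then verify both from the hypothesis via the degenerate case \(n=1\), \(b_{1}=0\) and the equational criterion, respectively. The only difference is that you spell out why these two conditions imply \(I\otimes_{A}A/I\simeq0\), whereas the paper states this implication without proof.
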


\begin{proof}
  We first note that
  if \(I^{2}=I\) and \(I\) is flat as a right \(A\)-module,
  then \(A\to A/I\) is \(\Tor\)-unital.
We can check those two properties
  under our assumption:
  The first one follows from the assumption
  by taking \(n=1\) and \(b_{1}=0\).
The second one follows
  from the assumption and the equational criterion of flatness.
\end{proof}

\begin{theorem}\label{xxaaq2}
  Let \((A,\lVert\X\rVert)\) be a normed algebra
  satisfying either the assumptions of~\cref{exc_a} or~\cref{exc_na}.
  Let \(Y\to X\) be a map of a compactum.
  Let \(I\) be the kernel of the map \(\Cls{C}(X;A)\to\Cls{C}(Y;A)\).
  Then \(\Cls{C}(X;A)\to\Cls{C}(X;A)/I \) is \(\Tor\)-unital.
\end{theorem}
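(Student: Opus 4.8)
The plan is to verify, in each of the two cases, the explicit divisibility criterion of \cref{6dcf858e48} for the ideal~$I$; then $\Tor$-unitality follows at once. First I would record a concrete description of~$I$. Writing $\phi\colon Y\to X$ for the given map and $Z=\phi(Y)$ for its image, the set $Z$ is closed in~$X$ (a compact subset of a Hausdorff space), and $f\circ\phi=0$ holds exactly when $f$ vanishes on~$Z$; hence $I$ is the ideal of continuous $A$-valued functions vanishing on~$Z$, and I may assume $Y=Z\hookrightarrow X$ is a closed subspace. Throughout, given $(b_i)_{i=1}^n$ in $\Cls{C}(X;A)$ and $(y_i)_{i=1}^n\in I^n$ with $\sum_i y_ib_i=0$, I set $s=\max_i\lVert y_i\rVert\colon X\to\RR_{\geq0}$, a continuous function with $\lVert y_i\rVert\leq s$ and $Z\subseteq\{s=0\}$; the desired factors $x$ and $a_i$ will be built by dividing the $y_i$ by a ``square root'' of~$s$.

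In the archimedean case this is literal. Using the central structure map $\RR\to A$, regard $s^{1/3}$ and $s^{-1/3}$ as (central) $A$-valued functions on $\{s>0\}$, and set $x=s^{1/3}$ and $a_i=y_i\,s^{-1/3}$, extended by $0$ on $\{s=0\}$. Then $x\in I$, and since $\lVert a_i\rVert\leq s^{-1/3}\lVert y_i\rVert\leq s^{2/3}\to0$ near $\{s=0\}$, the $a_i$ are continuous and lie in~$I$. One checks $xa_i=y_i$ pointwise, while $\sum_i a_ib_i=s^{-1/3}\sum_i y_ib_i=0$ on $\{s>0\}$ (by centrality of the scalar $s^{-1/3}$) and trivially on $\{s=0\}$. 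This is exactly the criterion of \cref{6dcf858e48}.

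In the nonarchimedean case the difficulty is that no real ``square root'' of~$s$ lives in~$A$, and moreover $\{s=0\}$ need not be clopen, so one cannot simply divide by a locally constant power of the uniformizer~$\pi$; resolving this is the main point of the proof. I would instead exploit that $X$, being profinite, is zero-dimensional: separating closed sets from open sets by clopen sets, choose increasing clopen subsets $D_n$ ($n\geq0$) with $\{s\geq\lVert\pi\rVert^{2n+2}\}\subseteq D_n\subseteq\{s>\lVert\pi\rVert^{2n+4}\}$, so that $\bigcup_n D_n=\{s>0\}$ and on each clopen ``annulus'' $L_n=D_n\setminus D_{n-1}$ (with $D_{-1}=\emptyset$) one has $\lVert\pi\rVert^{2n+4}<s<\lVert\pi\rVert^{2n}$. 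Using $\lVert\pi^m\rVert=\lVert\pi\rVert^m$ for all $m\in\ZZ$ (a consequence of $\lVert\pi\rVert\lVert\pi^{-1}\rVert=1$), define $x=\pi^{\,n}$ and $a_i=\pi^{-n}y_i$ on $L_n$, and $x=a_i=0$ on $\{s=0\}$. On $L_n$ one gets $\lVert a_i\rVert\leq\lVert\pi\rVert^{-n}\lVert y_i\rVert<\lVert\pi\rVert^{n}$, which simultaneously places $a_i$ in the unit disk in case~\cref{i:n_nab} and, together with $\lVert x\rVert=\lVert\pi\rVert^{n}$, forces $\lVert x\rVert,\lVert a_i\rVert\to0$ as $n\to\infty$ (equivalently as $s\to0$), so $x$ and the $a_i$ are continuous and lie in~$I$. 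As before $xa_i=y_i$ and $\sum_i a_ib_i=\pi^{-n}\sum_i y_ib_i=0$ on each $L_n$, verifying \cref{6dcf858e48}. The main obstacle is thus entirely the nonarchimedean clopen bookkeeping above; the archimedean case and the reduction to functions vanishing on~$Z$ are routine.
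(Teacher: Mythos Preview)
Your overall strategy---verifying the hypothesis of \cref{6dcf858e48}---is the paper's, and your archimedean case is essentially identical (you take $s^{1/3}$ where the paper takes $s^{1/2}$; any exponent strictly between $0$ and $1$ works). The paper also makes the same reduction to the closed image $Z\subset X$.

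In the nonarchimedean case, however, you assert that $X$ is profinite, and your clopen-annulus construction depends on this: separating the closed set $\{s\geq\lVert\pi\rVert^{2n+2}\}$ from the complement of $\{s>\lVert\pi\rVert^{2n+4}\}$ by a clopen set uses zero-dimensionality of~$X$. But the theorem is stated for an arbitrary compactum~$X$---``the assumptions of \cref{exc_na}'' refers only to the conditions on~$A$---and the paper's proof works in that generality. The paper instead uses the one hypothesis you never invoke, namely that $A$ is \emph{discretely} normed: since $s=\max_i\lVert y_i\rVert$ then takes values in a subset of~$\RR_{\geq0}$ that is discrete away from~$0$, the function
\[
  v(p)=\Bigl\lfloor\tfrac12\log_{\lVert\pi\rVert}s(p)\Bigr\rfloor
\]
is automatically continuous from~$X$ to $\ZZ\cup\{\infty\}$, with no assumption on~$X$; one sets $x=\pi^{v}$ and $a_i=\pi^{-v}y_i$ and argues as you do.

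Your route can be repaired with one sentence: a nonarchimedean discretely normed ring is totally disconnected, so every continuous map $X\to A$ is constant on connected components and hence factors through the profinite quotient $X\to\pi_0(X)$; thus $\Cls{C}(X;A)\simeq\Cls{C}(\pi_0(X);A)$ and one may assume $X$ profinite. With that reduction in place your argument is correct (and, incidentally, does not need the discreteness of the norm). One small slip: your claimed bound $s<\lVert\pi\rVert^{2n}$ on $L_n$ is not provided by the construction when $n=0$, since $D_{-1}=\emptyset$ imposes no upper constraint; this is harmless, as on $L_0$ you have $a_i=y_i$, which already lies in~$I$ (and in the unit disk in case~\cref{i:n_nab}).
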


\begin{proof}
  For \(\epsilon>0\),
  we write \(A_{<\epsilon}\)
  for the subset of~\(A\)
  consisting of elements of norm \(<\epsilon\).

  We prove that the hypothesis of \cref{6dcf858e48}
  is satisfied for \(I\subset\Cls{C}(X;A)\):
  Assume that
  \((b_{i})_{i=1}^{n}\in\Cls{C}(X;A)^{n}\)
  and \((y_{i})_{i=1}^{n}\in I^{n}\)
  satisfy
  \(\sum_{i=1}^n y_{i} b_{i}=0\).
  We wish to construct \((a_{i})_{i=1}^{n}\in I^{n}\) and~\(x\in I\)
  satisfying
  \(y_{i}=xa_{i}\) and \(\sum_{i=1}^{n}a_{i}b_{i}=0\).
  We can replace~\(Y\) by its image \(Z\subset X\).
  By enlarging~\(Z\) if necessary,
  we furthermore assume that~\(Z\)
  is the intersection of the zeros of~\(y_{i}\).

  We first suppose that \(A\) satisfies the assumption of \cref{exc_a}.
  We define a function \(x\colon X\to\RR\) by
  \begin{equation*}
    x(p)
    =
    \sqrt{\max(\lVert y_{1}(p)\rVert,\dotsc,\lVert y_{n}(p)\rVert)
    }.
  \end{equation*}
  This is continuous and vanishes on~\(Z\)
  so we regard this as an element of~\(I\).
  By assumption, it is invertible on~\(X\setminus Z\).
  For each~\(i\),
  we define a function \(a_{i}\colon X\to A\) by
  \begin{equation}
    \label{e:a_from_x}
    a_{i}(p)=
    \begin{cases}
      0&\text{if \(p\in Z\),}\\
      x(p)^{-1}y_{i}(p)&\text{if \(p\notin Z\).}
    \end{cases}
  \end{equation}
  By definition,
  each \(a_{i}\) is continuous on \(X\setminus Z\).
It is also continuous at any \(p\in Z\),
  since for~\(\epsilon>0\),
  the image of
  any element
  in \(y_{i}^{-1}(A_{<\epsilon})\)
  under~\(a_{i}\)
  has norm \(<\sqrt{\epsilon}\).
  Hence this pair is a solution to the equations.

  We then suppose that \(A\) satisfies the assumption of \cref{exc_na}.
  We first assume that \(A\) satisfies \cref{i:n_na}.
First note that
  the assumption implies that 
  \(\lVert\pi^{n}\X\rVert=\lVert\pi\rVert^{n}\lVert\X\rVert\)
  for any \(n\in\ZZ\).
  We define
  the function
  \(v\colon X\to\ZZ\cup\{\infty\}\)
  by
  \begin{equation*}
    v(p)
    =
    \biggl\lfloor
      \frac{\log_{\lVert\pi\rVert}
        \max(\lVert y_{1}(p)\rVert,\dotsc,\lVert y_{n}(p)\rVert)
      }2
    \biggr\rfloor.
  \end{equation*}
Since \(A\) is discretely valued,
  this is continuous
  when the target is equipped with the topology
  induced by \((-\infty,\infty]\).
  Hence \(x(\X)=\pi^{v(\X)}\) defines an element of~\(I\)
  and is invertible on \(X\setminus Z\).
  For each~\(i\),
  we define \(a_{i}\colon X\to A\)
  as in \cref{e:a_from_x}.
  Each \(a_{i}\) is continuous on \(X\setminus Z\).
It is also continuous at any \(p\in Z\),
  since for~\(\epsilon>0\),
  the image of
  any element
  in \(y_{i}^{-1}(A_{<\epsilon})\)
  under~\(a_{i}\)
  has norm \(<\sqrt{\epsilon}\).
  These constitute a solution to the equations.

  Lastly, we suppose~\cref{i:n_nab} of \cref{exc_na}.
  It suffices to observe that
  the same proof as in the previous case works;
  i.e., each construction does not leave the unit ball.
  First, \(x\) lands in the unit ball
  since \(v\) lands in \(\NN\cup\{\infty\}\).
  For \(a_{i}\),
  the norm estimate we use to show its continuity
  at \(p\in Z\)
  also shows that
  it lands in the unit ball.
\end{proof}

\begin{proof}[Proof of \cref{exc_a}]
  By \cref{f37c1d2136,26b5462836},
  it suffices to show that
  for any closed inclusion \(Z\hookrightarrow X\)
  of compacta of countable weight,
  the map
  \begin{equation}
    \label{e:x678f}
    \Cls{C}(Z;A)\otimes_{\Cls{C}(X;A)}\Cls{C}(Z;A)
    \to
    \Cls{C}(Z;A)
  \end{equation}
  is an equivalence.
  This follows from \cref{b485ea33ba,xxaaq2}.
\end{proof}

\begin{proof}[Proof of \cref{exc_na}]
  By \cref{xds8fz,26b5462836},
  it suffices to show that
  for any closed inclusion \(Z\hookrightarrow X\)
  of profinite sets of countable weight,
  \cref{e:x678f} is an equivalence.
  This follows from \cref{7d5f1b5f38,xxaaq2}.
\end{proof}

\section{Preparation for the noncommutative case}\label{s:nc}

The goal of this technical section
is to give a slight extension of
the following vanishing results,
proven in~\cite[Proposition~5]{KerzStrunk17}
and~\cite[Proposition~6.4]{KerzStrunkTamme18},
respectively:

\begin{theorem}[Kerz--Strunk]\label{ks}
  Let \(X\) be a reduced scheme
  which is quasiprojective over a noetherian commutative ring.
  Let \(f\colon Y\to X\) be
  a smooth and quasiprojective morphism.
  For any \(\alpha\in K_{*}(Y)\) with \({*}<0\),
  there exists a projective birational morphism
  \(p\colon X'\to X\)
  such that \(\alpha\) vanishes in \(K_*(Y')\).
\end{theorem}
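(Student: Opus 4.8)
The plan is to run the dimension induction of \cite{KerzStrunk17} relative to the smooth morphism \(f\), carrying the auxiliary scheme \(Y\) along by base change, and to reduce the general Noetherian base to the finite-type case by a continuity argument. The two facts that make this work are that smoothness and quasiprojectivity of \(f\) are stable under arbitrary base change, and that a smooth scheme over a regular base is again regular: the latter controls the base case, while the former keeps the inductive hypothesis in force after every operation performed on \(X\).

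First I would reduce to \(X\) of finite type over a Noetherian base of finite Krull dimension. Writing the base ring \(R\) as the filtered union of its finitely generated subrings \(R_{i}\), the quasiprojective, finitely presented data \((X,f\colon Y\to X)\) descends to some \((X_{i},f_{i}\colon Y_{i}\to X_{i})\) over \(R_{i}\) with \(f_{i}\) smooth and quasiprojective for \(i\gg0\). Since nonconnective \(K\)-theory commutes with filtered colimits of rings and \(Y=\lim_{i}Y_{i}\) has affine transition maps, the class \(\alpha\) is the image of some \(\alpha_{i}\in K_{*}(Y_{i})\); a projective birational \(X_{i}'\to X_{i}\) killing \(\alpha_{i}\) on \(Y_{i}'=Y_{i}\times_{X_{i}}X_{i}'\) then base-changes to the required \(X'\to X\), with vanishing preserved under the pullback \(K_{*}(Y_{i}')\to K_{*}(Y')\). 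Refining \(i\) if necessary ensures that the isomorphism locus of \(X_{i}'\to X_{i}\) pulls back to a dense open, so that \(X'\to X\) is again birational.

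Now with \(X\) Noetherian of dimension \(d\), I would induct on \(d\). The case \(d=0\) is immediate: a reduced Noetherian scheme of dimension \(0\) is regular, hence so is the smooth \(Y\) over it, and regular Noetherian schemes have vanishing negative \(K\)-theory. For \(d>0\), smoothness of \(f\) yields generic regularity: over each generic point \(\eta\) of \(X\) the fiber \(Y_{\eta}\) is smooth over the field \(\kappa(\eta)\), hence regular, so \(\alpha\) dies in \(K_{*}(Y_{\eta})\). Writing \(Y_{\eta}=\lim_{U\ni\eta}f^{-1}(U)\) and using continuity again, \(\alpha\) already vanishes over a dense open \(U\subseteq X\); set \(Z=X\setminus U\) with its reduced structure, so \(\dim Z<d\).

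Finally I would blow up along \(Z\): the square with corners \(Z\), \(X\), the exceptional divisor \(E\), and \(\mathrm{Bl}_{Z}X\) is an abstract blowup square, and pulling it back along \(f\) gives one over \(Y\) with closed part \(Y\times_{X}Z\) and exceptional part \(Y_{E}\), all of whose vertical maps are smooth over the corresponding \(X\)-schemes. The pro-cdh blowup descent of \cite{KerzStrunkTamme18} makes the associated square of \(K\)-theory spectra pro-cartesian, which in negative degrees reduces the vanishing of \(\alpha\) (already trivial over \(U\)) to the vanishing of induced classes on \(Y\times_{X}Z\) and \(Y_{E}\); since \(Z\) and \(E\) have dimension \(<d\) and the relevant morphisms to them are again smooth and quasiprojective, the inductive hypothesis kills these contributions by further base blowups, whose composite with \(\mathrm{Bl}_{Z}X\to X\) is the desired \(X'\). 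The main obstacle, and the technical heart of the argument, is this pro-cartesian input: one must check that the Kerz--Strunk--Tamme descent survives pullback along the smooth \(f\) and interacts correctly with the non-reduced centers that blowups introduce, and that the class of situations—reduced finite-dimensional \(X\) equipped with a smooth quasiprojective \(Y\)—is genuinely preserved by all the operations driving the induction.
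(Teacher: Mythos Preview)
Your reduction to the finite-type case and the base step \(d=0\) are fine, but the inductive step does not close.  Pro-cdh descent for the blowup square gives a (pro\nobreakdash-)cartesian square with \emph{four} corners, \(K(Y)\), \(K(Y_{\tilde X})\), \(\{K(Y_{nZ})\}_{n}\), \(\{K(Y_{nE})\}_{n}\); it does not ``reduce the vanishing of~\(\alpha\)'' to the two lower-dimensional pieces alone.  You still have to control the image \(\tilde\alpha\in K_{*}(Y_{\tilde X})\), and \(\tilde X\) has dimension~\(d\), so the inductive hypothesis does not apply to it.  The phrase ``already trivial over~\(U\)'' does not buy you this: \(\tilde X\setminus E\cong U\) only tells you that \(\tilde\alpha\) restricts to zero on \(Y_{U}\subset Y_{\tilde X}\), not that \(\tilde\alpha\) itself vanishes.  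There is also a secondary issue you flag but do not resolve: the pro-system involves the nonreduced thickenings \(nZ\) and \(nE\), so even the ``lower-dimensional'' contributions are not literally of the form covered by your inductive hypothesis.  Finally, your last sentence proposes composing birational maps to \(Z\) and to \(E\) with \(\mathrm{Bl}_{Z}X\to X\), but a birational \(Z'\to Z\) or \(E'\to E\) is not a map to~\(X\), and it is not explained how these are amalgamated into a single projective birational \(X'\to X\).

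The paper's proof avoids this altogether by passing directly to the cofiltered limit \(Y''=\projlim_{X'\to X}Y'\) over all projective birational \(X'\to X\) and showing once and for all that \(K_{*}(Y'')=0\) for \({*}<0\).  The key inputs are: (i) each \(Y'\) has an ample family, so \(K(Y')=K(\Vec(Y'))\); (ii) \(Y''\) has a coherent structure sheaf (proven in \cite{KerzStrunkTamme18}), and the resolution theorem identifies \(K(\Vec(Y''))\simeq K(\Coh(Y''))\); (iii) \(\Coh(Y'')\simeq\injlim_{X'}\Coh(Y')\) is a filtered colimit of abelian categories, so its \(K\)-theory is connective by Schlichting's theorem.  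No dimension induction, no pro-system bookkeeping, and no need to revisit~\(\tilde X\).
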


\begin{theorem}[Kerz--Strunk--Tamme]\label{kst}
  In the situation of \cref{ks},
  for any \(\alpha\in\NK_*(Y)\)\footnote{Recall that
    \(\NK(\X)\) is
    the complement of
    the direct summand~\(K(\X)\)
    of~\(K(\AA_{\X}^1)\).
  },
  there exists a projective birational morphism
  \(p\colon X'\to X\)
  such that \(\alpha\) vanishes in \(\NK_*(Y')\).
\end{theorem}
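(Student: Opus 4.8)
The plan is to separate the negative-degree case, which reduces cleanly to \cref{ks}, from the general case, which requires blow-up descent. Recall that \(\NK(\X)\) is by definition the natural direct summand of \(K(\AA^1_\X)\) complementary to \(K(\X)\); naturality means that for any \(Y'\to Y\) the transition map \(K(\AA^1_Y)\to K(\AA^1_{Y'})\) preserves the \(\NK\)-summand. Hence, if \({*}<0\), I would regard \(\alpha\in\NK_*(Y)\) as a class in \(K_*(\AA^1_Y)\), note that \(\AA^1_Y\to Y\to X\) is smooth and quasiprojective, and apply \cref{ks} to obtain a projective birational \(p\colon X'\to X\) killing \(\alpha\) in \(K_*(\AA^1_{Y'})\); by naturality it then dies in the summand \(\NK_*(Y')\), where \(Y'=Y\times_X X'\).

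For arbitrary \({*}\), the degree restriction in \cref{ks} is unavailable, so I would instead argue by induction on \(\dim X\), using that \(\NK\) vanishes for the homotopy-invariant theory \(\KH\). Since \(\KH(\AA^1_Y)\simeq\KH(Y)\), the complementary summand of \(\KH\) vanishes, so \(\alpha\) lies in (the \(\NK\)-part of) the fiber \(\mathcal{F}=\fib(K\to\KH)\); equivalently, \(\alpha\) maps to zero in \(\KH\), which drives the whole argument. When \(X\) is regular — in particular when \(\dim X=0\) — the \(X\)-scheme \(Y\) is regular, hence \(K\)-regular, so \(\NK(Y)=0\) and there is nothing to prove; this is the base of the induction.

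For the inductive step I would pick a nowhere-dense closed \(Z\subset X\) with \(\dim Z<\dim X\) and a blow-up \(X'\to X\) with center in \(Z\) that is an isomorphism over \(X\setminus Z\), yielding an abstract blow-up square with \(Z'=Z\times_X X'\), and pull it back along the smooth map \(f\) to the square of \(Y\), \(Y'=Y\times_X X'\), \(Y_Z=f^{-1}(Z)\), and \(Y'_{Z'}\); here \(Y'\to X'\) stays smooth and quasiprojective and the two lower corners lie over the lower-dimensional \(Z\), \(Z'\). The decisive input is blow-up descent for \(\mathcal{F}\) (equivalently for \(\NK\)): applying the localizing-invariant form of the Land--Tamme excision theorem \cref{26b5462836} to this square should show that \(\NK\) carries it to a cartesian square after passing to the pro-system of infinitesimal thickenings of \(Z\). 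The inductive hypothesis then kills the classes over \(Y_Z\) and \(Y'_{Z'}\) after blow-ups of \(Z\) and \(Z'\), which lift to further blow-ups of \(X\); combined with the vanishing of \(\alpha\) in \(\KH\), passing to the colimit over all blow-ups — where the theory becomes homotopy invariant and \(\NK\) vanishes — forces \(\alpha\) to die already at a finite stage, giving the single projective birational \(X'\to X\) we want.

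I expect the main obstacle to be exactly this descent step: turning \cref{26b5462836} into a usable pro-Mayer--Vietoris sequence for \(\NK\) along the singular blow-up square and controlling the resulting pro-system so that vanishing in the blow-up colimit genuinely forces vanishing after finitely many blow-ups. This is the bookkeeping that handles all \(K\)-degrees uniformly and replaces the resolution of singularities used in the classical characteristic-zero arguments. By contrast, carrying the auxiliary smooth \(f\colon Y\to X\) through the induction should be routine, since smooth base change preserves abstract blow-up squares together with the smoothness and quasiprojectivity of \(Y'\to X'\).
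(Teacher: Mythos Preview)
Your negative-degree reduction to \cref{ks} via the smooth quasiprojective map $\AA^1_Y\to X$ is correct and pleasantly direct, but the paper does not split off this case.

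For general~${*}$, your inductive argument has a real gap. The clause ``passing to the colimit over all blow-ups --- where the theory becomes homotopy invariant and $\NK$ vanishes'' is precisely the statement being proved: that $\NK$ dies in the blow-up colimit is \emph{equivalent} to \cref{kst}, so the induction as written is circular. Even if you try to repair this by running the induction purely through a pro-Mayer--Vietoris sequence, \cref{26b5462836} does not apply to an abstract blow-up square on the nose: the condition $A'\otimes_A B\to B'$ fails, and recovering it requires the full pro-system of infinitesimal thickenings together with a proof of pro-cdh excision --- which is the main technical theorem of \cite{KerzStrunkTamme18}, not a lemma one can simply invoke. There is also a mismatch with your hypothesis: the pro-squares involve the non-reduced thickenings $Z_n$, while your induction is stated only for reduced bases.

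The paper's argument (recalled inside the proof of \cref{xpbwff}) is entirely different and sidesteps all of this. One passes directly to the limit space $Y''=\projlim_{X'\to X}Y'$ over all projective birational $X'\to X$, notes that $Y''$ has coherent structure sheaf, and compares the exact categories $\Vec(Y'')\hookrightarrow\Coh(Y'')$ and $\NVec(Y'')\hookrightarrow\NCoh(Y'')$ of vector bundles, resp.\ coherent sheaves, equipped with a nilpotent endomorphism. The resolution theorem identifies $K(\Vec(Y''))\simeq K(\Coh(Y''))$ and $K(\NVec(Y''))\simeq K(\NCoh(Y''))$, and Quillen d\'evissage gives $K(\Coh(Y''))\simeq K(\NCoh(Y''))$. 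Hence $\NK(Y'')=0$, which is exactly the theorem. No induction on dimension, no $\KH$, no pro-cdh descent.
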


Our generalization is only needed to prove
our main theorems
when the local division ring~\(F\)
is noncommutative\footnote{When \(F=\HH\),
  we can use the resolution of singularities
  to avoid this.
}.
It concerns
the following situation:

\begin{assumption}\label{finsep}
  Let \(R\) be a commutative ring
  and \(S\) an associative \(R\)-algebra,
  i.e., an associative algebra
  with an algebra map \(R\to S\)
  whose image is contained in the center.
  We assume that
  it is finitely generated
  projective as an \(R\)-module
  and
  the multiplication
  \(S\otimes_{R}S\to S\) admits
  an \((S,S)\)-bilinear section.
\end{assumption}

\begin{remark}\label{xdfljs}
  \Cref{finsep} can be thought of as
  an additive counterpart
  of the notion
  of a saturated (aka smooth proper) \(\E_{1}\)-algebras:
  Those conditions require that
  over both~\(R\)
  and the enveloping algebra,
  \(S\) be finitely generated projective 
  instead of perfect.
  Hence \cref{finsep} implies
  that \(S\) is saturated over~\(R\)
  but not vice versa;
consider
  the algebra of upper-triangular \(2\times2\) matrices.
\end{remark}

\begin{remark}\label{1d3a23503d}
  In \cref{finsep},
  the finiteness hypothesis is redundant
  by~\cite[Theorem~1]{VillamayorZelinsky66}.
\end{remark}

\begin{example}\label{b2dea3b9aa}
  When \(R\) is a field,
  \cref{finsep} is satisfied if and only if
  \(S\) is a finite product of matrix algebras
  over finite-dimensional division algebras
  whose centers are finite-dimensional separable field extensions
  of~\(R\).
\end{example}

\begin{example}\label{xpfq5y}
Recall that a \emph{local division ring}
  is a topological division ring
  whose
  underlying topological space is a local compactum
  but not discrete.
  It is classical that
  it is a finite-dimensional division algebra
  over
  \(\FF_{p}\llparenthesis T\rrparenthesis\),
  \(\QQ_{p}\),
  or~\(\RR\)
  for some prime~\(p\).
  Then
  as an algebra over its center,
  it satisfies \cref{finsep}.
\end{example}

For an associative algebra,
we write \(\D(\X)\)
for the derived \(\infty\)-category of 
right modules.
We introduce the following notation:

\begin{definition}\label{a4bae89270}
  Under \cref{finsep},
  for a quasicompact quasiseparated
  \(R\)-scheme~\(X\),
  we define
  \(K(X_S)\) to be
  the \(K\)-theory
  of \((\D(X)\otimes_{\D(R)}\D(S))^{\aleph_{0}}\).
\end{definition}

The main result of this section is the following:

\begin{theorem}\label{xpbwff}
  Under \cref{finsep},
  in the situation of \cref{ks},
  consider a map \(X\to\Spec R\).
  \begin{enumerate}
    \item\label{i:ks}
      Then for any \({*}<0\) and \(\alpha\in K_{*}(Y_{S})\),
      there is a birational projective morphism
      \(p\colon X'\to X\) such that
      \(\alpha\) vanishes in \(K_{*}(Y'_{S})\).
    \item\label{i:kst}
      Then for any \(\alpha\in\NK_{*}(Y_{S})\),
      there is a birational projective morphism
      \(p\colon X'\to X\) such that
      \(\alpha\) vanishes in \(\NK_{*}(Y'_{S})\).
  \end{enumerate}
\end{theorem}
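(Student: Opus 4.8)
The plan is to deduce both parts from the commutative statements \cref{ks,kst} by verifying that the $S$-twisted invariant $K((\X)_S)$ retains every formal feature that the Kerz--Strunk(--Tamme) arguments exploit. The one structural input is \cref{finsep}: since $S$ is finitely generated projective over $R$, the category $\D(S)$ is a dualizable---indeed, by the separability hypothesis, saturated---$\D(R)$-module, as recorded in \cref{xdfljs}. Everything below is organized around transporting the scheme-level arguments across the exact functor $\X\otimes_{\D(R)}\D(S)$.

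First I would check that $E_{S}:=K((\X)_{S})$, as a functor on $\D(R)$-linear categories, is a localizing invariant. Because $\D(S)$ is dualizable over $\D(R)$, the functor $\X\otimes_{\D(R)}\D(S)$ is exact, preserves compact objects (so that \cref{a4bae89270} is computing the right thing), and carries a Verdier sequence of $\D(R)$-linear categories to a Verdier sequence; postcomposing with $K$ therefore yields an $R$-linear localizing invariant in the sense of \cref{exc_a}. In particular $E_{S}$ sends the (pro-)blow-up squares that drive the proofs of \cref{ks,kst} to (pro-)cartesian squares, exactly as $K$ does, and the Land--Tamme theorem \cref{26b5462836} applies to $E_{S}$ verbatim.

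The geometric content is to see that $E_{S}$ behaves like the $K$-theory of a regular ring on the smooth pieces. Here I would use that by \cref{finsep} the algebra $S$ is separable over~$R$, so for any regular $R$-scheme~$T$ the finite $\mathcal{O}_{T}$-algebra $\mathcal{O}_{T}\otimes_{R}S$ is unramified over $\mathcal{O}_{T}$ and hence has finite global dimension equal to that of~$T$; thus $\mathcal{O}_{T}\otimes_{R}S$ is a regular ring, so $E_{S}(T)$ is connective and $K$-regular, i.e. $\NK_{*}(T_{S})=0$ and $K_{*}(T_{S})=0$ for ${*}<0$. Applied after a resolution, this furnishes the vanishing on the smooth strata on which the Kerz--Strunk--Tamme induction feeds.

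With these two inputs in hand I would run the proofs of \cref{ks,kst} line by line with $K$ replaced by $E_{S}$: the induction on $\dim X$, the localization and blow-up long exact sequences, and the terminal vanishing on the (now regular) strata each use only that $E_{S}$ is a localizing invariant, that it enjoys (pro-)cdh and blow-up descent, and that it is connective and $K$-regular on the smooth pieces; part~\cref{i:kst} is handled by the same substitution into the $\NK$-refinement of \cref{kst}. The step I expect to be the main obstacle is the smooth-piece input of the previous paragraph in the genuinely relative situation of \cref{ks}, where $Y\to X$ is smooth but $X$ may be singular: one blows up the base~$X$ rather than~$Y$, and must ensure that the separable algebra, together with the smoothness of $Y\to X$, really does force regularity of the relevant $S$-twisted strata after pulling back along $p\colon X'\to X$. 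Managing the twist carried by~$S$ across the non-smooth base while only the fibers $Y\to X$ are smooth is the delicate bookkeeping on which the whole argument turns.
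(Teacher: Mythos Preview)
There is a genuine gap. You assert that the proofs of \cref{ks,kst} proceed by ``induction on $\dim X$, the localization and blow-up long exact sequences, and the terminal vanishing on the (now regular) strata'', and that these ``use only that $E_{S}$ is a localizing invariant\ldots\ and that it is connective and $K$-regular on the smooth pieces''. But that is not how the argument runs. As the paper recalls, one passes to the limit space $Y''=\projlim_{X'\to X}Y'$ over \emph{all} projective birational $X'\to X$; the structure sheaf of $Y''$ is coherent, but $Y''$ is nowhere near regular, and no resolution to smooth strata ever occurs (the whole point is that the argument works without resolution of singularities). The vanishing of $K_{*}(Y'')$ for ${*}<0$ comes from identifying $K(Y'')$, via the resolution theorem and an ample family of line bundles, with the $K$-theory of the abelian category $\Coh(Y'')$, and then invoking Schlichting's theorem that the $K$-theory of an abelian category is connective; \cref{i:kst} is obtained by adding Quillen d\'evissage for $\Coh(Y'')\hookrightarrow\NCoh(Y'')$. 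The resolution theorem, Schlichting connectivity, and d\'evissage are statements about algebraic $K$-theory of exact and abelian $1$-categories, not consequences of being a localizing invariant with good values on regular schemes, so the substitution ``replace $K$ by $E_{S}$ and rerun'' does not go through as you describe. The obstacle you flag in your last paragraph---regularity of the relative strata---is therefore beside the point; there are no regular strata in the argument at all.

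The paper accordingly does not treat $E_{S}$ as a black-box localizing invariant. It applies the additive base change $\cat{E}\mapsto\cat{E}\otimes_{R}S$ to the square relating $\Vec(Y'')$, $\NVec(Y'')$, $\Coh(Y'')$, $\NCoh(Y'')$, observes that each term is again an exact (respectively abelian) category because $S$ is finite projective over~$R$, and proves the key compatibility $\Db(\cat{E}\otimes_{R}S)\simeq\Db(\cat{E})\otimes_{R}S$ (this is \cref{xarrml}). With that in hand, the resolution theorem, Schlichting connectivity, and d\'evissage apply directly to the $S$-twisted exact categories, and \cref{i:ks,i:kst} follow. Your observation that $\mathcal{O}_{T}\otimes_{R}S$ is regular for smooth~$T$ is correct but plays no role.
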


\begin{proof}
  We first recall
  the proofs of \cref{ks,kst}
  and then explain
  how to modify the argument
  in order to get the desired variants.
  Note that our proof of \cref{ks},
  which we explain here, is a bit
  different from the original one;
  we rather follow the strategy
  of the original proof of \cref{kst}.

  Consider the limit space
  \(Y''=\projlim_{X'\to X}Y'\)
  where \(X'\to X\) runs
  over birational projective morphisms.
  It suffices to show
  the vanishing of \(K_*(Y'')\) for \({*}<0\)
  and~\(\NK(Y'')\).
  Since \(Y'\) admits an ample family
  of line bundles,
  \(K(Y')\) is equivalent to
  the \(K\)-theory of
  the exact category\footnote{By ``exact category'',
    we mean a Quillen exact \(1\)-category.
  }~\(\Vec(X)\).
  Also, \(\NK(Y')\) is the suspension
  of the complement
  of the direct summand \(K(\Vec(Y'))\) of~\(K(\NVec(Y'))\),
  where \(\NVec(Y')\) denotes
  the exact category of vector bundles
  with nilpotent endomorphisms.
  Therefore,
  we need to show that
  \(K(\Vec(Y''))\) is connective and
  \(K(\Vec(Y''))\to K(\NVec(Y''))\) is an equivalence,
  where 
  \(Y''\) is
  considered as a locally ringed space.
  It is proven
  in the proof of~\cite[Proposition~6.4]{KerzStrunkTamme18}
  that \(Y''\) has a coherent structure sheaf.
  We then consider
  the following diagrams
  of exact \(1\)-categories:
  \begin{equation}
    \label{e:27sdz}
    \begin{tikzcd}
      \Vec(Y'')\ar[r,hook]\ar[d,hook]&
      \NVec(Y'')\ar[d,hook]\\
      \Coh(Y'')\ar[r,hook]&
      \NCoh(Y'')\rlap.
    \end{tikzcd}
  \end{equation}
  Here the bottom row is the abelian categories
  of coherent sheaves,
  and those with nilpotent endomorphisms,
  respectively.
  In the proof of~\cite[Proposition~6.4]{KerzStrunkTamme18},
  it is shown that
  the vertical arrows satisfy
  the assumption of the resolution theorem
  and thus induce equivalences on~\(K\).
  By~\cite[Remark~4.2.3]{FujiwaraKato18},
  the functor \(\injlim_{X'\to X}\Coh(Y')\to\Coh(Y'')\)
  is an equivalence.
  The same holds for \(\NCoh\)
  as it is the kernel of
  \(\Coh(\AA^{1}_{\X})\to\Coh(\GG_{m,\X})\).
  Hence \(K\) of the bottom terms is connective
  by the connectivity theorem of Schlichting~\cite[Theorem~7]{Schlichting06},
  from which we get \cref{ks}.
  By the Quillen dévissage theorem,
  the bottom row is an equivalence,
  from which we also get \cref{kst}.

  Then we get back to our variant.
  For an idempotent-complete additive \(R\)-linear \(\infty\)-category~\(\cat{A}\),
  we write \(\cat{A}\otimes_{R}S\)
  for the \(\infty\)-category
  of compact objects of
  \(\PShv_{\Sigma}(\cat{A})
  \otimes_{\D(R)_{\geq0}}\D(S)_{\geq0}\);
  cf.~\cite[Sections~D.1--2]{LurieSAG}.
  With this notation,
  \(K(Y'_{S})\) is by definition \(K(\Perf(Y')\otimes_{R}S)\).
  We apply \(\X\otimes_{R}S\) to 
  the diagram~\cref{e:27sdz}.
  It is easy to observe that
  for an idempotent-complete \(R\)-linear
  exact category~\(\cat{E}\),
  its base change~\(\cat{E}\otimes_{R}S\)
  inherits a structure of an exact category.
  We conclude
  the desired result from \cref{xarrml},
  which we prove below.
\end{proof}

\begin{lemma}\label{xarrml}
Under \cref{finsep},
  consider
  an idempotent-complete \(R\)-linear exact category~\(\cat{E}\).
  Then the canonical functor
  \(
  F\colon
  \Db(\cat{E}\otimes_{R}S)
  \to
  \Db(\cat{E})\otimes_{R}S
  \)
  is an equivalence.
\end{lemma}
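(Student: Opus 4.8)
The plan is to exhibit \(F\) as an equivalence by showing that its source and target are, as idempotent-complete stable \(\infty\)-categories, the thick closure of the same family of \emph{induced} objects, and that \(F\) is fully faithful on that family. First I would fix the induction--restriction adjunctions. At the level of exact categories, tensoring with the finitely generated projective \(R\)-module \(S\) and forgetting the \(S\)-action give exact functors \(\operatorname{ind}\colon\cat{E}\to\cat{E}\otimes_{R}S\) and \(\operatorname{res}\colon\cat{E}\otimes_{R}S\to\cat{E}\) with \(\operatorname{ind}\dashv\operatorname{res}\) and \(\operatorname{res}\operatorname{ind}\simeq(\X)\otimes_{R}S\); passing to bounded derived categories yields the same data on \(\Db(\cat{E})\) and \(\Db(\cat{E}\otimes_{R}S)\), and the base-change construction supplies a parallel adjunction \(\operatorname{ind}\dashv\operatorname{res}\) relating \(\Db(\cat{E})\) and \(\Db(\cat{E})\otimes_{R}S\). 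The functor \(F\) is the one induced by the universal property of \(\Db\) applied to the exact functor \(\cat{E}\otimes_{R}S\to\Db(\cat{E})\otimes_{R}S\), so by construction it intertwines the two inductions, giving \(F\operatorname{ind}(E)\simeq\operatorname{ind}(E)\) for \(E\in\cat{E}\).

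Two inputs then feed into the argument, using the two halves of \cref{finsep} separately. The finite projectivity of~\(S\) yields full faithfulness on induced objects: since \(S\) is a retract of some \(R^{n}\), the functor \(\Map_{\Db(\cat{E})}(E,\X)\) carries \(E'\otimes_{R}S\) to \(\Map_{\Db(\cat{E})}(E,E')\otimes_{R}S\), whence by the adjunctions both
\[
  \Map_{\Db(\cat{E}\otimes_{R}S)}(\operatorname{ind}E,\operatorname{ind}E')
  \quad\text{and}\quad
  \Map_{\Db(\cat{E})\otimes_{R}S}(\operatorname{ind}E,\operatorname{ind}E')
\]
compute to \(\Map_{\Db(\cat{E})}(E,E')\otimes_{R}S\), and one checks that \(F\) realizes this identification. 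The bilinear section of the multiplication—the separability idempotent—yields generation of the source: for any object \(N\) of the exact category \(\cat{E}\otimes_{R}S\), this idempotent splits the multiplication \(\operatorname{ind}\operatorname{res}(N)=N\otimes_{R}S\to N\) by an \(S\)-linear section, exhibiting \(N\) as a direct summand of \(\operatorname{ind}(\operatorname{res}N)\) with \(\operatorname{res}N\in\cat{E}\). Hence every object of \(\cat{E}\otimes_{R}S\), and thus every object of \(\Db(\cat{E}\otimes_{R}S)\), lies in the thick subcategory generated by \(\{\operatorname{ind}(E)\}_{E\in\cat{E}}\). On the target the same family generates for a softer reason: by the defining description of \(\X\otimes_{R}S\) via \(\PShv_{\Sigma}\) and \(\D(S)_{\geq0}\), the induced objects \(\operatorname{ind}(X)\) generate under colimits because \(S\) is a compact generator of \(\D(S)_{\geq0}\) over \(\D(R)_{\geq0}\), and since \(\Db(\cat{E})\) is the thick closure of~\(\cat{E}\), the compact objects form the thick closure of \(\{\operatorname{ind}(E)\}_{E\in\cat{E}}\).

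Combining these, \(F\) is a functor of idempotent-complete stable \(\infty\)-categories that sends a generating family to a generating family and is fully faithful on it, hence an equivalence. I expect the main obstacle to be the careful bookkeeping at the derived level rather than any single deep point: one must verify that the exact-categorical adjunction \(\operatorname{ind}\dashv\operatorname{res}\) and the identity \(\operatorname{res}\operatorname{ind}\simeq(\X)\otimes_{R}S\) genuinely pass to \(\Db\) (using that both functors are exact, so that unit and counit are natural transformations of exact functors), and that the \(S\)-linear summand splitting furnished by the separability idempotent is compatible with the exact structure, so that it survives into the bounded derived category and delivers the generation statement. Once these compatibilities are in place, the comparison of mapping spectra and the two generation statements close the argument.
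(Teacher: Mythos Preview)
Your proposal is correct and follows essentially the same route as the paper: reduce to the family of induced objects \(\{\operatorname{ind}(E)\}_{E\in\cat{E}}\), verify full faithfulness there via the adjunction and finite projectivity of~\(S\), and conclude by thick generation on both sides. You are more explicit than the paper about invoking the separability idempotent to exhibit every \(N\in\cat{E}\otimes_{R}S\) as a summand of \(\operatorname{ind}\operatorname{res}(N)\), hence to obtain generation of the source; the paper compresses this step and the mapping-spectrum comparison into the phrase ``the assertion is clear.''
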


\begin{proof}
  Note that
  by~\cite[Theorem~2.8]{BalmerSchlichting01}
  and~\cite[Lemma~1.2.4.6]{LurieHA},
  if an exact category~\(\cat{E}\) is idempotent complete,
  so is \(\Db(\cat{E})\).

  We first show that
  it is essentially surjective.
  Let \(D\) be an object
  of \(\Db(\cat{E})\otimes_{R}S\).
  As its image is idempotent complete
  and stable,
  without loss of generality,
  we can assume that
  \(D\) is of the form \(E\otimes_{R}S\)
  for an object~\(E\) of
  \(\Db(\cat{E})\).
  This clearly can be
  represented as a complex
  of \(S\)-modules in~\(\cat{E}\).

  We then show that
  it is fully faithful.
  We prove that
  \(\map(C,C')\to\map(FC,FC')\)
  is an equivalence
  for any \(C\) and~\(C'\),
  where \(\map\) denotes the mapping spectrum.
  By the same argument,
  without loss of generality,
  we can assume
  that both are images
  in \(\cat{E}\to\Db(\cat{E})\),
  but in this case,
  the assertion is clear.
\end{proof}

\section{\texorpdfstring{\(K\)}{K}-theory of rings of continuous functions}\label{s:main}

In this section, we demonstrate the main results of this paper.
Building on axiomatic arguments in \cref{ss:ha},
we prove generalized versions
of our main theorems
in \cref{ss:main}.
We conclude with the variants
for real compacta
and
with order coefficients
in \cref{ss:kr,ss:order},
respectively.

\subsection{Some homological algebra of sheaves}\label{ss:ha}

We here observe some abstract properties of cd~sheaves
under the situation
where we know that
certain classes vanish cd~locally.
To run the induction,
the following notion is convenient:

\begin{definition}\label{1684f2d18a}
  Let \(F\colon((\Cat{Cpt}_{\aleph_{1}})_{/S})^{\op}\to\Cat{Sp}\)
  be a cd~sheaf over \(S\in\Cat{Cpt}_{\aleph_{1}}\).
  For an integer \(n\geq0\),
  we say that a class
  \(\alpha\in\pi_{*}F(X)\)
  has \emph{complexity}~\(\leq n\),
  if there is a cd~cover
  \(\{Y_{i}\to X\}_{i}\)
  of complexity \(\leq n\)
  (see \cref{c0f0e840c9})
  such that
  \(\alpha\) is killed in \(\pi_{*}F(Y_{i})\)
  for each~\(i\).
  We say that a class
  has \emph{finite complexity}
  if it has complexity~\(\leq n\)
  for some~\(n\).

  We define a similar notion
  when \(\Cat{Cpt}_{\aleph_{1}}\)
  is replaced by \(\Cat{PFin}_{\aleph_{1}}\).
\end{definition}

\begin{lemma}\label{xv5iy7}
  In the situation of \cref{1684f2d18a},
  suppose that
  a class \(\alpha\in\pi_{*}F(X)\) has complexity \(\leq n\)
  for \(n\geq1\).
  Then there is a blowup square~\cref{e:blowup}
  such that
  the image of \(\alpha\) is zero in \(\pi_{*}F(X')\)
  and has complexity \(\leq n-1\) in \(\pi_{*}F(Z)\).
\end{lemma}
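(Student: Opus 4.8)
The plan is to unwind the definition of complexity $\leq n$ and exploit the structure theorem for cd~covers via \cref{two}. By \cref{1684f2d18a}, the hypothesis gives a cd~cover $\{Y_i \to X\}_i$ of complexity $\leq n$ on which $\alpha$ dies, so there is a filtration
\[
  \emptyset = X_0 \subset X_1 \subset \dotsb \subset X_n = X
\]
together with maps $p_k \colon X_k' \to X_k$ each factoring through some $Y_i \to X$, such that $X_k' \setminus p_k^{-1}(X_{k-1}) \to X_k \setminus X_{k-1}$ is a homeomorphism. The natural candidate for the blowup square is obtained by taking $Z = X_{n-1}$ as the bottom-left corner and assembling the top-level data $p_n \colon X_n' \to X_n = X$ into the right-hand map. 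Concretely, I would set $Z = X_{n-1}$, let $X' \to X$ be a morphism through which $p_n$ factors (namely some $Y_i \to X$ realizing the top layer, or $X_n'$ itself), and let $Z' = X' \times_X Z$ be the pullback, yielding a pullback square as in \cref{e:blowup}. The condition that $X' \setminus Z' \to X \setminus Z = X_n \setminus X_{n-1}$ is a homeomorphism is exactly the defining property of the $n$-th layer, so this square is a blowup square.

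**Killing $\alpha$ upstairs and reducing complexity on $Z$.** With this square in hand, the image of $\alpha$ in $\pi_* F(X')$ vanishes because $X' \to X$ factors through one of the $Y_i$ on which $\alpha$ was assumed to die; functoriality of $F$ then kills $\alpha$ pulled back to $X'$. For the claim that $\alpha$ has complexity $\leq n-1$ in $\pi_* F(Z)$, I would observe that restricting the original filtration to $Z = X_{n-1}$ gives
\[
  \emptyset = X_0 \subset \dotsb \subset X_{n-1} = Z,
\]
a filtration of length $n-1$, and the corresponding maps $p_1, \dotsc, p_{n-1}$ (each pulled back along $Z \hookrightarrow X$, or rather restricted appropriately) witness a cd~cover of $Z$ of complexity $\leq n-1$ on which the restriction of $\alpha$ dies. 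The key technical tool making this legitimate is \cref{two}: applied to the closed subset $Z = X_{n-1} \subset X$, it tells us precisely that the restriction of a cd~cover to a closed subset is again a cd~cover, and the induced filtration has the expected length.

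**The main obstacle.** The subtle point, and where I expect to spend the most care, is verifying that $\alpha$ genuinely dies on each piece of the length-$(n-1)$ cover of $Z$, rather than merely on the ambient pieces $Y_i$. Since $\alpha$ is killed in $\pi_* F(Y_i)$ for each $i$ over $X$, and the cover of $Z$ is built from the restrictions $Z_i' = p_i^{-1}(Z)$ of these same maps, I must check that the restriction maps $F(Y_i) \to F(Z_i')$ carry the vanishing of $\alpha|_{Y_i}$ to the vanishing of $\alpha|_{Z_i'}$; this is immediate from functoriality applied to the inclusions $Z_i' \hookrightarrow Y_i$ compatible with the restriction $\alpha \mapsto \alpha|_Z$. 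The only genuine content is bookkeeping: ensuring the filtration layers $X_1, \dotsc, X_{n-1}$ restrict correctly and that \cref{two} applies to certify the restricted family as a cd~cover of complexity $\leq n-1$. Once this compatibility is in place, the blowup square together with the two vanishing/complexity statements completes the proof.
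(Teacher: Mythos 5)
Your proposal is correct and is essentially the paper's own (one-line) proof: take the filtration witnessing complexity \(\leq n\) and form the blowup square with \(X'=X_{n}'\) and \(Z=X_{n-1}\). Two small refinements: the choice \(X'=X_{n}'\) (rather than the ambient \(Y_{i}\)) is the one that actually yields a blowup square, since only \(p_{n}\colon X_{n}'\to X\) is required to be a homeomorphism over \(X\setminus X_{n-1}\) (while the vanishing of \(\alpha\) on \(X_{n}'\) still follows because \(p_{n}\) factors through some \(Y_{i}\)); and \cref{two} is not needed, because the truncated filtration \(\emptyset=X_{0}\subset\dotsb\subset X_{n-1}=Z\) together with \(p_{1},\dotsc,p_{n-1}\) already witnesses, directly from \cref{c0f0e840c9}, a cd~cover of \(Z\) of complexity \(\leq n-1\) on whose pieces \(\alpha\) dies by functoriality.
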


\begin{proof}
  In the notation of \cref{c0f0e840c9},
  the blowup square determined by
  \(X'=X_{n}'\) and \(Z=X_{n-1}\)
  satisfies the desired property.
\end{proof}

In the profinite case,
we use the following:

\begin{proposition}\label{f59fc7068a}
  Let \(F\colon((\Cat{PFin}_{\aleph_{1}})_{/S})^{\op}\to\Cat{Sp}\)
  be a cd~sheaf over \(\Cat{PFin}_{\aleph_{1}}\).
  If any class of negative degree
  has finite complexity,
  then \(F\) is connective.
\end{proposition}

\begin{proof}
  For any blowup square~\cref{e:blowup}
  of profinite sets of countable weight,
  by \cref{xds8fz},
  we have a long exact sequence
  \begin{equation*}
    \dotsb
    \to\pi_*(F(X'))
    \to\pi_*(F(X))
    \oplus\pi_*(F(Z'))
    \to\pi_*(F(Z))
    \to\dotsb.
  \end{equation*}
  We see that this splits into
  the short exact sequences
  \begin{equation*}
    0
    \to\pi_*(F(X'))
    \to\pi_*(F(X))
    \oplus\pi_*(F(Z'))
    \to\pi_*(F(Z))
    \to0
  \end{equation*}
  as follows:
  When \(Z=\emptyset\), this is obvious.
  Otherwise,
  by the theorem of Halmos~\cite{Halmos61},
  we have a retract \(X\to Z\),
  which gives us the desired splitting.
  From this,
  by induction on the complexity
  and \cref{xv5iy7},
  we see that
  every class of negative degree is zero.
\end{proof}

In the compactum case,
we prove the following two:

\begin{proposition}\label{8b58168a08}
  Let \(F\colon((\Cat{Cpt}_{\aleph_{1}})_{/S})^{\op}\to\Cat{Sp}\)
  be a cd~sheaf over \(S\in\Cat{Cpt}_{\aleph_{1}}\).
  Assume that any class of negative degree
  has finite complexity.
  Then the connective cover map
  \(\tau_{\geq0}F\to F\)
  of presheaves is the cd~sheafification morphism.
\end{proposition}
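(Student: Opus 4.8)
The plan is to compare \(F\) with its objectwise connective cover through sheafification. Write \(a\) for the cd~sheafification functor on presheaves of spectra. The connective-cover construction gives a fiber sequence of presheaves
\[
  \tau_{\geq 0}F \to F \to \tau_{\leq -1}F ,
\]
where \(\tau_{\leq -1}F\) denotes the objectwise \((-1)\)-truncation. The cd~sheafification functor is exact: it preserves finite limits as a left-exact localization and colimits as a left adjoint, and a finite-limit- and colimit-preserving functor of stable \(\infty\)-categories is exact. Since \(F\) is a cd~sheaf we have \(aF\simeq F\), so applying \(a\) yields a fiber sequence of cd~sheaves \(a(\tau_{\geq 0}F)\to F\to a(\tau_{\leq -1}F)\). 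Consequently the map \(\tau_{\geq 0}F\to F\) exhibits \(F\) as the cd~sheafification of \(\tau_{\geq 0}F\) if and only if \(a(\tau_{\leq -1}F)\simeq 0\), so the whole statement reduces to this vanishing.

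Next I would compute the homotopy sheaves of \(a(\tau_{\leq -1}F)\). It is standard that sheafification commutes with the formation of homotopy group sheaves, i.e.\ \(\pi_n(aG)\simeq a(\pi_n G)\) for a presheaf \(G\), the right-hand side being the sheafification of the homotopy presheaf. For \(n\geq 0\) the presheaf \(\pi_n(\tau_{\leq -1}F)\) vanishes, so \(a(\tau_{\leq -1}F)\) is coconnective (it lies in \(\Cat{Sp}_{\leq -1}\) for the \(t\)-structure on cd~sheaves of spectra). For \(n\leq -1\) we have \(\pi_n(\tau_{\leq -1}F)\simeq\pi_n F\), and here the hypothesis enters: by assumption every class of negative degree has finite complexity (see \cref{1684f2d18a}), and a cd~cover of finite complexity generates a covering sieve, so every section of the homotopy presheaf \(\pi_n F\) is killed by a cd~cover, i.e.\ is cd~locally zero. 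Hence \(a(\pi_n F)=0\) for \(n\leq -1\), and all homotopy sheaves of \(a(\tau_{\leq -1}F)\) vanish.

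It remains to conclude that a cd~sheaf of spectra that is coconnective and has vanishing homotopy sheaves is zero; this is the step to handle with care. The naive implication ``all homotopy sheaves vanish \(\Rightarrow\) the object is zero'' is false in a topos that is not hypercomplete, as such an object can be nonzero and infinitely connective. The point I would stress is that we may sidestep hypercompleteness entirely: vanishing of all homotopy sheaves places \(a(\tau_{\leq -1}F)\) in \(\Cat{Sp}_{\geq m}\) for every \(m\), in particular in \(\Cat{Sp}_{\geq 0}\), while coconnectivity places it in \(\Cat{Sp}_{\leq -1}\); since \(\Cat{Sp}_{\geq 0}\cap\Cat{Sp}_{\leq -1}=0\) for any \(t\)-structure, the object vanishes. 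This uses only that \(\tau_{\leq -1}F\) is bounded above, which is exactly why passing to the connective cover, rather than arguing with \(F\) directly, is the right move. I expect this last observation---not the finite-complexity input, which only feeds in through the elementary local vanishing of negative homotopy presheaves---to be the conceptual crux of the argument.
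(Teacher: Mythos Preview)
Your proof is correct and follows essentially the same strategy as the paper's: both reduce to showing that the coconnective obstruction (your \(a(\tau_{\leq -1}F)\), equivalently the paper's cofiber of \(E\to F\)) vanishes by combining boundedness above with the local triviality of negative homotopy coming from the finite-complexity hypothesis. The only difference is cosmetic---you phrase local triviality via vanishing of homotopy sheaves, whereas the paper checks connectivity at each point of the cd~topos---and your formulation has the minor advantage of not invoking that the cd~topos has enough points.
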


\begin{proof}
Let \(E\) be the sheafification of \(\tau_{\geq0}F\).
  We have a canonical map \(E\to F\).
  We need to prove that
  this is an equivalence.
  Since its connective cover is an equivalence,
  it suffices to check that
  this is an equivalence on each point
  of the cd~topos of~\(S\).
  Since both \(E\) and~\(F\) are connective on each point
  by assumption,
  the desired result follows.
\end{proof}

\begin{proposition}\label{a2ca91099a}
  Let \(S\) be a compactum of countable weight
  and \(E\to F\)
  a map of \(\Cat{Sp}\)-valued cd~sheaves on
  \((\Cat{Cpt}_{\aleph_{1}})_{/S}\).
  Suppose
  for any \(X\in(\Cat{Cpt}_{\aleph_{1}})_{/S}\)
  that
  \(E(X)\to F(X)\) is bijective on~\(\pi_{0}\)
  and each class in
  \(E_*(X)\) or \(F_*(X)\)
  for \({*}<0\) has finite complexity.
  Then
  \(\tau_{\leq0}E(X)\to\tau_{\leq0}F(X)\) is an equivalence
  for any \(X\in(\Cat{Cpt}_{\aleph_{1}})_{/S}\).
\end{proposition}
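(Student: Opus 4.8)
The plan is to package the two sheaves into a single cd~sheaf and reduce the statement to a vanishing that the complexity induction can handle. Set $K=\fib(E\to F)$; since a fiber is a levelwise limit and the cd~sheaves form a limit-closed subcategory of presheaves, $K$ is a $\Cat{Sp}$-valued cd~sheaf with $K(X)=\fib(E(X)\to F(X))$. A diagram chase in the long exact sequence of $K\to E\to F$, using that $\pi_0E(X)\to\pi_0F(X)$ is bijective for every $X$, shows that $\tau_{\leq0}E(X)\to\tau_{\leq0}F(X)$ is an equivalence for all $X$ as soon as $\pi_mK(X)=0$ for all $m\leq-1$ and all $X\in(\Cat{Cpt}_{\aleph_1})_{/S}$. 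So it suffices to prove this vanishing.

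First I would check that every class in $\pi_mK(X)$ with $m\leq-1$ has finite complexity. Given $\kappa\in\pi_mK(X)$, its image $\bar\kappa\in\pi_mE(X)$ has finite complexity by hypothesis, so some cd~cover $\{Y_i\to X\}_i$ kills it. Over each $Y_i$ the class $\kappa|_{Y_i}$ then lies in the kernel of $\pi_mK(Y_i)\to\pi_mE(Y_i)$, hence in the image of the connecting map $\pi_{m+1}F(Y_i)\to\pi_mK(Y_i)$. When $m\leq-2$ this preimage is a negative-degree class of $F$, so it too has finite complexity; a further cd~cover kills it, and composing the two covers by transitivity (\cref{2924013c0a}) exhibits the finite complexity of $\kappa$. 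When $m=-1$ the connecting map $\pi_0F(Y_i)\to\pi_{-1}K(Y_i)$ is already zero, because $\pi_0E(Y_i)\to\pi_0F(Y_i)$ is surjective; hence $\kappa|_{Y_i}=0$ outright.

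With finite complexity available I would prove $\pi_mK(X)=0$ for $m\leq-1$ by a double induction: a downward (outer) induction on the degree $m$ starting at $m=-1$, and for each $m$ an (inner) induction on the complexity of a class $\kappa\in\pi_mK(X)$, modeled on the arguments of \cref{f59fc7068a,8b58168a08}. In the inner step \cref{xv5iy7} supplies a blowup square with $\kappa|_{X'}=0$ and $\kappa|_Z$ of one lower complexity, so $\kappa|_Z=0$ by the inner hypothesis; since $K$ is a cd~sheaf, \cref{f37c1d2136} turns the blowup square into a cartesian square, and the associated Mayer--Vietoris sequence writes $\kappa$ as the image of some $\delta\in\pi_{m+1}K(Z')$ under the blowup boundary. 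For $m\leq-2$ we have $m+1\leq-1$, so $\pi_{m+1}K(Z')=0$ by the outer hypothesis and $\kappa=0$.

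The genuine obstacle is the base degree $m=-1$, where the boundary feeds in from the uncontrolled group $\pi_0K(Z')$. Here I would exploit the $\pi_0$-isomorphism from both sides. Over $Z'$, injectivity of $\pi_0E(Z')\to\pi_0F(Z')$ forces $\pi_0K(Z')\to\pi_0E(Z')$ to vanish, so $\delta$ lifts to a class $\phi\in\pi_1F(Z')$ along the fiber-sequence connecting map. Naturality of the blowup boundary with respect to the fiber sequence $K\to E\to F$ then yields
\begin{equation*}
  \kappa=\partial^{\mathrm{bl}}_K(\delta)=\iota^X\bigl(\partial^{\mathrm{bl}}_F(\phi)\bigr),
\end{equation*}
where $\partial^{\mathrm{bl}}_F(\phi)\in\pi_0F(X)$ and $\iota^X\colon\pi_0F(X)\to\pi_{-1}K(X)$ is the connecting map over $X$; but $\iota^X=0$ because $\pi_0E(X)\to\pi_0F(X)$ is surjective, whence $\kappa=0$. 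Setting up this compatibility of the blowup connecting map with the fiber-sequence connecting map cleanly is the step I expect to require the most care; the remainder is bookkeeping of complexities along the lines already used in this subsection.
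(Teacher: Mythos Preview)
Your argument is correct, but it is organised differently from the paper's.  The paper does not form the fibre $K$; instead it proves a two-part lemma (\cref{891d38c320}) that, assuming $\pi_0$-bijectivity, first gives injectivity and then surjectivity of $\pi_{-1}E(X)\to\pi_{-1}F(X)$ by separate diagram chases in the Mayer--Vietoris sequences of $E$ and of $F$.  Once $\pi_{-1}$ is bijective, one applies the same lemma to $\Sigma E\to\Sigma F$ and iterates downward in degree.  Your route packages both statements into the single vanishing $\pi_mK=0$ and handles the complexity induction in one pass, at the cost of invoking the compatibility of the two boundary maps in the $3\times3$ diagram of fibre sequences (which you rightly flag as the delicate point; it holds up to a sign that is irrelevant here since $\iota^X=0$).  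The paper's approach trades that compatibility lemma for a longer but entirely elementary chase; your approach is tidier once one is willing to cite the standard commuting-boundaries fact for bicartesian $3\times3$ grids in a stable $\infty$-category.
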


\begin{proof}
  This follows from induction
  and \cref{891d38c320} below.
\end{proof}

\begin{lemma}\label{891d38c320}
  Let \(E\to F\)
  be a map of \(\Cat{Sp}\)-valued cd~sheaves on
  \(\Cat{Cpt}_{\aleph_{1}}\).
  Suppose that
  \(E(X)\to F(X)\) is surjective on~\(\pi_{0}\)
  for each~\(X\).
  \begin{enumerate}
    \item\label{i:tan}
      Suppose that
      any class in
      \(\pi_{-1}E(X)\)
      has finite complexity for any~\(X\)
      and \(E(X)\to F(X)\) is injective on~\(\pi_{0}\).
      Then
      the map
      \(E(X)\to F(X)\) is injective on~\(\pi_{-1}\)
      for each~\(X\).
    \item\label{i:zen}
      Suppose that
      any class in
      \(\pi_{-1}F(X)\)
      has finite complexity for any~\(X\)
      and \(E(X)\to F(X)\) is injective on~\(\pi_{-1}\).
      Then
      the map
      \(E(X)\to F(X)\) is surjective on~\(\pi_{-1}\)
      for each~\(X\).
  \end{enumerate}
\end{lemma}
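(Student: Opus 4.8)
The plan is to prove both parts simultaneously by induction on the complexity (\cref{1684f2d18a}) of the relevant class, using the blowup squares produced by \cref{xv5iy7} to generate Mayer--Vietoris sequences and then running a diagram chase. The structural input is that $E$ and $F$, being cd~sheaves, carry every blowup square~\cref{e:blowup} to a cartesian square by \cref{f37c1d2136}; since $\Cat{Sp}$ is stable, each such square is also cocartesian and yields a long exact sequence
\[
  \cdots\to\pi_n E(X)\to\pi_n E(X')\oplus\pi_n E(Z)\xrightarrow{r_E}\pi_n E(Z')\xrightarrow{\partial_E}\pi_{n-1}E(X)\to\cdots,
\]
and likewise for $F$, the two fitting into a map of long exact sequences. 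The base case of the induction is complexity~$\leq0$: by \cref{c0f0e840c9}, a cd~cover of complexity~$\leq0$ forces $X=\emptyset$, where both spectra are terminal, so there is nothing to check.

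For \cref{i:tan}, I would take $\alpha\in\pi_{-1}E(X)$ of complexity~$\leq n$ mapping to~$0$ in $\pi_{-1}F(X)$ and apply \cref{xv5iy7} to~$E$ to obtain a blowup square with $\alpha|_{X'}=0$ and $\alpha|_Z$ of complexity~$\leq n-1$. Since $\alpha|_Z$ still dies in $\pi_{-1}F(Z)$, the inductive hypothesis gives $\alpha|_Z=0$, so $\alpha$ maps to $(0,0)$ under $r_E$ and hence $\alpha=\partial_E\beta$ for some $\beta\in\pi_0 E(Z')$. The remaining task is to show $\partial_E\beta=0$: here I use that \cref{i:tan} assumes $\pi_0$-injectivity, which combined with the standing surjectivity hypothesis makes $E\to F$ a $\pi_0$-isomorphism. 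Letting $\bar\beta$ be the image of~$\beta$ in $\pi_0 F(Z')$, one has $\partial_F\bar\beta=0$ (it is the image of~$\alpha$), so $\bar\beta$ lies in the image of~$r_F$; lifting a preimage along the $\pi_0$-isomorphisms and using $\pi_0$-injectivity at~$Z'$ shows $\beta$ itself lies in the image of~$r_E$, whence $\alpha=\partial_E\beta=0$ by exactness.

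For \cref{i:zen} the chase is dual. Given $\phi\in\pi_{-1}F(X)$ of complexity~$\leq n$, apply \cref{xv5iy7} to~$F$ to get $\phi|_{X'}=0$ and $\phi|_Z$ of complexity~$\leq n-1$, and lift $\phi|_Z$ to $\psi_Z\in\pi_{-1}E(Z)$ by induction. Using $\pi_{-1}$-injectivity at~$Z'$ (the hypothesis of this part), $\psi_Z$ restricts to~$0$ in $\pi_{-1}E(Z')$, so $(0,\psi_Z)$ is killed by~$r_E$ and lifts to some $\psi\in\pi_{-1}E(X)$. The image of~$\psi$ differs from~$\phi$ by a class $\partial_F\beta$ with $\beta\in\pi_0 F(Z')$; lifting~$\beta$ through $\pi_0$-surjectivity to $\tilde\beta\in\pi_0 E(Z')$ and replacing $\psi$ by $\psi-\partial_E\tilde\beta$ yields an exact preimage of~$\phi$.

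The step I expect to be the main obstacle is the control of the connecting homomorphism $\partial\colon\pi_0\to\pi_{-1}$ in each chase: this is precisely where the hypothesis on~$\pi_0$ ($\pi_0$-injectivity for \cref{i:tan}, $\pi_0$-surjectivity for \cref{i:zen}) becomes indispensable, and it explains why the two formally similar statements require different hypotheses. Everything else reduces, once the paired Mayer--Vietoris sequences are in place, to a routine five-lemma-type argument organized by the complexity induction.
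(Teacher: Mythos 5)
Your proposal is correct and follows essentially the same route as the paper: induction on complexity via \cref{xv5iy7}, the Mayer--Vietoris long exact sequences from \cref{f37c1d2136}, and a diagram chase in which the \(\pi_{0}\)-bijectivity (resp.\ \(\pi_{0}\)-surjectivity together with \(\pi_{-1}\)-injectivity) controls the connecting map, exactly as in the paper's two chases. No gaps.
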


\begin{proof}
  In this proof,
  we write
  \(E_*(\X)\) and \(F_*(\X)\)
  for \(\pi_*E(\X)\) and \(\pi_*F(\X)\),
  respectively.
  Note that by \cref{f37c1d2136},
  we have an exact sequence
  \begin{equation*}
    \dotsb
    \to E_{*}(X')
    \to E_{*}(X)\oplus E_{*}(Z')
    \to E_{*}(Z)
    \to\dotsb
  \end{equation*}
  for any blowup square \cref{e:blowup}
  and similarly for~\(F\).

  We prove \cref{i:tan}.
  We take a class
  \(\alpha\in E_{-1}(X)\)
  that is killed in \(F_{-1}(X)\).
  We wish to show that \(\alpha\) is zero.
  We proceed by induction on the complexity of~\(\alpha\).
  If the complexity is \(\leq0\), it is zero by definition.
  Fix \(n\geq1\).
  Assume that
  the claim holds for the complexity \(\leq n-1\)
  and
  that \(\alpha\) has complexity \(\leq n\).
  By \cref{xv5iy7},
  we can take a blowup square \cref{e:blowup}
  such that
  the image of~\(\alpha\)
  is zero
  in \(E_{-1}(X')\)
  and
  has complexity \(\leq n-1\)
  in \(E_{-1}(Z)\).
  We consider
  the map of long exact sequences
  \begin{equation*}
    \begin{tikzcd}
      \dotsb\ar[r]&
      E_{0}(X')\oplus E_{0}(Z)\ar[r]\ar[d,tail,two heads]&
      E_{0}(Z')\ar[r]\ar[d,tail,two heads]&
      E_{-1}(X)\ar[r]\ar[d]&
      E_{-1}(X')\oplus E_{-1}(Z)\ar[r]\ar[d]&
      \dotsb\\
      \dotsb\ar[r]&
      F_{0}(X')\oplus F_{0}(Z)\ar[r]&
      F_{0}(Z')\ar[r]&
      F_{-1}(X)\ar[r]&
      F_{-1}(X')\oplus F_{-1}(Z)\ar[r]&
      \dotsb\rlap.
    \end{tikzcd}
  \end{equation*}
  By the inductive hypothesis, both images are zero.
  Hence \(\alpha\) is the image
  of a class \(\gamma'\in E_{0}(Z')\).
  As \(\alpha\) is mapped to zero in \(F_{-1}(X)\),
  the image of~\(\gamma'\) in \(F_{0}(Z')\)
  is in the image of the map
  \(F_{0}(X')\oplus F_{0}(Z)\to F_{0}(Z')\).
  However, by assumption, this means that
  \(\gamma'\) is in the image of
  \(E_{0}(X')\oplus E_{0}(Z)\to E_{0}(Z')\)
  and therefore \(\alpha\) is zero.

  We then prove \cref{i:zen}.
  We take \(\beta\in F_{-1}(X)\).
  We have to show that \(\beta\) is the image
  of some class in \(E_{-1}(X)\).
  We proceed by induction on the complexity of~\(\beta\).
  If the complexity is \(\leq0\), it is clear.
  Fix \(n\geq1\) and
  suppose that
  the claim is true for the complexity \(\leq n-1\)
  and
  \(\beta\) has complexity \(\leq n\).
  By \cref{xv5iy7},
  we can take a blowup square \cref{e:blowup}
  such that
  \(\beta\) is killed in \(F_{-1}(X')\)
  and its image \(\delta\in F_{-1}(Z)\)
  has complexity \(\leq n-1\).
  We consider
  the map of long exact sequences
  \begin{equation*}
    \begin{tikzcd}
      \dotsb\ar[r]&
      E_{0}(Z')\ar[r]\ar[d,two heads]&
      E_{-1}(X)\ar[r]\ar[d,tail]&
      E_{-1}(X')\oplus E_{-1}(Z)\ar[r]\ar[d,tail]&
      E_{-1}(Z')\ar[r]\ar[d,tail]&
      \dotsb\\
      \dotsb\ar[r]&
      F_{0}(Z')\ar[r]&
      F_{-1}(X)\ar[r]&
      F_{-1}(X')\oplus F_{-1}(Z)\ar[r]&
      F_{-1}(Z')\ar[r]&
      \dotsb\rlap.
    \end{tikzcd}
  \end{equation*}
  By the inductive hypothesis,
  we can take
  \(\gamma\in E_{-1}(Z)\)
  that is mapped to \(\delta\).
  Since \(E_{-1}(Z')\to F_{-1}(Z')\) is injective,
  \(\gamma\) is killed in \(E_{-1}(Z')\).
Therefore, we have a class
  \(\tilde\alpha\in E_{-1}(X)\)
  that is mapped to~\((0,\gamma)\).
  We write \(\tilde\beta\) for its image
  in \(F_{-1}(X)\).
  Then \(\beta-\tilde\beta\)
  is killed by
  \(F_{-1}(X)\oplus F_{-1}(X')\oplus F_{-1}(Z)\).
  Hence we can take \(\delta'\in F_{0}(Z')\)
  that is mapped to this class.
  By the surjectivity of \(E_{0}(Z')\to F_{0}(Z')\),
  we get \(\gamma'\in E_{0}(Z')\)
  that is mapped to~\(\delta'\).
  The sum of \(\tilde\alpha\)
  and the image of \(\gamma'\) in \(E_{-1}(X)\)
  is mapped to \(\beta\in F_{-1}(X)\).
\end{proof}

\subsection{\texorpdfstring{\(K\)}{K}-theory of rings of continuous functions}\label{ss:main}

In this section,
we prove \cref{main_a} and the following,
which generalizes \cref{main_na,main_hi}:

\begin{theorem}\label{x8biir}
  Let \(X\) be a compactum and
  \(F\) a local division ring with center~\(Z\).
  Suppose that \(A\) is a commutative ring
  smooth over \(\Cls{C}(X;Z)\).
  \begin{enumerate}
    \item\label{i:m_neg}
      When \(F\) is nonarchimedean,
      \(K(A\otimes_{Z}F)\)
      is connective.
    \item\label{i:m_reg}
      The ring
      \(A\otimes_{Z}F\)
      is \(K\)-regular.
  \end{enumerate}
\end{theorem}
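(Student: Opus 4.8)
The plan is to promote \(K\)-theory with \(F\)-coefficients to a cd~sheaf in the parameter \(X\) and to prove that the relevant homotopy classes are cd~locally trivial, using the \(S\)-coefficient Kerz--Strunk--Tamme vanishing of \cref{xpbwff}. Two reductions come first. Since \(F\) is finite-dimensional over its center~\(Z\), for any compactum~\(Y\) there is a ring identification \(\Cls{C}(Y;F)\simeq\Cls{C}(Y;Z)\otimes_{Z}F\), so writing \(A_{Y}:=A\otimes_{\Cls{C}(X;Z)}\Cls{C}(Y;Z)\) we get
\[
  A_{Y}\otimes_{Z}F\;\simeq\;A\otimes_{\Cls{C}(X;Z)}\Cls{C}(Y;F).
\]
For part~\cref{i:m_reg}, note that \(A[T_{1},\dotsc,T_{j}]\) is again smooth over \(\Cls{C}(X;Z)\); hence \(K\)-regularity of \(A\otimes_{Z}F\) is equivalent to the vanishing of \(\NK_{*}(A'\otimes_{Z}F)\) for every commutative \(A'\) smooth over \(\Cls{C}(X;Z)\). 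In the nonarchimedean case, continuous maps into the totally disconnected~\(F\) factor through \(\pi_{0}(X)\), so I may assume \(X\) is profinite. The functors \(E_{K}(\X)=K(A\otimes_{\Cls{C}(X;Z)}\X)\) and \(E_{N}(\X)=\NK(A\otimes_{\Cls{C}(X;Z)}\X)\) are \(\Cls{C}(X;Z)\)-linear localizing invariants, so applying \cref{exc_a} (archimedean, with normed ring~\(F\)) or \cref{exc_na} (nonarchimedean) to them shows that \(\mathcal{F}(Y)=K(A_{Y}\otimes_{Z}F)\) and \(\mathcal{G}(Y)=\NK(A_{Y}\otimes_{Z}F)\) are cd~sheaves on \((\Cat{Cpt}_{\aleph_{1}})_{/X}\) (resp.\ on \((\Cat{PFin}_{\aleph_{1}})_{/X}\)).

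The heart is to show that every negative-degree class of~\(\mathcal{F}\) (for part~\cref{i:m_neg}) and every class of~\(\mathcal{G}\) (for part~\cref{i:m_reg}) has finite complexity. Replacing \((X,A)\) by \((Y,A_{Y})\), I may assume the class lives over \(X\) itself. I would then use algebraic approximation: spreading out, \(A\) is the base change to \(\Cls{C}(X;Z)\) of a smooth morphism \(\Spec A_{0}\to\Spec B_{0}\) of finite-type \(Z\)-schemes, and writing \(\Cls{C}(X;Z)\) as a filtered colimit of finite-type \(Z\)-subalgebras \(R_{i}\supseteq B_{0}\), continuity of \(K\)- and \(\NK\)-theory descends the class to \(K_{*}((W_{i})_{S})\) (resp.\ \(\NK_{*}((W_{i})_{S})\)), where \(W_{i}=\Spec(A_{0}\otimes_{B_{0}}R_{i})\) is smooth over \(V_{i}=\Spec R_{i}\), and where \(S=F\) over \(R=Z\) satisfies \cref{finsep} by \cref{xpfq5y}. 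Now \cref{xpbwff} provides a birational projective morphism \(V_{i}'\to V_{i}\) that kills the class after base change. Its abstract blowup square is a cdh~cover, so by \cref{cd_rh} the induced square of \(Z\)-points is a cd~cover of \(V_{i}(Z)\); pulling back along the map \(X\to V_{i}(Z)\) coming from \(R_{i}\hookrightarrow\Cls{C}(X;Z)\) yields a blowup square over~\(X\) on whose open part the class dies, while the closed stratum maps into a lower-dimensional base, where the class has smaller complexity by induction on \(\dim V_{i}\). This exhibits the class as having finite complexity.

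With finite complexity in hand the conclusions follow formally. For part~\cref{i:m_neg}, finite complexity of the negative-degree classes of~\(\mathcal{F}\) together with \cref{f59fc7068a} shows that \(\mathcal{F}\) is connective, i.e.\ \(K(A\otimes_{Z}F)\) has vanishing negative homotopy. For part~\cref{i:m_reg}, finite complexity of \emph{all} classes of~\(\mathcal{G}\) means that every homotopy sheaf of the cd~sheaf~\(\mathcal{G}\) vanishes; this forces \(\mathcal{G}=0\) by the devices of \cref{f59fc7068a,8b58168a08}—in the profinite case the blowup long exact sequences split, so the complexity induction closes in every degree, and in the general compactum case one checks the vanishing on the points of the cd~topos as in \cref{8b58168a08}. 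Thus \(\NK_{*}(A\otimes_{Z}F)=0\), and by the reduction above \(A\otimes_{Z}F\) is \(K\)-regular.

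The main obstacle is the approximation step of the second paragraph: one must simultaneously spread out the smooth \(\Cls{C}(X;Z)\)-algebra~\(A\) and the chosen \(K\)- or \(\NK\)-class to a finite-type smooth morphism, invoke \cref{xpbwff}, and—most delicately—convert the purely algebraic birational modification back into a genuine cd~cover of the compact parameter through \cref{cd_rh}, organizing the strata into a terminating induction on dimension. A secondary subtlety, relevant only to part~\cref{i:m_reg} in the archimedean case, is passing from the vanishing of all homotopy sheaves of~\(\mathcal{G}\) to the vanishing of its global sections, which relies on the good behavior of the points of the cd~topos already used in \cref{8b58168a08}.
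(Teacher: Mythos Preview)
Your treatment of part~\cref{i:m_neg} and of the ``finite complexity'' step (spread out, apply \cref{xpbwff}, then \cref{cd_rh}, induct on dimension) is essentially the paper's argument; this is \cref{vain} followed by \cref{f59fc7068a}.

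The gap is in part~\cref{i:m_reg}, in the archimedean case. You conclude that the cd~sheaf \(\mathcal{G}=\NK(A_{\X}\otimes_{Z}F)\) vanishes because all of its homotopy classes have finite complexity, i.e.\ all of its homotopy sheaves (equivalently, all of its stalks) vanish, and you invoke \cref{8b58168a08} for this. But \cref{8b58168a08} does not say that a cd~sheaf with vanishing stalks is zero; it only compares a cd~sheaf with the sheafification of its connective cover, and its proof uses that \emph{both} objects are connective on points. The paper states explicitly, just before the proof of \cref{i:m_reg}, that the cd~site \(\Cat{Cpt}_{\aleph_{1}}\) is \emph{not} hypercomplete, so ``stalks vanish \(\Rightarrow\) sheaf vanishes'' fails in general there. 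Your ``secondary subtlety'' is therefore the actual obstruction, not a side issue.

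The paper's fix is to shift the hypercompleteness burden to the algebraic side. After spreading out \(A\) to \(A_{i}\) over a finite-type \(Z\)-algebra \(C_{i}\), one considers the presheaf \(G=\NK(A_{i}\otimes_{C_{i}}\X\otimes_{Z}F)\) on the cdh~site \(\Cat{Aff}_{C_{i}}\). By \cref{xpbwff}\,\cref{i:kst} its cdh~stalks vanish, and the cdh~topos of a noetherian base \emph{is} hypercomplete (Voevodsky; see also \cite{EHIK21C}), so the cdh~sheafification of \(G\) is zero. Now \cref{exc_a} says \(\NK(A\otimes_{\Cls{C}(X;Z)}\Cls{C}(\X;F))\) is a cd~sheaf on \((\Cat{Cpt}_{\aleph_{1}})_{/X}\), and \cref{xwb4zy} provides a geometric morphism from the topological cd~topos to the algebraic cdh~topos; pulling back the zero sheaf gives zero, which yields \(\NK(A\otimes_{Z}F)=0\). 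In the nonarchimedean case your splitting argument via \cref{f59fc7068a} does close in every degree on \(\Cat{PFin}_{\aleph_{1}}\), so there your route is fine; but in the archimedean case you need the detour through the cdh~site and \cref{xwb4zy}.
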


The main input of our arguments
is the following:

\begin{proposition}\label{vain}
  In the situation of \cref{x8biir},
  we have the following:
  \begin{enumerate}
    \item\label{i:ygi2s}
      For any class
      \(\alpha\in K_{*}(A\otimes_{Z}F)\)
      for \({*}<0\),
      there is a cd~cover \(Y\to X\)
      of compacta
      such that \(\alpha\)
      vanishes in
      \(K_{*}(A\otimes_{\Cls{C}(X;Z)}\Cls{C}(Y;Z))\).
    \item\label{i:4qz49}
      For any class
      \(\alpha\in\NK_{*}(A\otimes_{Z}F)\),
      there is a cd~cover \(Y\to X\)
      of compacta
      such that \(\alpha\)
      vanishes in
      \(\NK_{*}(A\otimes_{\Cls{C}(X;Z)}\Cls{C}(Y;F))\).
  \end{enumerate}
\end{proposition}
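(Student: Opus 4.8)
The plan is to transport the algebraic vanishing theorem \cref{xpbwff} to compacta by algebraic approximation, using \cref{cd_rh} to match the cdh~topology on varieties with the cd~topology on compacta. Throughout I treat \cref{i:ygi2s,i:4qz49} uniformly, writing $K$ for either $K$ (in negative degrees) or $\NK$; I also use the notation of \cref{a4bae89270} with $R=Z$ and $S=F$, legitimate by \cref{xpfq5y}, the identification $K((\Spec A_{0})_{F})\simeq K(A_{0}\otimes_{Z}F)$ for an affine $Z$-scheme $\Spec A_{0}$, and the identification $\Cls{C}(Y;Z)\otimes_{Z}F\simeq\Cls{C}(Y;F)$ coming from the finite-dimensionality of $F$ over $Z$, under which the targets in \cref{i:ygi2s,i:4qz49} become the $F$-linear $K$-theory of one and the same base change.

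First I would produce a finite-type model. Since $K$ and $\NK$ commute with filtered colimits and $\Cls{C}(X;Z)$ is a filtered colimit of reduced $Z$-algebras of finite type, the smooth algebra $A$ descends: there are a reduced $Z$-algebra $B$ of finite type, a smooth $B$-algebra $A_{0}$, and a continuous map $\phi\colon X\to(\Spec B)(Z)$ inducing $B\to\Cls{C}(X;Z)$ with $A\simeq A_{0}\otimes_{B}\Cls{C}(X;Z)$, such that $\alpha$ is the image of a class $\alpha_{0}\in K_{*}((\Spec A_{0})_{F})$. Writing $\mathcal{X}=\Spec B$ and $\mathcal{Y}=\Spec A_{0}$, the morphism $\mathcal{Y}\to\mathcal{X}$ is smooth and quasiprojective and $\mathcal{X}$ is quasiprojective over the field $Z$, so we are in the situation of \cref{ks,xpbwff}.

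Next I would run a Noetherian induction on $\dim\mathcal{X}$. \Cref{xpbwff} yields a projective birational morphism $p\colon\mathcal{X}'\to\mathcal{X}$ killing $\alpha_{0}$ in $K_{*}(\mathcal{Y}'_{F})$, where $\mathcal{Y}'=\mathcal{Y}\times_{\mathcal{X}}\mathcal{X}'$. Choosing a nowhere-dense closed $\mathcal{Z}\subsetneq\mathcal{X}$ over whose complement $p$ is an isomorphism, the pair $(\mathcal{X}',\mathcal{Z})$ is an abstract blowup square, hence an rh~cover; by the rh~variant of \cref{cd_rh} (see \cref{79952efe42}), applied with the local field $Z$, the family $\{\mathcal{X}'(Z)\to\mathcal{X}(Z),\mathcal{Z}(Z)\to\mathcal{X}(Z)\}$ is a cd~cover. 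Pulling it back along $\phi$—noting that $\mathcal{X}'(Z)\to\mathcal{X}(Z)$ is proper by \cref{i:pr} of \cref{xi1siy}, so that $X'=X\times_{\mathcal{X}(Z)}\mathcal{X}'(Z)$ is again a compactum—gives a blowup square
\begin{equation*}
  \begin{tikzcd}
    X'\times_{X}X_{\mathcal{Z}}\ar[r]\ar[d]&X'\ar[d]\\
    X_{\mathcal{Z}}\ar[r]&X
  \end{tikzcd}
\end{equation*}
with $X_{\mathcal{Z}}=\phi^{-1}(\mathcal{Z}(Z))$, so that $\{X'\to X,X_{\mathcal{Z}}\to X\}$ is a cd~cover by \cref{882d59cc9a}. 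Over $X'$ the class $\alpha$ dies, since the base change $A\otimes_{\Cls{C}(X;Z)}\Cls{C}(X';F)$ receives the vanishing class $\alpha_{0}$ along a realization functor out of $\mathcal{Y}'$ (see below). Over $X_{\mathcal{Z}}$, the restriction $\alpha|_{X_{\mathcal{Z}}}$ is the image of $\alpha_{0}|_{\mathcal{Y}\times_{\mathcal{X}}\mathcal{Z}_{\mathrm{red}}}$ for the smooth morphism $\mathcal{Y}\times_{\mathcal{X}}\mathcal{Z}_{\mathrm{red}}\to\mathcal{Z}_{\mathrm{red}}$, whose base has strictly smaller dimension, so the inductive hypothesis provides a further cd~cover killing it. Composing the two by transitivity (\cref{2924013c0a}) produces the required cd~cover of $X$; the base case $\dim\mathcal{X}=0$ is immediate, since then $\mathcal{Y}$ is regular and $\mathcal{Y}_{F}$ has vanishing $K_{<0}$ and $\NK$, forcing $\alpha_{0}=0$.

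The hard part will be the realization functor used over $X'$: for the possibly non-affine schemes $\mathcal{X}'$ and $\mathcal{Y}'$, one must construct a symmetric monoidal functor from $\Perf$ of $\mathcal{Y}'$ (with its $F$-linear structure) to an appropriate category of continuous modules over $A\otimes_{\Cls{C}(X;Z)}\Cls{C}(X';F)\simeq(A_{0}\otimes_{B}\Cls{C}(X';Z))\otimes_{Z}F$, natural enough that $\alpha_{0}\mapsto\alpha|_{X'}$; functoriality then sends the zero class $\alpha_{0}$ to $0$. This is precisely where the geometric morphism of \cref{xwb4zy}—extended from affines to $\mathcal{X}'$ by Zariski descent along an open cover whose image is an open cover of $\mathcal{X}'(Z)$—together with the properness in \cref{i:pr} of \cref{xi1siy} guarantees that pulling perfect complexes back along $X'\to\mathcal{X}'(Z)$ lands in continuous modules. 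Working with the center $Z$ rather than $F$ as the field of points is what lets the base modification $\mathcal{X}'\to\mathcal{X}$ be transported even when $F$ is noncommutative, while the $F$-linear structure is carried along formally through the $(\X)_{S}$-construction of \cref{a4bae89270}.
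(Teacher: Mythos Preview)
Your strategy---algebraic approximation to a finite-type model, the vanishing theorem \cref{xpbwff}, induction on dimension, and \cref{cd_rh} to transfer rh~covers to cd~covers---is exactly the paper's. The difference is in how you handle the non-affine blowup~$\mathcal{X}'$, and here you make your own life harder than necessary.

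You correctly flag the ``realization functor'' out of $\Perf(\mathcal{Y}')$ as the hard part, but your sketch via \cref{xwb4zy} and Zariski descent is not convincing as written: an affine Zariski cover of~$\mathcal{X}'$ gives an \emph{open} cover of~$\mathcal{X}'(Z)$ and hence of~$X'$, and $K$-theory of rings of continuous functions has no reason to satisfy descent for open covers. One can repair this (e.g.\ by invoking a Swan-type equivalence $\Perf(X',\Cls{C}_{X'})\simeq\Perf(\Cls{C}(X';Z))$ for the locally ringed space $(X',\Cls{C}_{X'})$, or by refining the open cover of~$X'$ to a finite closed cover using normality), but the paper sidesteps the issue entirely.

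The paper's simplification is to perform the affine refinement on the algebraic side \emph{before} passing to $Z$-points. Given the projective birational $\mathcal{X}'\to\mathcal{X}$ from \cref{xpbwff}, take a finite affine Zariski cover of~$\mathcal{X}'$; together with the center~$\mathcal{Z}$ this is already an rh~cover of~$\mathcal{X}$ by affines plus a lower-dimensional closed piece, and induction on $\dim\mathcal{Z}$ produces a single \emph{affine} rh~cover $\Spec D\to\Spec B$ on which $\alpha_{0}$ vanishes. Now apply \cref{cd_rh} once: the base change of $(\Spec D)(Z)\to(\Spec B)(Z)$ along~$\phi$ is the desired cd~cover $Y\to X$, and the ``realization'' is nothing more than the ring map $A_{0}\otimes_{B}D\to A_{0}\otimes_{B}\Cls{C}(Y;Z)$. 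Your two-stage induction and the non-affine functor both evaporate.
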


\begin{proof}
  We prove~\cref{i:ygi2s}.
  We write \(\Cls{C}(X;Z)\) as
  a filtered colimit of \(Z\)-algebras of finite type
  as \(\injlim_{i}C_{i}\).
  Then there is~\(i\) and a smooth algebra \(A_{i}\) over~\(C_{i}\)
  such that \(A\) is
  the base change of~\(A_{i}\) along \(C_{i}\to\Cls{C}(X;Z)\).
Since \(K\) commutes with filtered colimits,
  we can retake~\(i\) bigger if necessary so that 
  \(\alpha\) comes from
  a class in \(K_*(A_{i}\otimes_{Z}F)\).
  By using~\cref{i:ks} of \cref{xpbwff}
  and induction on the dimension of~\(\Spec C_{i}\),
  we get an affine rh~cover
  \(\Spec D\to\Spec C_{i}\)
  such that 
  the class vanishes in
  \(K_*(A\otimes_{C_{i}}D\otimes_{Z}F)\).
  Let \(Y\) be the base change
  of \((\Spec D)(F)\to(\Spec C_{i})(F)\)
  along \(X\to(\Spec C_{i})(F)\).
  Then by \cref{cd_rh},
  \(Y\to X\) is a cd~cover
  satisfying the desired condition.

  Similarly,
  we obtain \cref{i:4qz49}
  by using~\cref{i:kst},
  instead of~\cref{i:ks},
  of \cref{xpbwff}.
\end{proof}

\begin{proof}[Proof of \cref{main_a}]
  We consider the quaternionic case,
  as the other cases are similar.
  We have canonical maps of spectra
  \begin{equation*}
    \tau_{\geq0}K(\Cls{C}(X;\HH))
    \to
    \tau_{\geq0}\Gamma(X;\cst{\KSp})
    \gets
    \tau_{\geq0}\Gamma(X;\cst{\ksp})
  \end{equation*}
  for a compactum~\(X\),
  which induces an equivalence on \(\pi_{0}\).
  The first map is classical;
  e.g.,
  we will construct this in~\cite{k-ros-2}.
  The second map is an equivalence.
  Hence we have a map
  \(
  \tau_{\geq0}K(\Cls{C}(\X;\HH))
  \to
  \tau_{\geq0}\Gamma(\X;\cst{\ksp})
  \)
  of \(\Cat{Sp}\)-valued presheaves
  on \(\Cat{Cpt}_{\aleph_{1}}\).
  Since
  \(K(\Cls{C}(\X;\HH))\)
  and
  \(\Gamma(\X;\cst{\ksp})\)
  are sheaves
  by \cref{exc_a,f1a75190e6},
  respectively,
  its sheafification gives us a morphism
  \(K(\Cls{C}(\X;\HH))\to\Gamma(\X;\cst{\ksp})\)
  by \cref{8b58168a08}.
  Then we can apply \cref{a2ca91099a}
  to get the desired result,
  since
  the complexity conditions for
  them
  and
  \(\Gamma(\X;\cst{\ksp})\)
  follow from~\cref{i:ygi2s}
  of \cref{vain}
and \cref{zero_zero},
  respectively.
\end{proof}

\begin{proof}[Proof of \cref{i:m_neg} of \cref{x8biir}]
  By \cref{exc_na},
  the functor
  \(K(A\otimes_{\Cls{C}(X;Z)}\Cls{C}(\X;Z)\otimes_{Z}F)\colon
  (\Cat{PFin}_{\aleph_{1}})_{/X}^{\op}
  \to\Cat{Sp}\) is a cd~sheaf.
  Therefore,
  in light of \cref{f59fc7068a},
  the desired result follows from~\cref{i:ygi2s}
  of \cref{vain}.
\end{proof}

Now
only \cref{i:m_reg} of \cref{x8biir}
is left to be proven.
One might want to prove this
as a corollary of~\cref{i:4qz49} of \cref{vain}.
However, it requires some care:
We know that \(\KH(\Cls{C}(\X;F))\)
is a cd~sheaf whose
stalks vanish,
but as the cd~site \(\Cat{Cpt}_{\aleph_{1}}\)
(or its minimal variant in \cref{xah2cq}\footnote{Note that it is also nontrivial that
  it suffices to consider this case;
  nevertheless,
  it follows from
  Rosenberg's argument on~\cite[page~91]{Rosenberg97},
}) is not hypercomplete,
it does not imply
the vanishing of itself.
Nevertheless,
we can rely on another hypercompleteness statement:

\begin{proof}[Proof of \cref{i:m_reg} of \cref{x8biir}]
  We only treat the archimedean case as the nonarchimedean case is similar.
  We write \(\Cls{C}(X;Z)\) as
  a filtered colimit of \(Z\)-algebras of finite type
  as \(\injlim_{i}C_{i}\).
  Then there is~\(i\) and a smooth algebra \(A_{i}\) over~\(C_{i}\)
  such that \(A\) is
  the base change of~\(A_{i}\) along \(C_{i}\to\Cls{C}(X;Z)\).
  Then we consider the presheaf
  \(G=\NK(A_{i}\otimes_{C_{i}}\X\otimes_{Z}F)\)
  on the cdh~site \(\Cat{Aff}_{C_{i}}
  =(\Cat{Aff}_{Z})_{/\Spec C_{i}}\)
  of affine schemes of finite type over~\(C_{i}\).
  Since its stalks are zero by \cref{i:kst} of \cref{xpbwff}
  its sheafification~\(G'\) is zero;
  here we use the hypercompleteness of the site
  proven in~\cite[Proposition~2.11]{Voevodsky10M}
  (see~\cite{EHIK21C} for a more general result).
  Since the functor
  \(\NK(A\otimes_{\Cls{C}(X;Z)}\Cls{C}(\X;Z)\otimes_{Z}F)\colon
  (\Cat{Cpt}_{\aleph_{1}})_{/X}^{\op}
  \to\Cat{Sp}\) is a cd~sheaf by \cref{exc_a},
the desired result follows from \cref{xwb4zy}.
\end{proof}

\subsection{Variant: commutative real C*-algebras}\label{ss:kr}

In this section,
we compute the negative \(K\)-theory of commutative real C*-algebras.
We start by recalling a classical result,
which was essentially proven in~\cite{ArensKaplansky48}:

\begin{definition}\label{50b22568f4}
  A \emph{real\footnote{This usage of the word ``real'' is due to Atiyah~\cite{Atiyah66}.
  } compactum}~\((X,\tau)\)
  is a compactum with an involution.
  We write \(\Cat{RCpt}\)
  for the category of real compacta.
  For such,
  we write \(\Cls{C}(X,\tau)\) for
  the subring of \(\Cls{C}(X;\CC)\)
  spanned by functions~\(f\)
  satisfying \(f\circ\tau=\overline{(\X)}\circ f\).
\end{definition}

\begin{example}\label{8ea6072d0a}
  For a compactum~\(X\),
  the algebra
  \(\Cls{C}(X,{\id})\)
  is identified with
  \(\Cls{C}(X;\RR)\).
\end{example}

\begin{example}\label{98d62ca7a8}
  For a compactum~\(X\),
  consider its double \(X\amalg X\)
  and the involution~\(\sw\)
  given by swapping the two copies of~\(X\).
  Then the algebra
  \(\Cls{C}(X\amalg X,{\sw})\)
  is identified with
  \(\Cls{C}(X;\CC)\).
\end{example}

\begin{theorem}[Arens--Kaplansky]\label{57ed4323f9}
  The assignment \((X,\tau)\mapsto\Cls{C}(X,\tau)\)
  gives an equivalence
  from \(\Cat{RCpt}^{\op}\)
  to the category
  of commutative real C*-algebras.
\end{theorem}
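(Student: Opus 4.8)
The plan is to reduce \cref{57ed4323f9} to the classical commutative Gelfand--Naimark theorem over~\(\CC\) by means of complexification. First I would check that the functor is well defined: for a real compactum~\((X,\tau)\), the set \(\Cls{C}(X,\tau)\) is a norm-closed real \({*}\)-subalgebra of \(\Cls{C}(X;\CC)\) for the supremum norm and the involution \(f\mapsto\overline f\) (note \(\overline f\) again satisfies the defining condition), and it inherits the real C*-axioms \(\lVert a^{*}a\rVert=\lVert a\rVert^{2}\) and invertibility of \(1+a^{*}a\) from \(\Cls{C}(X;\CC)\). An equivariant continuous map \((X,\tau)\to(X',\tau')\) induces, by precomposition, a unital \({*}\)-homomorphism \(\Cls{C}(X',\tau')\to\Cls{C}(X,\tau)\), so we indeed obtain a functor \(\Cat{RCpt}^{\op}\to\{\text{commutative real C*-algebras}\}\).

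For a quasi-inverse I would pass through the complexification. Recall the standard dictionary for real C*-algebras: sending \(A\) to its complexification \(A_{\CC}=A\otimes_{\RR}\CC\), equipped with its (unique) C*-norm, together with the conjugate-linear isometric involutive \({*}\)-automorphism \(c\) fixing \(A\) and sending \(i\mapsto-i\), sets up an equivalence between real C*-algebras and pairs \((B,c)\) consisting of a complex C*-algebra~\(B\) and such a real structure~\(c\); the inverse sends \((B,c)\) to the fixed real subalgebra \(B^{c}\). This equivalence preserves commutativity. I expect this analytic input---that the complexification of a real C*-algebra is again a C*-algebra and that \(A\mapsto A_{\CC}\) and \(B\mapsto B^{c}\) are mutually inverse---to be the main obstacle; in the commutative case it can be verified directly, and it is precisely the content going back to Arens--Kaplansky.

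Granting this, I would apply the commutative complex Gelfand--Naimark theorem to identify \(\{\text{commutative complex C*-algebras}\}\) with \(\Cat{Cpt}^{\op}\), sending \(B\) to its character space \(Y\) with the weak-\({*}\) topology and inverting this via \(X\mapsto\Cls{C}(X;\CC)\). Under this equivalence a real structure~\(c\) on \(B=\Cls{C}(Y;\CC)\) transports to an involution of~\(Y\): dualizing, \(c\) acts on characters by \(\chi\mapsto\overline{\chi\circ c}\), which is weak-\({*}\) continuous and squares to the identity, hence is an involution~\(\tau\) of~\(Y\), and a short computation gives \(c(f)=\overline{f\circ\tau}\). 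Conversely every involution~\(\tau\) yields the real structure \(f\mapsto\overline{f\circ\tau}\), so involutions of~\(Y\) correspond bijectively (and functorially) to real structures on \(\Cls{C}(Y;\CC)\). Taking fixed points,
\[
  B^{c}=\{f\in\Cls{C}(Y;\CC):f\circ\tau=\overline f\}=\Cls{C}(Y,\tau),
\]
which is exactly the algebra of \cref{50b22568f4}.

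It remains to assemble these identifications into an equivalence. The first step enhances complex Gelfand--Naimark to an equivalence
\[
  \Cat{RCpt}^{\op}\xrightarrow{\ \sim\ }\{(B,c):B\text{ commutative complex C*-algebra},\ c\text{ a real structure}\},
\]
sending \((X,\tau)\) to \((\Cls{C}(X;\CC),\,f\mapsto\overline{f\circ\tau})\); here an equivariant continuous map of character spaces corresponds to a \({*}\)-homomorphism commuting with the conjugate-linear involutions, giving full faithfulness, and the bijection between involutions and real structures gives essential surjectivity. The second step is the complexification equivalence \((B,c)\mapsto B^{c}\) of the second paragraph. The composite sends \((X,\tau)\) to \(\Cls{C}(X;\CC)^{c}=\Cls{C}(X,\tau)\) and is compatible with morphisms, so it is the functor of the statement; being a composite of equivalences, it is itself an equivalence, which proves \cref{57ed4323f9}.
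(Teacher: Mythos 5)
Your proposal is correct. Note, however, that the paper offers no proof of \cref{57ed4323f9} at all: it is recorded as a classical result and attributed to Arens--Kaplansky, so there is no argument in the text to compare yours against. What you have written is the standard modern proof of that classical fact: factor the desired equivalence as (real compacta with involution) \(\simeq\) (commutative complex C*-algebras with a real structure) \(\simeq\) (commutative real C*-algebras), the first step being an equivariant enhancement of commutative Gelfand--Naimark and the second being the complexification/fixed-point dictionary. Your computations transporting a real structure \(c\) on \(\Cls{C}(Y;\CC)\) to the involution \(\chi\mapsto\overline{\chi\circ c}\) of the character space, and identifying \(B^{c}\) with \(\Cls{C}(Y,\tau)\) as in \cref{50b22568f4}, are all correct, and you rightly isolate the one genuinely analytic input---that the complexification of a real C*-algebra admits a C*-norm and that \(A\mapsto A_{\CC}\) and \(B\mapsto B^{c}\) are mutually inverse---which is exactly where the axiom that \(1+a^{*}a\) be invertible is needed and which is the content of the cited 1948 paper (whose original argument proceeds by a direct study of topological representations rather than through complexification). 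So your write-up supplies a complete and correctly structured proof of a statement the paper only cites.
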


We prove the following variant of \cref{main_a}:

\begin{theorem}\label{main_kr}
  For a real compactum~\((X,\tau)\),
  there is an equivalence
  \begin{equation*}
    \tau_{\leq0}
    K(\Cls{C}(X,\tau))
    \simeq
    \tau_{\leq0}
    (\Gamma(X;\cst{\ku})^{hC_{2}})
  \end{equation*}
  where the \(C_{2}\)-action
  comes from~\(\tau\) on~\(X\)
  and the standard \(C_{2}\)-action on~\(\ku\).
\end{theorem}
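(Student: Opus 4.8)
The plan is to carry the proof of \cref{main_a} into the $C_{2}$-equivariant setting, regarding a real compactum $(X,\tau)$ as a $C_{2}$-object and $\Cls{C}(X,\tau)$ as the fixed subring of the $C_{2}$-Galois extension $\Cls{C}(X,\tau)\to\Cls{C}(X;\CC)$, where the generator acts by $f\mapsto\overline{f\circ\tau}$ and $\Cls{C}(X;\CC)=\Cls{C}(X,\tau)\otimes_{\RR}\CC$ (compare \cref{8ea6072d0a,98d62ca7a8}). First I would equip the category of real compacta of countable weight with the cd~topology generated by $C_{2}$-equivariant blowup squares, exactly as in \cref{s:cd}, and record the evident equivariant analogues of \cref{f37c1d2136,a2ca91099a}; the homological algebra of \cref{ss:ha} transports verbatim once the equivariant \cref{f37c1d2136} is in place. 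The aim is then to compare the two $\Cat{Sp}$-valued presheaves $K(\Cls{C}(\X,\tau))$ and $\Gamma(\X;\cst{\ku})^{hC_{2}}$ on this site, where the $C_{2}$-action entangles $\tau$ on $X$ with complex conjugation on $\ku$.

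Both presheaves are cd~sheaves. For $\Gamma(\X;\cst{\ku})^{hC_{2}}$ this is formal: $\Gamma(\X;\cst{\ku})$ is a cd~sheaf by \cref{f1a75190e6}, and $(\X)^{hC_{2}}$, being a limit, preserves cd~sheaves. For $K(\Cls{C}(\X,\tau))$ I would prove equivariant excision: a $C_{2}$-equivariant closed inclusion $Z\hookrightarrow X$ induces a surjection $\Cls{C}(X,\tau)\to\Cls{C}(Z,\tau)$ (an equivariant Dugundji extension refining \cref{b485ea33ba}) whose kernel is a nonunital real C*-algebra and therefore satisfies excision by Suslin--Wodzicki \cref{994969f6af}. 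Hence $K(\Cls{C}(\X,\tau))$ carries equivariant blowup squares to cartesian squares, i.e.\ is a cd~sheaf. One could equally run the equivariant refinement of \cref{xxaaq2}, whose norm estimates are manifestly $C_{2}$-compatible, combined with \cref{26b5462836}.

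The essential and most delicate input is that every class of negative degree on either side has finite complexity in the sense of \cref{1684f2d18a}; this is the equivariant counterpart of \cref{vain}, it is where the argument genuinely departs from the complex case, and I expect it to be the main obstacle. My plan is a dévissage along the involution. The fixed locus $Z=X^{\tau}$ is a $C_{2}$-equivariant closed subset on which $\tau=\id$, so $\Cls{C}(Z,\tau)=\Cls{C}(Z;\RR)$ and, since $\ku^{hC_{2}}\simeq\ko$, there $\Gamma(\X;\cst{\ku})^{hC_{2}}$ restricts to $\Gamma(\X;\cst{\ko})$; both sides are then governed by the real instance of \cref{vain} and by \cref{zero_zero} with connective coefficients. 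On the open free part $U=X\setminus X^{\tau}$ the quotient double cover is cd-locally split, so over the quotient $\Cls{C}(U,\tau)$ becomes $\Cls{C}(\X;\CC)$ and $\Gamma(\X;\cst{\ku})^{hC_{2}}$ becomes $\Gamma(\X;\cst{\ku})$ with connective coefficients, reducing to the complex instance of \cref{vain,zero_zero}. The equivariant transitivity lemma (the $C_{2}$-refinement of \cref{two}) then assembles these local vanishings into finite complexity on all of~$X$. The subtlety is that $X^{\tau}$ is merely a closed subset and the free double cover may be globally nontrivial, so one must argue cd-locally and invoke \cref{cd_rh} equivariantly to produce the covers witnessing vanishing.

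With finite complexity established, I would build the comparison morphism as in \cref{main_a}: a map $\tau_{\geq0}K(\Cls{C}(\X,\tau))\to\tau_{\geq0}(\Gamma(\X;\cst{\ku})^{hC_{2}})$ of connective presheaves inducing the classical isomorphism on $\pi_{0}$ (the identification of $K_{0}$ of the real C*-algebra with Real $K$-theory, which on polyhedra reduces via \cref{approx} to the standard topological computation), and then pass to cd~sheaves using \cref{8b58168a08}, whose hypothesis is exactly the finite complexity just proven. Finally, the equivariant form of \cref{a2ca91099a} (through \cref{891d38c320}), applied to a map of cd~sheaves that is an isomorphism on $\pi_{0}$ and whose negative classes have finite complexity on both sides, forces an equivalence on $\tau_{\leq0}$, which is the assertion. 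As consistency checks, the trivial involution recovers $\Gamma(X;\cst{\ko})$ and hence the real case of \cref{main_a}, while the swap involution of \cref{98d62ca7a8} recovers the complex case, matching the two building blocks of the dévissage.
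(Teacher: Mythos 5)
Your core geometric idea---split \((X,\tau)\) into the fixed locus, where the statement is the real case of \cref{main_a}, and the free part, where it is the complex case, and glue using excision for the kernels of the restriction maps---is exactly the paper's. But the paper runs this reduction directly and never sets up an equivariant site: it upgrades the comparison map of the complex case to a map of \(\Fun(BC_{2},\Cat{Sp})\)-valued presheaves on the ordinary site \(\Cat{Cpt}_{\aleph_{1}}\), takes homotopy fixed points, and then proves the equivalence by exhibiting two explicit blowup squares of real compacta (first collapsing \(X^{C_{2}}\) to a point, then separating the two sheets over the free part). Both sides carry these squares to cartesian squares---the \(K\)-theory side by \cref{ed50fc2cda,3234ba150e,xxaaq2} (or \cref{994969f6af}), the other side because \(\Gamma(\X;\cst{\ku})\) is a cd~sheaf by \cref{f1a75190e6} and \((\X)^{hC_{2}}\) preserves limits---and three of the four corners of each square are instances of \cref{8ea6072d0a} or \cref{98d62ca7a8}, so the theorem follows from \cref{main_a} with no equivariant analogue of \cref{cd_rh}, no equivariant \cref{vain}, and no second pass through \cref{a2ca91099a}. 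Your d\'evissage already contains all of this; once it is in place, the surrounding finite-complexity induction is a detour.

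The detour is not merely inefficient: the step where you claim that negative-degree classes of \(\Gamma(\X;\cst{\ku})^{hC_{2}}\) have finite complexity does not follow from \cref{zero_zero}. Homotopy fixed points are an infinite limit (the totalization over \(BC_{2}\)), so killing the image of a class in \(\Gamma(Y;\cst{\ku})\) on a finite closed cover only raises its filtration in the homotopy fixed point spectral sequence, and since \(\Gamma(X;\X)\) does not commute with Postnikov truncation for general compacta (\cref{xqrboy}) there is no a priori bound that forces the class to die. Relatedly, \(\ku^{hC_{2}}\simeq\ko\) is false as stated (it holds for the periodic spectra, and for the connective covers only after applying \(\tau_{\leq0}\)), and even the truncated statement does not automatically pass through \(\Gamma(X;\X)\) for infinite-dimensional \(X\), for the same reason. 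The paper's route needs no complexity statement for the \(hC_{2}\) side at all, which is precisely how it sidesteps this issue. Finally, your worry that the free double cover may be globally nontrivial is well taken; note that the paper's proof handles the free part by choosing an open \(U\) with \(X\setminus X^{C_{2}}=U\amalg\tau(U)\), i.e., by splitting that cover outright, so if you follow the paper you should convince yourself that such a \(U\) exists (or reduce to that case by a further cd~cover, as you propose).
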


\begin{remark}\label{xqhxql}
  For a real compactum \((X,\tau)\),
  the homotopy group
  \(\pi_{*}(\Gamma((X,\tau);\cst{\ku})^{hC_{2}})\)
  is commonly written as \(\kr^{-{*}}(X,\tau)\).
\end{remark}

We prove \cref{main_kr}
by reducing to the real and complex cases:

\begin{proof}
By arguing as in \cref{main_a},
  we can obtain a map \(K(\Cls{C}(\X;\CC))\to\Gamma(\X;\cst{\ku})\)
  between \(\Fun(BC_{2},\Cat{Sp})\)-valued presheaves
  on \(\Cat{Cpt}_{\aleph_{1}}\).
  Hence we have a map
  \begin{equation*}
    \tau_{\leq0}
    K(\Cls{C}(X,\tau))
    \to
    \tau_{\leq0}
    (\Gamma(X;\cst{\ku})^{hC_{2}})
  \end{equation*}
  functorial in \((X,\tau)\).
  We prove that this is an equivalence.
  Note that
  we know the cases of \cref{8ea6072d0a,98d62ca7a8}
  by \cref{main_a}.

  For a general real compactum~\((X,\tau)\),
  we choose an open subset~\(U\) such that
  \(X\setminus X^{C_{2}}=U\amalg\tau(U)\) holds.
  Then
  the square
  \begin{equation*}
    \begin{tikzcd}
      X^{C_{2}}\ar[r]\ar[d]&
      X\ar[d]\\
      \{\infty\}\ar[r]&
      (U\amalg\tau(U))^+\rlap,
    \end{tikzcd}
  \end{equation*}
  of real compacta is carried to a cartesian square
  by \(K(\Cls{C}(\X))\):
  This follows from \cref{ed50fc2cda,3234ba150e,xxaaq2}
  and the observation that
  the kernels of
  \(\cat{C}((U\amalg\tau(U))^{+})\to\cat{C}(\{\infty\})\)
  and
  \(\cat{C}(U^{+};\CC)\to\cat{C}(\{\infty\};\CC)\)
  are isomorphic as nonunital rings;
  we can also use \cref{994969f6af} to deduce this.
  Therefore, we are reduced to the case of \((U\amalg\tau(U))^+\).
  By considering the square
  \begin{equation*}
    \begin{tikzcd}
      \{\infty\}\amalg\{\infty\}\ar[r]\ar[d]&
      U^+\amalg\tau(U)^+\ar[d]\\
      \{\infty\}\ar[r]&
      (U\amalg\tau(U))^+
    \end{tikzcd}
  \end{equation*}
  and the same excision result,
  we obtain the desired equivalence.
\end{proof}

The following analog of \cref{main_hi} is obtained similarly:

\begin{theorem}\label{xz0crg}
  Any commutative real C*-algebra is \(K\)-regular.
\end{theorem}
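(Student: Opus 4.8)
The plan is to mimic the proof of \cref{main_kr}, reducing to the real and complex cases of \cref{main_hi} by means of the same two excision squares. By \cref{57ed4323f9}, every commutative real C*-algebra is of the form $\Cls{C}(X,\tau)$ for a real compactum $(X,\tau)$, so it suffices to prove that $\Cls{C}(X,\tau)$ is $K$-regular. I would use the standard fact that $K$-regularity of a ring $B$ is equivalent to the vanishing of $\NK_{*}(B[T_{1},\dots,T_{m}])$ for every $m\geq0$; hence I would fix $m\geq0$ and aim to show that the functor
\begin{equation*}
  R_{m}\colon(X,\tau)\mapsto\NK(\Cls{C}(X,\tau)[T_{1},\dots,T_{m}])
\end{equation*}
takes the value $0$ on every real compactum.

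First I would record that $R_{m}$ carries the two squares of real compacta appearing in the proof of \cref{main_kr} to cartesian squares. For a closed invariant subset $Z\subseteq X$, the kernel $I$ of $\Cls{C}(X,\tau)\to\Cls{C}(Z,\tau)$ is a closed ideal of a real Banach algebra, hence has a bounded approximate unit, so by \cref{994969f6af,3234ba150e} the map $\Cls{C}(X,\tau)\to\Cls{C}(X,\tau)/I$ is $\Tor$-unital. As $\Tor$-unitality is preserved under the flat base change $\X\otimes_{\ZZ}\ZZ[T_{1},\dots,T_{m}]$, the ideals $I[T_{1},\dots,T_{m}]$ and $I[T_{1},\dots,T_{m},S]$ are again $\Tor$-unital. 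Hence \cref{26b5462836} shows that $K$ sends the corresponding polynomial excision squares to cartesian squares, and therefore so does their difference $\NK(\X[T_{1},\dots,T_{m}])$. This supplies for $R_{m}$ exactly the excision input used for $K$ in \cref{main_kr}.

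With this in hand, I would run the reduction verbatim. Writing $X\setminus X^{C_{2}}=U\amalg\tau(U)$, the first square reduces the vanishing of $R_{m}(X,\tau)$ to that of $R_{m}$ on $X^{C_{2}}$, on $\{\infty\}$, and on $(U\amalg\tau(U))^{+}$; the second square further reduces the free part to $U^{+}\amalg\tau(U)^{+}$. Now $X^{C_{2}}$ carries the trivial involution, so $\Cls{C}(X^{C_{2}},{\id})=\Cls{C}(X^{C_{2}};\RR)$ is $K$-regular by the real case of \cref{main_hi}, whence $R_{m}(X^{C_{2}})=0$; similarly $R_{m}(\{\infty\})=0$ since $\Cls{C}(\{\infty\})=\RR$ is regular. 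By \cref{98d62ca7a8}, the free compactum $U^{+}\amalg\tau(U)^{+}$ gives the complex C*-algebra $\Cls{C}(U^{+};\CC)$, which is $K$-regular by the complex case of \cref{main_hi}, so $R_{m}$ vanishes there too (as it does on $\{\infty\}\amalg\{\infty\}$, which gives $\CC$). Feeding these vanishings into the Mayer--Vietoris sequences of the two cartesian squares yields $R_{m}(X,\tau)=0$, as desired.

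The main obstacle is the excision bookkeeping of the second paragraph: one must ensure that the $\Tor$-unitality obtained from \cref{994969f6af,3234ba150e} for the closed ideal $I$ persists after adjoining the polynomial variables, uniformly in $m$, so that Land--Tamme excision applies to $\NK(\X[T_{1},\dots,T_{m}])$ rather than merely to $K$. Once this is granted, everything else is a direct transcription of \cref{main_kr} together with the already-established $K$-regularity of $\Cls{C}(\X;\RR)$ and $\Cls{C}(\X;\CC)$.
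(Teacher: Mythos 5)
Your proposal is correct and follows essentially the same route as the paper: the paper's proof of \cref{xz0crg} is precisely to rerun the reduction of \cref{main_kr} through the two excision squares, replacing Suslin's excision theorem \cref{3234ba150e} by the Land--Tamme theorem \cref{26b5462836} so that the argument applies to the polynomial extensions, and then to quote \cref{main_hi}. Your bookkeeping of the Tor-unitality after adjoining the variables $T_{1},\dotsc,T_{m}$ (and your use of \cref{994969f6af} to obtain it in the first place, which the paper also notes as an alternative in the proof of \cref{main_kr}) is exactly the intended way to make that substitution precise.
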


\begin{proof}
  By arguing as in the proof of \cref{main_kr},
  but applying \cref{26b5462836}
  instead of \cref{3234ba150e},
  the desired result follows from \cref{main_hi}.
\end{proof}

\subsection{Variant: rings of integers}\label{ss:order}

In this section, we prove the following:

\begin{theorem}\label{main_int}
  Let \(O\) be the ring of integers
  of a nonarchimedean local field~\(F\)
  and \(X\) a profinite set.
  Suppose that a commutative ring~\(A\)
  is smooth over \(\Cls{C}(X;O)\).
  Then the negative \(K\)-theory of \(A\) vanishes
  and \(A\) is \(K\)-regular.
\end{theorem}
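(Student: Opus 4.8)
The plan is to run the argument of \cref{x8biir} in the profinite, nonarchimedean setting, now with coefficients in~$O$ rather than in~$F$. The crucial point is that $O$, being the unit disk of the discretely normed field~$F$ (which satisfies~\cref{i:n_na} via a uniformizer), satisfies~\cref{i:n_nab} of \cref{exc_na}. Hence, writing $B$ for the given smooth $\Cls{C}(X;O)$-algebra, \cref{exc_na} applies to the $\Cls{C}(X;O)$-linear localizing invariant $K(B\otimes_{\Cls{C}(X;O)}\X)$ and shows that both $K(B\otimes_{\Cls{C}(X;O)}\Cls{C}(\X;O))$ and $\NK(B\otimes_{\Cls{C}(X;O)}\Cls{C}(\X;O))$ are $\Cat{Sp}$-valued cd~sheaves on $(\Cat{PFin}_{\aleph_{1}})_{/X}$. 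Since $B$ and~$O$ are commutative, the noncommutative input of \cref{xpbwff} is unnecessary: writing $\Cls{C}(X;O)=\injlim_i C_i$ as a filtered colimit of finite-type $O$-algebras with $B$ descending to $B_i$ smooth over~$C_i$, one feeds the (reduced) $\Spec C_i$ and $\Spec B_i$ directly into \cref{ks,kst}, whose hypotheses only ask for a \emph{noetherian} base, and $O$ is noetherian.

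The one genuinely new ingredient is the $O$-points analogue of \cref{cd_rh}. I would first observe that for a separated finite-type $O$-scheme~$Y$ the set $Y(O)$ is naturally a profinite set of countable weight: covering $Y$ by finitely many affines~$V_j$, each $V_j(O)$ is closed in some $O^{n}$, hence compact, and $Y(O)$ carries a continuous reduction map to the \emph{finite} discrete set $Y(\kappa)$ ($\kappa$ the residue field) under which $V_j(O)$ is the clopen preimage of $V_j(\kappa)$; a finite union of compact clopen sets is compact. This gives a functor $\X(O)\colon\Cat{Var}_O\to\Cat{PFin}_{\aleph_{1}}$, which, being corepresented by $\Spec O$, preserves fiber products. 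I would then show it carries rh~covers to cd~covers by treating the two generating families (recall from \cref{79952efe42} that only the rh-version is needed): a Zariski cover pulls back, via the reduction map, to a finite clopen cover of $Y(O)$, which refines to a cd~cover; an abstract blow-up square with closed centre~$Z$ and proper $Y'\to Y$ pulls back to the length-$\leq2$ cd~cover with filtration $\emptyset\subset Z(O)\subset Y(O)$. Here the valuative criterion of properness is what makes the nonfield base work, playing the role of \cref{xi1siy} in \cref{nis,cdp}: every $O$-point of~$Y$ whose generic point avoids~$Z$ lifts, uniquely, to~$Y'$, so the proper map $Y'(O)\to Y(O)$ of compacta restricts to a homeomorphism over the complement of the closed subset $Z(O)$. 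Exactly as \cref{xwb4zy} was deduced from \cref{cd_rh}, this yields a geometric morphism $\Shv(\Cat{PFin}_{\aleph_{1}})\to\Shv(\Cat{Aff}_{O})$ attached to the pairing $(\Spec A,X)\mapsto\Map_{\Cat{CAlg}_{O}}(A,\Cls{C}(X;O))$.

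With these in hand, the vanishing of negative $K$-theory follows the blueprint of \cref{vain}: a class $\alpha\in K_{*}(B)$ with ${*}<0$ comes from $K_{*}(B_i)$, is killed after an affine rh~cover $\Spec D\to\Spec C_i$ produced by \cref{ks} and induction on dimension, and then the base change along $X\to(\Spec C_i)(O)$ of the cd~cover $(\Spec D)(O)\to(\Spec C_i)(O)$ kills $\alpha$ in $K_{*}(B\otimes_{\Cls{C}(X;O)}\Cls{C}(Y;O))$. Thus every negative class of the cd~sheaf $K(B\otimes_{\Cls{C}(X;O)}\Cls{C}(\X;O))$ has finite complexity, and \cref{f59fc7068a} forces connectivity, as in \cref{i:m_neg} of \cref{x8biir}. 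For $K$-regularity I would imitate the proof of \cref{i:m_reg} of \cref{x8biir}: the presheaf $\NK(B_i\otimes_{C_i}\X)$ on the cdh~site of $\Cat{Aff}_{C_i}$ has vanishing stalks by \cref{kst}, so by hypercompleteness of that site over the noetherian base~$O$ (Voevodsky) its sheafification is~$0$; pulling this back through the geometric morphism above collapses the cd~sheaf $\NK(B\otimes_{\Cls{C}(X;O)}\Cls{C}(\X;O))$ to zero, whence $\NK(B)=0$.

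I expect the cdp step of the cover translation to be the main obstacle: because $O$ is not a field the naive complement of $Z(O)$ inside $Y(O)$ is \emph{not} $(Y\setminus Z)(O)$, and one must argue through generic points and the valuative criterion to identify the locus over which $Y'(O)\to Y(O)$ is a homeomorphism. Once the compactness of $O$-points and this cdp computation are in place, everything else is a faithful transcription of the profinite case of \cref{s:main}.
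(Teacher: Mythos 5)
Your proposal is correct in outline, but it takes a genuinely different route from the paper. The paper does not redo the site-theoretic argument over~\(O\) at all: it proves \cref{main_int} by d\'evissage along the arithmetic fibration, establishing a fiber sequence \(K(A\otimes_{O}\FF_{q})\to K(A)\to K(A\otimes_{O}F)\) (\cref{fibers}) via the Burklund--Levy d\'evissage theorem applied to \(A/\pi\to\REnd_{A}(A/\pi)\) --- whose hypothesis, regular coherence of \(A/\pi\), is supplied by the trivial observation that \(\Cls{C}(X;\FF_{q})\) is a filtered colimit of finite products of copies of~\(\FF_{q}\) (\cref{x89,dis}) --- and then quotes the already-proved field cases, namely \cref{x8biir} for the generic fiber and \cref{k_dis} for the special fiber. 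Your route instead re-runs the machinery of \cref{s:cd_rh,s:main} with \(O\)-coefficients; its one genuinely new ingredient is the integral analogue of \cref{cd_rh}, and your sketch of it is sound: since \(O\) is a valuation ring and \(Z\) is closed, an \(O\)-point of~\(Y\) lies in \(Z(O)\) exactly when its generic point lands in~\(Z\), so \(Y(O)\setminus Z(O)\) is precisely the locus where the valuative criterion produces a unique lift to~\(Y'\), and a proper continuous bijection of locally compact Hausdorff spaces is a homeomorphism; the Zariski part works because the reduction map to the finite set \(Y(\kappa)\) makes each \(U_{j}(O)\) clopen. (It is perhaps telling that case~\cref{i:n_nab} of \cref{exc_na} --- the unit-disk case, which is exactly what your argument needs --- is never invoked in the paper's own proof of \cref{main_int}.) As for what each approach buys: the paper's is much shorter given what is already in place and avoids developing a theory of \(O\)-points of rh~covers, at the cost of importing Burklund--Levy; yours is uniform with the field case, avoids Burklund--Levy, and would yield the finer local (``finite complexity'') statements over~\(O\) directly, at the cost of the new comparison theorem, which you correctly identify as the main obstacle and for which your valuative-criterion argument does close the gap.
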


Our strategy is simple:
We deduce this from
the cases of 
the fraction and residue fields of~\(O\).

\begin{proposition}\label{x89}
  Let \(k\) be a discrete field
  and \(X\) a profinite set.
  Then \(\Cls{C}(X;k)\) is \(\Ind\)-smooth (see \cref{ind_sm}) over~\(k\).
\end{proposition}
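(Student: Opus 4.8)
The plan is to exhibit \(\Cls{C}(X;k)\) explicitly as a filtered colimit of finite products of copies of~\(k\), each of which is smooth over~\(k\), and then invoke \cref{ind_sm} directly. First I would use that a profinite set is a cofiltered limit of finite discrete sets, writing \(X\simeq\projlim_{i}X_{i}\) with each \(X_{i}\) finite. Because \(k\) carries the discrete topology, any continuous map \(X\to k\) is locally constant with finite image, and by compactness its (clopen) level sets form a finite partition of~\(X\). Since every clopen subset of \(\projlim_{i}X_{i}\) is pulled back from some~\(X_{i}\), passing to a common refinement shows that any such function factors through a single~\(X_{i}\). This gives a canonical isomorphism
\begin{equation*}
  \Cls{C}(X;k)\simeq\injlim_{i}\Cls{C}(X_{i};k)=\injlim_{i}k^{X_{i}},
\end{equation*}
where the colimit is filtered and the transition maps are the evident pullback inclusions.

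It then remains to observe that for a finite set~\(S\), the \(k\)-algebra \(k^{S}\) is smooth over~\(k\): indeed \(\Spec(k^{S})\simeq\coprod_{s\in S}\Spec k\) is a finite disjoint union of copies of \(\Spec k\), hence smooth (in fact étale) over \(\Spec k\). Since \cref{ind_sm} only requires a filtered colimit of smooth \(k\)-algebras, with \emph{no} constraint on the transition maps, the display above already concludes that \(\Cls{C}(X;k)\) is \(\Ind\)-smooth over~\(k\).

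There is essentially no obstacle here; this is the degenerate analog of the much subtler real case treated in \cref{dig,linf}. The point is that over a \emph{discrete} field every continuous function is automatically locally constant, so the approximation of \(\Cls{C}(X;k)\) by algebras of finite type is completely explicit and one never needs the resolution of singularities or the lifting property of totally disconnected \(\F\)-spaces that the real statement relies on. The only step warranting a line of justification is the factorization through a finite quotient, which is the standard fact that continuous maps out of a profinite set into a discrete set factor through a finite stage.
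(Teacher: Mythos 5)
Your proposal is correct and is essentially the paper's own proof: both write \(X=\projlim_i X_i\) as a cofiltered limit of finite sets and identify \(\Cls{C}(X;k)\) with \(\injlim_i\Cls{C}(X_i;k)=\injlim_i k^{X_i}\), a filtered colimit of smooth (indeed étale) \(k\)-algebras. You simply spell out the factorization of locally constant functions through a finite stage, which the paper leaves implicit.
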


\begin{proof}
  We write~\(X\)
  as a cofiltered limit of
  finite sets \(\projlim_{i}X_{i}\).
  Then \(\Cls{C}(X;k)\)
  is isomorphic to \(\injlim_{i}\Cls{C}(X_{i};k)\).
\end{proof}

From this, we see the following:

\begin{corollary}\label{k_dis}
  Every smooth commutative algebra over \(\Cls{C}(X;k)\)
  is \(K\)-regular
  and its negative \(K\)-theory vanishes.
\end{corollary}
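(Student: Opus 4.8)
The plan is to combine \cref{x89} with a standard finite-presentation limit argument to exhibit~\(A\) as a filtered colimit of regular Noetherian rings, and then transport the relevant \(K\)-theoretic properties across the colimit. Write \(R=\Cls{C}(X;k)\). By \cref{x89}, we have \(R\simeq\injlim_{i}R_{i}\) as a filtered colimit of smooth \(k\)-algebras; each \(R_{i}\) is smooth of finite type over the field~\(k\), hence is a regular Noetherian ring.

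First I would descend~\(A\) to a finite stage. Since~\(A\) is smooth—in particular finitely presented—over~\(R\), the usual limit arguments for finitely presented algebras and smooth morphisms over a filtered colimit of base rings provide an index~\(i\) and a smooth \(R_{i}\)-algebra~\(A_{i}\) of finite presentation with \(A\simeq A_{i}\otimes_{R_{i}}R\). Setting \(A_{j}=A_{i}\otimes_{R_{i}}R_{j}\) for \(j\geq i\), we obtain \(A\simeq\injlim_{j}A_{j}\), where each \(A_{j}\) is smooth over~\(R_{j}\). As a composition of smooth morphisms is smooth, each \(A_{j}\) is smooth over~\(k\); being of finite type over a field, \(A_{j}\) is therefore a regular Noetherian ring.

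It then remains to pass the two desired properties through the colimit. Regular Noetherian rings are \(K\)-regular and have vanishing negative \(K\)-theory by the fundamental theorem of algebraic \(K\)-theory. Since \(K\) commutes with filtered colimits of rings, we get \(K_{n}(A)\simeq\injlim_{j}K_{n}(A_{j})=0\) for \(n<0\). For \(K\)-regularity, each tautological map \(K(A_{j})\to K(A_{j}[T_{1},\dotsc,T_{n}])\) is an equivalence; using \(\injlim_{j}A_{j}[T_{1},\dotsc,T_{n}]\simeq A[T_{1},\dotsc,T_{n}]\) together with the compatibility of \(K\) with filtered colimits, the map \(K(A)\to K(A[T_{1},\dotsc,T_{n}])\) is an equivalence as well. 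The only nonformal ingredient is the finite-presentation descent of~\(A\) to a finite stage, which is entirely standard; I expect no serious obstacle, and the statement follows formally once \cref{x89} is in hand.
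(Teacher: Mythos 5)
Your argument is correct and is essentially the paper's intended proof: the paper derives \cref{k_dis} directly from \cref{x89} by exactly this finite-presentation descent of a smooth algebra along the filtered colimit \(\Cls{C}(X;k)\simeq\injlim_{i}\Cls{C}(X_{i};k)\), followed by the vanishing of negative \(K\)-theory and \(K\)-regularity for regular Noetherian rings and the commutation of \(K\)-theory with filtered colimits. (The paper also records an alternative deduction via regular coherence in \cref{xwquao}, but that is not the primary route.)
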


We need a bit stronger statement to prove \cref{main_int}.
First, we recall the following:

\begin{definition}\label{x5l9jg}
  We call a coherent ring \(R\) \emph{regular}
  if every finitely presented module
  has finite projective dimension.
\end{definition}

\begin{example}\label{xrd00s}
  For a field~\(k\),
  the polynomial ring \(k[T_{i}\mid i\in I]\)
  is regular coherent for any set~\(I\).
\end{example}

\begin{proposition}\label{dis}
  In the situation of \cref{k_dis},
  suppose that \(A\) is a commutative ring
  smooth over \(\Cls{C}(X;k)\).
  Then \(A\) is regular coherent.
\end{proposition}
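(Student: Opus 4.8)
The plan is to exploit the special structure of \(R:=\Cls{C}(X;k)\) provided by the proof of \cref{x89}: writing \(X=\projlim_{i}X_{i}\) as a cofiltered limit of finite sets, we have \(R=\injlim_{i}R_{i}\) with \(R_{i}=\Cls{C}(X_{i};k)=k^{X_{i}}\). The crucial point — exactly what fails for \(\Cls{C}(X;\RR)\) in \cref{ind_sm_var} — is that each transition map \(R_{i}\to R_{j}\) is \emph{flat}: assuming (as we may) that the maps \(X_{j}\to X_{i}\) are surjective, \(R_{j}\) is obtained from \(R_{i}\) by replacing each factor \(k\) by a finite product of copies of \(k\) indexed by the fiber, so \(R_{j}\) is free as an \(R_{i}\)-module. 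Since a filtered colimit of flat modules is flat, \(R\) is flat over every \(R_{i}\).

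First I would descend \(A\) to a finite stage. As \(A\) is smooth over \(R\), hence of finite presentation, standard limit arguments for finitely presented morphisms yield an index \(i\) and a smooth \(R_{i}\)-algebra \(A_{i}\) with \(A\simeq A_{i}\otimes_{R_{i}}R\). Setting \(B_{j}:=A_{i}\otimes_{R_{i}}R_{j}\) for \(j\geq i\), we get \(A=\injlim_{j}B_{j}\), where each transition map \(B_{j}\to B_{j'}\) is the flat base change of \(R_{j}\to R_{j'}\), and \(A\) is flat over every \(B_{j}\). Because \(R_{j}=k^{X_{j}}\) is a finite product of copies of \(k\), the morphism \(\Spec B_{j}\to\Spec R_{j}\) is smooth over a finite disjoint union of points, so \(B_{j}\) is a finite product of smooth \(k\)-algebras; in particular each \(B_{j}\) is regular noetherian (of Krull dimension bounded by the relative dimension of \(A_{i}/R_{i}\), though this bound will not be needed).

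The heart of the argument is the following general fact, which I would prove inline: if \(A=\injlim_{j}B_{j}\) is a filtered colimit of regular noetherian rings along flat transition maps, with \(A\) flat over each \(B_{j}\), then \(A\) is regular coherent in the sense of \cref{x5l9jg}. For coherence, a finitely generated ideal \(I\subseteq A\) is generated by finitely many elements coming from some \(B_{j}\), so \(I=I_{j}\cdot A\) for a finitely generated ideal \(I_{j}\subseteq B_{j}\); since \(B_{j}\) is noetherian, \(I_{j}\) is finitely presented, and flatness of \(A\) over \(B_{j}\) gives \(I\simeq I_{j}\otimes_{B_{j}}A\), so \(I\) is finitely presented. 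For regularity, a finitely presented \(A\)-module \(M\) descends to a finitely presented \(B_{j}\)-module \(M_{j}\) with \(M\simeq M_{j}\otimes_{B_{j}}A\); as \(B_{j}\) is regular noetherian, \(M_{j}\) admits a finite resolution \(0\to P_{d}\to\dots\to P_{0}\to M_{j}\to 0\) by finitely generated projective \(B_{j}\)-modules, and applying the exact functor \(\X\otimes_{B_{j}}A\) yields a finite resolution of \(M\) by finitely generated projective \(A\)-modules. Hence \(M\) has finite projective dimension, as required. This is the same mechanism that makes \(k[T_{i}\mid i\in I]\) regular coherent in \cref{xrd00s}.

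I expect the only genuine subtlety to be the \emph{flatness} of the transition maps, which is precisely what separates this totally disconnected, nonarchimedean situation from the real one: without flatness the colimit need not even be coherent, as \cref{ind_sm_var} shows for \(\Cls{C}(X;\RR)\). Once flatness is in hand, the descent of smoothness to a finite stage and the preservation of finite projective resolutions under flat base change are routine.
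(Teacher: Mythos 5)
Your proposal is correct and follows essentially the same route as the paper: the paper's proof likewise observes that \(A\) is a filtered colimit of smooth (hence regular noetherian) \(k\)-algebras along smooth, hence flat, transition maps, and then concludes regular coherence. The only difference is that the paper cites~\cite[Theorem~6.2.2]{Glaz89} for the final step, whereas you prove that general fact inline (correctly).
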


\begin{proof}
  The proof of \cref{x89}
  shows that \(A\) is not just
  \(\Ind\)-smooth,
  but also it is a filtered colimit of
  smooth algebras with smooth transition maps,
  (cf.~\cref{ind_sm_var}).
  Therefore,
  as a filtered colimit
  of regular noetherian rings with flat transition maps,
  \(A\) is regular coherent;
  see, e.g.,~\cite[Theorem~6.2.2]{Glaz89}.
\end{proof}

\begin{remark}\label{xwquao}
We can deduce \cref{k_dis}
  from \cref{dis}
  since it implies that every smooth algebra
  over \(\Cls{C}(X;k)\) is regular stably coherent.
  Note that when \(F\) is a local field,
  \(\Cls{C}(X;F)\) is rarely coherent:
  Consider \(X=\NN\cup\{\infty\}\)
  and let \(I\) be
  the kernel of the map
  \(\Cls{C}(X;F)\to\Cls{C}(X;F)\)
  determined by the constant endomorphism of~\(X\) at~\(\infty\).
  If \(\Cls{C}(X;F)\) is coherent,
  this is finitely generated.
  Since we know \(I^{2}=I\) from \cref{xxaaq2},
  we see that \(I\) is generated by an idempotent,
  which is not the case.
\end{remark}

\begin{proposition}\label{fibers}
  In the situation of \cref{main_int},
  let \(\FF_q\) denote the residue field of~\(O\).
  Then
  \begin{equation*}
    \Perf(A\otimes_{O}\FF_{q})
    \to
    \Perf(A)
    \to
    \Perf(A\otimes_{O}F),
  \end{equation*}
  where the first map is forgetful,
  induces a fiber sequence on \(K\)-theory.
\end{proposition}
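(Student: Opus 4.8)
The plan is to realize the asserted sequence as the localization sequence attached to inverting the uniformizer $\pi$ of $O$, and then to identify its fiber by dévissage. First I would record that $\pi$ is a non-zero-divisor in $A$: the ring $\Cls{C}(X;O)$ is $\pi$-torsion-free (being a subring of $\Cls{C}(X;F)$), and $A$ is flat over it, so multiplication by $\pi$ is injective on $A$. Since $F=O[1/\pi]$ and $\FF_q=O/\pi$, this identifies $A\otimes_O F\simeq A[1/\pi]$ and $A\otimes_O\FF_q\simeq A/\pi$, with the forgetful functor $\Perf(A/\pi)\to\Perf(A)$ well defined because $A/\pi$, resolved by $[A\xrightarrow{\pi}A]$, is a perfect $A$-module, so any perfect $A/\pi$-complex lands in $\Perf(A)$. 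Thus the claim becomes: the forgetful functor and the base-change (localization) functor assemble into a fiber sequence $K(A/\pi)\to K(A)\to K(A[1/\pi])$.

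Next I would invoke the Thomason--Trobaugh localization theorem for the quasi-compact open immersion $\Spec A[1/\pi]\hookrightarrow\Spec A$ with closed complement $V(\pi)$. This produces, with no regularity hypothesis, a fiber sequence of nonconnective $K$-theory spectra $K(\Perf_{V(\pi)}(A))\to K(A)\to K(A[1/\pi])$, where $\Perf_{V(\pi)}(A)\subset\Perf(A)$ is the full subcategory of perfect complexes that become acyclic after inverting $\pi$. The forgetful functor lands in $\Perf_{V(\pi)}(A)$, so it remains to show that it induces an equivalence $K(A/\pi)\xrightarrow{\sim}K(\Perf_{V(\pi)}(A))$.

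This identification is where the residue field enters, and it is the main obstacle. The point I would establish is that $A$ is regular along $V(\pi)$: the ring $\Cls{C}(X;\FF_q)$ is von Neumann regular (as $X$ is profinite and $\FF_q$ a field), so its localizations are fields; hence for a prime $\idl p\supseteq(\pi)$ the local ring $A_{\idl p}$ has $A_{\idl p}/\pi$ regular, and---its maximal ideal being generated by the non-zero-divisor $\pi$ over a field---$A_{\idl p}$ is smooth over a valuation ring, in particular regular. Consequently every coherent $A$-module supported on $V(\pi)$ has finite projective dimension, so $\Perf_{V(\pi)}(A)$ coincides with the bounded derived category of coherent $A$-modules supported on $V(\pi)$, and its $K$-theory agrees with the corresponding $G$-theory. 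Quillen's dévissage theorem then identifies the latter with $G(A/\pi)$ via the forgetful functor---each such coherent module is $\pi^N$-torsion and carries a finite $\pi$-adic filtration with $A/\pi$-module subquotients---while regular coherence of $A/\pi$, which holds by \cref{dis} since $A/\pi$ is smooth over $\Cls{C}(X;\FF_q)$, gives $G(A/\pi)\simeq K(A/\pi)$. Combining these yields $K(A/\pi)\simeq K(\Perf_{V(\pi)}(A))$ and hence the desired fiber sequence.

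The hard part is precisely the regularity-along-$V(\pi)$ step: checking that coherent modules supported on $V(\pi)$ are perfect over the (non-noetherian, generally non-regular) ring $A$, so that the passage from perfect to coherent complexes, and thus the application of Quillen dévissage, is legitimate. Once this local-regularity input is in place---deduced, as the strategy suggests, from the residue field $\FF_q$---the remaining ingredients are formal.
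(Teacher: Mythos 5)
Your reduction to a localization sequence is a reasonable first move, and the preliminary steps are correct: $\pi$ is a non-zero-divisor on $A$ because $A$ is flat over the $\pi$-torsion-free ring $\Cls{C}(X;O)$, so $A\otimes_{O}F\simeq A[1/\pi]$ and $A\otimes_{O}\FF_{q}\simeq A/\pi$, and the Thomason--Trobaugh fiber sequence $K(\Perf_{V(\pi)}(A))\to K(A)\to K(A[1/\pi])$ needs no regularity hypothesis. The gap is in the identification of the fiber term, i.e., exactly the step you flag as the hard part. Your proposed argument runs Quillen d\'evissage on ``the abelian category of coherent $A$-modules supported on $V(\pi)$'', but that category is not available: $A$ is in general not a coherent ring (the argument of \cref{xwquao} applies verbatim to $\Cls{C}(X;O)$ with $X=\NN\cup\{\infty\}$, since the kernel $I$ of evaluation at $\infty$ satisfies $I^{2}=I$ by the construction in \cref{xxaaq2} and is not generated by an idempotent), so finitely presented $A$-modules do not form an abelian category and Quillen's d\'evissage theorem, which requires abelian (in practice noetherian) input, does not apply. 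Your regularity-along-$V(\pi)$ step is also purely local at primes; for non-noetherian rings, finiteness of projective dimension of a finitely presented module is not detected by localizing at primes, and the local rings $C_{\idl{q}}$ of $\Cls{C}(X;O)$ at primes containing $\pi$ can satisfy $\bigcap_{n}(\pi^{n})\neq0$, so they are not discrete valuation rings and ``regular'' is already delicate for them.

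The paper closes this gap by a different, shorter route that serves as the d\'evissage substitute you are missing: since the kernel of $\Perf(A)\to\Perf(A[1/\pi])$ is generated as a stable idempotent-complete subcategory by the single object $A/\pi$, its $K$-theory is that of the $\E_{1}$-ring $\REnd_{A}(A/\pi)$, a coconnective ring with connective cover $A/\pi$; the theorem of Burklund--Levy on regular coconnective rings then shows that $K(A/\pi)\to K(\REnd_{A}(A/\pi))$ is an equivalence, the required hypothesis being precisely the regular coherence of $A/\pi=A\otimes_{O}\FF_{q}$ supplied by \cref{dis} --- the same input you invoke, but used where it is actually needed. With that replacement for your d\'evissage step, the rest of your outline goes through.
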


\begin{proof}
  Fix a uniformizer~\(\pi\in O\).
  We need to see that the map
  \begin{equation*}
    \Perf(A/\pi)
    \to
    \ker(
    \Perf(A)
    \to
    \Perf(A[1/\pi])
    )
  \end{equation*}
  induces an equivalence on~\(K\)-theory.
  As the right-hand side 
  is generated by \(A/\pi\),
  this boils down to showing that
  the connective cover
  \(A/\pi\to\REnd_{A}(A/\pi)\) induces an equivalence on \(K\)-theory,
  where \(\REnd\) denotes the derived endomorphism \(\E_{1}\)-algebra.
  This equivalence follows from~\cite[Theorem~1.1]{BurklundLevy23},
  which can be applied here in light of \cref{dis}.
\end{proof}

\begin{proof}[Proof of \cref{main_int}]
  By \cref{fibers},
  we are reduced to the same questions
  on the generic and special fibers,
  which we have treated in \cref{x8biir,k_dis}.
\end{proof}

\let\AA\oldAA \let\Vec\oldVec \bibliographystyle{plain}
 \newcommand{\yyyy}[1]{}

\end{document}